\def\eq{\begin{equation}}
\def\endeq{\end{equation}}
\def\bpm{\begin{pmatrix}}
\def\epm{\end{pmatrix}}
\def\bbm{\begin{bmatrix}}
\def\ebm{\end{bmatrix}}
\def\pq{\mathcal{Q}}
\def\pp{\mathcal{P}}
\def\pu{\mathcal{U}}
\def\pv{\mathcal{V}}
\def\zc{z_*}
\def\rc{r_*}
\def\bc{\beta_*}
\def\tc{t_*}
\def\ec{\eta_*}
\def\Odot{{\bf \dot{O}}}
\renewcommand{\Re}{\mathrm{Re}}
\renewcommand{\Im}{\mathrm{Im}}
\theoremstyle{plain}
\newtheorem{rhp}{Riemann-Hilbert Problem}
\newtheorem{lemma}{Lemma}
\newtheorem{theorem}{Theorem}
\newtheorem{proposition}{Proposition}
\DeclareRobustCommand*\circled[1]{\tikz[baseline=(char.base)]{
    \node[shape=circle,draw,inner sep=2pt] (char) {#1};}}
\newcommand\xqed[1]{%
  \leavevmode\unskip\penalty9999 \hbox{}\nobreak\hfill
  \quad\hbox{#1}}
\newcommand\myendrmk{\xqed{$\triangleright$}}
\theoremstyle{definition}
\newtheorem{remark}{$\triangleleft$ Remark}
\renewcommand{\theequation}{\arabic{section}-\arabic{equation}}
\title{Large-degree asymptotics of rational Painlev\'e-II functions.  II.}
\author{Robert J. Buckingham}
\address{Department of Mathematical Sciences, University of Cincinnati, Cincinnati, PO Box 210025, Cincinnati, OH 45221}
\email{buckinrt@uc.edu}
\urladdr{http://homepages.uc.edu/~buckinrt}
\author{Peter D. Miller}
\address{Department of Mathematics, University of Michigan, East Hall, 530 Church St., Ann Arbor, MI 48109}
\email{millerpd@umich.edu}
\urladdr{http://www.math.lsa.umich.edu/~millerpd}
\begin{document}
\begin{abstract}
This paper is a continuation of our analysis, begun in \cite{Buckingham-Miller-rational-noncrit}, of
the rational solutions of the inhomogeneous Painlev\'e-II equation and associated rational solutions of 
the homogeneous coupled Painlev\'e-II system in the limit of large degree.  In this paper we establish asymptotic formulae valid near a certain curvilinear triangle in the complex plane that was previously shown to separate two distinct types of asymptotic behavior.  Our results display both a trigonometric degeneration of the rational Painlev\'e-II functions and also a degeneration to the tritronqu\'ee solution of the Painlev\'e-I equation.  Our rigorous analysis is based on the steepest descent method applied to a Riemann-Hilbert representation of the rational Painlev\'e-II functions, and supplies leading-order formulae as well as error estimates.
\end{abstract}
\maketitle

\section{Introduction}
\label{section:intro}
Here we continue our investigation, begun in 
\cite{Buckingham-Miller-rational-noncrit}, of the large-degree asymptotic 
behavior of rational solutions to 
the inhomogeneous Painlev\'e-II equation
\eq
\label{PII}
p_{yy} = 2p^3 + \frac{2}{3}y p-\frac{2}{3}m, \quad p:\mathbb{C}\to\mathbb{C} \text{ with parameter } m\in\mathbb{C}
\endeq
and the coupled Painlev\'e-II system
\eq
\label{PII-system}
\left.\begin{matrix}
\vspace{.1in}\displaystyle u_{yy} + 2u^2v + \frac{1}{3}yu = 0 \\
\displaystyle v_{yy} + 2uv^2 + \frac{1}{3}yv = 0
\end{matrix}\right\}, 
\quad u,v:\mathbb{C}\to\mathbb{C}.
\endeq
The Painlev\'e-II equation \eqref{PII} has a rational solution if and only if 
$m\in\mathbb{Z}$ \cite{Airault:1979}, and when this rational solution exists 
it is unique \cite{Murata:1985}.  These rational solutions arise in the study 
of fluid vortices \cite{Clarkson:2009}, string theory \cite{Johnson:2006}, 
and transition behavior for the semiclassical sine-Gordon equation 
\cite{BuckinghamMcritical}.  
The rational solutions can be constructed as follows.  Define 
\eq
\label{backlund-initial-condition}
\pu_0(y):=1 \quad \text{and} \quad \pv_0(y):=-\frac{1}{6}y.
\endeq
Then define the rational functions $\pu_m(y)$ and $\pv_m(y)$ iteratively 
for positive integers $m$ by 
\eq
\label{backlund-positive}
\pu_{m+1}(y) := -\frac{1}{6}y\pu_m(y) - \frac{\pu_m'(y)^2}{\pu_m(y)} + \frac{1}{2}\pu_m''(y) \quad \text{and} \quad \pv_{m+1}(y) := \frac{1}{\pu_m(y)},
\endeq
and for negative integers $m$ by
\eq
\label{backlund-negative}
\pu_{m-1}(y) := \frac{1}{\pv_m(y)} \quad \text{and} \quad \pv_{m-1}(y) := \frac{1}{2}\pv_m''(y) - \frac{\pv_m'(y)^2}{\pv_m(y)} - \frac{1}{6}y\pv_m(y).
\endeq
Then $\{u,v\}=\{\pu_m,\pv_m\}$ solves the coupled Painlev\'e-II system 
\eqref{PII-system} for each choice of $m\in\mathbb{Z}$.  Furthermore, if 
we define 
\eq
\label{log-derivative}
\pp_m(y) := \frac{\pu_m'(y)}{\pu_m(y)} \quad \text{and} \quad \pq_m(y) := \frac{\pv_m'(y)}{\pv_m(y)}, \quad m\in\mathbb{Z},
\endeq
then $\pp_m$ satisfies \eqref{PII} with parameter $m$ while 
$\pq_m=\pp_{1-m}$.  It is sufficient to assume that $m>0$ (see \cite[Remark 2]{Buckingham-Miller-rational-noncrit}) and we will do so for the rest of this paper.

In \cite{Buckingham-Miller-rational-noncrit}, it was shown that, in the 
scaled coordinate 
\begin{equation}
\label{x}
x:=(m-\tfrac{1}{2})^{-2/3}y,
\end{equation} 
the zeros and poles of $\pp_m$, $\pq_m$, $\pu_m$, and $\pv_m$ are contained 
within and densely fill out a fixed (independent of $m$) domain $T$ 
(the \emph{elliptic region}) as $m\to\infty$ (see Figure 
\ref{u13-u14-zeros-poles}).  The explicit 
analytical definition of $T$ is given in 
\cite[Section 3.2]{Buckingham-Miller-rational-noncrit}, and further details can be found below in \S\ref{sec:edges-and-corners}.  These results 
are consistent with numerical studies of the related Yablonskii-Vorob'ev 
polynomials (the functions $\pu_m$ are ratios of successive 
Yablonskii-Vorob'ev polynomials) by Clarkson and Mansfield 
\cite{Clarkson:2003}, as well as Kapaev's results \cite{Kapaev:1997} on the 
large-$m$ asymptotic behavior of general solutions to \eqref{PII}.  
Furthermore, the leading order large-$m$ asymptotic behaviors of $\pp_m$, 
$\pq_m$, $\pu_m$, and $\pv_m$ were computed, with error term of order 
$m^{-1}$, assuming that $x$ is not close to $\partial T$.  Subsequently some of these asymptotic results were reproduced by other authors \cite{BertolaB14}.  In this work we complete the analysis by filling in the missing 
parts of the complex $x$-plane near the smooth arcs and corner points of $\partial T$, supplying asymptotic formulae for the rational Painlev\'e-II functions in these remaining regions.
\begin{figure}[h]
\includegraphics[width=2in]{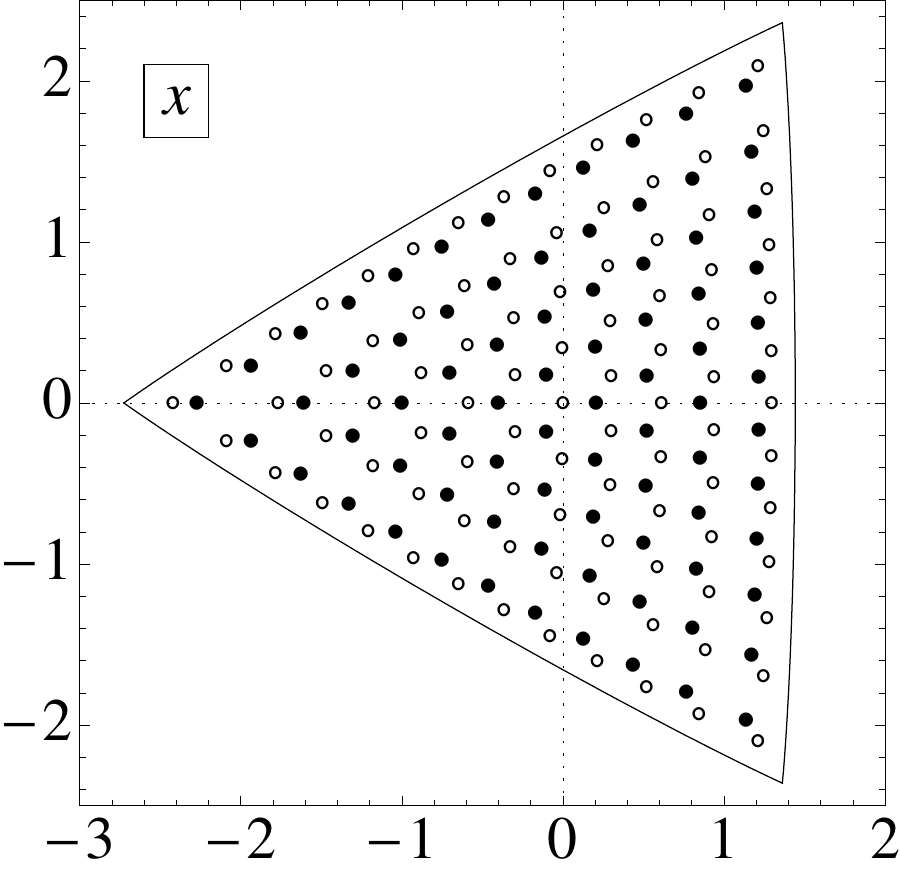}
\includegraphics[width=2in]{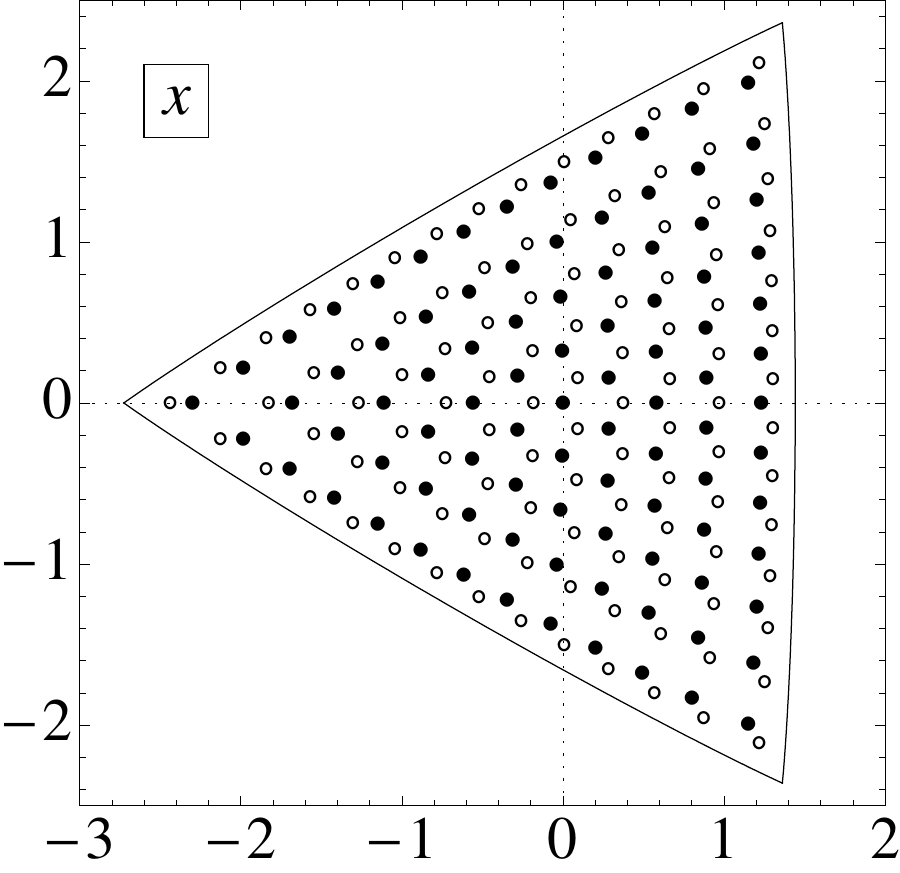}
\caption{The zeros (circles) and poles (dots) of $\pu_m(y)$ for $m=13$ (left) 
and $m=14$ (right) in the complex $x$-plane, where $x=(m-\tfrac{1}{2})^{-2/3}y$, 
along with the $m$-independent boundary of the region $T$.  
Note that each pole of $\pu_{14}(y)$ is also a zero of $\pu_{13}(y)$ (these points 
are slightly shifted between the two plots since the map $y\mapsto x$ depends on $m$).  Every zero of $\pu_m(y)$ is a pole of $\pp_m(y)$ with residue $+1$, 
while every pole of $\pu_m(y)$ is a pole of $\pp_m(y)$ with residue $-1$.}
\label{u13-u14-zeros-poles}
\end{figure}

\subsection{Results} 
\label{section:results-summary}
The Painlev\'e functions have the two following 
discrete symmetries (see, for example, 
\cite[Section 2]{Buckingham-Miller-rational-noncrit}):
\begin{equation}
\pu_m(y^*)=\pu_m(y)^*, \quad \pv_m(y^*)=\pv_m(y)^*, \quad \pp_m(y^*)=\pp_m(y)^*, \quad \pq_m(y^*)=\pq_m(y)^*
\end{equation}
and
\begin{equation}
\label{pp-pq-pu-pv-symmetries}
\begin{split}
\pp_m(e^{-2\pi i/3}y)=e^{2\pi i/3}\pp_m(y), \quad &\pq_m(e^{-2\pi i/3}y)=e^{2\pi i/3}\pq_m(y), \\
\pu_m(e^{-2\pi i/3}y)=e^{-2m\pi i/3}\pu_m(y),\quad &\pv_m(e^{-2\pi i/3}y)=e^{2(m-1)\pi i/3}\pv_m(y).
\end{split}
\end{equation}
The curve $\partial T$ turns out to consist of three smooth arcs (``edges'') that terminate in pairs at certain points (``corners'') lying along the rays $\arg(-x)=0$ and $\arg(x)=\pm\pi/3$ (complete details and precise definitions can be found in \S\ref{sec:edges-and-corners} below, but these features are completely obvious from the plots in Figure~\ref{u13-u14-zeros-poles}).
Together with the exact symmetry \eqref{pp-pq-pu-pv-symmetries}, this shows it is sufficient to 
analyze the behavior of the rational Painlev\'e-II functions near one edge and 
one corner of $\partial T$ (we pick those that intersect the real axis).  

Our analysis of the rational Painlev\'e-II functions for $x$ near the edge of $\partial T$ subtending the sector $|\arg(x)|<\pi/3$ is presented in \S\ref{section:edge}, and the main results are formulated in
Theorem~\ref{theorem:edge} (which provides asymptotic formulae for $\pu_m$ and $\pv_m$) and Theorem~\ref{theorem:edge-p-q} (which provides asymptotic formulae for $\pp_m$ and $\pq_m$).
These results show that, when viewed in terms of a conformal coordinate (independent of $m$) that maps the edge to a vertical segment, the rational functions can be represented for large $m$ as infinite series of trigonometric terms periodic in the direction parallel to the (straightened) edge, up to an absolute error term proportional to $m^{-1}$.  The period of each term is proportional to $m^{-1}$, and subsequent terms in the series are displaced in the direction perpendicular to the (straightened) edge by a shift proportional to $m^{-1}\log(m)$.  Details of the location of poles of the approximating series are given in \S\ref{section:edge-singularities}, including 
information regarding how the pole lattices shift smoothly as $x$ moves along the edge, how they jump when $m$ is incremented, and how the lattices for subsequent terms in the series are related.  The 
infinite-series formulae show remarkable agreement with the actual rational Painlev\'e-II functions, even when $m$ is not very large and even when $x$ is not so close to $\partial T$.  This agreement is illustrated qualitatively in Figures~\ref{fig:UEdge1}--\ref{fig:UEdge2} (for $\pu_m$) and in Figure~\ref{fig:PEdge} (for $\pp_m$).  It has to be noted, however, that the error terms fail to be controllable if $x$ is allowed to move so far along the edge as to approach one of the two corner points; a different type of analysis is required in this situation.

In \S\ref{section:cusp} we resolve this issue by analyzing the rational Painlev\'e-II functions near the corner point $x=x_c$ of $\partial T$ that lies on the negative real axis.   The main results are formulated in Theorem~\ref{theorem:corner}, which shows that for $x-x_c$ of order $\mathcal{O}(m^{4/5})$ the rational Painlev\'e-II functions can be represented in the limit $m\to\infty$ in terms of a certain solution of the Painlev\'e-I equation $Y''(t)=6Y(t)^2+t$ (a \emph{tritronqu\'ee} solution; for details see \S\ref{section:tritronquee-parametrix}).  The error terms are small in the limit $m\to\infty$ but are large compared with $m^{-1}$; consequently it is necessary to consider $m$ quite large to see good agreement of the exact rational Painlev\'e-II functions with their approximations.  The poles of $\pp_m$ and the approximating tritronqu\'ee 
function are compared in Figure \ref{tritronquee-vs-p10-p40};  note that 
two simple poles of $\pp_m$ of opposite residues converge to each double 
tritronqu\'ee pole.  The functions $\pu_m$ and $\pp_m$ are compared with their 
large-$m$ approximations via Painlev\'e-I functions for real $t$ in 
Figures~\ref{fig:Um-corner} and \ref{fig:Pm-corner}.  
\begin{figure}[h]
\includegraphics[width=2.5in]{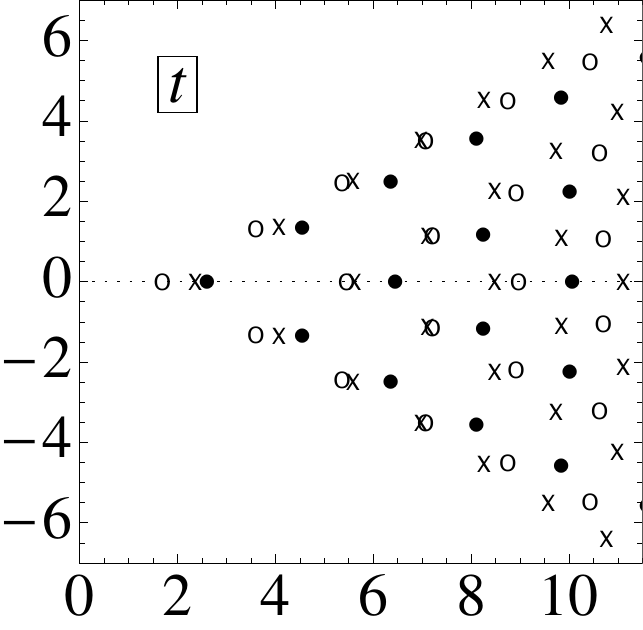}
\includegraphics[width=2.5in]{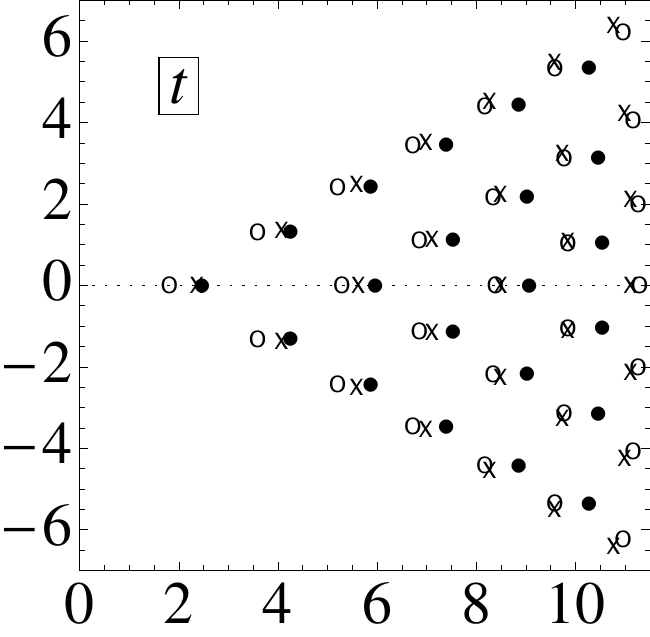}
\caption{Poles of the Painlev\'e-I tritronqu\'ee solution $Y(t)$ (denoted by 
the symbol X) and $\pp_m(y)$ (circles correspond to poles with residue $+1$ and 
dots correspond to poles with residue $-1$) for $m=10$ (left) and $m=40$ 
(right) in the complex $t$-plane, where 
$t=2^{1/15}3^{-1/3}m^{4/5}((m-\tfrac{1}{2})^{-2/3}y-x_c)$.  In the $t$-plane 
the poles of $Y(t)$ are independent of $m$, and $t=0$ corresponds to $x=x_c$.  
In the large-$m$ limit, one (simple) pole of $\pp_m$ with residue $+1$ and one 
with residue $-1$ converge to each (double) pole 
of $Y$.  The tritronqu\'ee poles were numerically computed using the pole 
field solver code provided by Fornberg and Weideman \cite{FornbergW11}.
}
\label{tritronquee-vs-p10-p40}
\end{figure}

The fact that the Painlev\'e-I equation appears exactly when $x$ is near $x_c$ and the other two corner points of $\partial T$ can be appreciated at a formal level by a simple scaling argument (see also \cite{Kapaev:1997}).  Indeed, consider making simultaneous affine coordinate changes in the independent and dependent variables of the inhomogeneous Painlev\'e-II equation \eqref{PII}:
\begin{equation}
y=a+bt\quad\text{and}\quad p=\alpha+\beta Y,
\end{equation}
where $a$, $b$, $\alpha$, and $\beta$ are constants, while $t$ and $Y$ are the new independent and dependent variables, respectively.  Making these substitutions in \eqref{PII} yields
\begin{equation}
\frac{\beta}{b^2}Y_{tt}=2\alpha^3+6\alpha^2\beta Y+6\alpha\beta^2 Y^2+2\beta^3Y^3+\frac{2}{3}a\alpha
+\frac{2}{3}\alpha b t + \frac{2}{3}a\beta Y +\frac{2}{3}b\beta t Y-\frac{2}{3}m.
\label{eq:PII-rescale}
\end{equation}
Balancing the terms proportional to $Y_{tt}$, $Y^2$, and $t$ (the terms appearing in the Painlev\'e-I equation) by taking
\begin{equation}
\alpha\beta b^2=1\quad\text{and}\quad\frac{b}{\beta^2}=\frac{3}{2}\quad\implies\quad
\alpha=\frac{4}{9}\frac{1}{\beta^5}\quad\text{and}\quad b=\frac{3}{2}\beta^2,
\label{eq:PI-identity}
\end{equation}
the transformed equation \eqref{eq:PII-rescale} becomes
\begin{equation}
Y_{tt}-6Y^2-t=2\left(\frac{2}{3}\right)^4\beta^{-12}+6\left(\frac{2}{3}\right)^2\beta^{-6}Y +
2\left(\frac{3}{2}\right)^2\beta^6Y^3+\frac{2}{3}a\beta^{-2}+\frac{3}{2}a\beta^4Y+\left(\frac{3}{2}\right)^2\beta^6tY-\frac{3}{2}m\beta^3.
\label{eq:PII-rescale-again}
\end{equation}
The right-hand side can be made formally small in the limit $m\to\infty$ by choosing $a$ and $\beta$ appropriately.  Indeed, one may first observe that there is a unique term proportional to the product $tY$, and for this term to be negligible it is necessary that $\beta\ll 1$.  Also, there are three constant terms, one of which is proportional to $\beta^{-12}$, which is large; this situation can be resolved by assuming that the constant terms sum exactly to zero:
\begin{equation}
2\left(\frac{2}{3}\right)^4\beta^{-12} + \frac{2}{3}a\beta^{-2} -\frac{3}{2}m\beta^3=0.
\label{eq:constant-terms}
\end{equation}
Similarly, there are two terms proportional to $Y$ (without a factor of $t$), one of which is proportional to $\beta^{-6}$, which is large; we therefore remove these terms by supposing that
\begin{equation}
6\left(\frac{2}{3}\right)^2\beta^{-6}+\frac{3}{2}a\beta^4=0.
\label{eq:linear-terms}
\end{equation}
Solving \eqref{eq:constant-terms} and \eqref{eq:linear-terms} for $a$ and $\beta$ in terms of $m$
and using \eqref{eq:PI-identity} we arrive at
\begin{equation}
\begin{split}
a&=-\left(\frac{9}{2}\right)^{2/3}m^{2/3}e^{2\pi ik/3},\\
b&=\frac{3^{1/3}}{2^{1/15}}m^{-2/15}e^{4\pi i k/15},\\
\alpha&=-6^{-1/3}m^{1/3}e^{-2\pi ik/3},\\
\beta&=-4^{1/15}\left(\frac{2}{3}\right)^{1/3}m^{-1/15}e^{2\pi ik/15},
\end{split}
\end{equation}
where $k\in\mathbb{Z}$, which puts \eqref{eq:PII-rescale-again} in the form of a perturbation of the Painlev\'e-I equation:
\begin{equation}
Y_{tt}=6Y^2+t + \mathcal{O}(m^{-2/5}Y^3) +\mathcal{O}(m^{-2/5}tY),\quad m\to\infty.
\end{equation}
Based on this formal argument, one is led to expect solutions of \eqref{PII} to be asymptotically expressible in terms of solutions of Painlev\'e-I near three distinguished points in the $y$-plane given by the three values of $a$; upon rescaling by \eqref{x} these points correspond to the three corners of $\partial T$.  Of course this formal argument does not suggest which solutions of Painlev\'e-I should appear in the large-$m$ behavior of any given family of solutions of \eqref{PII}, nor does it yield a direct method of proof.

\subsection{Notation}
We define for later use the Pauli spin matrices
\eq
\sigma_1:=\bbm 0 & 1 \\ 1 & 0 \ebm, \quad \sigma_2:=\bbm 0 & -i \\ i & 0 \ebm, \quad \sigma_3:=\bbm 1 & 0 \\ 0 & -1 \ebm, \quad
\sigma_+:=\bbm 0 & 1\\ 0 & 0\ebm,\quad \sigma_-:=\bbm 0 & 0\\ 1 & 0\ebm.
\label{eq:Pauli-matrices}
\endeq
We will frequently refer to certain sectors of the complex plane, for which we define special notation:
\begin{equation}
\mathcal{S}_\sigma:=\left\{x\in\mathbb{C}:  |\arg(x)|<\frac{\pi}{3}-\sigma\right\}, \quad\sigma\ge 0.
\label{eq:edge-sector-define}
\end{equation}
Given an oriented contour arc and a function defined locally in the complement of the arc, we use the subscript ``$+$'' (respectively, ``$-$'') to denote the nontangential boundary value taken by the function on the arc 
from the left (respectively, right).  
Finally, it will be convenient to introduce the notation
\begin{equation}
\label{epsilon}
\epsilon:=(m-\tfrac{1}{2})^{-1}.
\end{equation}

\subsection{Acknowledgements}  
In making Figures \ref{tritronquee-vs-p10-p40}, \ref{fig:Um-corner}, and 
\ref{fig:Pm-corner} we used the 
``pole field solver''  of Bengt Fornberg and Andr\'e Weideman 
\cite{FornbergW11} to compute the tritronqu\'ee functions $H$ and $Y$, and we 
are grateful to them for providing us with a code for this purpose.  
We also thank Marco Bertola for pointing out the approach to the proof of 
Lemma \ref{lemma:conformal} via differential equations in Banach space.  
R. J. Buckingham was partially supported by the National Science Foundation via grant DMS-1312458, by the Simons Foundation via award 245775, and the Charles Phelps Taft Research Foundation. P. D. Miller was partially supported by the National Science Foundation under grants DMS-0807653 and DMS-1206131, and by a Fellowship in Mathematics from the Simons Foundation.

\section{Edges and Corners}  
\label{sec:edges-and-corners}
The curve $\partial T$ can be detected with the help of
the $g$-function introduced in \cite{Buckingham-Miller-rational-noncrit} to establish asymptotic formulae for
the rational Painlev\'e-II functions under the assumption that $|x|$ is sufficiently large.  In this section, we recall that $g$-function and use it to precisely describe the boundary $\partial T$ of the set $T$
as this will be the focus of the analysis in the rest of the paper.  
\subsection{Definition of $g$ and related functions}
\label{section:g-define}
Set 
\begin{equation}
x_c:=-\left(\frac{9}{2}\right)^{2/3}
\label{eq:xc-define}
\end{equation}
(this turns out to be the leftmost corner point of $T$) and define 
$\Sigma_S$ to be the union of the three straight line segments 
$[x_c,0]$, $[0,x_ce^{-2i\pi/3}]$, and $[0,x_ce^{2i\pi/3}]$ (see Figure~\ref{fig:Sigma-S}).
\begin{figure}[h]
\includegraphics{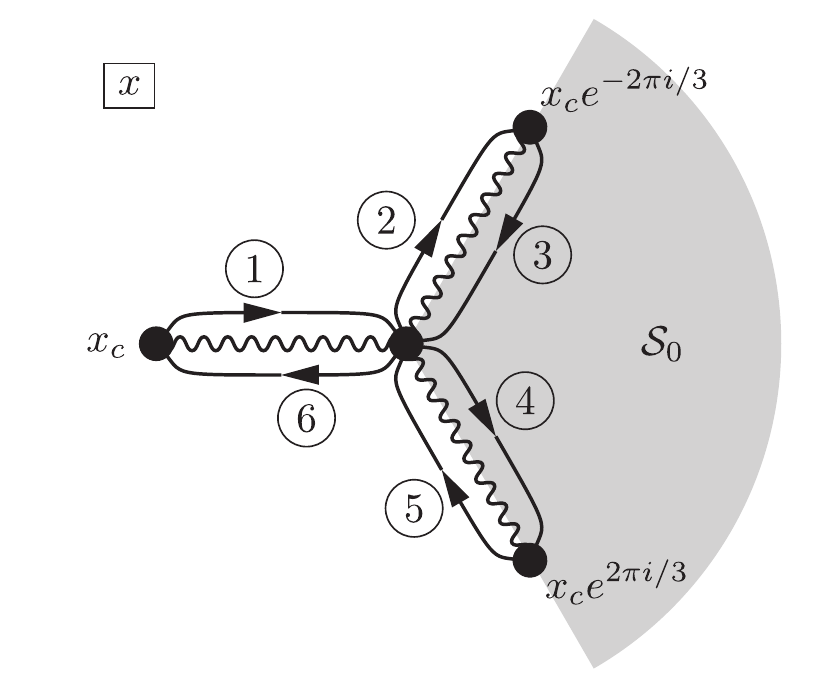}
\caption{The jump contour $\Sigma_S$ for $S=S(x)$ (wavy lines), and the six segments of the positively-oriented boundary of $\mathbb{C}\setminus\Sigma_S$.  The sector $\mathcal{S}_0$ is shaded.  The contours $\circled{1}$ -- $\circled{6}$ correspond to the contours in the $S$-plane with the same labels in Figure \ref{fig:S-plane}.}
\label{fig:Sigma-S}
\end{figure}
Let
$S:\mathbb{C}\backslash\Sigma_S\to\mathbb{C}$ 
denote the unique analytic function satisfying  
\eq
\label{cubic-equation}
3S(x)^3+4xS(x)+8=0
\endeq
with branch cut $\Sigma_S$ and satisfying $S(x)=-2x^{-1}+\mathcal{O}(x^{-4})$ 
as $x\to\infty$.  The three endpoints of $\Sigma_S$ are precisely the values of $x$ at which \eqref{cubic-equation} has a double root.  It is easy to see that $S$ is Schwarz-symmetric and satisfies the rotational symmetry
\begin{equation}
S(xe^{2\pi i/3})=e^{-2\pi i/3}S(x) \text{ for all } x\in \mathbb{C}\setminus\Sigma_S.
\label{eq:S-rotational-symmetry}
\end{equation}
\begin{lemma}
$S:\mathbb{C}\setminus\Sigma_S\to\mathbb{C}$ is univalent, and its conformal image is the domain
bounded by the arc $S=(\tfrac{4}{3})^{1/3}(\sec(\vartheta))^{1/3}e^{i\vartheta}$, $|\vartheta|<\pi/3$, and its rotations about the origin by $\pm 2\pi/3$ radians (see Figure~\ref{fig:S-plane}).
\label{lemma-S-univalent}
\end{lemma}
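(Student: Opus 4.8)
\emph{Proof sketch.} The plan is to invert the algebraic relation defining $S$ explicitly. Since $S=S(x)$ never vanishes (setting $S=0$ in \eqref{cubic-equation} would give $8=0$), we may solve \eqref{cubic-equation} for $x$, obtaining a rational map of degree three on the Riemann sphere,
\[
x=f(S):=-\frac{3S^3+8}{4S}=-\frac34 S^2-\frac2S ,\qquad f(S(x))\equiv x\ \text{on}\ \mathbb C\setminus\Sigma_S ,
\]
with a double pole at $S=\infty$, a simple pole at $S=0$, and critical points exactly at the roots of $S^3=\tfrac43$ (these being the three $S$-values at which \eqref{cubic-equation} has a double root, carried by $f$ to the three endpoints of $\Sigma_S$). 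Let $\mathcal A:=\{(\tfrac43)^{1/3}(\sec\vartheta)^{1/3}e^{i\vartheta}:|\vartheta|\le\pi/3\}$, let $\Gamma:=\mathcal A\cup e^{2\pi i/3}\mathcal A\cup e^{-2\pi i/3}\mathcal A$, and let $\Omega$ be the bounded component of $\mathbb C\setminus\Gamma$, i.e.\ the domain in the statement (cf.\ Figure~\ref{fig:S-plane}). I would prove the lemma by showing that $f$ restricts to a biholomorphism of $\Omega$ onto $\widehat{\mathbb C}\setminus\Sigma_S$: granting this, the restriction of $f^{-1}$ to $\mathbb C\setminus\Sigma_S$ is a univalent analytic function with image $\Omega$ (its extension to $x=\infty$ taking the value $S=0$), and since it and $S$ both tend to $0$ as $x\to\infty$ while $f$ is injective near $S=0$, the two agree near $x=\infty$, hence---by analytic continuation on the connected set $\mathbb C\setminus\Sigma_S$---everywhere.

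The one substantial computation is that $f$ maps $\Gamma$ onto $\Sigma_S$. Substituting $S=(\tfrac43)^{1/3}(\sec\vartheta)^{1/3}e^{i\vartheta}$ and simplifying with $S^3=\tfrac43\sec\vartheta$ and $e^{2i\vartheta}+2\cos\vartheta\,e^{-i\vartheta}=1+2\cos2\vartheta$, one finds that along $\mathcal A$
\[
f(S)=-\Bigl(\tfrac34\Bigr)^{1/3}\frac{1+2\cos2\vartheta}{(\cos\vartheta)^{2/3}}=-\Bigl(\tfrac34\Bigr)^{1/3}(4u-1)\,u^{-1/3},\qquad u:=\cos^2\vartheta\in[\tfrac14,1],
\]
which is real, strictly increasing in $u$ (its $u$-derivative is $(\tfrac83 u+\tfrac13)u^{-4/3}>0$), vanishes at $u=\tfrac14$ (i.e.\ $\vartheta=\pm\pi/3$), and equals $-3(\tfrac34)^{1/3}=x_c$ at $u=1$ (i.e.\ $\vartheta=0$) by \eqref{eq:xc-define}. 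Hence $f(\mathcal A)=[x_c,0]$, traced from $0$ out to $x_c$ and back as $\vartheta$ runs through $[-\pi/3,\pi/3]$; by \eqref{eq:S-rotational-symmetry} (equivalently $f(e^{\pm2\pi i/3}S)=e^{\mp2\pi i/3}f(S)$) the arcs $e^{\pm2\pi i/3}\mathcal A$ map onto the other two segments of $\Sigma_S$, so $f(\Gamma)=\Sigma_S$. Moreover $|S|=(\tfrac43)^{1/3}(\sec\vartheta)^{1/3}\ge(\tfrac43)^{1/3}>0$ on $\mathcal A$ while $\arg S$ sweeps all of $[-\pi,\pi]$ around $\Gamma$, so $\Gamma$ is a Jordan curve encircling the origin, whence $0\in\Omega$ and $\infty\notin\Omega$.

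It remains to show $f|_\Omega$ is a biholomorphism onto $\widehat{\mathbb C}\setminus\Sigma_S$. First, $f(\Omega)\cap\Sigma_S=\emptyset$: indeed $f^{-1}(\Sigma_S)$ consists of $\Gamma$ (which covers $\Sigma_S$ two-to-one via the folding just described) together with one further arc over each edge of $\Sigma_S$, issuing from a corner of $\Gamma$, whose outer endpoint is the third root of \eqref{cubic-equation} at the corresponding corner value $x=x_ce^{2\pi ik/3}$; since \eqref{cubic-equation} has no $S^2$-term and a double root $(\tfrac43)^{1/3}e^{2\pi ik/3}$ there, that endpoint equals $-2(\tfrac43)^{1/3}e^{2\pi ik/3}$, of modulus $2(\tfrac43)^{1/3}$, which exceeds the modulus $(\tfrac83)^{1/3}$ at which $\Gamma$ meets the ray through it, so the arc lies (apart from its attaching corner) in the unbounded component of $\mathbb C\setminus\Gamma$, and $f$ maps $\Omega$ into $\widehat{\mathbb C}\setminus\Sigma_S$. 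Second, for $x_0\in\mathbb C\setminus\Sigma_S$ the number $n(x_0)$ of solutions of $f(S)=x_0$ in $\Omega$ counted with multiplicity equals $1+\frac{1}{2\pi i}\oint_{\partial\Omega}f'(S)(f(S)-x_0)^{-1}\,dS$ by the argument principle (the $1$ counting the simple pole of $f$ at $S=0\in\Omega$); this is an integer depending continuously on $x_0$ on the connected set $\mathbb C\setminus\Sigma_S$, hence constant, and it equals $1$, as one sees by taking $x_0$ large and real, where \eqref{cubic-equation} has one root near $0\in\Omega$ and two roots of large modulus. Therefore $f|_\Omega$ is a bijection onto $\widehat{\mathbb C}\setminus\Sigma_S$, and since its only critical points---the roots of $S^3=\tfrac43$ and $S=\infty$---lie off $\Omega$, it has nonvanishing derivative there and is a biholomorphism; the lemma follows as in the first paragraph. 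The only genuine computation in all of this is the parametric identity for $f$ along $\mathcal A$; everything else is the argument principle and soft topology, with the one point needing a little care being the position of the ``third-sheet'' arcs of $f^{-1}(\Sigma_S)$ relative to $\Gamma$.
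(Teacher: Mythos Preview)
Your proof is correct, and it takes a genuinely different (though closely related) route from the paper's. Both arguments rest on the same parametric computation---that along the arc $\mathcal{A}$ the cubic relation forces $x=f(S)$ to be real and monotone in $\cos^2\vartheta$, mapping $\mathcal{A}$ onto $[x_c,0]$---but the global packaging differs. The paper works with the forward map $S$: it parametrizes the six boundary-value arcs of $\mathbb{C}\setminus\Sigma_S$ (the two sides of each segment of $\Sigma_S$), shows $S$ sends them injectively to a simple closed curve (this is where the polar equation $\rho=(4/3)^{1/3}(\sec\vartheta)^{1/3}$ emerges), and invokes the argument principle for $S$ directly. You invert the relation to get the explicit degree-three rational map $f(S)=-(3S^3+8)/(4S)$, show $f(\Gamma)=\Sigma_S$, and then count preimages in $\Omega$ via the argument principle applied to $f$, accounting for the simple pole at $S=0$. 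Your approach buys the convenience of working with a completely explicit rational function whose poles and critical points are transparent; the price is the extra step of locating the third-sheet preimages of $\Sigma_S$ and verifying they lie outside $\Omega$, which the paper's direction avoids entirely. One small typo: you write ``$S^3=\tfrac{4}{3}\sec\vartheta$'' when in fact $S^3=\tfrac{4}{3}\sec\vartheta\,e^{3i\vartheta}$; your subsequent calculation handles the phase correctly via the identity $e^{2i\vartheta}+2\cos\vartheta\,e^{-i\vartheta}=1+2\cos2\vartheta$, so the conclusion is unaffected.
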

The proof of Lemma~\ref{lemma-S-univalent} is presented in \S\ref{sec:lemma-S-univalent}.
\begin{figure}[h]
\includegraphics{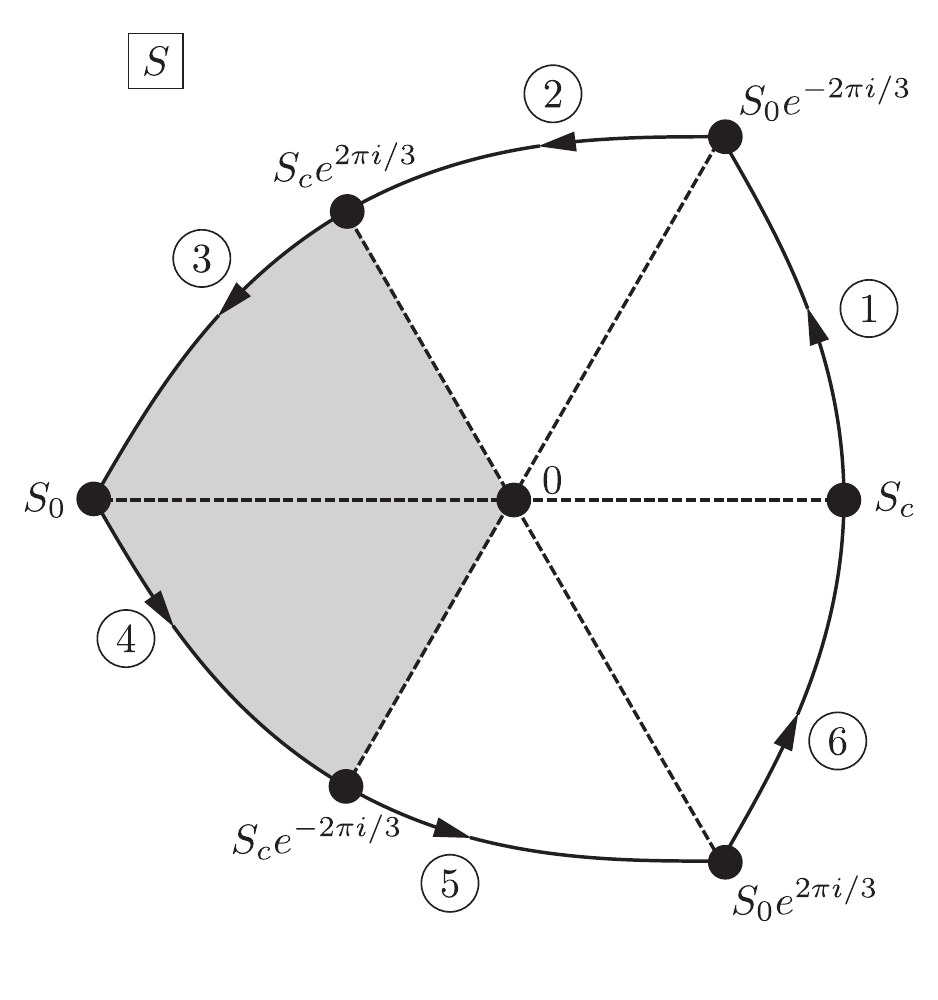}
\caption{The conformal image of $\mathbb{C}\setminus\Sigma_S$ under $S$.  The shaded region is the image of the sector $\mathcal{S}_0$.  Here $S_0:=-(8/3)^{1/3}$ and $S_c:=(4/3)^{1/3}$.  The contours $\circled{1}$ -- $\circled{6}$ correspond to the contours in the $x$-plane with the same labels in Figure \ref{fig:Sigma-S}.  Note that this figure does \emph{not} depict the elliptic region 
$T$ (in fact the interior corner angles here are $2\pi/3$ radians, while those 
of $T$ will be shown in \S\ref{opening-angle-subsection} to be $2\pi/5$ 
radians).}
\label{fig:S-plane}
\end{figure}
Define the semi-infinite ray 
$\mathscr{R}_{-\pi/3}:=(x_ce^{2i\pi/3},\infty e^{-i\pi/3})$.  Then let 
$\Delta:\mathbb{C}\backslash(\Sigma_S\cup\mathscr{R}_{-\pi/3})\to\mathbb{C}$ 
be the unique analytic function satisfying
\eq
\Delta(x)^2 = \frac{16}{3S(x)}
\label{eq:Delta-define}
\endeq
that is positive real for real $x<x_c$.  Now define 
$a,b:\mathbb{C}\backslash(\Sigma_S\cup\mathscr{R}_{-\pi/3})\to\mathbb{C}$ by  
\eq
a(x):=\frac{S(x)-\Delta(x)}{2}, \quad b(x):=\frac{S(x)+\Delta(x)}{2}.
\label{eq:a-b-define}
\endeq
(Observe that $S$ is the \emph{sum} $a+b$ and $\Delta$ is the 
\emph{difference} $b-a$).  
For $x\in\mathbb{C}\backslash(\Sigma_S\cup\mathscr{R}_{-\pi/3})$, 
$a(x)$ and $b(x)$ are distinct points in the complex plane satisfying 
$\text{Re}(a(x))<\text{Re}(b(x))$ for $0<\arg(x)<5\pi/3$ and 
$\text{Im}(a(x))>\text{Im}(b(x))$ for $-\pi/3<\arg(x)<\pi$.  
We call the oriented 
straight line segment $\Sigma:=\overrightarrow{ab}$ the \emph{band}.  
Given values $a$ and $b$, let
$r:\mathbb{C}\backslash\Sigma\to\mathbb{C}$ be the analytic function 
satisfying 
\eq
r(z;a,b)^2=(z-a)(z-b) \quad \text{and} \quad r(z;a,b)=z+\mathcal{O}(1), \quad z\to\infty.
\endeq
Let $L$ (depending on $x$) denote an unbounded arc joining $z=b$ to 
$z=\infty$ without otherwise touching $\Sigma$, and suppose that $L$ agrees with the positive real $z$-axis for sufficiently large $|z|$.  Then set
\begin{equation}
\zc=\zc(x):=-\frac{1}{2}S(x),
\label{eq:edge-zc}
\end{equation}
and then
\begin{equation}
h(z)=h(z;x):=\frac{3}{2}\int_{a(x)}^z (\zeta-\zc(x))r(\zeta;a(x),b(x))\,d\zeta -\frac{1}{2}\lambda(x),\quad
z\in \mathbb{C}\setminus (\Sigma\cup L),
\label{eq:hdef}
\end{equation}
where $\lambda(x)$ denotes an integration constant (see below) and the path of integration lies in $\mathbb{C}\setminus (\Sigma\cup L)$.  It is a consequence of the definition of $a(x)$ and $b(x)$ that
the associated function $g(z)=g(z;x)$ defined by
\begin{equation}
g(z;x):=\frac{1}{2}\theta(z;x)-h(z;x),
\label{g-formula}
\end{equation}
where $\theta:\mathbb{C}\to\mathbb{C}$ is defined by 
\begin{equation}
\theta(z)=\theta(z;x):=z^3+xz,
\label{eq:theta-define}
\end{equation}
has the asymptotic expansion
\begin{equation}
g(z;x)=\log(-z)+g_0(x)+\mathcal{O}(z^{-1}),\quad z\to\infty,
\end{equation}
where $g_0(x)$ is a constant related to $\lambda(x)$.  The constant $\lambda(x)$ is chosen to ensure that $g_0(x)=0$.
%
%
Further salient properties of $g$ and $h$ are recorded in 
\cite[Proposition 1]{Buckingham-Miller-rational-noncrit}, one of which is 
that 
the identity $h_+(z;x)+h_-(z;x)+\lambda(x)=0$ holds for all $z\in\Sigma$.

\subsection{Characterization of $\partial T$ in terms of $g$}
The analysis of the rational Painlev\'e-II functions carried out in \cite{Buckingham-Miller-rational-noncrit} with the help of the function $g$ defined by \eqref{g-formula} succeeds as long as the contour $L$ can be positioned so that the inequality $\Re(2h(z;x)+\lambda(x))>0$ holds for $z\in L$ and along two other unbounded contours with asymptotic directions $\arg(z)=\pm 2\pi/3$.  This is the case for sufficiently large $|x|$.  The boundary $\partial T$ is then the set of $x\in\mathbb{C}$ for which the region of the inequality $\Re(2h(z;x)+\lambda(x))>0$ undergoes a topological bifurcation, such that once $x\in T$ it is no longer possible to choose $L$ or the other two contours with the relevant inequality holding at every point.
The hallmark of the bifurcation is the appearance of the critical point $z=\zc(x)$ on the zero level curve of the function $\Re(2h(z;x)+\lambda(x))$.  If we set
\begin{equation}
\mathfrak{c}(x):= \frac{3}{2}\int_{a(x)}^{\zc(x)}(\zeta-\zc(x))r(\zeta;a(x),b(x))\,d\zeta,
\label{eq:c-function-define}
\end{equation}
where the path of integration is a straight line, then it is clear from comparison to \eqref{eq:hdef} that 
$\mathfrak{c}(x)=h(\zc(x);x)+\tfrac{1}{2}\lambda(x)\pmod{2\pi i}$ since the residue of $h'(z;x)$ at $z=\infty$ is $-1$.  Therefore, 
the bifurcation corresponds to the equation $\Re(\mathfrak{c}(x))=0$.  We note that $\mathfrak{c}(x)$ is well-defined for $x\in\mathcal{S}_0\cup\mathcal{S}_0e^{2\pi i/3}\cup\mathcal{S}_0e^{-2\pi i/3}$,
but along the three excluded rays either $\zc(x)$ fails to be well-defined (because $x\in\Sigma_S$) or $\zc(x)\in\Sigma$ so that the integral \eqref{eq:c-function-define} is not well-defined.  The bifurcation phenomenon is illustrated in Figures~\ref{edge-break} and \ref{corner-break}.
\begin{figure}[h]
\includegraphics[width=1.5in]{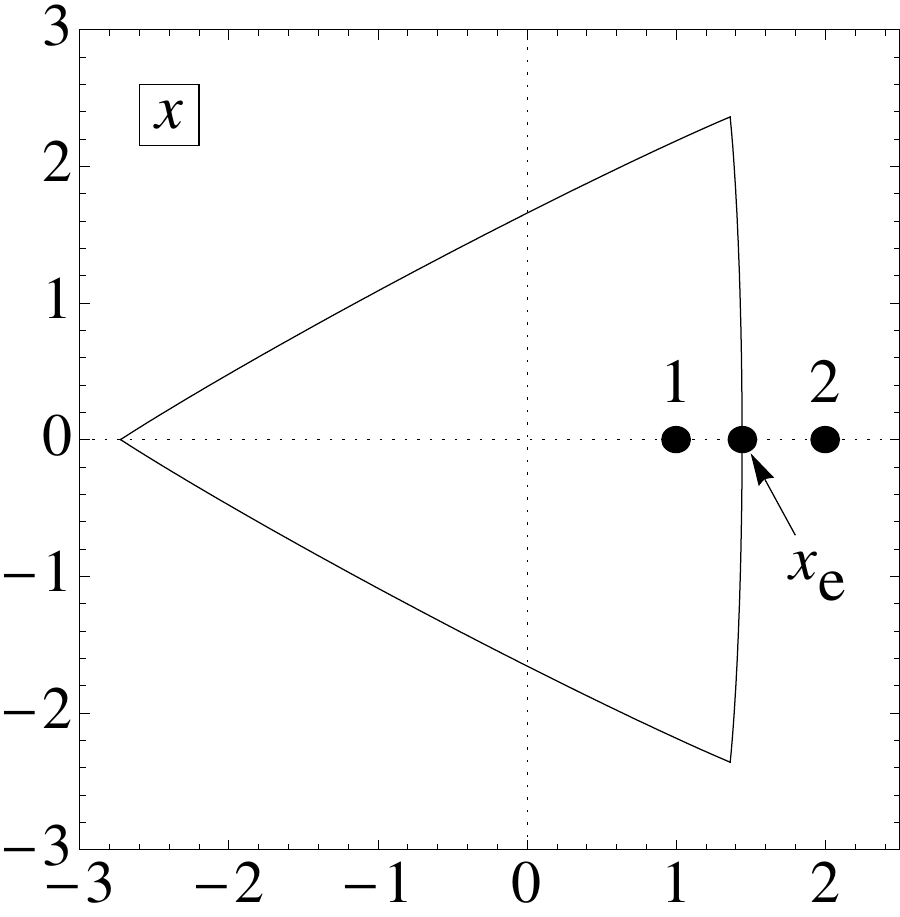}
\includegraphics[width=1.5in]{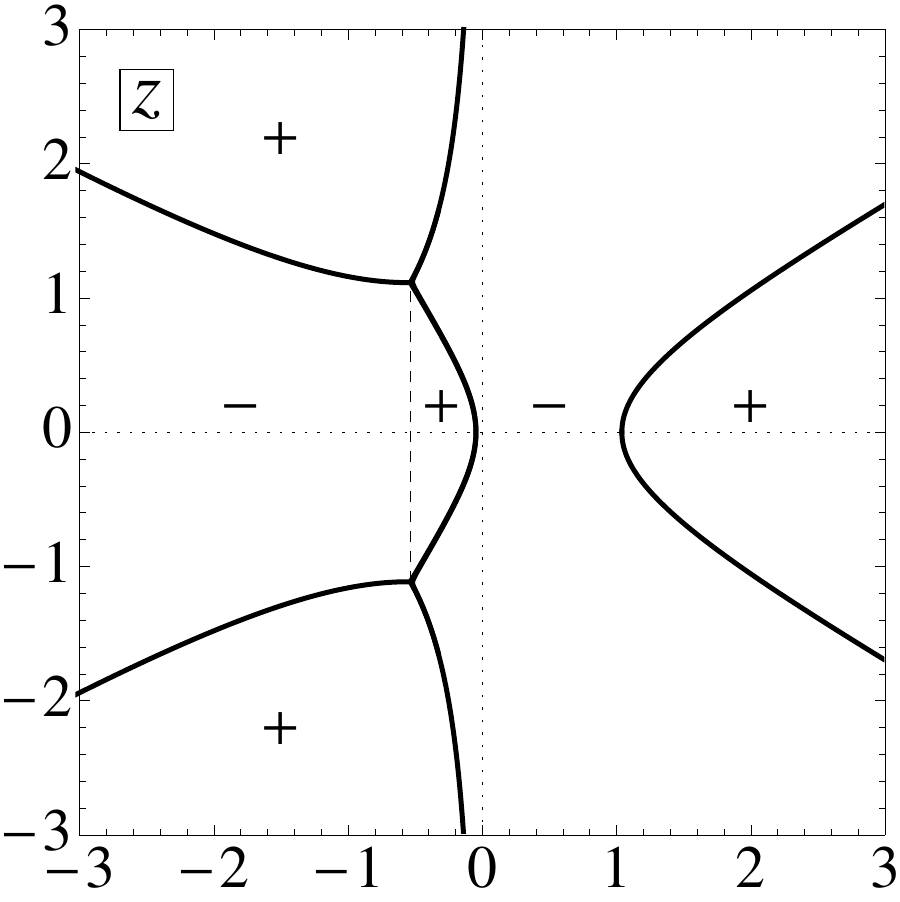}
\includegraphics[width=1.5in]{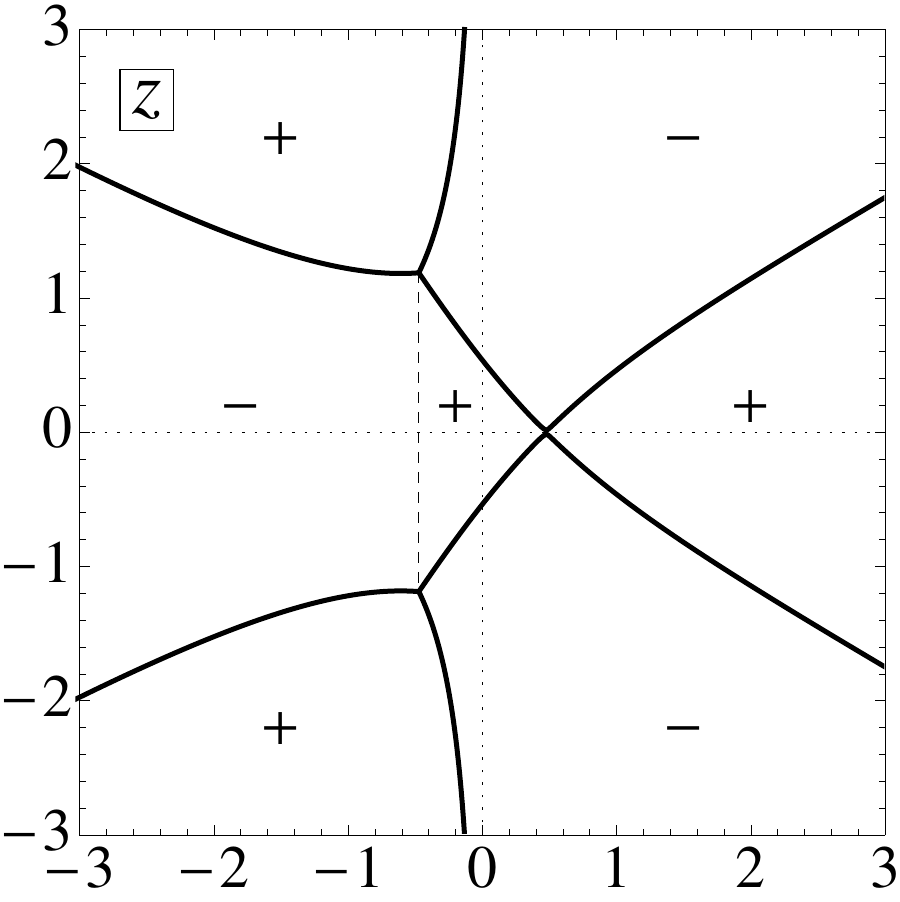}
\includegraphics[width=1.5in]{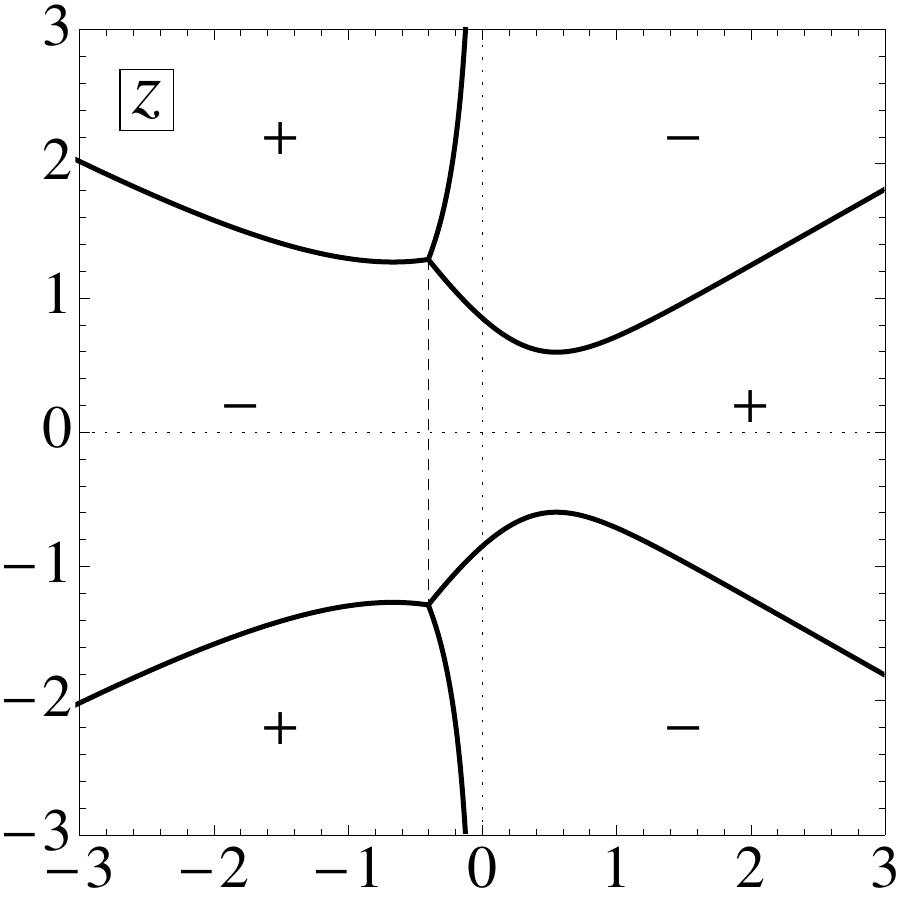}
\caption{
Signature charts for $\Re(2h(z;x)+\lambda(x))$ in the $z$-plane corresponding to $x=1$ (second panel), $x=x_e\approx 1.445$ (third panel), and $x=2$ (fourth panel).  Solid curves are zero level curves, while the dashed line in each case is the segment $\Sigma$ across which $h$ has a jump discontinuity.  
The location of each of these $x$-values in relation to $\partial T$ is illustrated in the first panel.  
}
\label{edge-break}
\end{figure}
\begin{figure}[h]
\includegraphics[width=1.5in]{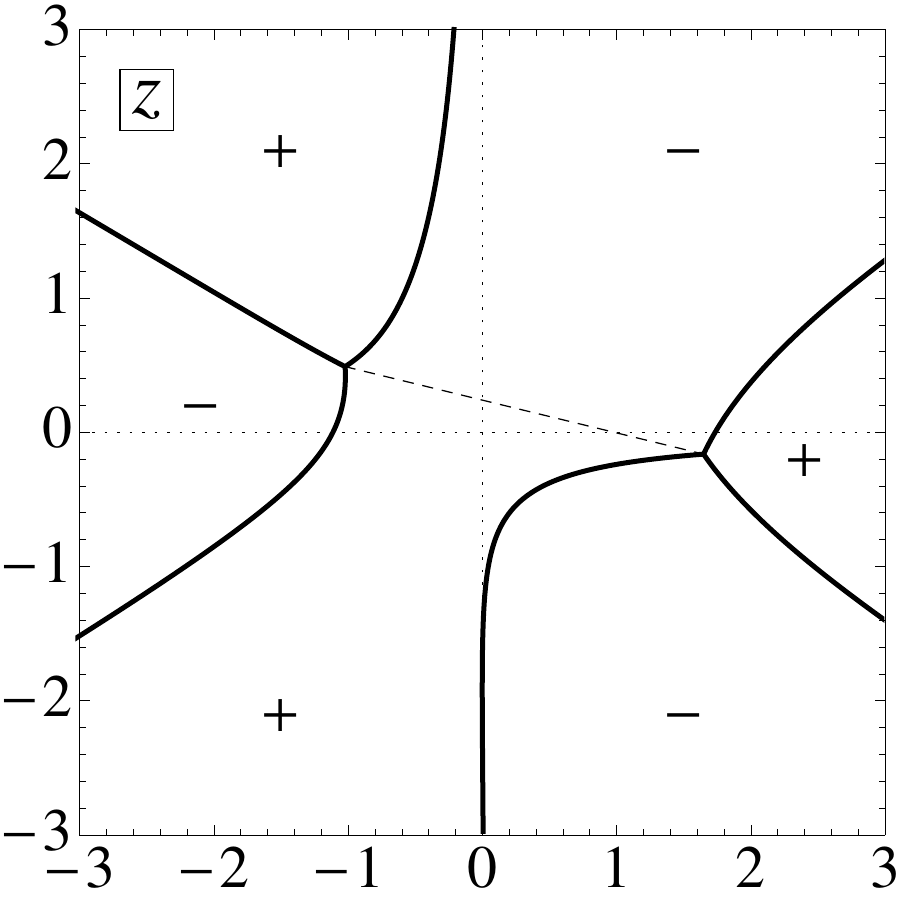}\\
\includegraphics[width=1.5in]{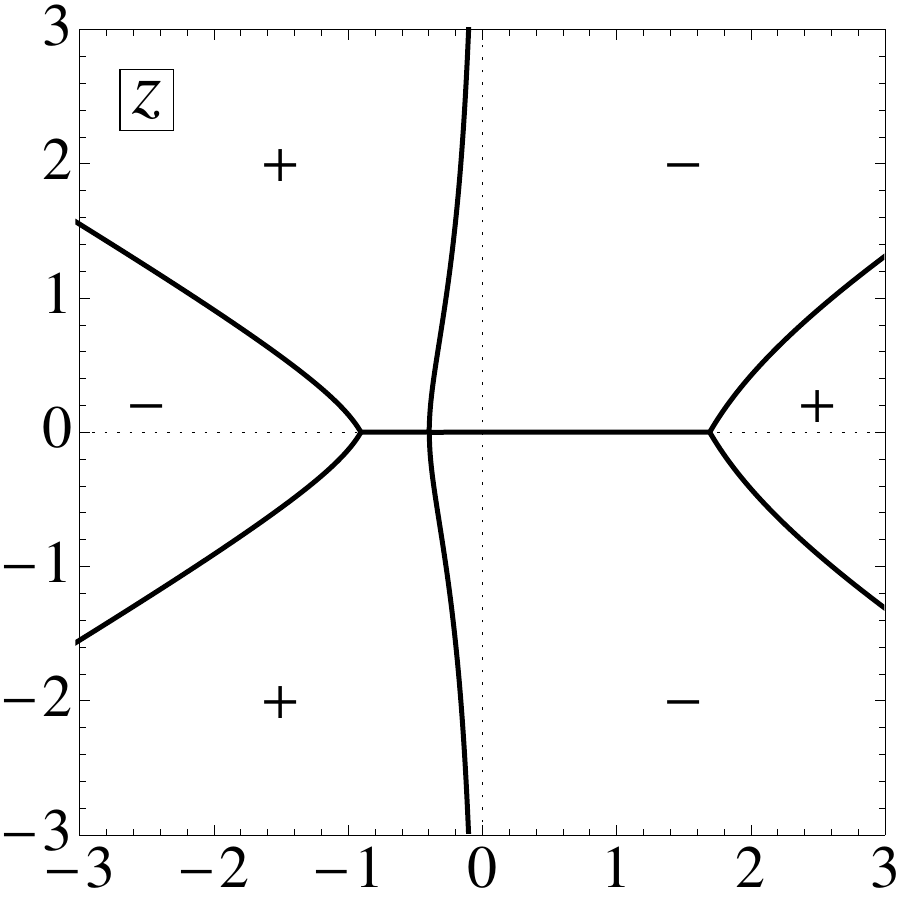}
\includegraphics[width=1.5in]{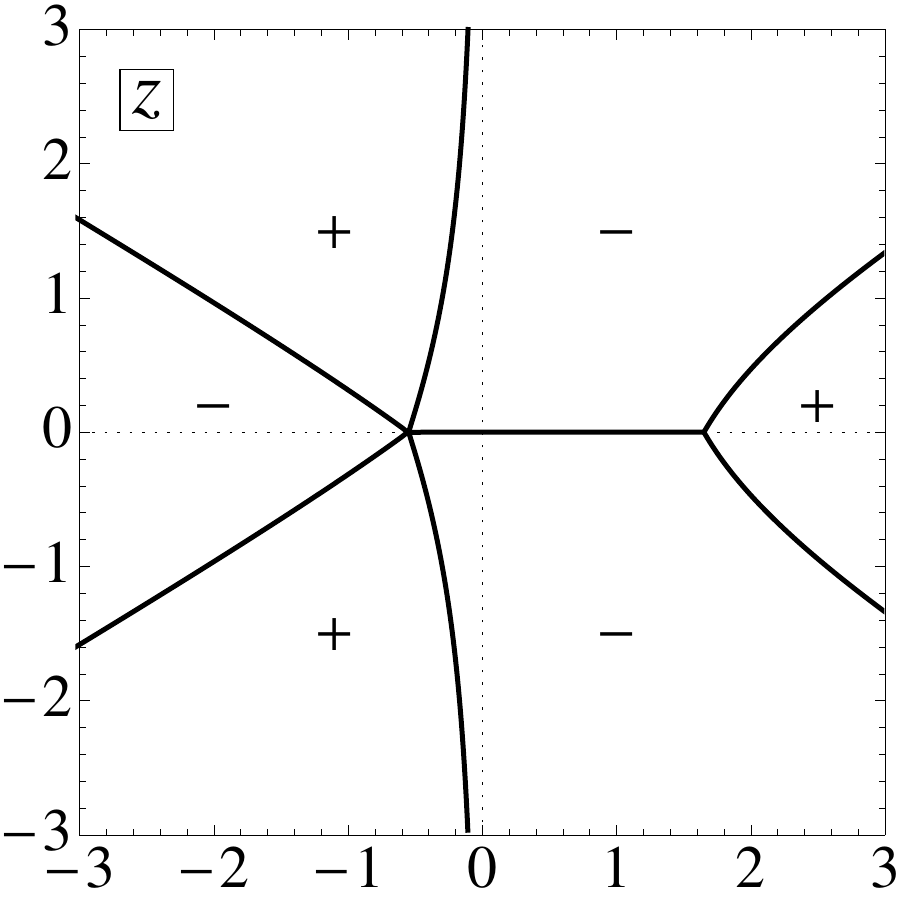}
\includegraphics[width=1.5in]{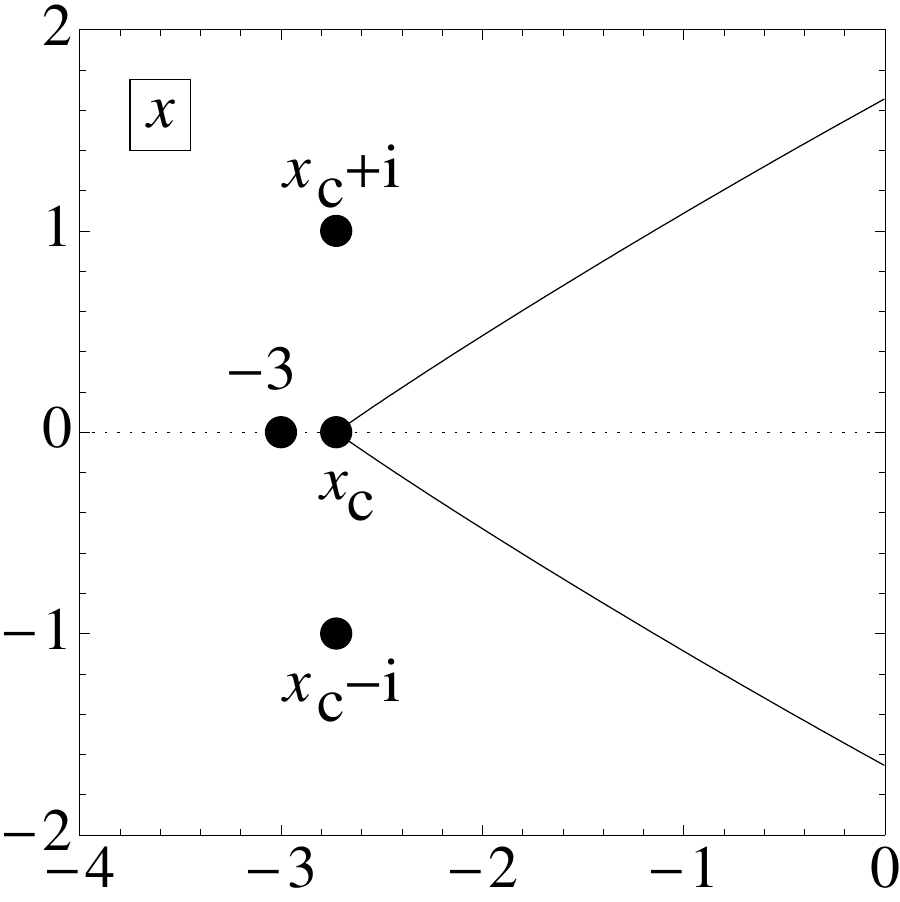}\\
\includegraphics[width=1.5in]{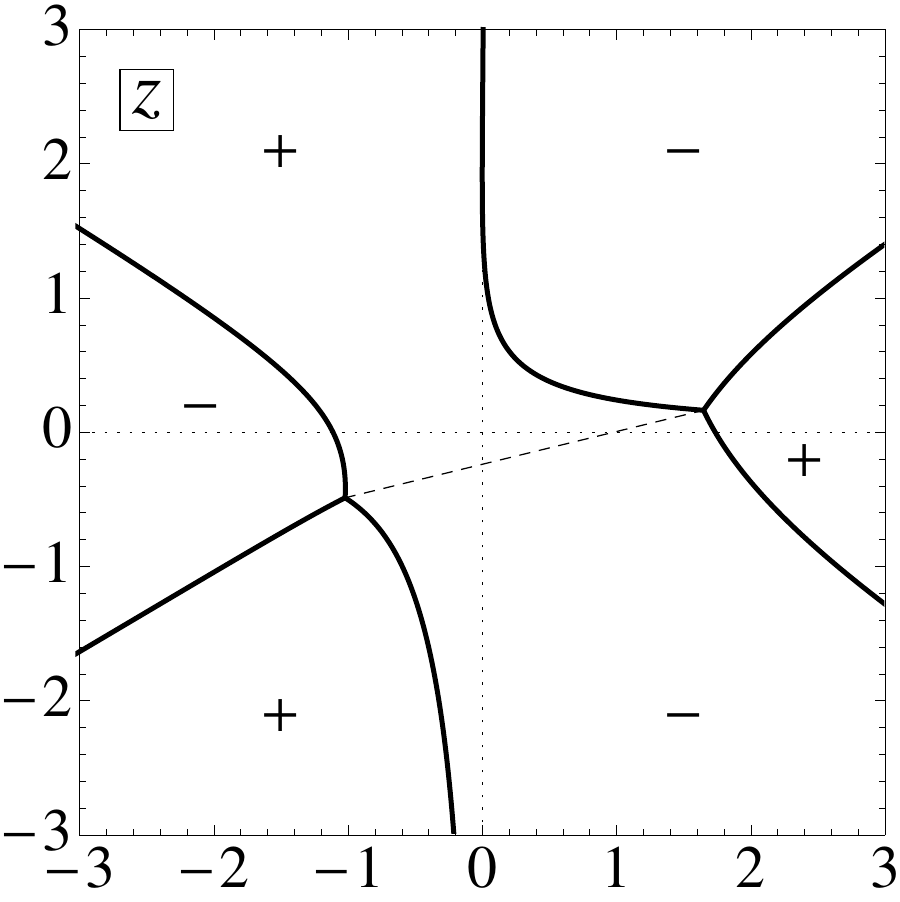}
\caption{
Signature charts for $\Re(2h(z;x)+\lambda(x))$ in the $z$-plane for $x=x_c+i$ (top panel), $x=x_c-i$ (bottom panel),
$x=-3$ (left panel), and $x=x_c$ (center panel).   The selected values of $x$ are plotted in relation to $\partial T$ in the right panel.  
}
\label{corner-break}
\end{figure}

The function $\mathfrak{c}$ has a number of symmetries.  The following formulae arise from noting that upon rotation of $x$ by multiples of $2\pi/3$ radians, the pair $(a,b)$ can undergo permutation, so that comparing the formula for $\mathfrak{c}$ for $x$ in different sectors one has to include an additional contribution amounting to integrating along the branch cut $\Sigma$ from $b$ to $a$.  This contribution can be evaluated by residues taking into account the definitions of $a(x)$ and $b(x)$.  The formulae are:
\begin{equation}
\mathfrak{c}(x)=
\begin{cases}
\mathfrak{c}(xe^{-2\pi i/3})-i\pi,&\quad \displaystyle x\in\mathcal{S}_0e^{2\pi i/3},\\
\mathfrak{c}(xe^{-4\pi i/3}),&\quad\displaystyle x\in\mathcal{S}_0e^{-2\pi i/3}.
\end{cases}
\label{eq:c-rotate}
\end{equation}
Similarly, upon noting that for $x\in\mathcal{S}_0$ one has $S(x^*)=S(x)^*$ but $a(x^*)=b(x)^*$, one can derive the relation
\begin{equation}
\mathfrak{c}(x^*)=\mathfrak{c}(x)^*+i\pi,\quad x\in\mathcal{S}_0.
\label{eq:c-reflection}
\end{equation}
\begin{lemma}
The function $\mathfrak{d}$ defined as
\begin{equation}
\mathfrak{d}(x):=\mathfrak{c}(x)-\frac{i\pi}{2},\quad x\in\mathcal{S}_0
\label{eq:d-define}
\end{equation}
is univalent and Schwarz-symmetric in its domain of definition.  The conformal image of $\mathcal{S}_0$ under $\mathfrak{d}$ intersects the imaginary axis exactly in the segment with endpoints $\pm i\pi/2$ (see Figure~\ref{fig:I-plane}).
\label{lemma:d-univalent}
\end{lemma}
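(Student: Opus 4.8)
The plan is to transfer the problem from the $x$-plane to the $S$-plane, where $\mathfrak{c}$ (hence $\mathfrak{d}$) becomes an explicit elementary function, and then verify univalence and the claim about the imaginary axis by a boundary-correspondence argument. First I would use Lemma~\ref{lemma-S-univalent} to replace $x$ by the coordinate $S=S(x)$, which maps $\mathcal{S}_0$ conformally onto the shaded region of Figure~\ref{fig:S-plane} bounded by the arc $S=(\tfrac43)^{1/3}(\sec\vartheta)^{1/3}e^{i\vartheta}$, $|\vartheta|<\pi/3$. Since $4x=-(3S^3+8)/S$ by the cubic \eqref{cubic-equation}, and since $\zc=-\tfrac12 S$, $\Delta^2=16/(3S)$, so that $a$ and $b$ in \eqref{eq:a-b-define} are explicit algebraic functions of $S$, the integral \eqref{eq:c-function-define} defining $\mathfrak{c}(x)$ can be evaluated in closed form. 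The key computation is to carry out the $\zeta$-integral $\tfrac32\int_a^{\zc}(\zeta-\zc)r(\zeta;a,b)\,d\zeta$ explicitly; because the integrand is $(\zeta-\zc)\sqrt{(\zeta-a)(\zeta-b)}$ with $\zc$, $a$, $b$ all rational in $S$ and $\Delta$, this reduces after elementary substitution to a combination of algebraic terms and a single logarithm, giving $\mathfrak{c}$ — and therefore $\mathfrak{d}(x)=\mathfrak{c}(x)-i\pi/2$ — as an explicit analytic function of $S$. I expect the answer to take the schematic form $\mathfrak{d}=(\text{algebraic in }S) + (\text{log of an algebraic function of }S)$, with the branch of the logarithm pinned down by the normalization that $\mathfrak{c}$ is real-plus-$i\pi/2$ on the relevant boundary pieces (equivalently $\mathfrak{d}$ is real there), consistent with \eqref{eq:c-reflection}.

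Next I would establish univalence. Rather than attempting to show injectivity of this composite function directly, the cleanest route is the argument principle / boundary correspondence: the composition $\mathfrak{d}\circ S^{-1}$ is analytic on the open region $\Omega\subset\mathbb{C}$ of Figure~\ref{fig:S-plane}, and extends continuously to $\partial\Omega$ (the endpoints $S_0$, $S_c$ and the endpoints obtained by the $\pm2\pi/3$ rotations of $S_c$ correspond to the corners of $\mathcal S_0$, i.e. to $x=x_c$ and to the two excluded rays, where one must check that no singularity develops — the logarithm's argument stays away from $0$ and $\infty$ on $\overline\Omega$). One then shows that $\mathfrak{d}\circ S^{-1}$ maps $\partial\Omega$ homeomorphically onto a simple closed curve $\gamma$ in the $\mathfrak{d}$-plane; by the Darboux–Picard theorem (a holomorphic map continuous up to the boundary that is a homeomorphism on the boundary onto a Jordan curve is a conformal bijection onto the Jordan interior), this yields univalence of $\mathfrak{d}$ on $\mathcal S_0$. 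Tracing $\partial\Omega$ is made tractable by symmetry: Schwarz symmetry of $\mathfrak{d}$ (inherited from $S(x^*)=S(x)^*$ together with \eqref{eq:c-reflection}, which gives $\mathfrak d(x^*)=\mathfrak c(x)^*-i\pi/2 = \overline{\mathfrak c(x)-i\pi/2}=\mathfrak d(x)^*$) reduces the task to the closed upper half of $\partial\Omega$, and the rotational relation \eqref{eq:c-rotate} controls the two arc-images obtained from the $\pm2\pi/3$ rotations.

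Finally, for the statement about the imaginary axis: by Schwarz symmetry, $\mathfrak{d}^{-1}(i\mathbb R)$ is exactly the set of $x\in\mathcal S_0$ fixed by complex conjugation, i.e. $\mathcal S_0\cap\mathbb R=(x_c,0)$ (the open segment), together with possible isolated contributions from the rotated boundary arcs where $\mathfrak d$ might also be purely imaginary. So I would evaluate $\mathfrak{d}$ on the real segment $x\in(x_c,0)$: here $S(x)$ is real (it is the real branch of the cubic on this segment), $\Delta(x)$ is — on the relevant side of $\mathscr R_{-\pi/3}$ — purely imaginary or real according to the sign structure recorded after \eqref{eq:a-b-define}, and a direct substitution shows $\mathfrak c(x)$ has constant real part equal to the value forced by the level set $\Re(\mathfrak c)=0$ that defines $\partial T$ meeting the real axis; equivalently $\mathfrak d(x)\in i\mathbb R$. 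Computing the endpoints: as $x\to 0^-$ one endpoint of the image is $+i\pi/2$ (this is where \eqref{eq:c-reflection} is tight, $\mathfrak c\to i\pi/2$), and as $x\to x_c^-$, where $a$ and $b$ coalesce ($\Delta\to 0$, $r(\zeta;a,b)$ degenerates and $\zc\to a=b$), the integral \eqref{eq:c-function-define} vanishes, so $\mathfrak c(x_c)=0$ and $\mathfrak d(x_c)=-i\pi/2$. Monotonicity of $\mathrm{Im}\,\mathfrak d$ along this segment — which follows from univalence, since $\mathfrak d$ restricted to a Schwarz-symmetric analytic arc through the domain is strictly monotone onto a real (here imaginary) interval — then gives that the image is exactly $[-i\pi/2, i\pi/2]$, and univalence guarantees the rotated arcs contribute nothing further to $i\mathbb R$.

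The main obstacle I anticipate is the explicit evaluation and branch-tracking of the integral \eqref{eq:c-function-define} as a function of $S$: getting a usable closed form (in particular isolating the single logarithmic term and fixing its branch consistently with the normalization $g_0(x)=0$ and with \eqref{eq:c-rotate}–\eqref{eq:c-reflection}) is delicate, and all of the boundary-correspondence bookkeeping downstream depends on having that formula correct. Once the closed form is in hand, the univalence argument via Darboux–Picard and the imaginary-axis computation are essentially routine symmetry-and-endpoint checks.
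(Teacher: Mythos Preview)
Your overall strategy (pass to the $S$-coordinate via Lemma~\ref{lemma-S-univalent} and then prove univalence by boundary correspondence) is the same in spirit as the paper's, though the paper makes one further substitution $t:=i(\tfrac34)^{1/2}(-S)^{3/2}$ which turns $\mathfrak{d}$ into a function $I(t)$ with the remarkably clean derivative $I'(t)=\tfrac{2}{3}r(t;-1,1)^3/t^2$; this lets the paper verify the boundary is mapped injectively onto a simple closed curve by direct sign checks on $\Re(dI/d\vartheta)$ and $\Im(dI/d\vartheta)$ along the boundary arcs, without ever producing a closed form for $I$ itself.  Your anticipated closed form ``algebraic plus a logarithm'' may well exist, but the paper's route sidesteps any branch-tracking of a primitive entirely.

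Your argument for the imaginary-axis claim, however, is wrong on two points of geometry.  First, Schwarz symmetry $\mathfrak{d}(x^*)=\mathfrak{d}(x)^*$ (which you derive correctly) implies that \emph{real} $x$ maps to \emph{real} $\mathfrak{d}$, not to imaginary $\mathfrak{d}$; so $\mathfrak{d}^{-1}(i\mathbb{R})$ is certainly not the real axis in $\mathcal{S}_0$.  Second, $\mathcal{S}_0=\{|\arg x|<\pi/3\}$ meets $\mathbb{R}$ in $(0,\infty)$, not in $(x_c,0)$; the point $x_c<0$ is not in $\mathcal{S}_0$ at all, and your limits $x\to 0^-$ and $x\to x_c^-$ are taken along a segment lying in $\Sigma_S$, outside the domain.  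In fact the preimage of the imaginary segment $(-i\pi/2,i\pi/2)$ is the curved edge $\partial T\cap\mathcal{S}_0$ joining the corner points $x_c e^{\pm 2\pi i/3}$, which is nowhere real except at the midpoint $x_e>0$.  The paper obtains the imaginary-axis statement not from any symmetry-forced preimage computation but as a byproduct of the boundary-correspondence itself: once the boundary of $I(\tau)$ is traced, one sees it touches $i\mathbb{R}$ only at $\pm i\pi/2$ and along the rays to $\pm i\infty$, so the interior of the image meets $i\mathbb{R}$ exactly in the open segment between $\pm i\pi/2$.
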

\begin{figure}[h]
\includegraphics{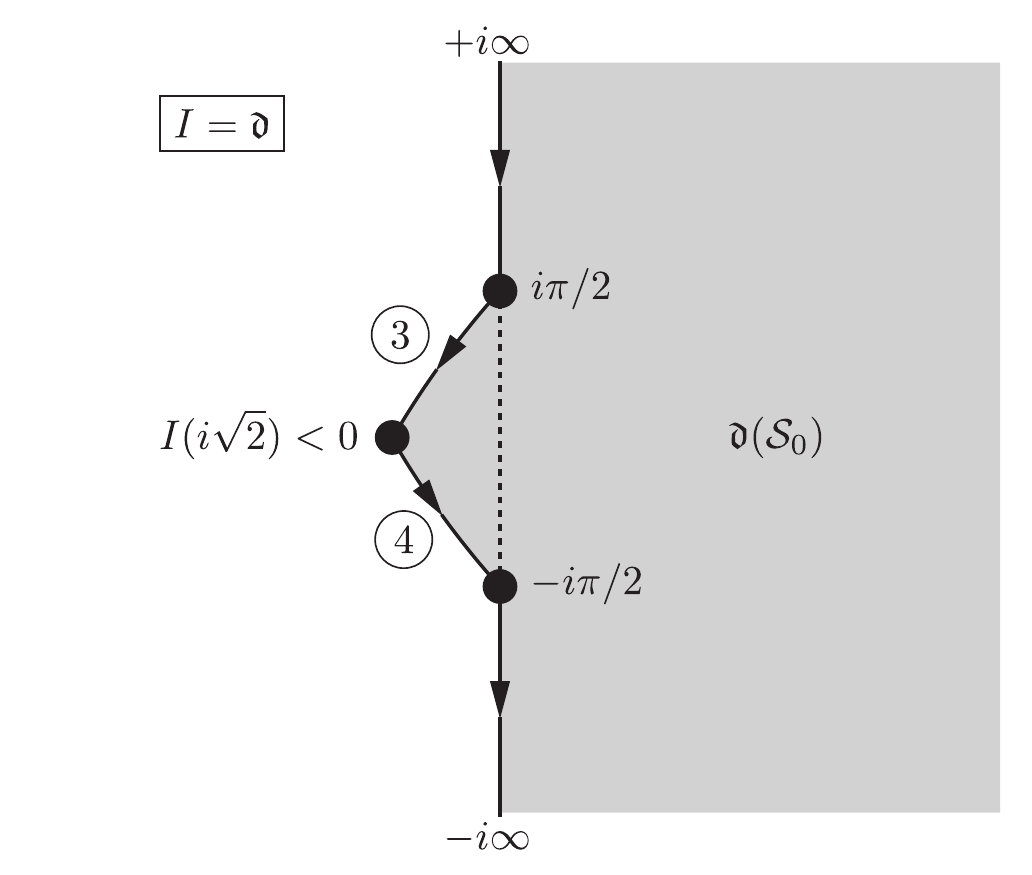}
\caption{The conformal image of $\mathcal{S}_0$ in the $\mathfrak{d}$-plane or, in 
the notation of Appendix \ref{sec:Appendix}, the conformal image of $\tau$ in the 
$I$-plane (see \eqref{proofs-I-def}).}
\label{fig:I-plane}
\end{figure}
The proof of Lemma~\ref{lemma:d-univalent} is given in \S\ref{sec:lemma:d-univalent}.
The symmetries \eqref{eq:c-rotate} and the definition \eqref{eq:d-define} show that the full locus of points $x$ where $\Re(\mathfrak{c}(x))=0$ can be obtained by considering the equation $\Re(\mathfrak{d}(x))=0$ for $x\in\mathcal{S}_0$ and then including rotations of this set by angles $\pm 2\pi/3$.  According to Lemma~\ref{lemma:d-univalent}, the solution of the equation $\Re(\mathfrak{d}(x))=0$ for $x\in\mathcal{S}_0$ is exactly the preimage under $\mathfrak{d}$ of the imaginary segment with endpoints $\pm i\pi/2$.  This is an analytic arc connecting the points $x_ce^{\pm 2\pi i/3}$.  This arc in the $x$-plane and its two rotations are what we call the three \emph{edges} of $T$.  
The edges join in pairs at three \emph{corners}, namely the points $x_c<0$ and $x_ce^{\pm 2\pi i}$.

\subsection{Opening angle of $\partial T$ near its corners}
\label{opening-angle-subsection}
To understand the nature of $\partial T$ near a corner, it suffices to consider $x$ near $x_c<0$, since 
$\partial T$ is invariant under rotations by $2\pi/3$ radians.  First, we use \eqref{cubic-equation}
to analyze $S(x)$ for $x$ near $x_c$.  This local analysis shows that $S(x)=s((x_c-x)^{1/2})$, where $s(\cdot)$ is an analytic function of its (small) argument and where $(x_c-x)^{1/2}$ denotes the principal branch, positive for $x<x_c$ and cut in the interval $x>x_c$.  Moreover, $s(0)=S_c:=(4/3)^{1/3}$ while $s'(0)=-2/3$.  From \eqref{eq:Delta-define}--\eqref{eq:a-b-define}, we then obtain corresponding series expansions for $a(x)$ and $b(x)$ in integer powers of $(x_c-x)^{1/2}$.  In particular, $a(x)=a_c+\mathcal{O}((x_c-x)^{1/2})$ and $b(x)=b_c+\mathcal{O}((x_c-x)^{1/2})$, where
\begin{equation}
a_c:=-\frac{1}{6^{1/3}} \quad\text{and}\quad b_c:=\left(\frac{9}{2}\right)^{1/3}.
\label{eq:ac-bc-define}
\end{equation}
Moreover, $a(x)+\tfrac{1}{2}S(x)=-(x_c-x)^{1/2}+\mathcal{O}(x_c-x)$.
Rescaling the integration variable $\zeta$ in \eqref{eq:c-function-define} by $\zeta=a(x)+(x_c-x)^{1/2}w$ then renders $\mathfrak{c}(x)$ in the form
\begin{equation}
\mathfrak{c}(x)=\frac{3}{2}(x_c-x)\int_0^{W(x)}(w-W(x))r(a(x)+(x_c-x)^{1/2}w;a(x),b(x))\,dw,
\end{equation}
where
\begin{equation}
W(x):=-(x_c-x)^{-1/2}\left(a(x)+\frac{1}{2}S(x)\right).
\end{equation}
From the local analysis of $S$, $a$, and $b$, we find that $W(x)$ is an analytic function of $(x_c-x)^{1/2}$ for $x$ near $x_c$ satisfying $W(x_c)=1$ (i.e., the apparent singularity at $x=x_c$ is removable).  Furthermore, uniformly for bounded $w$ one has in the limit $x\to x_c$:
\begin{equation}
r(a(x)+(x_c-x)^{1/2}w;a(x),b(x))=-i\,\mathrm{sgn}(\Im(x))(b_c-a_c)^{1/2}(x_c-x)^{1/4}w^{1/2}(1+\mathcal{O}((x_c-x)^{1/2}).
\end{equation}
The fact that we have two formulae depending on the sign of $\Im(x)$ is related to the fact that $\zc(x)$ crosses the branch cut $\Sigma$ of $r$ when $x<x_c$.  Therefore, 
\begin{equation}
\begin{split}
\mathfrak{c}(x)&=-\frac{3}{2}i\,\mathrm{sgn}(\Im(x))(b_c-a_c)^{1/2}(x_c-x)^{5/4}\int_0^1 (w-1)w^{1/2}\,dw + \mathcal{O}((x_c-x)^{7/4})\\
& = \frac{2}{5}i\,\mathrm{sgn}(\Im(x))(b_c-a_c)^{1/2}(x_c-x)^{5/4}+
\mathcal{O}((x_c-x)^{7/4}),\quad x\to x_c.
\end{split}
\end{equation}
The condition $\Re(\mathfrak{c}(x))=0$ with $0<|\arg(x_c-x)|<\pi$ then implies that $\arg(x_c-x)\to \pm 4\pi/5$ as $x\to x_c$.  This proves the following.
\begin{proposition}
The three corners of $\partial T$ are located at the points $x_c$, $e^{2\pi i/3}x_c$, and $e^{-2\pi i/3}x_c$ where $x_c$ is given by \eqref{eq:xc-define}.  At each corner, $T$ subtends an angle of $2\pi/5$ radians.
\label{prop:corner}
\end{proposition}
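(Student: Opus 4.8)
The location of the three corners is essentially already in hand: by Lemma~\ref{lemma:d-univalent} the portion of $\partial T$ inside $\mathcal{S}_0$ is the $\mathfrak{d}$-preimage of the imaginary segment $[-i\pi/2,i\pi/2]$, an analytic arc joining $x_ce^{2\pi i/3}$ to $x_ce^{-2\pi i/3}$, and the rotational symmetry \eqref{eq:c-rotate} then supplies the other two edges and shows that the edges meet pairwise exactly at $x_c$, $e^{2\pi i/3}x_c$, and $e^{-2\pi i/3}x_c$. So only the opening angle needs proof, and by the invariance of $\partial T$ under rotation by $2\pi/3$ it suffices to analyze $x$ near $x_c<0$. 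The plan is to compute the leading-order local form of $\mathfrak{c}$ at $x_c$ and then solve $\Re(\mathfrak{c}(x))=0$.

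The first ingredient is a Puiseux analysis of the algebraic data at $x_c$. Because $x_c$ is the parameter value at which the cubic \eqref{cubic-equation} acquires a double root, $S$ has a square-root branch point there; solving \eqref{cubic-equation} order by order in $(x_c-x)^{1/2}$ gives $S(x)=s((x_c-x)^{1/2})$ with $s$ analytic, $s(0)=S_c=(4/3)^{1/3}$ and $s'(0)=-\tfrac{2}{3}$, and then \eqref{eq:Delta-define}--\eqref{eq:a-b-define} yield $a(x)=a_c+\mathcal{O}((x_c-x)^{1/2})$ and $b(x)=b_c+\mathcal{O}((x_c-x)^{1/2})$ with $a_c,b_c$ as in \eqref{eq:ac-bc-define}. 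The decisive point is the refined expansion $a(x)+\tfrac{1}{2}S(x)=-(x_c-x)^{1/2}+\mathcal{O}(x_c-x)$: the critical point $\zc(x)=-\tfrac{1}{2}S(x)$ collides with the band endpoint $a(x)$ at rate $(x_c-x)^{1/2}$, and it is precisely this collision that forces $\mathfrak{c}$ to vanish to fractional order at $x_c$.

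Next I would rescale $\zeta=a(x)+(x_c-x)^{1/2}w$ in \eqref{eq:c-function-define}. The new upper endpoint $W(x)=-(x_c-x)^{-1/2}(a(x)+\tfrac{1}{2}S(x))$ extends analytically through $x=x_c$ with $W(x_c)=1$, while $\zeta-\zc(x)=(x_c-x)^{1/2}(w-W(x))$ and $d\zeta=(x_c-x)^{1/2}\,dw$, and estimating $r(a(x)+(x_c-x)^{1/2}w;a(x),b(x))=((\zeta-a(x))(\zeta-b(x)))^{1/2}$ from $\zeta-a(x)=(x_c-x)^{1/2}w$ and $\zeta-b(x)=a_c-b_c+\mathcal{O}((x_c-x)^{1/2})$ produces a factor $(x_c-x)^{1/4}$. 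Collecting these contributions gives, uniformly for bounded $w$,
\begin{equation*}
\mathfrak{c}(x)=\tfrac{2}{5}\,i\,\mathrm{sgn}(\Im(x))\,(b_c-a_c)^{1/2}(x_c-x)^{5/4}+\mathcal{O}((x_c-x)^{7/4}),\qquad x\to x_c,
\end{equation*}
the numerical constant coming from $\int_0^1(w-1)w^{1/2}\,dw=-\tfrac{4}{15}$. Since $b_c-a_c>0$ is real and $\mathrm{sgn}(\Im(x))=\pm1$, the equation $\Re(\mathfrak{c}(x))=0$ forces $(x_c-x)^{5/4}$ to be real to leading order; with the principal branch ($\arg(x_c-x)\in(-\pi,\pi)$) and $0<|\arg(x_c-x)|<\pi$ this gives $\arg(x_c-x)=\pm\tfrac{4}{5}\pi$, so that the two edges leave $x_c$ in the directions $\arg(x-x_c)=\pm\tfrac{1}{5}\pi$. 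These bound two sectors at $x_c$, of openings $2\pi/5$ and $8\pi/5$; as $T$ is the bounded region enclosed by the three edges, near $x_c$ it occupies the sector facing the other two corners, which is the one of opening $2\pi/5$. By rotational symmetry the same holds at each corner.

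The one genuinely delicate step is pinning down the branch of $r$ in the rescaled integral, in particular the factor $\mathrm{sgn}(\Im(x))$: it records that the integration path from $a(x)$ to $\zc(x)$ runs along one side or the other of the jump contour $\Sigma$ of $r$ according to the sign of $\Im(x)$ (indeed, for real $x<x_c$ one has $\zc(x)\in\Sigma$ exactly, which is why there is a sign to track at all). I would resolve this by following $\arg(\zeta-a(x))$ and $\arg(\zeta-b(x))$ along the rescaled path for $x$ slightly above and slightly below the negative real axis, using the orientation of $\Sigma=\overrightarrow{ab}$ together with the normalization $r(z;a,b)=z+\mathcal{O}(1)$ as $z\to\infty$. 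Everything else reduces to Puiseux expansion of the cubic and evaluation of an elementary beta-type integral.
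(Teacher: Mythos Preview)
Your proposal is correct and follows essentially the same route as the paper: Puiseux expansion of $S$, $a$, $b$ at $x_c$, the rescaling $\zeta=a(x)+(x_c-x)^{1/2}w$ in \eqref{eq:c-function-define}, the identification $W(x_c)=1$, the local form of $r$ (with the $\mathrm{sgn}(\Im(x))$ branch choice), and the resulting $(x_c-x)^{5/4}$ leading term for $\mathfrak{c}$. Your explicit identification of which of the two sectors at $x_c$ is occupied by $T$ is a small addition the paper leaves implicit.
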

In particular, $\partial T$ is \emph{not} a Euclidean triangle.

\section{Analysis near an edge of the elliptic region $T$}
\label{section:edge}
In this section, we study the rational Painlev\'e-II functions for $x$ in the vicinity of a smooth point of $\partial T$, that is, along an edge of $T$.  By the symmetries \eqref{pp-pq-pu-pv-symmetries} it is sufficient to assume that the edge of interest subtends the sector $\mathcal{S}_0$.

\subsection{The Positive-$x$ Configuration Riemann-Hilbert problem}
In the course of studying a librational-rotational transition point in the 
semiclassical sine-Gordon equation, the authors derived a Riemann-Hilbert 
problem well suited for studying the large-degree asymptotics of the 
rational Painlev\'e-II functions \cite{BuckinghamMcritical}.  In 
the companion paper 
\cite{Buckingham-Miller-rational-noncrit}, various transformations 
(i.e., contour deformations and introduction of the $g$-function defined by \eqref{g-formula}) 
dependent on $x$ were performed on this Riemann-Hilbert problem to facilitate 
the asymptotic analysis.  To analyze the rational Painlev\'e-II functions for $x$ near the edge in the sector $\mathcal{S}_0$, it is sufficient to 
start with the ``dressed'' Riemann-Hilbert problem referred to in 
\cite{Buckingham-Miller-rational-noncrit} as the Positive-$x$ Configuration 
problem.  In the companion paper this Riemann-Hilbert problem was used for $x\in\mathcal{S}_0\setminus\overline{T}$.
Here we will modify the analysis to allow $x$ to penetrate $\partial T$ by an distance proportional to $m^{-1}\log(m)$, assuming also that $x$ remains bounded away from the endpoints of the edge (corner points of $T$). 
The analytical issue that must be addressed is that 
uniform decay of the jump matrices to the identity is lost near the point $z=\zc(x)$.  A 
local parametrix will be inserted around this point to allow for the application of small norm theory (after some additional steps), ultimately leading to asymptotic formulae for 
the rational Painlev\'e-II functions for $x$ near the edge of $T$ in $\mathcal{S}_0$.  Significantly, these formulae display qualitatively different behavior than the corresponding formulae derived in \cite{Buckingham-Miller-rational-noncrit} assuming that $x$ lies outside of $T$.

We now define ${\bf O}(z)={\bf O}(z;x,\epsilon)$ as the solution to the 
following Riemann-Hilbert problem:
\begin{rhp}[The Positive-$x$ Configuration]
Fix a real number $x$ and an integer $m$ and recall $\epsilon$ defined in terms of $m$ by \eqref{epsilon}. 
Seek a matrix ${\bf O}(z;x,\epsilon)$ 
with the following properties:
\begin{itemize}
\item[]\textbf{Analyticity:}  ${\bf O}(z)$
is analytic in $z$ except along the contour 
$\Sigma^{({\bf O})}$ defined in Figure \ref{fig:O-jumps-edge},
from each region of analyticity it may be continued to
a slightly larger region except at the  points $a=a(x)$ and $b=b(x)$ given by \eqref{eq:a-b-define}, 
and in each region is H\"older continuous 
up to the boundary in neighborhoods of $a$ and $b$. 
\item[]\textbf{Jump condition:}  The jump conditions satisfied by 
the matrix function ${\bf O}(z)$ are of the form 
${\bf O}_+(z)=
{\bf O}_-(z)
\mathbf{V}^{({\bf O})}(z)$, 
where the jump matrix $\mathbf{V}^{({\bf O})}(z)$ and contour 
orientations are as shown in Figure~\ref{fig:O-jumps-edge}.
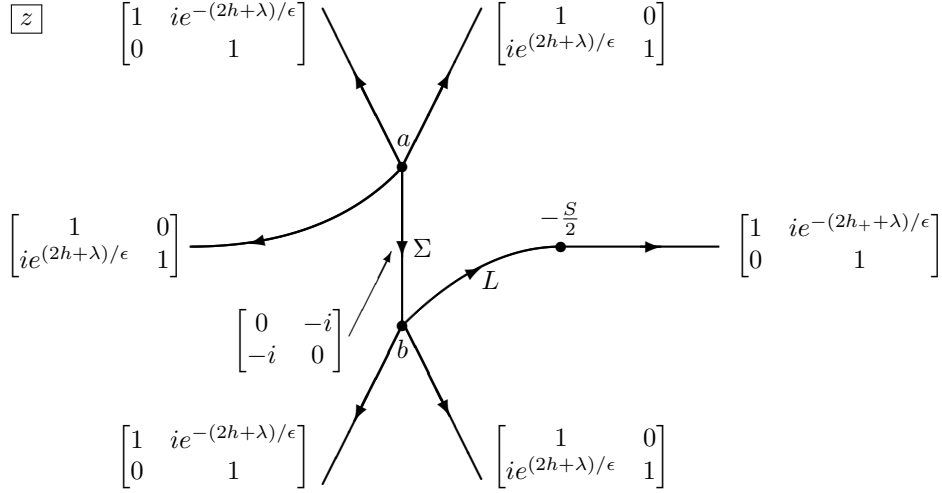
\begin{figure}[h]
\setlength{\unitlength}{2pt}
\begin{center}
\begin{picture}(100,100)(-50,-50)
\put(-84,42){\framebox{$z$}}
\put(-20,-17){\vector(1,2){8}}
\put(-10,15){\circle*{2}}
\put(-11,19){$a$}
\put(-10,-15){\circle*{2}}
\put(-11,-21){$b$}
\thicklines
\qbezier(-10,-15)(5,0)(20,0)
\put(20,0){\line(1,0){30}}
\put(1,-6){\vector(2,1){4}}
\put(35,0){\vector(1,0){4}}
\put(20,0){\circle*{2}}
\put(53,-1){$\bbm 1 & ie^{-(2h_++\lambda)/\epsilon} \\ 0 & 1 \ebm$}
\put(5,-8){$L$}
\put(16,4){$-\frac{S}{2}$}
\put(-10,15){\line(1,2){15}}
\put(-10,15){\vector(1,2){9}}
\put(7,39){$\bbm 1 & 0 \\ ie^{(2h+\lambda)/\epsilon} & 1 \ebm$}
\put(-10,-15){\line(0,1){30}}
\put(-10,15){\vector(0,-1){17}}
\put(-8,-2){$\Sigma$}
\put(-42,-19){$\bbm 0 & -i \\ -i & 0 \ebm$}
\put(-10,15){\line(-1,2){15}}
\put(-10,15){\vector(-1,2){9}}
\put(-64,39){$\bbm 1 & ie^{-(2h+\lambda)/\epsilon} \\ 0 & 1 \ebm$}
\qbezier(-10,15)(-24,0)(-50,0)
\put(-35,1.5){\vector(-4,-1){4}}
\put(-85,-1){$\bbm 1 & 0 \\ ie^{(2h+\lambda)/\epsilon} & 1 \ebm$}
\put(-10,-15){\line(-1,-2){15}}
\put(-10,-15){\vector(-1,-2){9}}
\put(-64,-41){$\bbm 1 & ie^{-(2h+\lambda)/\epsilon} \\ 0 & 1 \ebm$}
\put(-10,-14){\line(1,-2){15}}
\put(-10,-14){\vector(1,-2){9}}
\put(7,-41){$\bbm 1 & 0 \\ ie^{(2h+\lambda)/\epsilon} & 1 \ebm$}
\end{picture}
\end{center}
\caption{The jump matrices $\mathbf{V}^{(\mathbf{O})}(z;x,\epsilon)$ for 
$x$ on the positive real axis near the edge of the elliptic region $T$.  The topology of 
the jump contour $\Sigma^{(\mathbf{O})}$ is the same for any $x$ near $\partial T$ in the 
sector $\mathcal{S}_0$.  Recall that $h(z;x)$ and $\lambda(x)$ were defined in \S\ref{section:g-define}.
}
\label{fig:O-jumps-edge}
\end{figure}
\item[]\textbf{Normalization:}  The matrix 
${\bf O}(z)$ satisfies the condition
\begin{equation}
\lim_{z\to\infty}{\bf O}(z)=\mathbb{I},
\label{eq:Zindnorm}
\end{equation}
with the limit being uniform with respect to direction in each of the
six regions of analyticity.
\end{itemize}
\label{rhp:DSlocalII}
\end{rhp}

The rational Painlev\'e-II functions are recovered from ${\bf O}(z)$ by 
writing 
\eq
{\bf O}(z;x;\epsilon) = \mathbb{I} + \frac{{\bf O}_1(x;\epsilon)}{z} + \frac{{\bf O}_2(x;\epsilon)}{z^2} + \mathcal{O}\left(\frac{1}{z^3}\right),\quad z\to\infty,
\endeq
and using (see \cite[Section 3.6.3]{Buckingham-Miller-rational-noncrit}) 
\eq
\begin{split}
\label{eq:um-vm-pm-qm-ito-O-edge}
\epsilon^{(2+\epsilon)/(3\epsilon)}e^{-\lambda/\epsilon}\pu_m = O_{1,12}, \quad \epsilon^{-(2-\epsilon)/(3\epsilon)}e^{\lambda/\epsilon}\pv_m &= O_{1,21}, \\
\epsilon^{1/3}\pp_m = O_{1,22} - \frac{O_{2,12}}{O_{1,12}}, \quad \epsilon^{1/3}\pq_m  = -O_{1,11} +& \frac{O_{2,21}}{O_{1,21}}.
\end{split}
\endeq

\subsection{The outer parametrix for $\mathbf{O}(z)$}
\label{section:outer-parametrix-edge}
Recall the critical point $\zc(x)$ of $h$ defined by \eqref{eq:edge-zc} and the function $r(\cdot;a,b)$.
We introduce the related notation
\begin{equation}
\rc=\rc(x):=r(\zc(x);a(x),b(x)),
\label{eq:edge-rc}
\end{equation}
which defines $\rc$ as an analytic function of $x$ in the sector $\mathcal{S}_0$, and record the useful formula 
\eq
\rc(x)^2 = S(x)^2-\frac{1}{4}\Delta(x)^2.
\endeq
We assume that the contour $L$ passes through the critical point $\zc$
(see Figure \ref{fig:O-jumps-edge}), and that with the exception of the arc $\Sigma$, the jump contour $\Sigma^{(\mathbf{O})}$ locally agrees exactly with the steepest-descent directions for $\Re(2h(z;x)+\lambda(x))$ near the points $z=a(x)$, $z=b(x)$, and $z=\zc(x)$.  
Assuming that $x$ does not penetrate the elliptic region $T$ too far
(this will be made precise eventually), the asymptotic behavior of the jump matrix $\mathbf{V}^{(\mathbf{O})}(z)$ in the limit $\epsilon\downarrow 0$ is clear from the signature charts shown in Figure~\ref{edge-break} with the exception of the neighborhood of the critical point $z=\zc$.
Allowing for a certain type of singular behavior at this point (which will be repaired soon with the installation of an appropriate  local parametrix) as well as at the two endpoints $z=a$ and $z=b$ of the straight-line contour $\Sigma$ (which will be repaired with the installation of standard Airy parametrices), we are led to the following model problem:
\begin{rhp}[Outer parametrix for $x$ near an edge] 
Given $K\in\mathbb{Z}$, find a $2\times 2$ matrix-valued function 
$\dot{\mathbf{O}}^{(\mathrm{out},K)}(z;x)$ satisfying the following conditions:
\begin{itemize}
\item[]\textbf{Analyticity:}  $\dot{\mathbf{O}}^{(\mathrm{out},K)}(z;x)$ is analytic for 
$z\notin\Sigma\cup\{\zc(x)\}$ and takes H\"older-continuous boundary values on $\Sigma$ with 
the exception of its endpoints $a(x)$ and $b(x)$.
\item[]\textbf{Jump condition:}  $\dot{\mathbf{O}}_+^{(\mathrm{out},K)}(z;x) = \dot{\mathbf{O}}_-^{(\mathrm{out},K)}(z;x)\bbm 0 & -i \\ -i & 0 \ebm$ for $z\in\Sigma$.
\item[]\textbf{Singularities:}  $\dot{\mathbf{O}}^{(\mathrm{out},K)}(z)=\mathcal{O}((z-a(x))^{-1/4}(z-b(x))^{-1/4})$ for $z$ in a neighborhood of $\Sigma$, and 
$\dot{\mathbf{O}}^{(\mathrm{out},K)}(z;x)(z-\zc(x))^{-K\sigma_3}$ is analytic at $z=\zc(x)$.
\item[]\textbf{Normalization:}  $\dot{\mathbf{O}}^{(\mathrm{out},K)}(z;x) = \mathbb{I} + \mathcal{O}(z^{-1})$ as $z\to\infty$. 
\end{itemize}
\label{rhp:outer-model-edge}
\end{rhp}
It is easy to check that the solution of this problem is unique if it exists, and it necessarily has unit determinant.
For $K=0$, the solution of this problem is obtained as follows.  Let $\beta(z)=\beta(z;x)$ be the function analytic for $z\in\mathbb{C}\setminus\Sigma$ that satisfies the conditions
\begin{equation}
\beta(z;x)^4=\frac{z-a(x)}{z-b(x)}\quad\text{and}\quad\beta(\infty;x)=1.
\label{eq:edge-beta-define}
\end{equation}
For future reference, we note that $\beta$ satisfies the identities
\begin{equation}
\beta(z)^2+\beta(z)^{-2}=\frac{2z-S}{r(z)}\quad\text{and}\quad
\beta(z)^2-\beta(z)^{-2}=\frac{\Delta}{r(z)},
\label{eq:edge-beta-identities}
\end{equation}
and we introduce the related notation
\begin{equation}
\bc=\bc(x):=\beta(\zc(x);x).
\label{eq:edge-betac}
\end{equation}

Using the fact that $\beta_+(z)=-i\beta_-(z)$ holds for $z\in\Sigma$, the formula
\eq
\label{Odot-gen0}
\Odot^{(\mathrm{out},0)}(z;x) = \mathbf{M}\beta(z;x)^{\sigma_3}\mathbf{M}^{-1}=\frac{1}{2}\bbm \beta(z;x)+\beta(z;x)^{-1} & \beta(z;x)-\beta(z;x)^{-1} \\ \beta(z;x)-\beta(z;x)^{-1} & \beta(z;x)+\beta(z;x)^{-1} \ebm,\quad \mathbf{M}:=\frac{1}{\sqrt{2}}\bbm 1 & 1\\ 1 & -1\ebm,
\endeq
yields the (unique) solution of Riemann-Hilbert Problem~\ref{rhp:outer-model-edge} for $K=0$, as is easily checked.  To solve Riemann-Hilbert Problem~\ref{rhp:outer-model-edge} for $K\neq 0$, we modify the formula for $K=0$ as follows (see \cite[Section 3.2]{Bertola:2009}).  Let $t:\mathbb{C}\setminus\Sigma\to\mathbb{C}$ be the Joukowski map defined by
\begin{equation}
t(z)=t(z;x):=\frac{2}{i\Delta(x)}\left(z-\frac{S(x)}{2}+r(z;a(x),b(x))\right),\quad z\in\mathbb{C}\setminus\Sigma.
\label{eq:edge-Joukowski}
\end{equation}
We introduce the related notation
\begin{equation}
\tc=\tc(x):=t(\zc(x);x).
\label{eq:edge-tc}
\end{equation}
Note that $\tc>0$ holds for $x>0$.
The function $t=t(z)$ satisfies the quadratic equation
\begin{equation}
t-t^{-1}=\frac{4}{i\Delta}\left(z-\frac{S}{2}\right),
\end{equation}
the other solution of which is $-1/t(z)$.  The function $t$ is a univalent ($1$-to-$1$) conformal map of the slit domain $\mathbb{C}\setminus\Sigma$ onto the exterior of the unit circle, and its boundary values satisfy $t_+(z)t_-(z)=-1$ for $z\in\Sigma$.  It follows that the function $\mathcal{T}:\mathbb{C}\setminus\Sigma\to\mathbb{C}$ defined by
\begin{equation}
\mathcal{T}(z)=\mathcal{T}(z;x):=i\frac{\tc(x)-t(z;x)}{1+\tc(x)t(z;x)}
\label{eq:edge-correction-factor}
\end{equation}
is analytic in its domain of definition and has only one simple zero at the critical point $z=\zc(x)$,
and its boundary values satisfy the jump condition $\mathcal{T}_+(z)\mathcal{T}_-(z)=1$ for $z\in\Sigma$.  Also, as $z\to\infty$, $\mathcal{T}(z)\to\mathcal{T}(\infty;x)= -i\tc(x)^{-1}$.  The solution of Riemann-Hilbert Problem~\ref{rhp:outer-model-edge} for general $K\in\mathbb{Z}$ is then given by
\begin{equation}
\dot{\mathbf{O}}^{(\mathrm{out},K)}(z;x)=\mathcal{T}(\infty;x)^{-K\sigma_3}\dot{\mathbf{O}}^{(\mathrm{out},0)}(z;x)\mathcal{T}(z;x)^{K\sigma_3}.
\label{eq:edge-Outer-parametrix}
\end{equation}
It is easy to check that, as a consequence of the jump condition satisfied by $\mathcal{T}$, the jump condition for $\dot{\mathbf{O}}^{(\mathrm{out},K)}(z)$ across $\Sigma$ holds for general $K\in\mathbb{Z}$ just as it does for $K=0$.  The additional factors present for $K\neq 0$ are bounded and have bounded inverses near $z=a,b$, so the nature of the admitted singularities near these points is unchanged.  Finally, in a neighborhood of $z=\zc$ one can write $\mathcal{T}(z;x)=(z-\zc(x))\widetilde{\mathcal{T}}(z;x)$, where $\widetilde{\mathcal{T}}$ is analytic and non-vanishing at the critical point $z=\zc$, which confirms the desired singular behavior of $\dot{\mathbf{O}}^{(\mathrm{out},K)}(z)$ near this point.  In fact, 
\begin{equation}
\widetilde{\mathcal{T}}(\zc(x);x)=\mathcal{T}'(\zc(x),x)=\frac{\Delta(x)}{4\rc(x)^2} \neq 0.
\label{eq:edge-calT-prime}
\end{equation}

For later use, we record here two useful identities involving the functions $\tc(x)$, $\rc(x)$, $S(x)$, and $\Delta(x)$, which are easily proved by direct calculation.
\begin{lemma}
The following identities hold:
\begin{equation}
\begin{split}
-\tc(x)^2\left[\frac{i\Delta(x)}{4}-\frac{\rc(x)}{i\Delta(x)}(\rc(x)+S(x))\right]&=\frac{i\Delta(x)}{4}-\frac{\rc(x)}{i\Delta(x)}(\rc(x)-S(x)),\\
\tc(x)^2(\rc(x)+S(x))&=\rc(x)-S(x).
\end{split}
\end{equation}
\label{lemma-identities}
\end{lemma}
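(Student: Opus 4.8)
The plan is to reduce both identities to the single algebraic fact $\rc(x)^2 = S(x)^2 - \tfrac14\Delta(x)^2$ recorded immediately after \eqref{eq:edge-rc}, together with a closed-form evaluation of $\tc$. First I would collect the elementary consequences of the definitions: from \eqref{eq:edge-zc} one has $\zc = -\tfrac12 S$, while \eqref{eq:a-b-define} gives $a+b = S$ and $ab = \tfrac14(S^2 - \Delta^2)$. Hence $\rc^2 = (\zc - a)(\zc - b) = (\zc - \tfrac12 S)^2 - \tfrac14\Delta^2$, which at $\zc = -\tfrac12 S$ reproduces $\rc^2 = S^2 - \tfrac14\Delta^2$, equivalently $(\rc - S)(\rc + S) = -\tfrac14\Delta^2$.

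Next I would evaluate the Joukowski map \eqref{eq:edge-Joukowski} at the critical point: by \eqref{eq:edge-tc}, $\tc = t(\zc) = \tfrac{2}{i\Delta}(\zc - \tfrac12 S + \rc) = \tfrac{2}{i\Delta}(\rc - S)$. Since the quadratic $t - t^{-1} = \tfrac{4}{i\Delta}(z - \tfrac12 S)$ satisfied by $t(z)$ has $t(z)$ and $-1/t(z)$ as its two roots (product $-1$), running the analogous evaluation on the second root gives $\tc^{-1} = \tfrac{2}{i\Delta}(\rc + S)$; in particular $\rc \pm S$ are nonzero for $x \in \mathcal{S}_0$ because $\Delta$ and $\tc$ are. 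Dividing the two formulae yields the compact relation $\tc^2 = (\rc - S)/(\rc + S)$, which, upon clearing the denominator $\rc + S$, is precisely the second claimed identity.

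For the first identity I would substitute $\tc^2 = (\rc - S)/(\rc + S)$ on the left-hand side, multiply through by $\rc + S$, and cancel a common factor of $\rc$; the asserted equality then collapses to $4(\rc - S)(\rc + S) = -\Delta^2$, which is exactly $\rc^2 = S^2 - \tfrac14\Delta^2$. Alternatively, writing $\rc - S = \tfrac{i\Delta}{2}\tc$ and $\rc + S = \tfrac{i\Delta}{2}\tc^{-1}$ turns both bracketed expressions into $\tfrac{i\Delta}{4} - \tfrac12\rc\tc$ and $\tfrac{i\Delta}{4} - \tfrac12\rc\tc^{-1}$, and the identity reduces to $\rc\tc = \tfrac{i\Delta}{4}(1 + \tc^2)$, which is immediate from $1 + \tc^2 = 2\rc/(\rc + S) = 4\rc\tc/(i\Delta)$.

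I do not expect any genuine obstacle: the computation is purely algebraic once $\zc = -\tfrac12 S$, the two evaluations of $t$ at the critical point, and $\rc^2 = S^2 - \tfrac14\Delta^2$ are in hand. The only point needing care is to confirm that the branch of $r(\cdot; a, b)$ entering $\tc$ through \eqref{eq:edge-tc} is the one compatible with the sign conventions for $\Delta$ in \eqref{eq:Delta-define} and for the square root normalized by $r(z; a, b) = z + \mathcal{O}(1)$, so that $\tc$ and $\tc^{-1}$ are correctly matched to the two roots of the Joukowski quadratic; this is guaranteed by the construction in \S\ref{section:outer-parametrix-edge}.
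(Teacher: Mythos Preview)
Your proposal is correct and is exactly the ``direct calculation'' the paper alludes to (the paper gives no further details beyond that phrase). Your organization---first obtaining the closed forms $\tc=\tfrac{2}{i\Delta}(\rc-S)$ and $\tc^{-1}=\tfrac{2}{i\Delta}(\rc+S)$ from the Joukowski map at $\zc=-\tfrac12 S$, then reducing both identities to $\rc^2=S^2-\tfrac14\Delta^2$---is clean and complete.
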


\subsection{The inner (Airy) parametrices near $z=a(x)$ and $z=b(x)$}  The construction of inner parametrices near the points $z=a$ and $z=b$ in terms of Airy functions
is quite standard.  The full details can be found in \cite[Sections 3.6.1--2]{Buckingham-Miller-rational-noncrit} (following the case of the ``Positive-$x$ Configuration''), with the only change being that the outer parametrix $\dot{\mathbf{O}}^{(\mathrm{out})}(z)$ has to be generalized from the special case of $K=0$ to $\dot{\mathbf{O}}^{(\mathrm{out},K)}(z)$ as defined in \S\ref{section:outer-parametrix-edge}.  This implies that the corresponding parametrices constructed for $z$ near $a$ and $b$ will also depend on $K$, and we reflect this dependence by denoting the parametrices as $\dot{\mathbf{O}}^{(a,K)}(z)$ and $\dot{\mathbf{O}}^{(b,K)}(z)$, respectively.

The parametrices so-defined have the following properties.  Let $\mathbb{D}_a$ and $\mathbb{D}_b$ denote $\epsilon$-independent open disks containing the points $z=a$ and $z=b$ respectively.  Then $\dot{\mathbf{O}}^{(a,K)}(z)$ (respectively, $\dot{\mathbf{O}}^{(b,K)}(z)$) is defined and analytic for $z\in\mathbb{D}_a\setminus\Sigma^{(\mathbf{O})}$ (respectively, for
$z\in\mathbb{D}_b\setminus\Sigma^{(\mathbf{O})}$), and it satisfies exactly the same jump conditions along the three arcs of $\Sigma^{(\mathbf{O})}$ within its disk of definition as does $\mathbf{O}(z)$.  A further crucial property is that $\dot{\mathbf{O}}^{(a,K)}(z)$ and $\dot{\mathbf{O}}^{(b,K)}(z)$ both achieve a good match with the outer parametrix $\dot{\mathbf{O}}^{(\mathrm{out},K)}(z)$ on the boundaries of their respective disks in the sense that
\begin{equation}
\dot{\mathbf{O}}^{(a,K)}(z)\dot{\mathbf{O}}^{(\mathrm{out},K)}(z)^{-1}=\mathbb{I}+\mathcal{O}(\epsilon),\quad z\in\partial \mathbb{D}_a
\label{eq:edge-Airy-a-match}
\end{equation}
and
\begin{equation}
\dot{\mathbf{O}}^{(b,K)}(z)\dot{\mathbf{O}}^{(\mathrm{out},K)}(z)^{-1}=\mathbb{I}+\mathcal{O}(\epsilon),\quad z\in\partial \mathbb{D}_b,
\label{eq:edge-Airy-b-match}
\end{equation}
with the estimates holding uniformly with respect to $z$ on the corresponding circle, provided that the points $a$, $b$, and $\zc$ remain bounded away from one another.  This latter condition holds if $x$ is bounded away from the corner points of $T$; we will therefore assume for the duration of \S\ref{section:edge} that for some $\sigma>0$, $x\in\mathcal{S}_\sigma$ (see \eqref{eq:edge-sector-define}).

\subsection{The inner (Hermite) parametrix near $z=\zc(x)$}  Following the same basic methodology as in the construction of the Airy parametrices, we will here define a matrix $\dot{\mathbf{O}}^{(\zc,K)}(z)$ in a neighborhood of the critical point $z=\zc(x)$ that (i) locally solves the jump conditions for $\mathbf{O}(z)$ exactly and (ii) matches as well as possible the outer parametrix $\dot{\mathbf{O}}^{(\mathrm{out},K)}(z)$ for an appropriate choice of $K\in\mathbb{Z}$.  

The choice of $K\in\mathbb{Z}$ will depend on $x$ near $\partial T$ in a way that will be made clear shortly.  Recall the function $\mathfrak{c}$ defined by \eqref{eq:c-function-define}.
Since the upper limit of integration lies
on the branch cut $L$ of $h$, the Fundamental Theorem of Calculus can be applied if the proper boundary value of $h$ is used, yielding
\begin{equation}
\mathfrak{c}(x)=h_+(\zc(x);x)-h(a(x);x)=h_+(\zc(x);x)+\tfrac{1}{2}\lambda(x).
\end{equation}
Therefore, $2\mathfrak{c}(x)$ is the value of the function $2h_+(z;x)+\lambda(x)$ at the critical point
$z=\zc(x)$.  In \cite{Buckingham-Miller-rational-noncrit}, it is shown that if $\Re(2\mathfrak{c}(x))>\log(m)/m$, then no parametrix near the critical point is required, and the outer parametrix may be taken in the case $K=0$; this results in an error estimate proportional to $\epsilon$.  
While it is therefore harmless to allow $\Re(2\mathfrak{c}(x))$ to become large and positive, the use of different values of the integer $K$ will be needed to allow it to become negative, and with this device we will be able to admit $x$ to penetrate $T$ such that $\Re(2\mathfrak{c}(x))<0$ is of size $\mathcal{O}(\log(m)/m)$.  
The value of $K\ge 0$ will be gauged from the magnitude of $\Re(2\mathfrak{c}(x))$ compared with $\log(m)/m$.  This type of situation, in which a parametrix involving an integer-valued parameter tied to various auxiliary continuous parameters (here, $x$) plays an important role in the asymptotic analysis, has appeared at least three times in the literature on integrable nonlinear waves:  the analysis of Claeys and Grava \cite{Claeys:2010} on the ``solitonic'' edge of the Whitham oscillation zone for the Korteweg-de Vries equation in the small-dispersion limit, the analysis of Bertola and Tovbis \cite{BertolaT-edge} of semiclassical solutions to the focusing nonlinear Schr\"odinger equation near an analogous transition point, and our own work \cite{BuckinghamMcritical} on a certain universal feature of semiclassical solutions of the sine-Gordon equation.  In the former two cases \cite{BertolaT-edge,Claeys:2010}, the parametrix involved is very similar to the one that appears here.
Parametrices involving integer-valued parameters have also appeared in the theory of orthogonal polynomials and random matrices; see \cite{Bertola:2009}.

In order to construct the local parametrix $\dot{\mathbf{O}}^{(\zc,K)}(z)$, it is convenient to introduce 
a conformal local coordinate $W$ taking a neighborhood of the critical point in the $z$-plane to a target domain near the origin in the $W$-plane.  The conformal map $W=W(z)=W(z;x)$ is defined for $z$ near $\zc(x)$ by the relation
\begin{equation}
2h_+(z;x)+\lambda(x)=2\mathfrak{c}(x)+W(z;x)^2
\label{eq:edge-W-definition}
\end{equation}
(here $h_+(z;x)$ is analytically continued through $L$ to a neighborhood of $z=\zc(x)$ by the relation $h_+-h_-=2\pi i$) and the condition that $W$ is real and strictly increasing along $L$.  That these conditions serve to define $W$ as an analytic and univalent function follows because $2h_+(z;x)+\lambda(x)-2\mathfrak{c}(x)$ vanishes to second order at $z=\zc(x)$.  Note that $W(\zc(x);x)=0$.  

In terms of $W$, the jump condition along $L$ for $\mathbf{O}(z)$ reads
\begin{equation}
\mathbf{O}_+(z)=\mathbf{O}_-(z)\begin{bmatrix}1 & ie^{-2\mathfrak{c}(x)/\epsilon}e^{-W^2/\epsilon}\\
0 & 1\end{bmatrix},\quad W(z)\in\mathbb{R},
\end{equation}
or, written differently,
\begin{equation}
\mathbf{O}_+(z)e^{i\pi\sigma_3/4}e^{-\mathfrak{c}(x)\sigma_3/\epsilon}=
\mathbf{O}_-(z)e^{i\pi\sigma_3/4}e^{-\mathfrak{c}(x)\sigma_3/\epsilon}\begin{bmatrix}1 & e^{-W^2/\epsilon}\\0 & 1\end{bmatrix}.
\label{eq:O-crit-jump}
\end{equation}
At the same time, the outer parametrix $\dot{\mathbf{O}}^{(\mathrm{out},K)}(z)$ can be represented near $z=\zc(x)$ in the form
\begin{equation}
\dot{\mathbf{O}}^{(\mathrm{out},K)}(z)=\mathbf{F}^{(\zc,K)}(z)W(z;x)^{K\sigma_3},
\label{eq:Outer-crit}
\end{equation}
which serves to define $\mathbf{F}^{(\zc,K)}(z)$ as an analytic matrix function of $z$ in a neighborhood of  $z=\zc(x)$ that has unit determinant and is bounded independently of $\epsilon>0$.  The analogue of the matrix $\mathbf{A}(\zeta)$ relevant to the construction of Airy parametrices and defined in \cite[Appendix A]{Buckingham-Miller-rational-noncrit} is in this case the matrix function $\mathbf{H}^{(K)}(\zeta)$ defined by the following conditions:
\begin{rhp}[Basic Hermite parametrix]
Given $K=0,1,2,3,\dots$, find a $2\times 2$ matrix function $\mathbf{H}^{(K)}(\zeta)$ with the following properties:
\begin{itemize}
\item[]\textbf{Analyticity:}  $\mathbf{H}^{(K)}(\zeta)$ is analytic for $\zeta\in\mathbb{C}\setminus\mathbb{R}$, and $\mathbf{H}^{(K)}$ takes H\"older-continuous boundary values $\mathbf{H}^{(K)}_\pm(\zeta)$ on the real axis from the half-planes $\mathbb{C}_\pm$.
\item[]\textbf{Jump condition:} The boundary values are related by the jump condition
\begin{equation}
\mathbf{H}^{(K)}_+(\zeta)=\mathbf{H}^{(K)}_-(\zeta)\begin{bmatrix}1 & e^{-\zeta^2}\\0 & 1\end{bmatrix},\quad \zeta\in\mathbb{R}.
\label{eq:Hermite-jump}
\end{equation}
\item[]\textbf{Normalization:}  As $\zeta\to\infty$ in any direction (including tangentially to the real axis), 
\begin{equation}
\mathbf{H}^{(K)}(\zeta)\zeta^{-K\sigma_3}=\mathbb{I}+\mathcal{O}(\zeta^{-1}).
\label{eq:Hermite-norm}
\end{equation}
The four constants implicit in the bound of the matrix-valued term $\mathcal{O}(\zeta^{-1})$ in \eqref{eq:Hermite-norm} may depend on $K$.
\end{itemize}
\label{rhp-Hermite}
\end{rhp}
This Riemann-Hilbert problem can be traced back to the work of Fokas, Its, and Kitaev \cite{FIK}, and it is solved (uniquely) in terms of the Hermite polynomials $\{\mathfrak{H}_n(\zeta)\}_{n=0}^\infty$ defined by the orthonormality conditions
\begin{equation}
\int_{\mathbb{R}}\mathfrak{H}_j(\zeta)\mathfrak{H}_k(\zeta)e^{-\zeta^2}\,d\zeta=\delta_{jk}
\end{equation}
and the assertion that $\mathfrak{H}_n$ is a polynomial of degree exactly $n$:  $\mathfrak{H}_n(\zeta)=h_n\zeta^n + \text{lower order terms,}$ with $h_n>0$.  It turns out that the leading coefficients are given explicitly by
\begin{equation}
h_n:=\frac{2^{n/2}}{\pi^{1/4}\sqrt{n!}}.
\end{equation}
The solution of Riemann-Hilbert Problem~\ref{rhp-Hermite} is, explicitly,
\begin{equation}
\mathbf{H}^{(0)}(\zeta)=\begin{bmatrix}1 & \displaystyle\frac{1}{2\pi i}\int_\mathbb{R}\frac{e^{-s^2}\,ds}{s-\zeta}\\0 & 1\end{bmatrix},\;
\mathbf{H}^{(K)}(\zeta)=\begin{bmatrix}
\displaystyle \frac{1}{h_K}\mathfrak{H}_K(\zeta) & \displaystyle\frac{1}{2\pi ih_K}\int_\mathbb{R}\frac{\mathfrak{H}_K(s)e^{-s^2}\,ds}{s-\zeta}\\
-2\pi ih_{K-1}\mathfrak{H}_{K-1}(\zeta) & \displaystyle -h_{K-1}\int_\mathbb{R}
\frac{\mathfrak{H}_{K-1}(s)e^{-s^2}\,ds}{s-\zeta}\end{bmatrix}, \; K\ge 1.
\label{eq:HermiteH}
\end{equation}
Let $\mathbb{D}_{\zc}$ be a small $\epsilon$-independent disk containing the critical point $z=\zc(x)$.
The local parametrix $\dot{\mathbf{O}}^{(\zc,K)}(z)$ is then defined by the formula
\begin{equation}
\dot{\mathbf{O}}^{(\zc,K)}(z):=\mathbf{F}^{(\zc,K)}(z)\epsilon^{K\sigma_3/2}e^{-\mathfrak{c}(x)\sigma_3/\epsilon}e^{i\pi\sigma_3/4}\mathbf{H}^{(K)}(\epsilon^{-1/2}W(z;x))e^{-i\pi\sigma_3/4}
e^{\mathfrak{c}(x)\sigma_3/\epsilon},\quad z\in\mathbb{D}_{\zc}.
\end{equation}
Comparing \eqref{eq:O-crit-jump} and \eqref{eq:Hermite-jump} shows that $\dot{\mathbf{O}}^{(\zc,K)}(z)$ satisfies exactly the same jump condition along $L$ near $z=\zc(x)$ as does $\mathbf{O}(z)$ itself.  When $z\in\partial\mathbb{D}_{\zc}$, $W(z;x)$ is bounded away from zero, and hence $\epsilon^{-1/2}W(z;x)$ is large.  From \eqref{eq:Outer-crit} and \eqref{eq:Hermite-norm} it then follows that
\begin{multline}
\dot{\mathbf{O}}^{(\zc,K)}(z)\dot{\mathbf{O}}^{(\mathrm{out},K)}(z)^{-1}=\\
\mathbf{F}^{(\zc,K)}(z)\epsilon^{K\sigma_3/2}e^{-\mathfrak{c}(x)\sigma_3/\epsilon}\left(\mathbb{I}+\mathcal{O}(\epsilon^{1/2})\right)e^{\mathfrak{c}(x)\sigma_3/\epsilon}\epsilon^{-K\sigma_3/2}
\mathbf{F}^{(\zc,K)}(z)^{-1},\quad z\in\partial \mathbb{D}_{\zc}.
\label{eq:mismatch-critical}
\end{multline}

Now we can explain how the integer $K\ge 0$ is chosen.  To obtain the best possible match between $\dot{\mathbf{O}}^{(\zc,K)}(z)$ and $\dot{\mathbf{O}}^{(\mathrm{out},K)}(z)$ on $\partial\mathbb{D}_{\zc}$, it is clear from \eqref{eq:mismatch-critical} that, given $x$, $K$ should be chosen so that $|\epsilon^{K/2}e^{-\mathfrak{c}(x)/\epsilon}|$ is as close to $1$ as possible.  Indeed, if $|\epsilon^{K/2}e^{-\mathfrak{c}(x)/\epsilon}|$ is asymptotically large or small, then one or the other of the off-diagonal elements of the error term $\mathcal{O}(\epsilon^{1/2})$ in \eqref{eq:mismatch-critical} will become amplified.  We therefore choose
\begin{equation}
K=K(x;\epsilon):=\left\llbracket -\frac{\Re(2\mathfrak{c}(x))}{\epsilon\log(\epsilon^{-1})}\right\rrbracket\ge 0,\quad\text{whenever}\quad \Re(2\mathfrak{c}(x))\le\frac{1}{2}\epsilon\log(\epsilon^{-1}),
\label{eq:K-define}
\end{equation}
where $\llbracket z\rrbracket$ denotes the nearest integer to $z\in\mathbb{R}$ defined as $z+\tfrac{1}{2}$ in the ambiguous case that $z$ is an odd half-integer.  For reasons that will be clear shortly, as a special case, we want to define also  
\begin{equation}
K(x;\epsilon):=0\quad\text{whenever}\quad\Re(2\mathfrak{c}(x))>\frac{1}{2}\epsilon\log(\epsilon^{-1}).
\label{eq:K-define-zero}
\end{equation}
\emph{For the duration of \S\ref{section:edge}, the symbol $K$ will always stand for the integer-valued function of $x$ and $\epsilon$ defined in \eqref{eq:K-define}--\eqref{eq:K-define-zero}.}

\subsection{The global parametrix for $\mathbf{O}(z)$ and analysis of the preliminary error}
Define the global parametrix for $\mathbf{O}(z)$ as follows:
\begin{equation}
\dot{\mathbf{O}}(z):=\begin{cases}
\dot{\mathbf{O}}^{(\mathrm{out},K)}(z),&\quad z\in\mathbb{C}\setminus(\Sigma\cup\overline{\mathbb{D}}_a\cup\overline{\mathbb{D}}_b\cup\overline{\mathbb{D}}_{\zc}),\\
\dot{\mathbf{O}}^{(a,K)}(z),&\quad z\in\mathbb{D}_a,\\
\dot{\mathbf{O}}^{(b,K)}(z),&\quad z\in\mathbb{D}_b,\\
\dot{\mathbf{O}}^{(\zc,K)}(z),&\quad z\in\mathbb{D}_{\zc},
\end{cases}
\end{equation}
where the integer $K=K(x;\epsilon)\ge 0$ is defined in \eqref{eq:K-define}--\eqref{eq:K-define-zero}.
To gauge the accuracy of approximating $\mathbf{O}(z)$ with $\dot{\mathbf{O}}(z)$, we consider the (preliminary) error defined by
\begin{equation}
\mathbf{P}(z):=\mathbf{O}(z)\dot{\mathbf{O}}(z)^{-1}
\end{equation}
wherever both factors are defined.  This domain of definition is $\mathbb{C}\setminus\Sigma^{(\mathbf{P})}$, where $\Sigma^{(\mathbf{P})}$ is a contour consisting of:
\begin{itemize}
\item the disk boundaries $\partial\mathbb{D}_a$, $\partial\mathbb{D}_b$, and $\partial\mathbb{D}_{\zc}$, all of which are taken to be oriented in the negative (clockwise) direction, and
\item the arcs of $\Sigma^{(\mathbf{O})}$ lying outside of these disks, with the exception of the straight line segment $\Sigma$ connecting $z=a$ and $z=b$.  These arcs are taken with the same orientation as when they are considered as part of $\Sigma^{(\mathbf{O})}$.
\end{itemize}
The arc $\Sigma$ and the arcs of $\Sigma^{(\mathbf{O})}$ within the disks are not part of $\Sigma^{(\mathbf{P})}$ because $\mathbf{O}(z)$ and $\dot{\mathbf{O}}(z)$ share exactly the same jump conditions across each of these arcs.  On the other hand, the disk boundaries are included in $\Sigma^{(\mathbf{P})}$ because the local parametrices do not match the outer parametrix exactly.

Assume that $x\in\mathcal{S}_\sigma$ and that, for some fixed $M>0$, $K(x;\epsilon)\le M$.  The preliminary error matrix $\mathbf{P}(z)$ satisfies the conditions of a Riemann-Hilbert problem
relative to the jump contour $\Sigma^{(\mathbf{P})}$, with a jump condition of the form $\mathbf{P}_+(z)=\mathbf{P}_-(z)\mathbf{V}^{(\mathbf{P})}(z)$ on each oriented arc of $\Sigma^{(\mathbf{P})}$, and with the normalization condition $\mathbf{P}(z)\to\mathbb{I}$ as $z\to\infty$.  The jump matrix $\mathbf{V}^{(\mathbf{P})}(z)$ is defined on the arcs of $\Sigma^{(\mathbf{P})}$ as follows:
\begin{itemize}
\item For $z\in\Sigma^{(\mathbf{P})}\setminus(\partial\mathbb{D}_a\cup\partial\mathbb{D}_b\cup\partial\mathbb{D}_{\zc})$, we have $\mathbf{V}^{(\mathbf{P})}(z)=\dot{\mathbf{O}}^{(\mathrm{out},K)}(z)\mathbf{V}^{(\mathbf{O})}(z)\dot{\mathbf{O}}^{(\mathrm{out},K)}(z)^{-1}$.
The outer parametrix and its inverse are uniformly bounded independently of $\epsilon>0$
for $z$ outside of the three disks, and $\mathbf{V}^{(\mathbf{O})}(z)-\mathbb{I}$ is exponentially small in the limit $\epsilon\downarrow 0$ (in both the $L^\infty$ and weighted $L^1$ sense, with weight $z^2$, as can be easily shown).  Therefore $\mathbf{V}^{(\mathbf{P})}(z)-\mathbb{I}$ obeys a similar exponential estimate for such $z$.  These estimates are uniform for $x\in\mathcal{S}_\sigma$ with $K(x;\epsilon)\le M$.
\item For $z\in\partial\mathbb{D}_a$ (respectively, $z\in\partial\mathbb{D}_b$), we have $\mathbf{V}^{(\mathbf{P})}(z)=\dot{\mathbf{O}}^{(a,K)}(z)\dot{\mathbf{O}}^{(\mathrm{out},K)}(z)^{-1}$ (respectively, $\mathbf{V}^{(\mathbf{P})}(z)=\dot{\mathbf{O}}^{(b,K)}(z)\dot{\mathbf{O}}^{(\mathrm{out},K)}(z)^{-1}$), and therefore from \eqref{eq:edge-Airy-a-match} and \eqref{eq:edge-Airy-b-match} we have $\mathbf{V}^{(\mathbf{P})}(z)-\mathbb{I}=\mathcal{O}(\epsilon)$ in the $L^\infty$ sense on both disk boundaries, uniformly for $x\in\mathcal{S}_\sigma$ with $K(x;\epsilon)\le M$.
\item For $z\in\partial\mathbb{D}_{\zc}$, we have $\mathbf{V}^{(\mathbf{P})}(z)=\dot{\mathbf{O}}^{(\zc,K)}(z)\dot{\mathbf{O}}^{(\mathrm{out},K)}(z)^{-1}$, which is given by
\eqref{eq:mismatch-critical}.  Let $f(x;\epsilon)$ be defined by
\begin{equation}
f=f(x;\epsilon):=\epsilon^{K(x;\epsilon)/2}e^{-\mathfrak{c}(x)/\epsilon}.
\label{eq:edge-f-define}
\end{equation}
Because $|f(x;\epsilon)|$ can occupy the full range of values $\epsilon^{1/4}\le |f(x;\epsilon)|<\epsilon^{-1/4}$, $\mathbf{V}^{(\mathbf{P})}(z)-\mathbb{I}$ is not generally small for $z\in\partial\mathbb{D}_{\zc}$ although it is bounded\footnote{The given range of values of $|f(x;\epsilon)|$ holds for $K\ge 1$.  When $K=0$ we also allow $\Re(\mathfrak{c}(x))$ to become arbitrarily positive, and this means that while we still have the upper bound $|f(x;\epsilon)|<\epsilon^{-1/4}$, the lower bound can be exponentially small.  However, when $K=0$, the factor $\mathbb{I}+\mathcal{O}(\epsilon^{1/2})$ in \eqref{eq:mismatch-critical} is upper-triangular, and therefore the lower bound is immaterial.  We still conclude that for $z\in\partial\mathbb{D}_{\zc}$, $\mathbf{V}^{(\mathbf{P})}-\mathbb{I}$ is bounded although not generally small.}. 
\end{itemize}
The boundary values of $\mathbf{P}$ are required to be taken in the classical (H\"older-continuous) sense.

\subsubsection{Formulation of a parametrix for $\mathbf{P}(z)$}
Due to the contribution to the jump matrix $\mathbf{V}^{(\mathbf{P})}(z)$ from points $z\in\partial\mathbb{D}_{\zc}$, the preliminary error $\mathbf{P}(z)$ does not generally satisfy a Riemann-Hilbert problem of small-norm type (see \cite[Appendix B]{Buckingham-Miller-rational-noncrit} for a self-contained description of such problems and their solution).  Following the approach of \cite{BuckinghamMcritical}, we proceed by building a \emph{parametrix for the error}.  The term $\mathcal{O}(\epsilon^{1/2})$ in \eqref{eq:mismatch-critical} is shorthand for the matrix
\begin{equation}
\mathbf{G}(z)=\mathbf{G}(z;x,\epsilon):=e^{i\pi\sigma_3/4}\mathbf{H}^{(K(x;\epsilon))}(\epsilon^{-1/2}W(z;x))(\epsilon^{-1/2}W(z;x))^{-K(x;\epsilon)\sigma_3}e^{-i\pi\sigma_3/4}-\mathbb{I},
\end{equation}
and hence we have
\begin{equation}
\mathbf{V}^{(\mathbf{P})}(z)=\mathbf{F}^{(\zc,K)}(z)\left[\mathbb{I}+f^{\sigma_3}\mathbf{G}(z)f^{-\sigma_3}\right]\mathbf{F}^{(\zc,K)}(z)^{-1},\quad z\in\partial\mathbb{D}_{\zc}.
\end{equation}
By expanding the formulae \eqref{eq:HermiteH} for large $\zeta$, one obtains that
\begin{equation}
\mathbf{G}(z)=\begin{bmatrix}0 & \displaystyle -\frac{1}{2\pi h_{K}^2}\\
-2\pi h_{K-1}^2 & 0\end{bmatrix}\frac{\epsilon^{1/2}}{W} + 
\begin{bmatrix}\tfrac{1}{4}K(K-1) & 0\\0 & -\tfrac{1}{4}K(K+1)\end{bmatrix}\frac{\epsilon}{W^2} + \mathcal{O}\left(\frac{\epsilon^{3/2}}{W^3}\right),
\end{equation}
where $W=W(z;x)$,  and by convention $h_{-1}:=0$.  Since $W$ is bounded away from zero on $\partial\mathbb{D}_{\zc}$, the inequalities $\epsilon^{1/4}\le | f|<\epsilon^{-1/4}$ valid for $K\ge 1$ imply that 
\begin{equation}
\mathbf{V}^{(\mathbf{P})}(z)=\mathbf{F}^{(\zc,K)}(z)\left(\mathbb{I}+\begin{bmatrix}
0 & \displaystyle -\frac{f^2}{2\pi h_K^2}\\\displaystyle -\frac{2\pi h_{K-1}^2}{f^2} & 0\end{bmatrix}
\frac{\epsilon^{1/2}}{W} + \mathcal{O}(\epsilon)\right)\mathbf{F}^{(\zc,K)}(z)^{-1},\quad z\in\partial\mathbb{D}_{\zc},\quad K\ge 1,
\label{eq:edge-VP-Kge1}
\end{equation}
with $W=W(z;x)$, $f=f(x;\epsilon)$, and $K=K(x;\epsilon)$.  For $K=0$ we have only the upper bound $|f|<\epsilon^{-1/4}$ but $h_{K-1}=0$, and therefore
\begin{equation}
\mathbf{V}^{(\mathbf{P})}(z)=\mathbf{F}^{(\zc,0)}(z)\left(\mathbb{I}+\begin{bmatrix}
0 &\displaystyle-\frac{f^2}{2\sqrt{\pi}}\\0 & 0\end{bmatrix} \frac{\epsilon^{1/2}}{W}+\mathcal{O}(\epsilon^{3/2})\right)\mathbf{F}^{(\zc,0)}(z)^{-1},\quad z\in\partial\mathbb{D}_{\zc},\quad
K=0.
\label{eq:edge-VP-K0}
\end{equation}
Furthermore, the expression \eqref{eq:edge-VP-Kge1} can be simplified in two different ways depending on whether $|f|\le 1$ or $|f|\ge 1$:
\begin{equation}
\mathbf{V}^{(\mathbf{P})}(z)=\mathbf{F}^{(\zc,K)}(z)\left(\begin{bmatrix} 1 & 0\\
\displaystyle-\frac{2\pi \epsilon^{1/2}h_{K-1}^2}{f^2W} & 1\end{bmatrix} + \mathcal{O}(\epsilon^{1/2})\right)\mathbf{F}^{(\zc,K)}(z)^{-1},\; z\in\partial \mathbb{D}_{\zc},\;
 \epsilon^{1/2}\le |f|^2\le 1,\;K\ge 1,
\label{eq:VP-lower-exact}
\end{equation}
and
\begin{equation}
\mathbf{V}^{(\mathbf{P})}(z)=\mathbf{F}^{(\zc,K)}(z)\left(\begin{bmatrix} 1 & \displaystyle
-\frac{\epsilon^{1/2}f^2}{2\pi h_K^2W}\\
0 & 1\end{bmatrix} + \mathcal{O}(\epsilon^{1/2})\right)\mathbf{F}^{(\zc,K)}(z)^{-1},\; z\in\partial\mathbb{D}_{\zc},\; 1\le |f|^2<\epsilon^{-1/2},\;K\ge 1.
\label{eq:VP-upper-exact}
\end{equation}
By keeping only the leading terms in \eqref{eq:edge-VP-K0}--\eqref{eq:VP-upper-exact}, 
and ignoring all other jump discontinuities of $\mathbf{P}(z)$, we arrive at an explicitly solvable model for $\mathbf{P}(z)$:
\begin{rhp}[Parametrix for the error]
Let $s=\pm$ be a sign determined by $x$ and $\epsilon$ as follows:
\begin{equation}
s=s(x;\epsilon):=\begin{cases}\mathrm{sgn}(\log(|f(x;\epsilon)|)),&\quad K(x;\epsilon)\ge 1\\
+,&\quad K(x;\epsilon)=0.
\end{cases}
\label{eq:edge-s-define}
\end{equation}
Seek a $2\times 2$ matrix function $\dot{\mathbf{P}}(z)=\dot{\mathbf{P}}(z;x,\epsilon)$ with the following properties:
\begin{itemize}
\item[]\textbf{Analyticity:}  $\dot{\mathbf{P}}(z)$ is analytic for $z\in\mathbb{C}\setminus\partial\mathbb{D}_{\zc}$, and takes H\"older-continuous boundary values $\dot{\mathbf{P}}_+(z)$ (respectively, $\dot{\mathbf{P}}_-(z)$) on $\partial\mathbb{D}_{\zc}$ from outside (respectively, inside).
\item[]\textbf{Jump condition:}  The boundary values are related for $z\in\partial\mathbb{D}_{\zc}$ by $\dot{\mathbf{P}}_+(z)=\dot{\mathbf{P}}_-(z)\dot{\mathbf{V}}^{(\mathbf{P})}(z)$, where
\begin{equation}
\dot{\mathbf{V}}^{(\mathbf{P})}(z):=
\mathbf{F}^{(\zc,K)}(z)\left(\mathbb{I} -\frac{Q_s}{W(z)}\sigma_s\right)\mathbf{F}^{(\zc,K)}(z)^{-1}.
\end{equation}
Here, the Pauli matrices $\sigma_\pm$ are defined by \eqref{eq:Pauli-matrices} 
and the constants (i.e., independent of $z$) $Q_\pm=Q_\pm(x;\epsilon)$ are defined by
\begin{equation}
Q_-:=\frac{2\pi\epsilon^{1/2}h_{K-1}^2}{f^2}\quad\text{and}\quad
Q_+:=\frac{\epsilon^{1/2}f^2}{2\pi h_K^2}.
\label{eq:edge-Q-define}
\end{equation}
\item[]\textbf{Normalization:}  $\dot{\mathbf{P}}(z)\to\mathbb{I}$ as $z\to\infty$.
\end{itemize}
\label{rhp:edge-error-parametrix}
\end{rhp}
Note that $Q_s$ is bounded as $\epsilon\to 0$ as a consequence of the definition \eqref{eq:K-define}--\eqref{eq:K-define-zero} of $K(x;\epsilon)$, the definition \eqref{eq:edge-f-define} of $f(x;\epsilon)$, and the definition \eqref{eq:edge-s-define} of the sign $s$.  We also have the important identity
\begin{equation}
Q_+Q_-=\frac{1}{2}\epsilon K.
\label{eq:edge-Qplus-Qminus-product}
\end{equation}
For later use, we let $\overline{s}=\overline{s}(x;\epsilon)$ denote the sign opposite to $s$.  While $Q_s$ is merely bounded, $Q_{\overline{s}}=\mathcal{O}(\epsilon^{1/2})$ (and in the case that $K(x;\epsilon)=0$, $Q_{\overline{s}}=0$).
\subsubsection{Solution of Riemann-Hilbert Problem~\ref{rhp:edge-error-parametrix}}
Observe that the jump matrix $\dot{\mathbf{V}}^{(\mathbf{P})}(z)$ has a meromorphic continuation to the interior of the disk $\mathbb{D}_{\zc}$, the only singularity of which is a simple pole at $z=\zc(x)$ coming from the simple zero of $W$.  Based on this observation, we introduce the auxiliary unknown $\ddot{\mathbf{P}}(z)$ by setting
\begin{equation}
\ddot{\mathbf{P}}(z):=\begin{cases}\dot{\mathbf{P}}(z),&\quad z\in\mathbb{C}\setminus\overline{\mathbb{D}}_{\zc},\\
\dot{\mathbf{P}}(z)\dot{\mathbf{V}}^{(\mathbf{P})}(z),&\quad z\in\mathbb{D}_{\zc}\setminus\{\zc(x)\}.
\end{cases}
\label{eq:dotP-ddotP}
\end{equation}
It follows from the jump condition satisfied by $\dot{\mathbf{P}}(z)$ that $\ddot{\mathbf{P}}(z)$ admits a common analytic continuation to $\partial\mathbb{D}_{\zc}$ from either side, and hence may be regarded as a meromorphic function on the whole complex plane $\mathbb{C}$ with at worst a simple pole at the point $z=\zc$.  The normalization condition on $\dot{\mathbf{P}}(z)$ implies that $\ddot{\mathbf{P}}(\infty)=\mathbb{I}$, and therefore $\ddot{\mathbf{P}}(z)$ necessarily has the form
\begin{equation}
\ddot{\mathbf{P}}(z)=\mathbb{I} + (z-\zc(x))^{-1}\mathbf{B}
\label{eq:ddotP-formula}
\end{equation}
for some constant matrix $\mathbf{B}=\mathbf{B}(x;\epsilon)$ to be determined.  To find $\mathbf{B}$, we note that, since $\dot{\mathbf{P}}(z)$ is analytic at $z=\zc$, 
\begin{equation}
\left(\mathbb{I}+(z-\zc)^{-1}\mathbf{B}\right)\dot{\mathbf{V}}^{(\mathbf{P})}(z)^{-1}=
\ddot{\mathbf{P}}(z)\dot{\mathbf{V}}^{(\mathbf{P})}(z)^{-1}=\dot{\mathbf{P}}(z)=\mathcal{O}(1),\quad z\to\zc.
\label{eq:edge-analytic-condition}
\end{equation}
But, by direct calculation, the left-hand side is
\begin{equation}
\begin{split}
\left(\mathbb{I}+(z-\zc)^{-1}\mathbf{B}\right)\dot{\mathbf{V}}^{(\mathbf{P})}(z)^{-1}&=
\left(\mathbb{I}+(z-\zc)^{-1}\mathbf{B}\right)\mathbf{F}^{(\zc,K)}(z)\left(\mathbb{I}+\frac{Q_s}{W(z)}\sigma_s\right)\mathbf{F}^{(\zc,K)}(z)^{-1}\\ &=
(z-\zc)^{-2}\left[\rho_0 Q_s\mathbf{B}\mathbf{F}_0\sigma_s\mathbf{F}_0^{-1}\right] \\
&{}\quad +
(z-\zc)^{-1}\Big[\rho_0 Q_s\mathbf{F}_0\sigma_s\mathbf{F}_0^{-1}\\
&{}\quad\quad+\mathbf{B}\left(\mathbb{I}+\rho_1 Q_s \mathbf{F}_0\sigma_s\mathbf{F}_0^{-1}+\rho_0 Q_s\mathbf{F}_1\sigma_s\mathbf{F}_0^{-1}-\rho_0 Q_s\mathbf{F}_0\sigma_s\mathbf{F}_0^{-1}\mathbf{F}_1\mathbf{F}_0^{-1}\right)
\Big] \\
&\quad {}+ \mathcal{O}(1)
\end{split}
\label{eq:edge-LHS-expand}
\end{equation}
in the limit $z\to \zc$, where we recall that $s=\pm$ is the sign defined by \eqref{eq:edge-s-define} and where $\rho_0$ and $\rho_1$ (respectively, $\mathbf{F}_0$ and $\mathbf{F}_1$) are the first two coefficients in the convergent power series expansion of $1/W(z)$ (respectively, $\mathbf{F}^{(\zc,K)}(z)$) about $z=\zc$:
\begin{equation}
\frac{1}{W(z)}=\rho_0(z-\zc)^{-1}+\rho_1+\mathcal{O}(z-\zc)\quad\text{and}\quad
\mathbf{F}^{(\zc,K)}(z)=\mathbf{F}_0 + (z-\zc)\mathbf{F}_1 + \mathcal{O}((z-\zc)^2).
\end{equation}
Therefore, comparing \eqref{eq:edge-analytic-condition} and \eqref{eq:edge-LHS-expand}, the matrix $\mathbf{B}$ is required to satisfy the equations (assuming $Q_\pm\neq 0$)
\begin{equation}
\mathbf{B}\mathbf{F}_0\sigma_\pm=\mathbf{0}\quad\text{and}\quad
\mathbf{B}\mathbf{F}_0\left(\mathbb{I}+\rho_0 Q_\pm\mathbf{F}_0^{-1}\mathbf{F}_1\sigma_\pm\right)+
\rho_0 Q_\pm\mathbf{F}_0\sigma_\pm=\mathbf{0}.
\end{equation}
These matrix equations are easily solved by separating the columns and taking into account that $\sigma_\pm$ has only one nonzero column.  For example, in the case that $s=-$, the first equation amounts to the condition that $\mathbf{B}\mathbf{F}_0=(\mathbf{v},\mathbf{0})$, that is, the second column of $\mathbf{B}\mathbf{F}_0$ vanishes.  The second column of the second equation is then an exact identity, while the first column yields a $2\times 2$ linear system on the two elements of $\mathbf{v}$.  The result of applying this procedure (also in the other case that $s=+$) is the explicit formula
\begin{equation}
\mathbf{B}=-\frac{\rho_0 Q_s\phi_s}{1+\rho_0 Q_s \phi_s}\cdot\frac{1}{\phi_s}\mathbf{F}_0\sigma_s\mathbf{F}_0^{-1},
\label{eq:A-formula}
\end{equation}
where 
\begin{equation}
\phi_+:=(\mathbf{F}_0^{-1}\mathbf{F}_1)_{21}\quad\text{and}\quad
\phi_-:=(\mathbf{F}_0^{-1}\mathbf{F}_1)_{12}.
\end{equation}
This completes the construction of $\ddot{\mathbf{P}}(z)$ via the formula \eqref{eq:ddotP-formula}, and hence of $\dot{\mathbf{P}}(z)$ solving Riemann-Hilbert Problem~\ref{rhp:edge-error-parametrix}
by means of \eqref{eq:dotP-ddotP}.  Note that the solution exists as long as the  denominator in \eqref{eq:A-formula} is nonzero.

Now $\rho_0=W'(\zc)^{-1}$, and to calculate the derivative we differentiate \eqref{eq:edge-W-definition} twice with respect to $z$ and set $z=\zc$.  Since $h_+'(z;x)=\tfrac{3}{2}(z-\zc)r(z)$,
it follows that $W'(\zc)^2=\tfrac{3}{2}\rc$.  The correct sign of the square root to take to obtain $W'(\zc)$ and hence $\rho_0$ can be determined by analyzing the case that $x$ is a positive real number near $\partial T$.  In this case $\rc$ is positive real, and since $W'(\zc)$ should also be positive real so that $W$ is real and increasing along $L$ (which may be taken to coincide with the real axis locally near the critical point), we see that the correct choice is the positive square root for $x>0$ analytically continued to the sector $\mathcal{S}_0$.  Thus, 
\begin{equation}
W'(\zc)=\sqrt{\frac{3}{2}}\rc^{1/2}\quad\text{and hence}\quad
\rho_0=\sqrt{\frac{2}{3}}\rc^{-1/2}.
\label{eq:edge-W-prime}
\end{equation}
These formulae hold true also for complex $x$, with the interpretation that the power functions are principal branches.  
Next, we calculate $\mathbf{F}_0$ and the off-diagonal elements of $\mathbf{F}_0^{-1}\mathbf{F}_1$ to obtain $\phi_\pm$.  According to \eqref{Odot-gen0}, \eqref{eq:edge-Outer-parametrix}, and \eqref{eq:Outer-crit}, we have
\begin{equation}
\mathbf{F}^{(\zc,K)}(z):=\mathcal{T}(\infty)^{-K\sigma_3}\frac{1}{2}\begin{bmatrix}
\beta(z)+\beta(z)^{-1} & \beta(z)-\beta(z)^{-1}\\\beta(z)-\beta(z)^{-1} & \beta(z)+\beta(z)^{-1}\end{bmatrix}(-i\eta(z))^{K\sigma_3},
\label{eq:edge-FcK}
\end{equation}
where
\begin{equation}
\eta(z)=\eta(z;x):=i\frac{\mathcal{T}(z;x)}{W(z;x)}.
\label{eq:edge-eta-define}
\end{equation}
Note that since $\mathcal{T}$ and $W$ both vanish to precisely first order at $z=\zc$, the function $\eta(z)$ is analytic and nonzero at $z=\zc$. We introduce the related notation
\begin{equation}
\ec=\ec(x):=\eta(\zc(x);x).
\label{eq:edge-ec}
\end{equation}
Of course, we obtain $\mathbf{F}_0$ simply by setting $z=\zc$ in \eqref{eq:edge-FcK}:  $\mathbf{F}_0=\mathbf{F}^{(\zc,K)}(\zc)$.  
Using \eqref{eq:Pauli-matrices} therefore yields
\begin{equation}
\begin{split}
\mathbf{F}_0\sigma_\pm\mathbf{F}_0^{-1}&=\frac{1}{4}(-1)^K\ec^{\pm 2K}\begin{bmatrix}
\mp(\bc^2-\bc^{-2}) & \pm(-1)^K(\bc\pm\bc^{-1})^2\tc^{2K}\\
\mp(-1)^K(\bc\mp\bc^{-1})^2\tc^{-2K} & \pm(\bc^2-\bc^{-2})\end{bmatrix}\\
&=(-1)^K\frac{\ec^{\pm 2K}}{4\rc}\begin{bmatrix}\mp\Delta & 2(-1)^K(\rc\mp S)\tc^{2K}\\
2(-1)^K(\rc\pm S)\tc^{-2K} & \pm\Delta\end{bmatrix}.
\end{split}
\label{eq:edge-F0-sigmapm-F0inverse}
\end{equation}
(In the second step we have used the identities \eqref{eq:edge-beta-identities} and the definition 
\eqref{eq:edge-zc} of $\zc$.)  
To obtain $\mathbf{F}_1$, we differentiate with respect to $z$ and then evaluate at $z=\zc$.  This yields
\eq
\begin{split}
\mathbf{F}_1 = & \mathcal{T}(\infty)^{-K\sigma_3}\frac{\beta'(\zc)}{2\bc}\begin{bmatrix} \bc-\bc^{-1} & \bc+\bc^{-1}\\ \bc+\bc^{-1} & \bc-\bc^{-1}\end{bmatrix}(-i\ec)^{K\sigma_3} \\ 
   & -i\mathcal{T}(\infty)^{-K\sigma_3}\frac{1}{2}\begin{bmatrix}\bc +\bc^{-1} & \bc-\bc^{-1}\\\bc-\bc^{-1} & \bc+\bc^{-1}\end{bmatrix}K\eta'(\zc)\sigma_3(-i\ec)^{K\sigma_3-\mathbb{I}}.
\end{split}
\endeq
Using the fact that $\det(\mathbf{F}_0)=1$, it then follows that
\begin{equation}
\mathbf{F}_0^{-1}\mathbf{F}_1=(-1)^K\frac{\beta'(\zc)}{\bc}\begin{bmatrix}0 & \ec^{-2K}\\
\ec^{2K} & 0\end{bmatrix} +\frac{K\eta'(\zc)}{\ec}\sigma_3.
\end{equation}
Differentiation of \eqref{eq:edge-beta-define} then yields
\begin{equation}
\frac{\beta'(\zc)}{\bc}=\frac{1}{4}\left[\frac{1}{\zc-a}-\frac{1}{\zc-b}\right]=i\frac{i\Delta}{4\rc^2}.
\end{equation}
Therefore,
\begin{equation}
\phi_\pm = i(-1)^K\frac{\ec^{\pm 2K}i\Delta}{4\rc^2},
\label{eq:edge-phi-define}
\end{equation}
and combining this with \eqref{eq:edge-F0-sigmapm-F0inverse} yields
\begin{equation}
\frac{1}{\phi_\pm}\mathbf{F}_0\sigma_\pm\mathbf{F}_0^{-1}=
-\frac{\rc}{\Delta}\begin{bmatrix}\mp\Delta & 2(-1)^K(\rc\mp S)\tc^{2K}\\2(-1)^K(\rc\pm S)\tc^{-2K} & \pm\Delta\end{bmatrix}.
\label{eq:edge-F0-sigmapm-F0inverse-fraction}
\end{equation}
Note also that by combining \eqref{eq:edge-calT-prime} and \eqref{eq:edge-W-prime} we have
\begin{equation}
\ec(x)=i\lim_{z\to \zc(x)}\frac{\mathcal{T}(z;x)}{W(z;x)}=i\frac{\mathcal{T}'(\zc(x);x)}{W'(\zc(x);x)}=
\sqrt{\frac{2}{3}}\frac{i\Delta(x)}{4\rc(x)^{5/2}}.
\label{eq:edge-ec-formula}
\end{equation}
When $x>0$, $\ec(x)$ is a positive real number, and $\ec$ is an analytic function of $x\in\mathcal{S}_0$.

\subsubsection{Singularities of $\dot{\mathbf{P}}(z)$}
\label{section:edge-singularities}
The parametrix $\dot{\mathbf{P}}(z)$ will exist as long as $x$ is such that the matrix $\mathbf{B}$ exists, i.e., the denominator $1+\rho_0Q_s\phi_s$ is nonzero.  Moreover, since $Q_s$
is bounded,
$\dot{\mathbf{P}}(z)$ will be uniformly bounded in $z$, $x$, and $\epsilon$ as long as $1+\rho_0Q_s\phi_s$ is bounded away from zero.  Since $\det(\dot{\mathbf{P}}(z))\equiv 1$, as is easily checked, the same holds for the inverse matrix.  It is the goal of the subsequent discussion to clarify the conditions on $x$ necessary to guarantee the uniform boundedness of $\dot{\mathbf{P}}(z)$.

Combining \eqref{eq:edge-f-define}, \eqref{eq:edge-Q-define}, \eqref{eq:edge-W-prime}, \eqref{eq:edge-phi-define}, and \eqref{eq:edge-ec-formula}, and using the definition \eqref{epsilon} of $\epsilon$ in terms of $m$, we obtain
\begin{equation}
1+\rho_0Q_+\phi_+=1-(-1)^{m-K}e^{-2X_K/\epsilon}
\label{eq:edge-pert-plus}
\end{equation}
and
\begin{equation}
1+\rho_0Q_-\phi_-=
1-\left[(-1)^{m-J}e^{-2X_{J}/\epsilon}
\right]^{-1},\quad J:=K-1,
\label{eq:edge-pert-minus}
\end{equation}
where 
\begin{equation}
X_n=X_n(x;\epsilon):=\mathfrak{d}(x)+\frac{1}{2}\left(n+\frac{1}{2}\right)\epsilon\log(\epsilon^{-1}) -
\left(n+\frac{1}{2}\right)\epsilon\ell(x)+\epsilon\log(\sqrt{2\pi}h_n),\quad n=0,1,2,3,\dots,
\label{eq:edge-Xn-define}
\end{equation}
$\mathfrak{d}(\cdot)$ is defined in terms of $\mathfrak{c}(\cdot)$ by \eqref{eq:d-define},
and
\begin{equation}
\ell(x):=\log\left(\sqrt{\frac{2}{3}}\frac{i\Delta(x)}{4\rc(x)^{5/2}}\right).
\end{equation}
Here, the logarithm and power function in its argument are defined by analytic continuation from the ray $\arg(x)=0$ to the full sector $\mathcal{S}_0$.  Indeed, for $x>0$ we have $\rc(x)>0$ and $i\Delta(x)>0$.  The value of $\ell(x)$ for $\arg(x)=0$ is then defined to be real for $x>0$.  It is easy to see that this definition makes $\ell(x)$ a Schwarz-symmetric analytic function, i.e., $\ell(x^*)=\ell(x)^*$ holds for all $x\in\mathcal{S}_0$.  

According to Lemma~\ref{lemma:d-univalent}, 
the function $\mathfrak{d}(x)$ is analytic and univalent for $x\in\mathcal{S}_0$, and $\mathfrak{d}(x)$ is real for real positive $x$. 
It follows that the exponent $X_n$ defined by \eqref{eq:edge-Xn-define} is real-valued for all $x>0$.  
Of course the edge of the elliptic region $T$ in the sector $\mathcal{S}_0$ is given by the equation 
$\Re(\mathfrak{d}(x))=0$.  Along the edge from $x_ce^{2\pi i/3}$ to $x_ce^{-2\pi i/3}$ (bottom to top), $\Im(\mathfrak{d}(x))$ varies monotonically from $-\pi/2$ to $\pi/2$, also according to Lemma~\ref{lemma:d-univalent}.  
%
A similar result involving the function $\ell(\cdot)$ is the following, the proof of which can be found in \S\ref{sec:lemma:ell}.
\begin{lemma}
As $x$ varies along the edge $\partial T\cap\mathcal{S}_0$ from $x_ce^{2\pi i/3}$ to $x_ce^{-2\pi i/3}$ (bottom to top), $\Im(\ell(x))$ varies continuously from $-\pi/4$ to $\pi/4$ with $\Im(\ell(x_e))=0$, where $x_e$ denotes the real point of $\partial T\cap\mathcal{S}_0$.
\label{lemma:ell}
\end{lemma}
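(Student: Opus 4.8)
The plan is to reduce the assertion about $\Im\ell$ to a bookkeeping of continuous arguments of two explicit algebraic functions of $S(x)$. Recall from \eqref{eq:edge-ec-formula} that $\ell=\log\ec$, where $\ec$ is analytic, non-vanishing on $\mathcal{S}_0$, and positive for $x>0$; hence $\Im\ell(x)=\arg\ec(x)$ with the continuous branch pinned by $\arg\ec=0$ on the positive real axis. Using $\Delta(x)^2=16/(3S(x))$ and $\rc(x)^2=S(x)^2-\tfrac14\Delta(x)^2=(3S(x)^3-4)/(3S(x))$, together with $\ec=\sqrt{2/3}\,i\Delta/(4\rc^{5/2})$, one gets
\begin{equation}
\Im\ell(x)=\arg\bigl(i\Delta(x)\bigr)-\tfrac52\arg\rc(x)=\frac{\pi}{2}+\frac34\arg S(x)-\frac54\arg\bigl(3S(x)^3-4\bigr),
\end{equation}
where all arguments are continued continuously along the edge starting at $x=x_e$; there $\ec>0$, $S<0$ and $3S^3-4<0$, so the initial data are $\arg S=\arg(3S^3-4)=\pi$ and the formula gives $\Im\ell(x_e)=0$. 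This continuation is legitimate on the open edge because $S$ is finite and nonvanishing there while $3S^3-4$ vanishes exactly at the two corners (where $S^3=4/3$), hence is nonzero on the open edge; and since $\ell$ is analytic on $\mathcal{S}_0$, $\Im\ell$ is continuous along the open edge and will extend continuously to the corners once the one-sided limits of $\arg S$ and $\arg(3S^3-4)$ there are shown to exist.

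By the Schwarz symmetry $\ell(x^*)=\ell(x)^*$ and the fact that $\partial T\cap\mathcal{S}_0$ is mapped to itself by conjugation with $x_e$ fixed and the corners $x_ce^{\pm2\pi i/3}$ interchanged, we have $\Im\ell(x_e)=0$ and $\lim_{x\to x_ce^{2\pi i/3}}\Im\ell(x)=-\lim_{x\to x_ce^{-2\pi i/3}}\Im\ell(x)$. It therefore suffices to compute the limit at the top corner $\xi:=x_ce^{-2\pi i/3}$, along the edge and from inside $\mathcal{S}_0$. For this I would use the rotational symmetry \eqref{eq:S-rotational-symmetry} in the form $S(x)=e^{2\pi i/3}S(xe^{2\pi i/3})$ (so that $3S(x)^3-4=3S(xe^{2\pi i/3})^3-4$, with $xe^{2\pi i/3}$ near $x_c$) and import the local expansion of \S\ref{opening-angle-subsection}: $S(w)=s((x_c-w)^{1/2})$ with $s(0)=S_c=(4/3)^{1/3}$, $s'(0)=-2/3$, whence $S(\xi)=e^{2\pi i/3}S_c$ and, using $3S_c^3-4=0$,
\begin{equation}
3S(x)^3-4=-6S_c^2\,(x_c-xe^{2\pi i/3})^{1/2}\bigl(1+o(1)\bigr)=-6S_c^2\,\bigl((\xi-x)e^{2\pi i/3}\bigr)^{1/2}\bigl(1+o(1)\bigr),\qquad x\to\xi.
\end{equation}
The direction of approach is supplied by Proposition~\ref{prop:corner} and its proof: near $x_c$ the edge with $\Im w>0$ satisfies $\arg(x_c-w)\to-4\pi/5$, which for $w=xe^{2\pi i/3}$ translates into $\arg(\xi-x)\to 8\pi/15$ along our edge and fixes the principal-branch value $\arg\bigl((x_c-xe^{2\pi i/3})^{1/2}\bigr)\to-2\pi/5$. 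Combining, the ``naive'' limiting arguments are $\arg S(x)\to 2\pi/3$ and $\arg(3S(x)^3-4)\to\pi-2\pi/5=3\pi/5$.

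The remaining, and I expect hardest, step is to confirm that these are exactly the values reached by continuous continuation from $x_e$, i.e.\ that $\arg S$ changes by precisely $-\pi/3$ and $\arg(3S^3-4)$ by precisely $-2\pi/5$ from $x_e$ to $\xi$, with no spurious multiple of $2\pi$. Here I would invoke the univalence of $S$ (Lemma~\ref{lemma-S-univalent}): the edge maps under $S$ to a simple arc in the image domain of Figure~\ref{fig:S-plane} running from $e^{-2\pi i/3}S_c$ through the negative number $S(x_e)$ (the only point of the arc on the real axis, by Schwarz symmetry) to $e^{2\pi i/3}S_c$. Controlling this arc — in particular showing that between $x_e$ and $\xi$ it stays in the sector $\{2\pi/3\le\arg\le\pi\}$, so that $\arg S^3\in[2\pi,3\pi]$ and hence $\Im S^3>0$ and $3S^3-4$ remains in the open upper half-plane — pins down the two windings. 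Granting this,
\begin{equation}
\lim_{x\to\xi}\Im\ell(x)=\frac{\pi}{2}+\frac34\cdot\frac{2\pi}{3}-\frac54\cdot\frac{3\pi}{5}=\frac{\pi}{4},
\end{equation}
and by Schwarz symmetry the limit at the bottom corner is $-\pi/4$; together with continuity of $\Im\ell$ on the open edge this proves the lemma. Thus the main obstacle is the branch/winding bookkeeping in this last step, everything else being a routine algebraic identity, a soft symmetry/continuity argument, or a direct appeal to the corner analysis of \S\ref{opening-angle-subsection} and Proposition~\ref{prop:corner}.
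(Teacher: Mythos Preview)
Your approach is correct and genuinely different from the paper's.  You express $\Im\ell$ algebraically as $\tfrac{\pi}{2}+\tfrac{3}{4}\arg S-\tfrac{5}{4}\arg(3S^3-4)$ and track these two arguments along the edge, using the corner expansion of \S\ref{opening-angle-subsection} for the limiting values and appealing to the univalence of $S$ to exclude spurious windings.  The paper instead runs a closed-contour argument: since $e^{\ell}$ is analytic and nonvanishing on $\mathcal{S}_0$, the total increment of $\Im\ell$ around any loop in $\mathcal{S}_0$ vanishes, so the increment along the edge equals minus the increment along a detour consisting of (i) the real segment $[x_e,M]$, (ii) a large arc $|x|=M$ to $\arg x=\pi/3$, (iii) the ray $\arg x=\pi/3$ inward to a small circle about the corner, and (iv) a small arc about the corner.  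The four pieces contribute $0$, $-\pi/4$ (from $S\sim -2/x$), $0$ (``it can be checked'' via the rotational/Schwarz symmetry), and $\pi/2$ (from $\rc^2\propto(x-\xi)^{1/2}$), summing to $\pi/4$.

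Your winding step can be completed as follows.  Schwarz symmetry plus univalence forces $S$ to map $\mathcal{S}_0^+:=\mathcal{S}_0\cap\{\Im x>0\}$ into $\{\Im S>0\}$: a real value of $S$ at a nonreal $x\in\mathcal{S}_0$ would give $S(x)=S(x^*)$ with $x\ne x^*$.  The rotational/Schwarz identity $S(te^{i\pi/3})=e^{-2\pi i/3}\overline{S(te^{i\pi/3})}$ forces $\arg S=2\pi/3$ on the ray $\arg x=\pi/3$ (for $|x|>|x_c|$), and the explicit polar form of arc~$\circled{3}$ from the proof of Lemma~\ref{lemma-S-univalent} (a rotation of arc~$\circled{1}$ by $2\pi/3$) shows arc~$\circled{3}$ lies in $\{2\pi/3\le\arg S\le\pi\}$.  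Hence the entire boundary of $S(\mathcal{S}_0^+)$ lies in that closed sector, and by the maximum principle so does $S(\mathcal{S}_0^+)$ itself.  This is exactly the control you asked for: $\arg S\in[2\pi/3,\pi]$ on the upper half of the edge pins $\arg S\to 2\pi/3$ at $\xi$ and gives $\Im(3S^3-4)=3\Im(S^3)\ge 0$, so $\arg(3S^3-4)$ stays in $[0,\pi]$ and its limiting value $3\pi/5$ is unambiguous.

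The two routes trade roughly the same ingredients.  The paper's detour replaces your sector bound on $\arg S$ with separate computations on simpler curves, but still needs the same symmetry argument on the ray $\arg x=\pi/3$ and the same local corner analysis for the small-arc piece.  Your route is more direct and yields the explicit formula for $\Im\ell$ in terms of $S$ as a byproduct; the paper's route avoids analyzing the shape of $S(\mathcal{S}_0^+)$ but at the cost of a five-piece contour and two separate limiting regimes ($M\uparrow\infty$, $\delta\downarrow 0$).
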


Recall that if $s(x;\epsilon)=+$ (respectively, if $s(x;\epsilon)=-$) and also if the expression written in \eqref{eq:edge-pert-plus} (respectively, the expression written in \eqref{eq:edge-pert-minus})
is bounded away from zero, then $\dot{\mathbf{P}}(z)$ will be under control.  
It is clear from \eqref{eq:edge-pert-plus}--\eqref{eq:edge-pert-minus} that the condition that $1+\rho_0Q_-\phi_-$ vanishes is exactly the same as the condition that $1+\rho_0Q_+\phi_+$ vanishes, provided that $K$ is replaced by $K-1$ in the former.  This shows that the existence of a singularity of $\mathbf{B}$ is insensitive to the jump discontinuities in the integer-valued function $K(x;\epsilon)$ defined by \eqref{eq:K-define}--\eqref{eq:K-define-zero}.  Taking $n=K$ (for $s=+$) or $n=K-1$ (for $s=-$), the condition $1+\rho_0Q_s\phi_s=0$ is equivalent to 
\begin{equation}
X_n(x;\epsilon)
\in\begin{cases}i\pi\epsilon\mathbb{Z}, &\quad m\equiv n\pmod{2},\\
i\pi\epsilon(\mathbb{Z}+\tfrac{1}{2}),&\quad m\not\equiv n \pmod{2}.
\end{cases}
\label{eq:edge-pole-condition}
\end{equation}
To find the singularities of $\mathbf{B}$, and hence of $\dot{\mathbf{P}}(z)$, we temporarily ignore any relation between the integer $n=0,1,2,3,\dots$ and $x$.  Consider solving \eqref{eq:edge-pole-condition} perturbatively in the limit $\epsilon\to 0$, or equivalently, $m\to\infty$.  Let $\alpha$ be a fixed real number in the interval $(-\tfrac{1}{2},\tfrac{1}{2})$, let $n\ge 0$ be a fixed integer, and suppose (first) that $m$ is tending to infinity through a sequence of integers of the same parity as $n$.  We then use \eqref{eq:edge-Xn-define} to write \eqref{eq:edge-pole-condition} in the form
\begin{equation}
\mathfrak{d}(x)+\frac{1}{2}\left(n+\frac{1}{2}\right)\epsilon\log(\epsilon^{-1})-\left(n+\frac{1}{2}\right)\epsilon\ell(x)+\epsilon\log(\sqrt{2\pi}h_n)=i\pi\epsilon (N_0(\alpha;\epsilon)+N_1),
\end{equation}
where $N_0(\alpha;\epsilon):=\llbracket\epsilon^{-1}\alpha\rrbracket$ and where $N_1\in\mathbb{Z}$ is assumed to be bounded.  Given $\alpha$, $n$, and $N_1$, there is a unique solution $x$ of this equation with an asymptotic expansion as $\epsilon\to 0$ of the form
\begin{equation}
x = x_0+x_1\epsilon\log(\epsilon^{-1}) +x_2\epsilon + o(\epsilon),\quad\epsilon\to 0.
\end{equation}
Indeed, since $\epsilon N_0(\alpha;\epsilon)\to \alpha$ as $\epsilon\to 0$, we obtain $x_0=\mathfrak{d}^{-1}(i\pi\alpha)$, where the inverse function to $\mathfrak{d}$ is guaranteed to exist and be analytic on the imaginary axis between $-i\pi/2$ and $i\pi/2$ by 
Lemma~\ref{lemma:d-univalent}.
Note that $x_0$ lies on the boundary arc $\partial T$ in the range $|\arg(x_0)|<\pi/3$.  At subsequent orders in perturbation theory we obtain 
\begin{equation}
x_1=-\frac{n+\tfrac{1}{2}}{2\mathfrak{d}'(x_0)}\quad\text{and}\quad
x_2=\frac{i\pi(\llbracket\epsilon^{-1}\alpha\rrbracket-\epsilon^{-1}\alpha + N_1)+(n+\tfrac{1}{2})\ell(x_0)-\log(\sqrt{2\pi}h_n)}{\mathfrak{d}'(x_0)}.
\end{equation}
Note that $-\tfrac{1}{2}<\llbracket\epsilon^{-1}\alpha\rrbracket-\epsilon^{-1}\alpha\le\tfrac{1}{2}$ by definition of the nearest integer function $\llbracket\cdot\rrbracket$.  Alternately, we could write $\ell(x)=\widetilde{\ell}(\mathfrak{d})$ using univalence of $\mathfrak{d}$ and obtain a simpler expansion of $\mathfrak{d}$:
\begin{equation}
\mathfrak{d}=i\pi\alpha -\frac{1}{2}(n+\tfrac{1}{2})\epsilon\log(\epsilon^{-1}) +
\left(i\pi(\llbracket\epsilon^{-1}\alpha\rrbracket-\epsilon^{-1}\alpha+N_1)+(n+\tfrac{1}{2})\widetilde{\ell}(i\pi\alpha)
-\log(\sqrt{2\pi}h_n)\right)\epsilon+o(\epsilon).
\label{eq:edge-d-sequence}
\end{equation}
In the case that $m$ tends to infinity through a sequence of integers of opposite parity to $n$, 
the above formulae hold true with $N_1$ replaced by $N_1+\tfrac{1}{2}\in\mathbb{Z}+\tfrac{1}{2}$.

These calculations show that the singularities of $\mathbf{B}$ lie $\epsilon$-close to curves in the $x$-plane that are mapped into vertical straight lines $\Re(\mathfrak{d})=-\tfrac{1}{2}(n+\tfrac{1}{2})\epsilon\log(\epsilon^{-1})$ in the $\mathfrak{d}$-plane, $n=0,1,2,\dots$.  Given a value of $\alpha\in (-\tfrac{1}{2},\tfrac{1}{2})$, the singularities of $\mathbf{B}$ near the line indexed by $n$ with $|\Im(\mathfrak{d})-\pi\alpha|=\mathcal{O}(\epsilon)$ form an approximate vertical lattice in the $\mathfrak{d}$-plane indexed by $N_1\in\mathbb{Z}$ with spacing $i\pi\epsilon$.  The lattice is offset from the point $\mathfrak{d}=i\pi\alpha-\tfrac{1}{2}(n+\tfrac{1}{2})\epsilon\log(\epsilon^{-1})$ by a complex shift of size $\mathcal{O}(\epsilon)$ that depends on both $m$ (or $\epsilon$) and $n$ (as well as $\alpha$).  Of particular interest is the way that the imaginary part of this offset depends on the integers $m$ and $n$.  Holding $m$ fixed, the difference between the imaginary part of the offset for line $n$ and $n-1$ is given by $\epsilon\pi/2+\epsilon \Im(\widetilde{\ell}(i\pi\alpha))$ (the term $\epsilon\pi/2$ comes from the replacement of $N_1$ by $N_1+\tfrac{1}{2}$ in \eqref{eq:edge-d-sequence}).  This implies a vertical (in the $\mathfrak{d}$-plane) ``staggering'' effect of the lattices corresponding to neighboring values of $n$ when examined near a common fixed value $\pi\alpha$ of $\Im(\mathfrak{d})$.  The amount of staggering as a fraction of the lattice spacing varies with $\alpha$ (as one moves along the edge).  For example, when $\alpha=0$ (and hence we are examining the singularities of $\mathbf{B}$ near the real axis in the $x$-plane) we have $\Im(\widetilde{\ell}(0))=0$ and therefore the vertical displacement of the lattices corresponding to neighboring values of $n$ is half of the lattice spacing.  As $\alpha$ increases to the extreme value of $\alpha=\tfrac{1}{2}$, $\Im(\widetilde{\ell}(i\pi\alpha))$ increases to $\pi/4$, and therefore near the upper corner $x=x_ce^{-2\pi i/3}$ the vertical displacement of the lattices corresponding to incrementing the value of $n$ is $3/4$ (or equivalently, $-1/4$) of the lattice spacing.  (This effect is clearly visible, with the correct staggering fractions, 
in Figure \ref{fig:u-poles-24-25}.)  On the other hand,
if $n$ is held fixed, the vertical shift of the lattice in the $\mathfrak{d}$-plane associated with replacing $m$ with $m+1$ is, to leading order, $\epsilon\pi/2 -\epsilon \pi\alpha$, where again the first term comes from replacing $N_1$ by $N_1+\tfrac{1}{2}$ in \eqref{eq:edge-d-sequence}.
Therefore when $\alpha=0$ one again has a vertical staggering by half of the lattice spacing, while as $\alpha$ increases to $\alpha=\tfrac{1}{2}$, the vertical staggering associated with replacing $m$ by $m+1$ becomes an integer multiple of the spacing itself, and hence the singularities of $\mathbf{B}$ near the corners of $T$ do not move much as $m$ changes.
(This effect is also visible in Figure \ref{fig:u-poles-24-25}.)
\begin{figure}[h]
\begin{center}
\includegraphics[height=3 in]{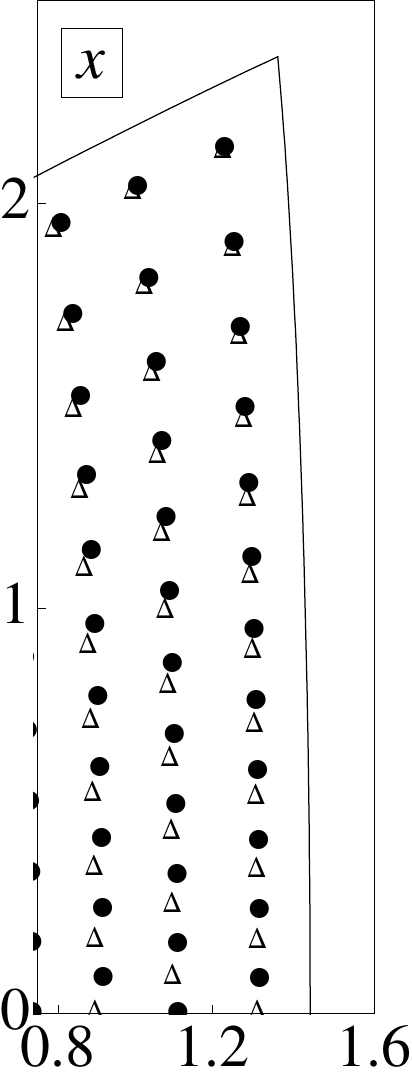}%
\end{center}
\caption{The poles of $\pu_{24}((24-\tfrac{1}{2})^{2/3}x)$ (triangles) and the 
poles of $\pu_{25}((25-\tfrac{1}{2})^{2/3}x)$ (dots) along an edge of $\partial T$ in 
the (upper half) complex $x$-plane.}
\label{fig:u-poles-24-25}
\end{figure}

We define a subset of the full sector $\mathcal{S}_0$ as follows:
\begin{equation}
\mathscr{S}_{m}^\infty(\delta):=\left\{y\in\mathcal{S}_0:\;|y-x|>\delta\epsilon \;\;\text{for all $x$ solving \eqref{eq:edge-pole-condition} for any $n=0,1,2,3,\dots$}\right\}.
\label{eq:edge-pole-cheese}
\end{equation}
The set $\mathscr{S}_m^\infty(\delta)$ omits from the sector $\mathcal{S}_0$ all ``holes'' of radius $\delta\epsilon$ centered at singularities of $\mathbf{B}$.  We have the following result.
\begin{lemma}
Let $\delta>0$ and $M>0$ be given.  Then $\dot{\mathbf{P}}(z;x,(m-\tfrac{1}{2})^{-1})$ and $\dot{\mathbf{P}}(z;x,(m-\tfrac{1}{2})^{-1})^{-1}$ are uniformly bounded for $m=0,1,2,3,\dots$,
for $x\in\mathscr{S}_m^\infty(\delta)$ with $K(x;(m-\tfrac{1}{2})^{-1})\le M$, and for all $z\in\mathbb{C}\setminus\partial\mathbb{D}_{\zc}$.  Under the same conditions, $\mathbf{B}(x;(m-\tfrac{1}{2})^{-1})=\mathcal{O}(Q_s)$, where the sign $s$ is determined as a function of $x$ and $\epsilon$ by \eqref{eq:edge-s-define}.
\label{lemma:edge-dotP-bounded-in-cheese}
\end{lemma}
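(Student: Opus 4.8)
The plan is to reduce the entire statement to one scalar estimate --- that the denominator $1+\rho_0Q_s\phi_s$ in the explicit formula \eqref{eq:A-formula} for $\mathbf{B}$ is bounded away from zero throughout $\mathscr{S}_m^\infty(\delta)$ --- and to prove that estimate by a quantitative inverse-function argument. First, the uniform boundedness of $\dot{\mathbf{P}}(z)$ and $\dot{\mathbf{P}}(z)^{-1}$ is equivalent to that of the single matrix $\mathbf{B}=\mathbf{B}(x;\epsilon)$: by \eqref{eq:ddotP-formula}, $\ddot{\mathbf{P}}(z)=\mathbb{I}+(z-\zc)^{-1}\mathbf{B}$, so for $z$ outside the $\epsilon$-independent disk $\mathbb{D}_{\zc}$ one has $\dot{\mathbf{P}}(z)=\ddot{\mathbf{P}}(z)=\mathbb{I}+\mathcal{O}(|\mathbf{B}|)$, while inside $\mathbb{D}_{\zc}$ the point $z=\zc$ is a removable singularity of $\dot{\mathbf{P}}$ --- which is exactly what the choice of $\mathbf{B}$ via \eqref{eq:edge-analytic-condition} arranges --- so $\dot{\mathbf{P}}$ is analytic there, continuous up to $\partial\mathbb{D}_{\zc}$, and by the maximum principle controlled by its boundary values $\ddot{\mathbf{P}}(z)\dot{\mathbf{V}}^{(\mathbf{P})}(z)^{-1}$; on $\partial\mathbb{D}_{\zc}$ one has $\ddot{\mathbf{P}}(z)=\mathbb{I}+\mathcal{O}(|\mathbf{B}|)$, $W(z)$ is bounded away from $0$, $\mathbf{F}^{(\zc,K)}(z)^{\pm1}$ is bounded for $K\le M$ by \eqref{eq:edge-FcK}, and $Q_s$ is bounded, so $\dot{\mathbf{V}}^{(\mathbf{P})}(z)^{-1}=\mathbf{F}^{(\zc,K)}(z)(\mathbb{I}+Q_sW(z)^{-1}\sigma_s)\mathbf{F}^{(\zc,K)}(z)^{-1}$ is bounded and $\dot{\mathbf{P}}(z)=\mathcal{O}(1+|\mathbf{B}|)$ on $\mathbb{C}\setminus\partial\mathbb{D}_{\zc}$. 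Since $\det\dot{\mathbf{P}}\equiv1$, the inverse is the adjugate and obeys the same bound.

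Now from \eqref{eq:A-formula}, $\mathbf{B}=-\rho_0Q_s(1+\rho_0Q_s\phi_s)^{-1}\mathbf{F}_0\sigma_s\mathbf{F}_0^{-1}$. The factor $\rho_0$ of \eqref{eq:edge-W-prime} and the matrix $\mathbf{F}_0\sigma_s\mathbf{F}_0^{-1}$ of \eqref{eq:edge-F0-sigmapm-F0inverse} are uniformly bounded for $x\in\mathcal{S}_\sigma$ with $K\le M$: the quantities $S,\Delta,\rc,\tc,\ec$ entering them are analytic and nonvanishing on $\mathcal{S}_0$, with $\rc$ and $\ec$ moreover bounded away from $0$ once $x$ is bounded away from the corners of $T$ and with $|\tc|>1$, and the exponents $\pm 2K$ are at most $2M$ in absolute value. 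Hence $\mathbf{B}=\mathcal{O}(Q_s)$, and since $Q_s$ is bounded (by the construction \eqref{eq:K-define}--\eqref{eq:edge-s-define} of $K$, $f$ and $s$), everything reduces --- via the previous paragraph --- to showing that $|1+\rho_0Q_s\phi_s|\ge c(\delta)>0$ uniformly for $x\in\mathscr{S}_m^\infty(\delta)$ with $K(x;\epsilon)\le M$ (under the standing hypothesis $x\in\mathcal{S}_\sigma$ of \S\ref{section:edge}).

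For this estimate, fix such an $x$, set $K:=K(x;\epsilon)$, and put $n:=K$ if $s(x;\epsilon)=+$ and $n:=K-1$ if $s(x;\epsilon)=-$ (so $0\le n\le M$, since $s=-$ forces $K\ge1$). By \eqref{eq:edge-pert-plus}--\eqref{eq:edge-pert-minus}, $1+\rho_0Q_s\phi_s=1-e^{w(x)}$ with $w(x):=\pm2X_n(x)/\epsilon+i\pi(m-n)$ and $X_n$ as in \eqref{eq:edge-Xn-define}. Treating $n$ now as a \emph{fixed} integer, $x\mapsto1-e^{w(x)}$ is analytic and its zeros are exactly the solutions of the pole condition \eqref{eq:edge-pole-condition} for that $n$, all of which are excised from $\mathscr{S}_m^\infty(\delta)$ by \eqref{eq:edge-pole-cheese}. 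I would use the elementary inequality $|1-e^\zeta|\ge c_0\min(1,\operatorname{dist}(\zeta,2\pi i\mathbb{Z}))$ for a universal $c_0>0$ and reduce to bounding $\operatorname{dist}(w(x),2\pi i\mathbb{Z})$ from below. On the part of the $x$-region where this distance can fail to be of order $1$ --- a shrinking collar about the edge, away from the corners, that for small $\epsilon$ lies inside a fixed compact subset of $\mathcal{S}_0$ --- Lemma~\ref{lemma:d-univalent} gives $\mathfrak{d}'$ bounded above and away from $0$, and since \eqref{eq:edge-Xn-define} yields $X_n'=\mathfrak{d}'+\mathcal{O}(\epsilon)$ and $X_n''=\mathfrak{d}''+\mathcal{O}(\epsilon)$ for $n\le M$, we get $w'\asymp\epsilon^{-1}$ and $w''=\mathcal{O}(\epsilon^{-1})$ with uniform constants. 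If $|1-e^{w(x)}|$ were below a suitable $\epsilon$-independent $\rho_*(\delta)$, then $w(x)$ would lie within $\mathcal{O}(\rho_*)$ of some lattice point $2\pi ik$; a single Newton step from $x$ for $w(\cdot)=2\pi ik$ --- convergent because $|w(x)-2\pi ik|=\mathcal{O}(1)$ while $|w'|\asymp\epsilon^{-1}$ and $|w''|=\mathcal{O}(\epsilon^{-1})$ --- would then produce a genuine solution $x'$ of \eqref{eq:edge-pole-condition} with $|x-x'|=\mathcal{O}(\rho_*\epsilon)$, contradicting $|x-x'|>\delta\epsilon$ once $\rho_*$ is small relative to $\delta$ and $\epsilon$ is small. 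This gives the required bound for all sufficiently large $m$; the finitely many remaining $m$ are handled individually, $\dot{\mathbf{P}}$ being then a continuous function of $(z,x)$ tending to $\mathbb{I}$ as $z\to\infty$ or $x\to\infty$.

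The main obstacle is precisely this scalar estimate, and within it the conversion of the $\delta\epsilon$-separation of $x$ from the excised lattice into an $\mathcal{O}(\delta)$-separation of $w(x)$ from $2\pi i\mathbb{Z}$: this matching works only because the $\epsilon^{-1}$ dilation built into $w$ is compensated, up to bounded factors, by $\mathfrak{d}'$ being bounded above and below --- the quantitative content of Lemma~\ref{lemma:d-univalent}. A secondary subtlety, already flagged in the text preceding \eqref{eq:edge-pole-condition}, is that the zero set of the denominator is insensitive to the jump discontinuities of $K(x;\epsilon)$, so that freezing $n$ at its value at the point under consideration --- rather than letting it track $K(x;\epsilon)$ --- loses nothing.
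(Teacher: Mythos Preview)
Your proposal is correct and follows the same route as the paper: reduce everything to the single scalar assertion that $1+\rho_0 Q_s\phi_s$ is bounded away from zero on $\mathscr{S}_m^\infty(\delta)$, and observe that the remaining ingredients ($\rho_0$, $\mathbf{F}_0\sigma_s\mathbf{F}_0^{-1}$, $Q_s$, $\det\dot{\mathbf{P}}\equiv1$) are already uniformly controlled. The paper's own proof is literally the one sentence ``The condition $x\in\mathscr{S}_m^\infty(\delta)$ bounds the denominator $1+\rho_0Q_s\phi_s$ away from zero,'' so your inverse-function/Newton argument is not a different method but rather the quantitative justification the paper leaves implicit --- converting the $\delta\epsilon$-exclusion in the $x$-plane into a uniform lower bound on $|1-e^{w(x)}|$ via $w'\asymp\epsilon^{-1}$, which in turn rests on $\mathfrak{d}'$ being bounded above and below (Lemma~\ref{lemma:d-univalent}).
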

\begin{proof}
The condition $x\in\mathscr{S}_m^\infty(\delta)$ bounds the denominator $1+\rho_0Q_s\phi_s$ away from zero.
\end{proof}

\subsection{Error analysis by small-norm theory}
Let positive numbers $\sigma$, $\delta$, and $M$ be given, and suppose that $x\in\mathcal{S}_\sigma\cap\mathscr{S}_m^\infty(\delta)$ and that $K(x;\epsilon)\le M$.  Consider the matrix (the \emph{error} in approximating $\mathbf{P}(z)$ by its parametrix $\dot{\mathbf{P}}(z)$) defined by
\begin{equation}
\mathbf{E}(z):=\mathbf{P}(z)\dot{\mathbf{P}}(z)^{-1}
\end{equation}
for all $z\in\mathbb{C}$ for which both factors are defined.  Thus, $\mathbf{E}(z)$ is analytic for $z\in\mathbb{C}\setminus\Sigma^{(\mathbf{E})}$, where $\Sigma^{(\mathbf{E})}=\Sigma^{(\mathbf{P})}$ because the jump contour for $\dot{\mathbf{P}}(z)$ is contained within that of $\mathbf{P}(z)$.  Let $\mathbf{V}^{(\mathbf{E})}(z)$ denote the jump matrix for $\mathbf{E}(z)$, i.e., $\mathbf{E}_+(z)=\mathbf{E}_-(z)\mathbf{V}^{(\mathbf{E})}(z)$ holds at each regular point of $\Sigma^{(\mathbf{E})}$.  For $z\in\Sigma^{(\mathbf{E})}\setminus\partial\mathbb{D}_{\zc}$, we have
\begin{equation}
\mathbf{V}^{(\mathbf{E})}(z)=\dot{\mathbf{P}}(z)\mathbf{V}^{(\mathbf{P})}(z)\dot{\mathbf{P}}(z)^{-1} =\mathbb{I}+\mathcal{O}(\epsilon),
\quad z\in\Sigma^{(\mathbf{E})}\setminus\partial\mathbb{D}_{\zc},
\label{eq:edge-error-jump-outside}
\end{equation}
because $\dot{\mathbf{P}}(z)$ and its inverse are uniformly bounded in the complex plane according to Lemma~\ref{lemma:edge-dotP-bounded-in-cheese}, and $\mathbf{V}^{(\mathbf{P})}(z)-\mathbb{I}$ is dominated by its uniformly $\mathcal{O}(\epsilon)$ behavior for $z\in\partial\mathbb{D}_a\cup\partial\mathbb{D}_b$.  The estimate \eqref{eq:edge-error-jump-outside} holds both in $L^\infty(\Sigma^{(\mathbf{E})})$ and in the weighted $L^1$-space $L^1(\Sigma^{(\mathbf{E})},z^2)$.
On the other hand, for $z\in\partial\mathbb{D}_{\zc}\subset\Sigma^{(\mathbf{E})}$, the jump matrix for $\mathbf{E}(z)$ is generally a larger (but still small) perturbation of the identity:
\begin{equation}
\begin{split}
\mathbf{V}^{(\mathbf{E})}(z)&=\dot{\mathbf{P}}_-(z)\mathbf{V}^{(\mathbf{P})}(z)\dot{\mathbf{V}}^{(\mathbf{P})}(z)^{-1}\dot{\mathbf{P}}_-(z)^{-1}\\ &=
\dot{\mathbf{P}}_-(z)\mathbf{F}^{(\zc,K)}(z)
\left(\mathbb{I}-\frac{Q_{\overline{s}}}{W(z)}\sigma_{\overline{s}} +\mathcal{O}(\epsilon)\right)\mathbf{F}^{(\zc,K)}(z)^{-1}\dot{\mathbf{P}}_-(z)^{-1},\quad z\in\partial\mathbb{D}_{\zc},
\end{split}
\label{eq:edge-error-jump-circle}
\end{equation}
where we recall that $\overline{s}$ is the sign opposite to $s$.  
Note that since $Q_{\overline{s}}=\mathcal{O}(\epsilon^{1/2})$, 
\eqref{eq:edge-error-jump-circle} could be written with less precision simply as $\mathbf{V}^{(\mathbf{E})}(z)=\mathbb{I}+\mathcal{O}(\epsilon^{1/2})$ with the estimate holding uniformly for $z\in\partial\mathbb{D}_{\zc}$.  Combining these estimates 
and applying the theory of small-norm Riemann-Hilbert problems as described in \cite[Appendix B]{Buckingham-Miller-rational-noncrit},
we see that $\mathbf{E}(z)$ is determined uniquely by its jump matrix \eqref{eq:edge-error-jump-outside}--\eqref{eq:edge-error-jump-circle} together with the normalization condition $\mathbf{E}(z)\to\mathbb{I}$ as $z\to\infty$, and it has an expansion for large $z$ of the form
\begin{equation}
\mathbf{E}(z)=\mathbb{I}+\mathbf{E}_1z^{-1}+\mathbf{E}_2z^{-2}+o(z^{-2}),\quad z\to\infty,
\end{equation}
where the overall $\mathcal{O}(\epsilon^{1/2})$ estimate of $\mathbf{V}^{(\mathbf{E})}(z)-\mathbb{I}$ and the more precise estimates \eqref{eq:edge-error-jump-outside}--\eqref{eq:edge-error-jump-circle} imply that (see equation (B-18) of \cite{Buckingham-Miller-rational-noncrit})
\begin{equation}
\begin{split}
\mathbf{E}_1&=\frac{Q_{\overline{s}}}{2\pi i}\oint_{\partial\mathbb{D}_{\zc}}\dot{\mathbf{P}}_-(z)\mathbf{F}^{(\zc,K)}(z)\sigma_{\overline{s}}\mathbf{F}^{(\zc,K)}(z)^{-1}\dot{\mathbf{P}}_-(z)^{-1}\frac{dz}{W(z)} + \mathcal{O}(\epsilon),\\
\mathbf{E}_2&=\frac{Q_{\overline{s}}}{2\pi i}\oint_{\partial\mathbb{D}_{\zc}}\dot{\mathbf{P}}_-(z)\mathbf{F}^{(\zc,K)}(z)\sigma_{\overline{s}}\mathbf{F}^{(\zc,K)}(z)^{-1}\dot{\mathbf{P}}_-(z)^{-1}\frac{z\,dz}{W(z)}+ \mathcal{O}(\epsilon).
\end{split}
\label{eq:edge-E1-E2}
\end{equation}
The error terms are uniform with respect to $x$ in the specified domain because a finite number of different contours $\Sigma^{(\mathbf{E})}$ suffice even though $x$ can take an infinite number of values (for details about how these contours are chosen, see \cite[\S3.6.3]{Buckingham-Miller-rational-noncrit}).
The explicit terms in \eqref{eq:edge-E1-E2} may be computed by residues (recall that $\partial\mathbb{D}_{\zc}$ is oriented negatively):
\begin{equation}
\begin{split}
\mathbf{E}_1&=-\rho_0Q_{\overline{s}}\dot{\mathbf{P}}(\zc)\mathbf{F}_0\sigma_{\overline{s}}\mathbf{F}_0^{-1}\dot{\mathbf{P}}(\zc)^{-1} + \mathcal{O}(\epsilon),\\
\mathbf{E}_2&=-\zc\rho_0Q_{\overline{s}}\dot{\mathbf{P}}(\zc)\mathbf{F}_0\sigma_{\overline{s}}\mathbf{F}_0^{-1}\dot{\mathbf{P}}(\zc)^{-1} + \mathcal{O}(\epsilon).
\end{split}
\end{equation}
Of course $\mathbf{E}_1=\mathcal{O}(\epsilon)$ and $\mathbf{E}_2=\mathcal{O}(\epsilon)$ 
for $K=0$ because $Q_-=0$ in this case.  To further simplify these matrices for $K\ge 1$, 
we first calculate $\dot{\mathbf{P}}(\zc)$ by continuing the expansion \eqref{eq:edge-LHS-expand}
to the next order in $z-\zc$.  Using the fact that $\mathbf{B}=\mathcal{O}(Q_s)$ according to 
Lemma~\ref{lemma:edge-dotP-bounded-in-cheese}, we obtain
\begin{equation}
\begin{split}
\dot{\mathbf{P}}(\zc)&=\mathbf{F}_0(\mathbb{I}+\rho_1Q_s\sigma_s)\mathbf{F}_0^{-1}+\rho_0Q_s\mathbf{F}_1\sigma_s\mathbf{F}_0^{-1}-\rho_0Q_s\mathbf{F}_0\sigma_s\mathbf{F}_0^{-1}\mathbf{F}_1\mathbf{F}_0^{-1}+\mathcal{O}(\mathbf{B})\\ & = \mathbb{I}+\mathcal{O}(Q_s).
\end{split}
\end{equation}
Similarly, by expanding instead $\dot{\mathbf{V}}^{(\mathbf{P})}(z)(\mathbb{I}+(z-\zc)^{-1}\mathbf{B})^{-1}$, which equals $\dot{\mathbf{V}}^{(\mathbf{P})}(z)(\mathbb{I}-(z-\zc)^{-1}\mathbf{B})$ since $\mathbf{B}^2=\mathbf{0}$, we obtain
\begin{equation}
\begin{split}
\dot{\mathbf{P}}(\zc)^{-1}&=
\mathbf{F}_0(\mathbb{I}-\rho_1Q_s\sigma_s)\mathbf{F}_0^{-1}-\rho_0Q_s\mathbf{F}_1\sigma_s\mathbf{F}_0^{-1}+\rho_0Q_s\mathbf{F}_0\sigma_s\mathbf{F}_0^{-1}\mathbf{F}_1\mathbf{F}_0^{-1}+\mathcal{O}(\mathbf{B})\\ &=\mathbb{I}+\mathcal{O}(Q_s).
\end{split}
\end{equation}
Therefore, using \eqref{eq:edge-Qplus-Qminus-product}, 
\begin{equation}
\begin{split}
\mathbf{E}_1&=-\rho_0Q_{\overline{s}}\mathbf{F}_0\sigma_{\overline{s}}\mathbf{F}_0^{-1}+\mathcal{O}(\epsilon),\\
\mathbf{E}_2&=-\zc\rho_0Q_{\overline{s}}\mathbf{F}_0\sigma_{\overline{s}}\mathbf{F}_0^{-1}+\mathcal{O}(\epsilon)
\end{split}
\end{equation}
both hold for $1\le K\le M$ with $x\in\mathscr{S}_m^\infty(\delta)$.  Finally, since $Q_{\overline{s}}^2=\mathcal{O}(\epsilon)$ (a fact which also bounds $1+\rho_0Q_{\overline{s}}\phi_{\overline{s}}$ away from zero), we choose to write these formulae in the equivalent form
\begin{equation}
\begin{split}
\mathbf{E}_1&=-\frac{\rho_0Q_{\overline{s}}\phi_{\overline{s}}}{1+\rho_0Q_{\overline{s}}\phi_{\overline{s}}}\cdot\frac{1}{\phi_{\overline{s}}}
\mathbf{F}_0\sigma_{\overline{s}}\mathbf{F}_0^{-1}+\mathcal{O}(\epsilon),\\
\mathbf{E}_2&=-\zc\cdot\frac{\rho_0Q_{\overline{s}}\phi_{\overline{s}}}{1+\rho_0Q_{\overline{s}}\phi_{\overline{s}}}\cdot\frac{1}{\phi_{\overline{s}}}
\mathbf{F}_0\sigma_{\overline{s}}\mathbf{F}_0^{-1}+\mathcal{O}(\epsilon).
\end{split}
\label{eq:edge-E1-E2-sym}
\end{equation}
The form \eqref{eq:edge-E1-E2-sym} yields the most symmetric expressions for the asymptotic behavior of the rational Painlev\'e-II functions, allowing us to write formulae that do not involve the sign $s$ defined by \eqref{eq:edge-s-define}, as we will now see.

\begin{remark}
Without computing the leading terms of $\mathbf{E}_1$ and $\mathbf{E}_1$ by residues, one may obtain directly from the crude estimate $\mathbf{V}^{(\mathbf{E})}(z)-\mathbb{I}=\mathcal{O}(\epsilon^{1/2})$ that $\mathbf{E}_1=\mathcal{O}(\epsilon^{1/2})$ and $\mathbf{E}_2=\mathcal{O}(\epsilon^{1/2})$.  This is enough to obtain asymptotic formulae for the rational Painlev\'e-II functions $\pu_m$, $\pv_m$, $\pp_m$, and $\pq_m$ that are accurate also to $\mathcal{O}(\epsilon^{1/2})$.  
Of course, one may freely modify the leading terms in such formulae by adding in quantities of size $\mathcal{O}(\epsilon^{1/2})$, and the estimate of the error will be unchanged.  It turns out that a natural way to so modify the leading terms is to add in precisely those $\mathcal{O}(\epsilon^{1/2})$ quantities that correspond to the leading terms of $\mathbf{E}_1$ and $\mathbf{E}_2$.  This choice leads to the simplest asymptotic formulae for $\pu_m$, $\pv_m$, $\pp_m$, and $\pq_m$.  

A key point is therefore that the same corrections that one would like to add to arrive at the simplest possible formulae for $\pu_m$, $\pv_m$, $\pp_m$, and $\pq_m$ (those corresponding to the leading terms in $\mathbf{E}_1$ and $\mathbf{E}_2$ computed above by residues) are also the correct terms to add to reduce the error in size from $\mathcal{O}(\epsilon^{1/2})$ to $\mathcal{O}(\epsilon)$.
We only wish to emphasize that the calculation of residues is not necessary if the cruder error estimate of $\mathcal{O}(\epsilon^{1/2})$ suffices for applications.
\myendrmk
\end{remark}

\subsection{Asymptotic formulae for the rational Painlev\'e-II functions}
In terms of $\mathbf{E}(z)$, the matrix $\mathbf{O}(z)$ can be represented explicitly as
\begin{equation}
\mathbf{O}(z)=\mathbf{P}(z)\dot{\mathbf{O}}(z)=\mathbf{E}(z)\dot{\mathbf{P}}(z)\dot{\mathbf{O}}(z).
\end{equation}
The product $\mathbf{E}(z)\dot{\mathbf{P}}(z)$ therefore plays a similar role in the analysis of the Painlev\'e-II rational functions when $x$ is near the edge $\partial T$ that the matrix $\mathbf{E}(z)$ alone plays for $x$ outside of the elliptic region.  Adapting the exact formulae \eqref{eq:um-vm-pm-qm-ito-O-edge} for the functions $\pu_m$, $\pv_m$, $\pp_m$, and $\pq_m$ 
with this modification, we obtain
\begin{equation}
\epsilon^{(2+\epsilon)/(3\epsilon)}e^{-\lambda/\epsilon}\pu_m=\dot{O}_{1,12}+(\dot{\mathbf{P}}_1+\mathbf{E}_1)_{12},
\label{eq:edge-Um-exact}
\end{equation}
\begin{equation}
\epsilon^{-(2-\epsilon)/(3\epsilon)}e^{\lambda/\epsilon}\pv_m=\dot{O}_{1,21}+(\dot{\mathbf{P}}_1+\mathbf{E}_1)_{21},
\label{eq:edge-Vm-exact}
\end{equation}
\begin{multline}
\epsilon^{1/3}\pp_m=\dot{O}_{1,22}+(\dot{\mathbf{P}}_1+\mathbf{E}_1)_{22}\\{}-
\frac{\dot{O}_{2,12}+(\dot{\mathbf{P}}_2+\mathbf{E}_2)_{12}+(\dot{\mathbf{P}}_1+\mathbf{E}_1)_{11}\dot{O}_{1,12}+(\dot{\mathbf{P}}_1+\mathbf{E}_1)_{12}\dot{O}_{1,22}+(\mathbf{E}_1\dot{\mathbf{P}}_1)_{12}}{\dot{O}_{1,12}+(\dot{\mathbf{P}}_1+\mathbf{E}_1)_{12}},
\label{eq:edge-pp-formula1}
\end{multline}
and
\begin{multline}
\epsilon^{1/3}\pq_m=-\dot{O}_{1,11}-(\dot{\mathbf{P}}_1+\mathbf{E}_1)_{11}\\{}+
\frac{\dot{O}_{2,21}+(\dot{\mathbf{P}}_2+\mathbf{E}_2)_{21}+(\dot{\mathbf{P}}_1+\mathbf{E}_1)_{21}\dot{O}_{1,11}+(\dot{\mathbf{P}}_1+\mathbf{E}_1)_{22}\dot{O}_{1,21}+(\mathbf{E}_1\dot{\mathbf{P}}_1)_{21}}{\dot{O}_{1,21}+(\dot{\mathbf{P}}_1+\mathbf{E}_1)_{21}}.
\label{eq:edge-pq-formula1}
\end{multline}
Here, $\dot{\mathbf{P}}_1$ and $\dot{\mathbf{P}}_2$ (respectively, 
$\dot{\mathbf{O}}_1$ and $\dot{\mathbf{O}}_2$) are matrix coefficients in the 
expansion of $\dot{\mathbf{P}}(z)$ (respectively, $\dot{\mathbf{O}}(z)$) for 
large $z$.  Since $\dot{\mathbf{P}}(z)=\ddot{\mathbf{P}}(z)$ for sufficiently large $|z|$, from \eqref{eq:ddotP-formula} we have
\begin{equation}
\dot{\mathbf{P}}(z)=\mathbb{I}+\dot{\mathbf{P}}_1z^{-1}+\dot{\mathbf{P}}_2z^{-2}+\mathcal{O}(z^{-3}),\quad z\to\infty,
\end{equation}
where, also using \eqref{eq:A-formula}, we have
\begin{equation}
\dot{\mathbf{P}}_1=\mathbf{B}=-\frac{\rho_0Q_s\phi_s}{1+\rho_0Q_s\phi_s}\cdot\frac{1}{\phi_s}\mathbf{F}_0\sigma_s\mathbf{F}_0^{-1}\quad\text{and}\quad
\dot{\mathbf{P}}_2=\zc\mathbf{B}=-\zc\cdot\frac{\rho_0Q_s\phi_s}{1+\rho_0Q_s\phi_s}\cdot\frac{1}{\phi_s}\mathbf{F}_0\sigma_s\mathbf{F}_0^{-1}.
\end{equation}
We also need the first few coefficients in the large-$z$ expansion of the matrix $\dot{\mathbf{O}}(z)$.  Since $\dot{\mathbf{O}}(z)=\dot{\mathbf{O}}^{(\mathrm{out},K)}(z)$ for sufficiently large $|z|$, we have 
\begin{equation}
\dot{\mathbf{O}}(z)=\mathbb{I}+\dot{\mathbf{O}}_1z^{-1}+\dot{\mathbf{O}}_2z^{-2}+\mathcal{O}(z^{-3}),\quad z\to\infty,
\end{equation}
where the coefficients $\dot{\mathbf{O}}_1$ and $\dot{\mathbf{O}}_2$ may be computed by combining \eqref{eq:edge-beta-define}, \eqref{Odot-gen0}, \eqref{eq:edge-Joukowski}, and \eqref{eq:edge-correction-factor} with \eqref{eq:edge-Outer-parametrix}.  The result is that
\begin{equation}
\dot{\mathbf{O}}_1 = \begin{bmatrix}\displaystyle\frac{K\Delta}{4}\left(\frac{2}{i\tc}+\frac{4S}{\Delta}\right) & \displaystyle(-1)^K\frac{\Delta \tc^{2K}}{4}\\
\displaystyle (-1)^K\frac{\Delta}{4\tc^{2K}} & \displaystyle -\frac{K\Delta}{4}\left(\frac{2}{i\tc}+\frac{4S}{\Delta}\right)\end{bmatrix}
\label{eq:edge-dot-O1}
\end{equation}
and
\begin{equation}
\dot{\mathbf{O}}_2=\begin{bmatrix}\dot{O}_{2,11} & \displaystyle (-1)^K
\frac{\Delta \tc^{2K}}{4}\left(\frac{S}{2}-\frac{K\Delta}{4}\left(\frac{2}{i\tc}+\frac{4S}{\Delta}\right)\right)\\
\displaystyle (-1)^K\frac{\Delta}{4\tc^{2K}}\left(\frac{S}{2}+\frac{K\Delta}{4}\left(\frac{2}{i\tc}+\frac{4S}{\Delta}\right)\right) & \dot{O}_{2,22}\end{bmatrix}.
\label{eq:edge-dot-O2}
\end{equation}
Therefore, assuming that $x\in\mathcal{S}_\sigma\cap\mathscr{S}_m^\infty(\delta)$ and $K(x;\epsilon)\le M$ for some fixed constants $\sigma>0$, $\delta>0$, and $M>0$, if $K\ge 1$ then 
\begin{multline}
\epsilon^{(2+\epsilon)/(3\epsilon)}e^{-\lambda/\epsilon}\pu_m=\\
(-1)^K\frac{\Delta \tc^{2K}}{4}-\frac{\rho_0Q_+\phi_+}{1+\rho_0Q_+\phi_+}\cdot\frac{1}{\phi_+}
(\mathbf{F}_0\sigma_+\mathbf{F}_0^{-1})_{12}-\frac{\rho_0Q_-\phi_-}{1+\rho_0Q_-\phi_-}\cdot\frac{1}{\phi_-}(\mathbf{F}_0\sigma_-\mathbf{F}_0^{-1})_{12}+\mathcal{O}(\epsilon).
\end{multline}
Note that as a consequence of taking the matrix $\mathbf{E}_1$ in the form \eqref{eq:edge-E1-E2-sym},  the explicit terms in this formula are independent of the sign $s$ defined in \eqref{eq:edge-s-define}.  On the other hand, for $K=0$ we have 
\begin{equation}
\epsilon^{(2+\epsilon)/(3\epsilon)}e^{-\lambda/\epsilon}\pu_m=\frac{\Delta}{4}-\frac{\rho_0Q_+\phi_+}{1+\rho_0Q_+\phi_+}\cdot\frac{1}{\phi_+}(\mathbf{F}_0\sigma_+\mathbf{F}_0^{-1})_{12}+\mathcal{O}(\epsilon).
\end{equation} 
Therefore, using \eqref{eq:edge-F0-sigmapm-F0inverse-fraction} and \eqref{eq:edge-pert-plus}--\eqref{eq:edge-pert-minus}, we obtain for $K\ge 1$ that
\eq
\begin{split}
\epsilon^{(2+\epsilon)/(3\epsilon)}e^{-\lambda/\epsilon}\pu_m=-i
(-1)^K\tc^{2K}\Bigg(\frac{i\Delta}{4}&+\frac{\rc}{i\Delta}(\rc-S)(\mathcal{H}_K-1)\\
  & -\frac{\rc}{i\Delta}(\rc+S)(\mathcal{H}_{K-1}+1)
\Bigg)+\mathcal{O}(\epsilon),
\end{split}
\endeq
and for $K=0$ that
\begin{equation}
\epsilon^{(2+\epsilon)/(3\epsilon)}e^{-\lambda/\epsilon}\pu_m=
-i\left(\frac{i\Delta}{4}+\frac{\rc}{i\Delta}(\rc-S)(\mathcal{H}_0-1)\right)
+\mathcal{O}(\epsilon),
\end{equation}
where
\begin{equation}
\mathcal{H}_n=\mathcal{H}_n(x;\epsilon):=\begin{cases}\coth(\epsilon^{-1}X_n),\quad &n\equiv m\pmod{2},\\
\tanh(\epsilon^{-1}X_n),\quad & n\not\equiv m\pmod{2}.
\end{cases}
\label{eq:Hn-edge}
\end{equation}
%
It follows from the construction of $\lambda=\lambda(x)$ summarized in \S\ref{section:g-define} that
for real $x>0$ we have $\Im(\lambda)=-\pi$.  Let
\begin{equation}
\mu(x):=\lambda(x)+i\pi.
\label{eq:edge-mu-def}
\end{equation}
Then $\mu$ is a Schwarz-symmetric analytic function of $x$ defined in the sector $\mathcal{S}_0$.
Using \eqref{epsilon}, we then arrive at the following asymptotic formulae for $\pu_m$:  if $K(x;\epsilon)\ge 1$, 
\eq
\begin{split}
m^{-2m/3}&e^{-m\mu}\pu_m \\
=e&^{-1/3-\mu/2}(-1)^{m-K}\tc^{2K}\Bigg(\frac{i\Delta}{4}+
\frac{\rc}{i\Delta}(\rc-S)(\mathcal{H}_K-1) -\frac{\rc}{i\Delta}(\rc+S)(\mathcal{H}_{K-1}+1)
\Bigg)+\mathcal{O}(m^{-1}),
\label{eq:U-edge-K-positive}
\end{split}
\endeq
while in the region where $K(x;\epsilon)=0$,
\begin{equation}
m^{-2m/3}e^{-m\mu}\pu_m=e^{-1/3-\mu/2}(-1)^m\left(\frac{i\Delta}{4}+\frac{\rc}{i\Delta}(\rc-S)
(\mathcal{H}_0-1)
\right)+\mathcal{O}(m^{-1}).
\label{eq:U-edge-K-zero}
\end{equation}
Note that $\mu$, $i\Delta$, $\rc$, $S$, and $X_n$, $n=0,1,2,3,\dots$ are all real-valued for positive real $x$.  Similar calculations starting from \eqref{eq:edge-Vm-exact} yield
\eq
\begin{split}
m&^{2(m-1)/3}e^{m\mu}\pv_m \\
&=e^{1/3+\mu/2}\frac{(-1)^{m-K+1}}{\tc^{2K}}\Bigg(\frac{i\Delta}{4}+
\frac{\rc}{i\Delta}(\rc+S)(\mathcal{H}_K-1) -\frac{\rc}{i\Delta}(\rc-S)(\mathcal{H}_{K-1}+1)
\Bigg)+\mathcal{O}(m^{-1})
\label{eq:V-edge-K-positive}
\end{split}
\endeq
when $K(x;\epsilon)\geq 1$, while
\begin{equation}
m^{2(m-1)/3}e^{m\mu}\pv_m=e^{1/3+\mu/2}(-1)^{m+1}\left(\frac{i\Delta}{4}+\frac{\rc}{i\Delta}(\rc+S)(\mathcal{H}_0-1)\right)+\mathcal{O}(m^{-1})
\label{eq:V-edge-K-zero}
\end{equation}
when $K(x;\epsilon)=0$.

%
%

Now we show that we can dispense with the technical device of introducing the integer-valued function $K(x;\epsilon)$.
\begin{theorem}[Asymptotics of $\pu_m$ and $\pv_m$ near an edge of $T$]
Define $\dot{\pu}^{\mathrm{Edge}}=\dot{\pu}^{\mathrm{Edge}}(x;m)$ and 
$\dot{\pv}^{\mathrm{Edge}}=\dot{\pv}^{\mathrm{Edge}}(x;m)$ by
\eq
\dot{\pu}^{\mathrm{Edge}}:=(-1)^me^{-1/3-\mu/2}\left[\frac{i\Delta}{4}+\frac{\rc}{i\Delta}(\rc-S)\sum_{n=0}^\infty(-1)^n\tc^{2n}(\mathcal{H}_n-1)\right]
\label{eq:dotu}
\endeq
and
\eq
\dot{\pv}^{\mathrm{Edge}}:=(-1)^{m+1}e^{1/3+\mu/2}\left[\frac{i\Delta}{4}+\frac{\rc}{i\Delta}(\rc+S)\sum_{n=0}^\infty\frac{(-1)^n}{\tc^{2n}}(\mathcal{H}_n-1)\right],
\label{eq:dotv}
\end{equation}
where $S=S(x)$, $\Delta=\Delta(x)$, $r_*=r_*(x)$, $t_*=t_*(x)$, 
$\mathcal{H}_n=\mathcal{H}_n(x;\epsilon)=\mathcal{H}_n(x;(m-\tfrac{1}{2})^{-1})$, and 
$\mu=\mu(x)$ are 
defined by \eqref{cubic-equation}, \eqref{eq:Delta-define}, \eqref{eq:edge-rc}, 
\eqref{eq:edge-tc}, \eqref{eq:Hn-edge}, and \eqref{eq:edge-mu-def}, respectively.
Recall that $\mathcal{S}_\sigma$, $\mathfrak{d}(x)$, and 
$\mathscr{S}_m^\infty(\delta)$ are defined by \eqref{eq:edge-sector-define}, 
\eqref{eq:d-define}, and \eqref{eq:edge-pole-cheese}, respectively, and that the 
Painlev\'e functions $\pu_m=\pu_m(y)$ and $\pv_m=\pv_m(y)$ are given for positive integer
$m$ in \eqref{backlund-positive}. 
Then, uniformly for $x\in\mathcal{S}_\sigma\cap\mathscr{S}_m^\infty(\delta)$ for fixed $\sigma>0$ and $\delta>0$, and $\Re(\mathfrak{d}(x))\ge -Mm^{-1}\log(m)$ for some fixed $M>0$,
\eq
m^{-2m/3}e^{-m\mu(x)}\pu_m\left(\left(m-\tfrac{1}{2}\right)^{2/3}x\right) = \dot{\pu}^{\mathrm{Edge}}(x;m) + \mathcal{O}(m^{-1}),\quad m\to +\infty
\label{eq:u-edge-approx}
\endeq
and
\eq
m^{2(m-1)/3}e^{m\mu(x)}\pv_m\left(\left(m-\tfrac{1}{2}\right)^{2/3}x\right) = \dot{\pv}^{\mathrm{Edge}}(x;m) + \mathcal{O}(m^{-1}),\quad m\to +\infty.
\label{eq:v-edge-approx}
\endeq
\label{theorem:edge}
\end{theorem}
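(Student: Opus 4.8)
The plan is to obtain \eqref{eq:u-edge-approx}--\eqref{eq:v-edge-approx} from the $K$-dependent asymptotic formulae \eqref{eq:U-edge-K-positive}--\eqref{eq:U-edge-K-zero} and \eqref{eq:V-edge-K-positive}--\eqref{eq:V-edge-K-zero}, already established above with error $\mathcal{O}(\epsilon)=\mathcal{O}(m^{-1})$, by verifying that their finite, $K$-dependent right-hand sides agree up to a further $\mathcal{O}(m^{-1})$ with the $K$-free series $\dot{\pu}^{\mathrm{Edge}}$ and $\dot{\pv}^{\mathrm{Edge}}$ of \eqref{eq:dotu}--\eqref{eq:dotv}. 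First I would record that the theorem's hypothesis $\Re(\mathfrak{d}(x))\ge -Mm^{-1}\log(m)$ forces, via \eqref{eq:d-define} and the definition \eqref{eq:K-define} of $K$, an absolute bound $K(x;\epsilon)\le M'$ with $M'=M'(M)$; combined with $x\in\mathcal{S}_\sigma\cap\mathscr{S}_m^\infty(\delta)$ this is exactly the setting in which Lemma~\ref{lemma:edge-dotP-bounded-in-cheese} holds and in which \eqref{eq:U-edge-K-positive}--\eqref{eq:V-edge-K-zero} were derived, so only the series identification remains to be checked.

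For $\pu_m$ I would split the series $\sum_{n\ge 0}(-1)^n\tc^{2n}(\mathcal{H}_n-1)$ of \eqref{eq:dotu} into the blocks $0\le n\le K-2$, the pair $n\in\{K-1,K\}$, and $n\ge K+1$. Writing $-\Re(2\mathfrak{c}(x))/(\epsilon\log(\epsilon^{-1}))=K+\theta$ with $\theta\in(-\tfrac12,\tfrac12]$ and using \eqref{eq:edge-Xn-define} gives, for $n$ over any bounded set, $\Re(X_n)=\tfrac12(n-K-\theta+\tfrac12)\epsilon\log(\epsilon^{-1})+\mathcal{O}(\epsilon)$. Hence for $n\le K-2$ we have $\Re(X_n)\le-\tfrac12\epsilon\log(\epsilon^{-1})+\mathcal{O}(\epsilon)$, so by \eqref{eq:Hn-edge} (whichever of $\coth$, $\tanh$ applies) $\mathcal{H}_n=-1+\mathcal{O}(\epsilon)$, and this block equals $-2\sum_{n=0}^{K-2}(-1)^n\tc^{2n}+\mathcal{O}(\epsilon)$, a finite geometric sum that I would evaluate in closed form using $1+\tc^2=2\rc/(\rc+S)$ (the second identity of Lemma~\ref{lemma-identities}). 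For $n\ge K+1$ we have $\Re(X_n)\ge\tfrac12\epsilon\log(\epsilon^{-1})+\mathcal{O}(\epsilon)$ and growing in $n$, so $\tc^{2n}(\mathcal{H}_n-1)=\mathcal{O}(\tc^{2K}(\tc^{2}\epsilon)^{\,n-K})$; since $\tc$ is bounded on $\mathcal{S}_\sigma$ and $\tc^{2}\epsilon=o(1)$, this tail is $\mathcal{O}(\epsilon)$, uniformly in $x$. The terms $n=K-1$ and $n=K$ are kept unchanged.

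Adding the three pieces, the closed-form value of the $n\le K-2$ block, the constant $\tfrac{i\Delta}{4}$, and the $n=K-1$ term combine --- by means of $\tc^2(\rc+S)=\rc-S$ (Lemma~\ref{lemma-identities}), $\rc^2-S^2=-\tfrac14\Delta^2$ (immediate from $\rc^2=S^2-\tfrac14\Delta^2$), and $(i\Delta)^2=-\Delta^2$, which together produce the key algebraic identity $\tfrac{i\Delta}{4}(1+\tc^2)=\tfrac{2\rc}{i\Delta}(\rc-S)$ --- to reproduce the right-hand side of \eqref{eq:U-edge-K-positive} up to $\mathcal{O}(\epsilon)$; in particular the passage from the series' factor $(\mathcal{H}_{K-1}-1)$ and prefactor $\tc^{2K-2}$ to the factor $(\mathcal{H}_{K-1}+1)$ and prefactor $\tc^{2K}$ that appear in \eqref{eq:U-edge-K-positive} is supplied precisely by the $-2$ and the geometric-sum contribution of the $n\le K-2$ block. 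When $K=0$ the first block and the index $n=K-1$ are vacuous and the same computation reduces to matching the $n=0$ term with \eqref{eq:U-edge-K-zero}. The treatment of $\pv_m$ is entirely analogous after interchanging $\tc\leftrightarrow\tc^{-1}$ and $(\rc-S)\leftrightarrow(\rc+S)$, and matches \eqref{eq:V-edge-K-positive}--\eqref{eq:V-edge-K-zero}; this yields \eqref{eq:u-edge-approx}--\eqref{eq:v-edge-approx}.

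The main obstacle I expect is making the tail estimate for $\sum_{n\ge K+1}$ genuinely $\mathcal{O}(\epsilon)$ --- not merely $\mathcal{O}(\epsilon^{1/2})$ --- and genuinely uniform over the full admissible set, including $x$ arbitrarily close to the cheese excisions of $\mathscr{S}_m^\infty(\delta)$ and arbitrarily close to the extreme locus $\Re(\mathfrak{d}(x))=-Mm^{-1}\log(m)$, where $\mathcal{H}_K$ and $\mathcal{H}_{K-1}$ themselves are only bounded. This forces one to pair the lower bound on $\Re(X_n)$ for $n$ just beyond $K$ with the super-exponential decay in $n$ of the Hermite leading coefficients $h_n$ that enter $X_n$ through \eqref{eq:edge-Xn-define}, so that the growth of $\tc^{2n}$ is dominated by the power $\epsilon^{\,n-K}$, while carrying the parity dichotomy in \eqref{eq:Hn-edge} through every step. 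By comparison, the algebraic reconciliation of the $\mathcal{O}(1)$ constants via Lemma~\ref{lemma-identities}, though lengthy, is routine.
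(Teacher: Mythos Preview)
Your overall scheme is sound and reaches the right conclusion, but the paper handles the low-$n$ block differently and more compactly.  Instead of first approximating $\mathcal{H}_n\approx-1$ for $n\le K-2$ and then summing the finite geometric series $-2\sum_{n=0}^{K-2}(-1)^n\tc^{2n}$, the paper applies the \emph{exact} identity
\[
-\tc^2\left[\frac{i\Delta}{4}-\frac{\rc}{i\Delta}(\rc+S)(\mathcal{H}_n+1)\right]=\frac{i\Delta}{4}+\frac{\rc}{i\Delta}(\rc-S)(\mathcal{H}_n-1),
\]
a direct consequence of Lemma~\ref{lemma-identities} (and equivalent to your key relation $\tfrac{i\Delta}{4}(1+\tc^2)=\tfrac{2\rc}{i\Delta}(\rc-S)$ together with $\tc^2(\rc+S)=\rc-S$), iteratively for $n=0,\dots,K_0-1$.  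This recasts $\dot{\pu}^{\mathrm{Edge}}$ \emph{exactly} as $(-1)^{K_0}\tc^{2K_0}\tfrac{i\Delta}{4}$ plus a tail $\sum_{n\ge K_0}$ of $(\mathcal{H}_n-1)$-terms and a finite sum $\sum_{n\le K_0-1}$ of $(\mathcal{H}_n+1)$-terms; the estimates $|\mathcal{H}_n-1|\le Cm^{K_0-n}$ and $|\mathcal{H}_n+1|\le Cm^{n-K_0+1}$ then prune everything to $n\in\{K_0-1,K_0\}$, matching \eqref{eq:U-edge-K-positive} with no separate geometric summation and no further algebraic reconciliation.  Your route is the same algebra unpacked: the displayed identity is precisely the one-step telescoping that your geometric-sum evaluation plus the $\tc^2(\rc+S)=\rc-S$ substitution carries out by hand.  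The paper's packaging is tidier; yours makes the geometric structure more explicit.

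Two small corrections to your closing paragraph.  First, the worry about $\mathcal{O}(\epsilon^{1/2})$ is unfounded: from your own formula $\Re(X_n)=\tfrac12(n-K-\theta+\tfrac12)\epsilon\log(\epsilon^{-1})+\mathcal{O}(\epsilon)$ with $\theta\le\tfrac12$, the very first tail index $n=K+1$ already gives $\Re(2X_{K+1}/\epsilon)\ge\log(\epsilon^{-1})+\mathcal{O}(1)$, hence $|\mathcal{H}_{K+1}-1|=\mathcal{O}(\epsilon)$ outright.  Second, the super-exponential decay of $h_n$ works \emph{against} you, not for you: since $\epsilon\log(\sqrt{2\pi}h_n)$ enters $X_n$ with a positive sign and $\log h_n\to-\infty$, it drives $\Re(X_n)$ \emph{down} for large $n$, pushing $\mathcal{H}_n-1$ toward $-2$ rather than toward $0$.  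The paper does not invoke $h_n$ here at all; it relies solely on the geometric-type bound $|\mathcal{H}_n-1|\le Cm^{K_0-n}$.
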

\begin{proof}
Each $x$ to which the theorem applies corresponds to a finite value of $K(x;\epsilon)$, say $K_0\ge 0$.  The identities written down in Lemma~\ref{lemma-identities} imply that
\begin{equation}
-\tc^2\left[\frac{i\Delta}{4}-\frac{\rc}{i\Delta}(\rc+S)(\mathcal{H}_n+1)\right]=\frac{i\Delta}{4}+\frac{\rc}{i\Delta}(\rc-S)(\mathcal{H}_n-1)
\label{eq:second-identity}
\end{equation}
holds for each $n$.  Using this identity (repeatedly, starting with $n=0$) to rewrite the terms in the sum for $\dot{\pu}^\mathrm{Edge}$ with $n\le K_0-1$ yields
\eq
\begin{split}
(-1)^me^{1/3+\mu/2}\dot{\pu}^\mathrm{Edge} = (-1)^{K_0}\tc^{2K_0}\frac{i\Delta}{4}&+\frac{\rc}{i\Delta}(\rc-S)\sum_{n=K_0}^\infty
(-1)^n\tc^{2n}(\mathcal{H}_n-1)\\
  & - \frac{\rc}{i\Delta}(\rc+S)\sum_{n=0}^{K_0-1}(-1)^{n+1}\tc^{2(n+1)}(\mathcal{H}_n+1).
\end{split}
\endeq
Now we use the fact that $K(x;\epsilon)=K_0$ to see that, for some constant $C>0$ independent of $n$, $|\mathcal{H}_n-1|\le Cm^{K_0-n}$ while $|\mathcal{H}_n+1|\le Cm^{n-K_0+1}$.  It therefore follows (i) that the infinite series converges for $m$ sufficiently large by comparison to a geometric series and (ii) that if $K_0>0$ then
\begin{equation}
(-1)^me^{1/3+\mu/2}\dot{\pu}^\mathrm{Edge} = (-1)^{K_0}\tc^{2K_0}\left[\frac{i\Delta}{4}
+\frac{\rc}{i\Delta}(\rc-S)(\mathcal{H}_{K_0}-1)-\frac{\rc}{i\Delta}(\rc+S)(\mathcal{H}_{K_0-1}+1)\right]+\mathcal{O}(m^{-1}),
\end{equation}
while if $K_0=0$ then
\begin{equation}
(-1)^me^{1/3+\mu/2}\dot{\pu}^\mathrm{Edge} = \frac{i\Delta}{4}
+\frac{\rc}{i\Delta}(\rc-S)(\mathcal{H}_0-1)+\mathcal{O}(m^{-1}).
\end{equation}
By comparison with \eqref{eq:U-edge-K-positive} and \eqref{eq:U-edge-K-zero}, the asymptotic formula \eqref{eq:u-edge-approx} is proved.  The corresponding 
asymptotic formula \eqref{eq:v-edge-approx} for $\pv_m$ is proved in a 
similar manner with the help of the identity 
\begin{equation}
-\frac{1}{\tc^2}\left[\frac{i\Delta}{4}-\frac{\rc}{i\Delta}(\rc-S)(\mathcal{H}_n+1)\right]=\frac{i\Delta}{4}+\frac{\rc}{i\Delta}(\rc+S)(\mathcal{H}_n-1),
\end{equation}
which holds for each $n=0,1,2,\dots$ by the identities in 
Lemma~\ref{lemma-identities}.
\end{proof}
The approximation formula for $\pu_m$ is compared with the corresponding exact expression in Figures~\ref{fig:UEdge1} and \ref{fig:UEdge2}.
\begin{figure}[h]
\includegraphics{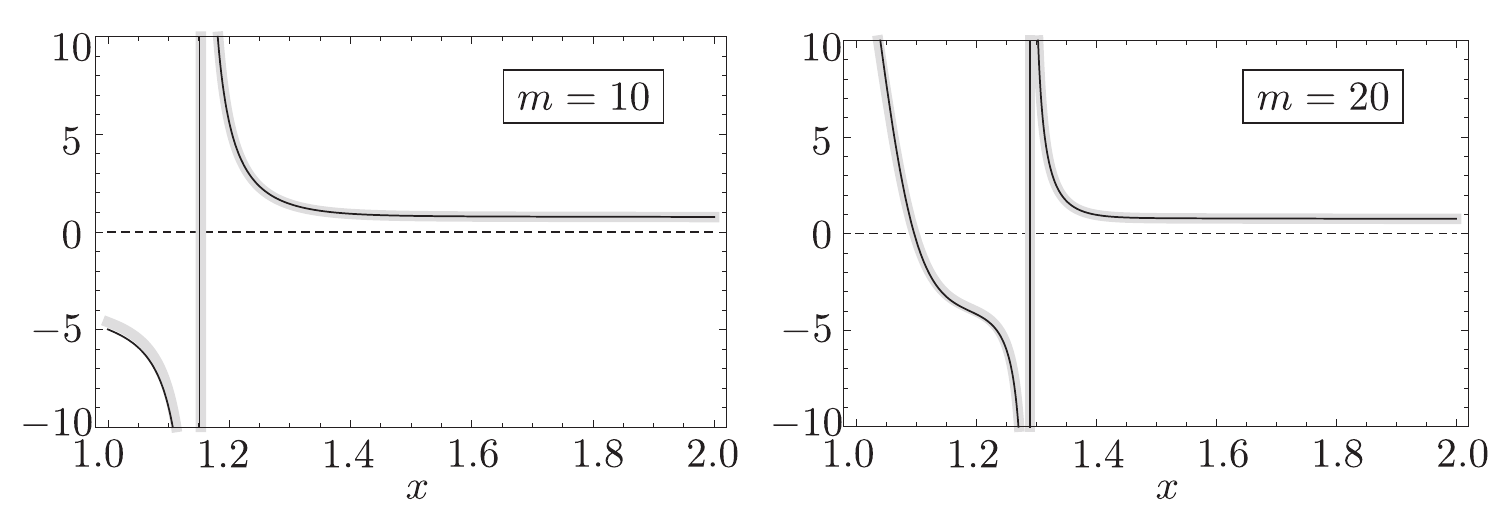}
\caption{The function $m^{-2m/3}e^{-m\mu}\pu_m$ plotted as a function of $x>0$ (thick gray curves) compared with $\dot{\pu}^\mathrm{Edge}$ (thin black curves) for $m=10$ (left) and $m=20$ (right).  The edge occurs at about $x=1.445$.}
\label{fig:UEdge1}
\end{figure}
\begin{figure}[h]
\includegraphics{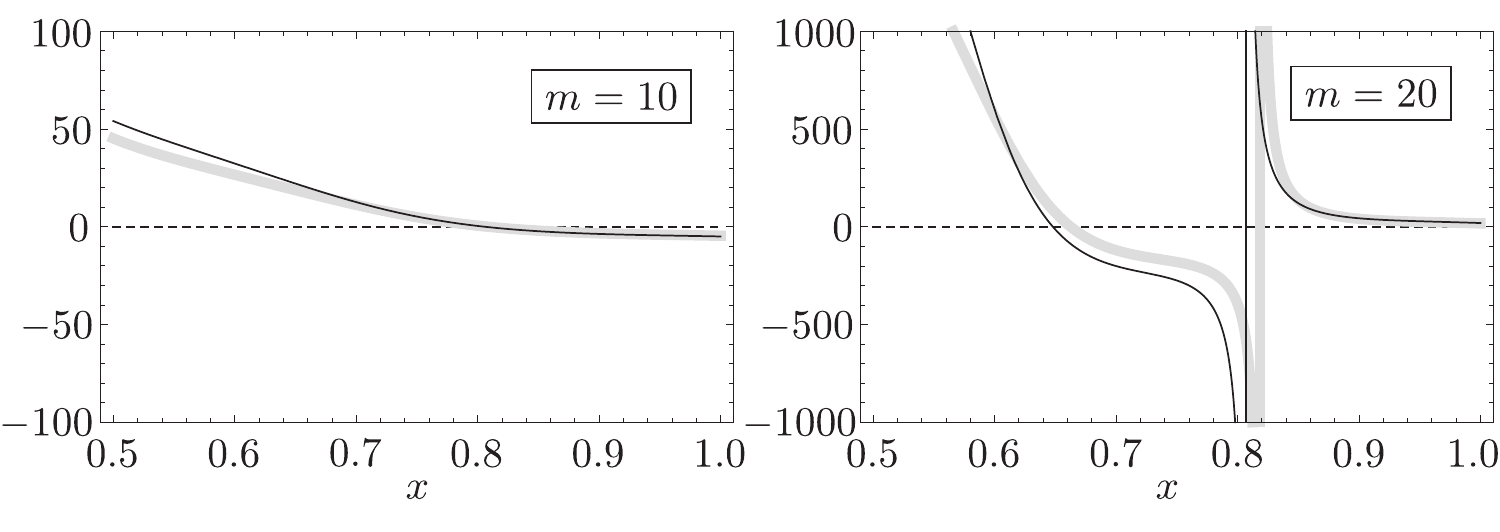}
\caption{The same as in Figure~\ref{fig:UEdge1}, but for $x$ further from the edge inside $T$.}
\label{fig:UEdge2}
\end{figure}

Next, consider the formula \eqref{eq:edge-pp-formula1} for $\pp_m$.  
Comparing with \eqref{eq:edge-Um-exact}, taking into account \eqref{epsilon} along with \eqref{eq:edge-mu-def}, and using Theorem~\ref{theorem:edge}, we see that the denominator in \eqref{eq:edge-pp-formula1} for $\pp_m$ can be written, up to a nonzero factor with modulus asymptotically independent of $m$, as $\dot{\pu}^\mathrm{Edge}+\mathcal{O}(m^{-1})$.  
The spacing between nearest zeros of $\dot{\pu}^\mathrm{Edge}$, for $x$ near $\partial T$ but bounded away from the corners, scales as $m^{-1}$, and therefore the denominator in \eqref{eq:edge-pp-formula1} will be bounded away from zero provided that $x$ is bounded away from each zero of 
$\dot{\pu}^\mathrm{Edge}$ by a distance of $\delta m^{-1}$ for some sufficiently small fixed $\delta>0$.  By analogy with the definition \eqref{eq:edge-pole-cheese} of $\mathscr{S}_m^\infty(\delta)$, we therefore define the set
\begin{equation}
\mathscr{S}_m^{0,\pu}(\delta):=\left\{y\in\mathcal{S}_0:\;|y-x|>\delta\epsilon\;\;\text{for all $x$ such that $\dot{\pu}^\mathrm{Edge}=0$}\right\}.
\label{eq:edge-zero-cheese-U}
\end{equation}
Assuming that $x\in\mathscr{S}_m^{0,\pu}(\delta)$ therefore controls the denominator, and then assuming that $x\in\mathcal{S}_\sigma\cap\mathscr{S}_m^\infty(\delta)$ shows that
the matrix product $\mathbf{E}_1\dot{\mathbf{P}}_1$ is $\mathcal{O}(\epsilon)$ 
as a consequence of \eqref{eq:edge-Qplus-Qminus-product}.  Moreover, elsewhere in \eqref{eq:edge-pp-formula1} 
the matrix coefficient $\mathbf{E}_j$ appears paired with the coefficient $\dot{\mathbf{P}}_j$, and this again implies a symmetry in the indices $s$ and $\overline{s}$ that results in an asymptotic formula independent of the value of $s=\pm$.
The formula we obtain in this way under the assumptions in force on $x$ is, for $K(x;\epsilon)\ge 1$,
\begin{equation}
m^{-1/3}\pp_m=\zc -\frac{1}{2}\rc\left(\mathcal{H}_K+\mathcal{H}_{K-1}
\right) + 
\frac{2\rc(\rc-S)^2(\mathcal{H}_K-1)+2\rc(\rc+S)^2(\mathcal{H}_{K-1}+1)}{\Delta^2-4\rc(\rc-S)(\mathcal{H}_K-1)+4\rc(\rc+S)(\mathcal{H}_{K-1}+1)}+\mathcal{O}(m^{-1}),
\label{eq:p-tiles-K-positive-1}
\end{equation}
while if $K(x;\epsilon)=0$
\begin{equation}
m^{-1/3}\pp_m=\zc-\frac{1}{2}\rc(\mathcal{H}_0-1)+\frac{2\rc(\rc-S)^2(\mathcal{H}_0-1)}{\Delta^2-4\rc(\rc-S)(\mathcal{H}_0-1)}+\mathcal{O}(m^{-1}).
\label{eq:p-tiles-K-zero}
\end{equation}
The formula \eqref{eq:p-tiles-K-positive-1} may be rewritten in the following form:
\eq
\begin{split}
m^{-1/3}\pp_m=\zc-\frac{1}{2}\rc(\mathcal{H}_K+\mathcal{H}_{K-1}) + &
\frac{2\rc (\rc-S)^2(\mathcal{H}_K-1)}{\Delta^2-4\rc(\rc-S)(\mathcal{H}_K-1)}\\
{} & +\frac{2\rc(\rc+S)^2(\mathcal{H}_{K-1}+1)}{\Delta^2+4\rc(\rc+S)(\mathcal{H}_{K-1}+1)} +
\mathcal{O}(m^{-1}).
\label{eq:p-tiles-K-positive-2}
\end{split}
\endeq
Indeed, because the denominators in \eqref{eq:p-tiles-K-positive-1} and \eqref{eq:p-tiles-K-positive-2} are bounded away from zero due to the assumption that $x\in \mathscr{S}_m^{0,\pu}(\delta)$, the difference 
between the explicit terms is proportional to the product 
$(\mathcal{H}_K-1)(\mathcal{H}_{K-1}+1)$ by uniformly bounded factors, and 
this product is proportional to $m^{-1}$ (it is really the identity 
\eqref{eq:edge-Qplus-Qminus-product} in disguise).  Analogous calculations starting instead from \eqref{eq:edge-pq-formula1}
give
\eq
\begin{split}
m^{-1/3}\pq_m=-\zc-\frac{1}{2}\rc(\mathcal{H}_K+\mathcal{H}_{K-1}) + &
\frac{2\rc (\rc+S)^2(\mathcal{H}_K-1)}{\Delta^2-4\rc(\rc+S)(\mathcal{H}_K-1)}\\
{} & +\frac{2\rc(\rc-S)^2(\mathcal{H}_{K-1}+1)}{\Delta^2+4\rc(\rc-S)(\mathcal{H}_{K-1}+1)} +
\mathcal{O}(m^{-1})
\label{eq:q-tiles-K-positive}
\end{split}
\endeq
for $K(x;\epsilon)\geq 1$, and 
\begin{equation}
m^{-1/3}\pq_m=-\zc-\frac{1}{2}\rc(\mathcal{H}_0-1)+\frac{2\rc(\rc+S)^2(\mathcal{H}_0-1)}{\Delta^2-4\rc(\rc+S)(\mathcal{H}_0-1)}+\mathcal{O}(m^{-1})
\label{eq:q-tiles-K-zero}
\end{equation}
for $K(x;\epsilon)=0$.  Here we need to assume that $x\in \mathcal{S}_\sigma\cap\mathscr{S}_m^\infty(\delta)$ as well as $x\in\mathscr{S}_m^{0,\pv}(\delta)$, where
\begin{equation}
\mathscr{S}_m^{0,\pv}(\delta):=\left\{y\in\mathcal{S}_0:\;|y-x|>\delta\epsilon\;\text{for all $x$ such that $\dot{\pv}^\mathrm{Edge}=0$}\right\}.
\label{eq:edge-cheese-zero-V}
\end{equation}
Once again, the device of the integer-valued function $K(x;\epsilon)$ is artificial, as the following result shows.
\begin{theorem}[Asymptotics of $\pp_m$ and $\pq_m$ near an edge of $T$]
Define $\dot{\pp}^{\mathrm{Edge}}=\dot{\pp}^{\mathrm{Edge}}(x;m)$ by 
\begin{equation}
\dot{\pp}^\mathrm{Edge}:=\zc +\sum_{n=0}^\infty\left[-\frac{1}{2}\rc(\mathcal{H}_n+1)+
\frac{2\rc(\rc+S)^2(\mathcal{H}_n+1)}{\Delta^2+4\rc(\rc+S)(\mathcal{H}_n+1)}\right],
\label{eq:dotp-1}
\end{equation}
which can also be written in the form
\begin{equation}
\dot{\pp}^\mathrm{Edge}=\zc+\sum_{n=0}^\infty\left[-\frac{1}{2}\rc(\mathcal{H}_n-1)+\frac{2\rc(\rc-S)^2(\mathcal{H}_n-1)}{\Delta^2-4\rc(\rc-S)(\mathcal{H}_n-1)}\right].
\label{eq:dotp-2}
\end{equation}
Also define $\dot{\pq}^{\mathrm{Edge}}=\dot{\pq}^{\mathrm{Edge}}(x;m)$ by
\begin{equation}
\begin{split}
\dot{\pq}^\mathrm{Edge}&:=-\zc +\sum_{n=0}^\infty\left[-\frac{1}{2}\rc(\mathcal{H}_n+1)+\frac{2\rc(\rc-S)^2(\mathcal{H}_n+1)}{\Delta^2+4\rc(\rc-S)(\mathcal{H}_n+1)}\right] \\
  & = -\zc+\sum_{n=0}^\infty\left[-\frac{1}{2}\rc(\mathcal{H}_n-1)+\frac{2\rc(\rc+S)^2(\mathcal{H}_n-1)}{\Delta^2-4\rc(\rc+S)(\mathcal{H}_n-1)}\right].
\end{split}
\label{eq:dotq}
\end{equation}
Here $S=S(x)$, $\Delta=\Delta(x)$, $z_*=z_*(x)$, $r_*=r_*(x)$, and 
$\mathcal{H}_n=\mathcal{H}_n(x;\epsilon)=\mathcal{H}_n(x;(m-\tfrac{1}{2})^{-1})$ are 
defined by \eqref{cubic-equation}, \eqref{eq:Delta-define}, \eqref{eq:edge-zc}, 
\eqref{eq:edge-rc}, and \eqref{eq:Hn-edge}, respectively.
Recall that $\mathcal{S}_\sigma$, $\mathfrak{d}(x)$, $\mathscr{S}_m^\infty(\delta)$, 
$\mathscr{S}_m^{0,\pu}(\delta)$, and $\mathscr{S}_m^{0,\pv}(\delta)$ are defined by 
\eqref{eq:edge-sector-define}, \eqref{eq:d-define}, \eqref{eq:edge-pole-cheese}, 
\eqref{eq:edge-zero-cheese-U}, and \eqref{eq:edge-cheese-zero-V}, respectively, and 
that the Painlev\'e-II functions $\pp_m=\pp_m(y)$ and $\pq_m=\pq_m(y)$ are given 
for positive integer $m$ in \eqref{log-derivative}.
Then, uniformly for $x\in\mathcal{S}_\sigma\cap\mathscr{S}_m^\infty(\delta)\cap\mathscr{S}_m^{0,\pu}(\delta)$ for fixed $\sigma>0$ and $\delta>0$, and $\Re(\mathfrak{d}(x))\ge -Mm^{-1}\log(m)$ for some fixed $M>0$,
\begin{equation}
m^{-1/3}\pp_m\left(\left(m-\tfrac{1}{2}\right)^{2/3}x\right)=\dot{\pp}^\mathrm{Edge}(x;m)+\mathcal{O}(m^{-1}),\quad m\to +\infty
\label{eq:p-edge-approx}
\end{equation}
while, uniformly for $x\in\mathcal{S}_\sigma\cap\mathscr{S}_m^\infty(\delta)\cap\mathscr{S}_m^{0,\pv}(\delta)$ for fixed $\sigma>0$ and $\delta>0$, and $\Re(\mathfrak{d}(x))\ge -Mm^{-1}\log(m)$ for some fixed $M>0$,
\begin{equation}
m^{-1/3}\pq_m\left(\left(m-\tfrac{1}{2}\right)^{2/3}x\right)=\dot{\pq}^\mathrm{Edge}(x;m)+\mathcal{O}(m^{-1}),\quad m\to +\infty.
\label{eq:q-edge-approx}
\end{equation}
\label{theorem:edge-p-q}
\end{theorem}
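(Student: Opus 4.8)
The plan is to follow closely the argument used in the proof of Theorem~\ref{theorem:edge}: for each admissible $x$ we will collapse the infinite series $\dot{\pp}^{\mathrm{Edge}}$ and $\dot{\pq}^{\mathrm{Edge}}$ down to the finite expressions \eqref{eq:p-tiles-K-positive-2}, \eqref{eq:p-tiles-K-zero}, \eqref{eq:q-tiles-K-positive}, and \eqref{eq:q-tiles-K-zero} already established, modulo $\mathcal{O}(m^{-1})$. Fix $x$ in the admissible set and let $K_0:=K(x;\epsilon)\ge 0$ be the corresponding value of the integer-valued function \eqref{eq:K-define}--\eqref{eq:K-define-zero}; since $\Re(\mathfrak{d}(x))=\Re(\mathfrak{c}(x))\ge -Mm^{-1}\log m$ and $\epsilon\log(\epsilon^{-1})\sim m^{-1}\log m$, only finitely many values of $K_0$ (bounded in terms of $M$) occur, which is what will make all error estimates uniform. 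Exactly as in the proof of Theorem~\ref{theorem:edge}, the structure \eqref{eq:edge-Xn-define} of $X_n$ together with the choice of $K_0$ yields a constant $C>0$, uniform over the admissible set, with $|\mathcal{H}_n-1|\le Cm^{K_0-n}$ for $n\ge K_0$ and $|\mathcal{H}_n+1|\le Cm^{n-K_0+1}$ for $0\le n\le K_0-1$, where $\mathcal{H}_n$ is defined by \eqref{eq:Hn-edge}.

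The first step is to check that \eqref{eq:dotp-1} and \eqref{eq:dotp-2} define the same series term by term; this is a short computation using the identity $\tc^2(\rc+S)=\rc-S$ from Lemma~\ref{lemma-identities} together with $\rc^2=S^2-\tfrac14\Delta^2$, and in particular it shows that the $n$-th term tends to $0$ as $n\to\infty$ in either representation (so both series converge). With this in hand, split the sum at $n=K_0$, using the $(\mathcal{H}_n+1)$-representation \eqref{eq:dotp-1} for the terms $0\le n\le K_0-1$ and the $(\mathcal{H}_n-1)$-representation \eqref{eq:dotp-2} for the terms $n\ge K_0$. In the tail $n\ge K_0$, each rational function of $\mathcal{H}_n-1$ is bounded because its denominator $\Delta^2-4\rc(\rc-S)(\mathcal{H}_n-1)$ stays away from zero precisely when $x\in\mathscr{S}_m^{0,\pu}(\delta)$ (this denominator is, up to bounded nonzero factors, the denominator appearing in \eqref{eq:edge-pp-formula1}, whose zeros are the zeros of $\dot{\pu}^{\mathrm{Edge}}$); hence the estimate on $|\mathcal{H}_n-1|$ forces each term with $n\ge K_0+1$ to be $\mathcal{O}(m^{K_0-n})$, the tail to converge geometrically, and only the $n=K_0$ term to survive modulo $\mathcal{O}(m^{-1})$. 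In the head $0\le n\le K_0-1$, the analogous boundedness of the rational function of $\mathcal{H}_n+1$ together with $|\mathcal{H}_n+1|\le Cm^{n-K_0+1}$ leaves only the $n=K_0-1$ term modulo $\mathcal{O}(m^{-1})$. Adding in the $\zc$ term, the surviving $n=K_0-1$ and $n=K_0$ contributions assemble to exactly the right-hand side of \eqref{eq:p-tiles-K-positive-2} when $K_0\ge 1$, and to \eqref{eq:p-tiles-K-zero} when $K_0=0$ (in which case the head is empty and only $n=0$ survives). Combined with the asymptotics \eqref{eq:p-tiles-K-positive-2}--\eqref{eq:p-tiles-K-zero} this proves \eqref{eq:p-edge-approx}; the estimate \eqref{eq:q-edge-approx} follows in the same way from \eqref{eq:q-tiles-K-positive}--\eqref{eq:q-tiles-K-zero}, with $\zc$ replaced by $-\zc$ and $\rc+S$ interchanged with $\rc-S$, using the companion identities in Lemma~\ref{lemma-identities} and the control provided by $\mathscr{S}_m^{0,\pv}(\delta)$.

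The main obstacle is the bookkeeping in the previous paragraph: one must keep straight which of the two representations \eqref{eq:dotp-1}/\eqref{eq:dotp-2} (and \eqref{eq:dotq}) is used on which range of $n$, verify that after the split the denominators that must be bounded away from zero are precisely those controlled by the ``cheese'' sets $\mathscr{S}_m^{0,\pu}(\delta)$ and $\mathscr{S}_m^{0,\pv}(\delta)$ appearing in the hypotheses, and confirm that the $n=K_0-1$ and $n=K_0$ terms match the finite formulae \eqref{eq:p-tiles-K-positive-2} and \eqref{eq:q-tiles-K-positive} on the nose (rather than only up to $\mathcal{O}(m^{-1})$). The uniformity of the $\mathcal{O}(m^{-1})$ error is not an additional difficulty: it rests only on the fact that the constant $C$ above and the geometric ratio in the tail can be chosen independently of $x$ in the admissible set, which holds because the hypothesis $\Re(\mathfrak{d}(x))\ge -Mm^{-1}\log m$ restricts $K_0$ to finitely many values, exactly as in Theorem~\ref{theorem:edge}. (The potentially dangerous cross term $\mathbf{E}_1\dot{\mathbf{P}}_1$ in \eqref{eq:edge-pp-formula1} has already been absorbed into the $\mathcal{O}(m^{-1})$ error via the identity \eqref{eq:edge-Qplus-Qminus-product}.)
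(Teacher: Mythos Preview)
Your proposal is correct and follows essentially the same route as the paper's proof: establish the term-by-term identity between \eqref{eq:dotp-1} and \eqref{eq:dotp-2}, split the series at $n=K_0$ using the $(\mathcal{H}_n-1)$-form for $n\ge K_0$ and the $(\mathcal{H}_n+1)$-form for $n\le K_0-1$, invoke the geometric bounds $|\mathcal{H}_n-1|\le Cm^{K_0-n}$ and $|\mathcal{H}_n+1|\le Cm^{n-K_0+1}$ to kill all but the $n=K_0$ and $n=K_0-1$ terms, and compare with \eqref{eq:p-tiles-K-positive-2}/\eqref{eq:p-tiles-K-zero}. You are somewhat more explicit than the paper about the role of $\mathscr{S}_m^{0,\pu}(\delta)$ in bounding the surviving denominators and about why the error is uniform, but the argument is the same.
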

\begin{proof}
To prove \eqref{eq:p-edge-approx} we first observe that the exact identity
\begin{equation}
-\frac{1}{2}\rc(\mathcal{H}_n+1)+\frac{2\rc(\rc+S)^2(\mathcal{H}_n+1)}{\Delta^2+4\rc(\rc+S)(\mathcal{H}_n+1)}=
-\frac{1}{2}\rc(\mathcal{H}_n-1)+\frac{2\rc(\rc-S)^2(\mathcal{H}_n-1)}{\Delta^2-4\rc(\rc-S)(\mathcal{H}_n-1)}
\end{equation}
is established by direct calculation, which in particular shows the equivalence of \eqref{eq:dotp-1} and \eqref{eq:dotp-2}.  Moreover, we can also use the identity to write $\dot{\pp}^\mathrm{Edge}$ as
\begin{multline}
\dot{\pp}^\mathrm{Edge}=\zc+\sum_{n=K_0}^\infty\left[-\frac{1}{2}\rc(\mathcal{H}_n-1)+\frac{2\rc(\rc-S)^2(\mathcal{H}_n-1)}{\Delta^2-4\rc(\rc-S)(\mathcal{H}_n-1)}\right]\\
{}+\sum_{n=0}^{K_0-1}\left[-\frac{1}{2}\rc(\mathcal{H}_n+1)
+\frac{2\rc(\rc+S)^2(\mathcal{H}_n+1)}{\Delta^2+4\rc(\rc+S)(\mathcal{H}_n+1)}\right].
\end{multline}
By analogous arguments as in the proof of Theorem~\ref{theorem:edge},
the condition $K(x;\epsilon)=K_0$ implies the convergence of the infinite sum and that if $K_0>0$ then
\begin{multline}
\dot{\pp}^\mathrm{Edge}=\zc-\frac{1}{2}\rc(\mathcal{H}_{K_0}+\mathcal{H}_{K_0-1})
+\frac{2\rc(\rc-S)^2(\mathcal{H}_{K_0}-1)}{\Delta^2-4\rc(\rc-S)(\mathcal{H}_{K_0}-1)} \\{}+
\frac{2\rc(\rc+S)^2(\mathcal{H}_{K_0-1}+1)}{\Delta^2+4\rc(\rc+S)(\mathcal{H}_{K_0-1}+1)} +
\mathcal{O}(m^{-1}),
\end{multline}
while if $K_0=0$ then
\begin{equation}
\dot{\pp}^\mathrm{Edge}=\zc-\frac{1}{2}\rc(\mathcal{H}_0-1)+\frac{2\rc(\rc-S)^2(\mathcal{H}_0-1)}{\Delta^2-4\rc(\rc-S)(\mathcal{H}_0-1)} + \mathcal{O}(m^{-1}).
\end{equation}
The proof of \eqref{eq:p-edge-approx} is then complete upon comparison with \eqref{eq:p-tiles-K-zero} and \eqref{eq:p-tiles-K-positive-2}.
The proof of \eqref{eq:q-edge-approx} follows exactly the same lines.
\end{proof}
The approximation formula for $\pp_m$ is compared with the corresponding exact expression in Figure~\ref{fig:PEdge}.
\begin{figure}[h]
\includegraphics{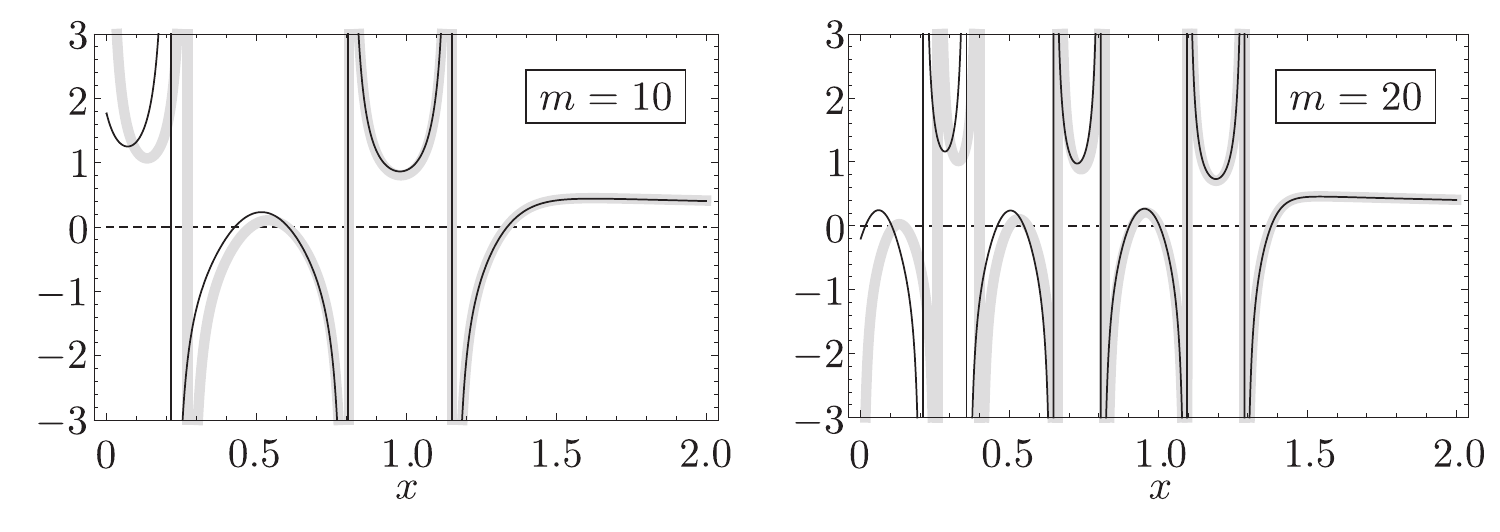}
\caption{The function $m^{-1/3}\pp_m$ plotted as a function of $x>0$ (thick gray curves) compared with $\dot{\pp}^\mathrm{Edge}$ (thin black curves) for $m=10$ (left) and $m=20$ (right).  The edge occurs at about $x=1.445$.}
\label{fig:PEdge}
\end{figure}

\begin{remark}
While such plots as shown in Figures~\ref{fig:UEdge1}--\ref{fig:PEdge} further demonstrate the accuracy of the infinite series formulae presented in Theorems~\ref{theorem:edge} and \ref{theorem:edge-p-q}, it should be mentioned that plots comparing $m^{-2m/3}e^{-m\mu}\pu_m$ with the ``piecewise'' approximations \eqref{eq:U-edge-K-positive}--\eqref{eq:U-edge-K-zero}, or comparing $m^{-1/3}\pp_m$ with the approximations
\eqref{eq:p-tiles-K-positive-1} (or \eqref{eq:p-tiles-K-positive-2}) and \eqref{eq:p-tiles-K-zero}
show poor agreement.  The approximating formulae are indeed asymptotically equivalent to the infinite series formulae (this is the content of the proofs of Theorems~\ref{theorem:edge} and \ref{theorem:edge-p-q}), however
 the constants implicit in the error estimates are evidently much larger for the ``piecewise'' approximations.  It seems that the key mechanism behind the relatively poor accuracy of the latter approximations is that the true locations of the poles and zeros of $\pu_m$ are, for moderate values of $m$, shifted right out of the ``$K$-windows'' to the edge of which they asymptotically converge.  Unless $m$ is extremely large, the true poles and zeros lie in the incorrect window to be visible under the piecewise approximation.  The infinite series formulae appear to circumvent this difficulty by including all of the singularities, even should they not appear in the correct window.  \myendrmk
\end{remark}

\begin{remark}
The role of the condition $x\in\mathscr{S}_m^{0,\pu}(\delta)$ in the proof of validity of the asymptotic formula \eqref{eq:p-edge-approx} for $\pp_m$ and of the condition $x\in\mathscr{S}_m^{0,\pv}(\delta)$ in the proof of the formula \eqref{eq:q-edge-approx} for $\pq_m$ is in each case merely to bound the denominator in the approximation away from zero and therefore to allow error terms in the denominators of the exact formulae \eqref{eq:edge-pp-formula1}--\eqref{eq:edge-pq-formula1} to be expressed as absolute error terms.  These conditions may therefore be dropped at the cost of modifying the corresponding asymptotic formulae to maintain error terms in the denominators.

On the other hand, the condition $x\in\mathscr{S}_m^\infty(\delta)$ plays a much more essential role in the hypotheses of Theorems~\ref{theorem:edge} and \ref{theorem:edge-p-q}, because it bounds $x$ away from values at which the error matrix $\mathbf{E}(z)$ cannot be controlled at all.  However, even this condition can be dropped at the cost of an argument based on the asymptotic analysis of a different Riemann-Hilbert problem.  The main idea is to use the B\"acklund relation $\pu_m=\pv_{m+1}^{-1}$.  Since according to the arguments in \S\ref{section:edge-singularities} the pole lattices shift upon incrementing $m$ by a nonzero fraction of the lattice spacing when $x$ is bounded away from the corners of $T$, the identity 
\begin{equation}
(\mathscr{S}_m^\infty(\delta)\cap\mathcal{S}_\sigma)\cup (\mathscr{S}_{m+1}^\infty(\delta)\cap\mathcal{S}_\sigma)=\mathcal{S}_\sigma
\end{equation}
holds for all $m$ with any $\sigma>0$ as long as $\delta>0$ is sufficiently small.  Therefore, $\pu_m$ can be analyzed for all $x\in\mathcal{S}_\sigma$ by a combination of the results of Theorem~\ref{theorem:edge} applied for degrees $m$ and $m+1$, in the latter case by first approximating $\pv_{m+1}$ and then reciprocating to obtain an asymptotic formula for $\pu_m$.  Similar reasoning applies to all four families of rational Painlev\'e-II functions.  

These arguments are carried out in some detail in a more complicated setting in \cite{Buckingham-Miller-rational-noncrit}.  The method fails, however, when as $x$ approaches a corner point of $T$, the pole lattices fail to shift significantly as $m$ is replaced with $m+1$.  Near the corners, the pole lattice becomes ``frozen'' and the B\"acklund transformation cannot be applied to obtain asymptotics for the rational Painlev\'e-II functions near singularities of the approximation.  This approximation will be developed in \S\ref{section:cusp} below.
\label{remark:cheese}
\myendrmk
\end{remark}

\section{Analysis for $x$ near a corner point of $\partial T$}
\label{section:cusp}
In this section, we suppose that $x$ is close to the corner point $x_c<0$ of $\partial T$, and we derive asymptotic formulae for the rational Painlev\'e-II functions in the limit of large $m$.  Our goal is to 
make the formal rescaling argument given in \S\ref{section:results-summary} completely rigorous and
also to isolate the precise solution of Painlev\'e-I that is relevant.  We will also obtain corresponding asymptotic formulae for the functions $\pu_m$ and $\pv_m$.  We choose to recycle some symbols used in \S\ref{section:edge} to represent analogous objects, and we hope that once these are redefined in the present context there will be no confusion for the reader.

\subsection{The Negative-$x$ Configuration Riemann-Hilbert problem}
As $x_c$ lies on the negative real axis, we begin our analysis with the 
Riemann-Hilbert problem satisfied by ${\bf N}(z;x,\epsilon)$ in the 
Negative-$x$ Configuration as shown in 
\cite[Figure 20]{Buckingham-Miller-rational-noncrit}.  This contour 
arrangement is suitable for asymptotic analysis in the genus-zero region 
as long as $|\arg(-x)|<2\pi/3$.  As illustrated in Figure \ref{corner-break}, 
the local behavior in $z$ of the function $2h(z;x)+\lambda(x)$ (that is of 
use in the genus-zero region) changes dramatically near one of the band 
endpoints as $x\to x_c$.  To handle this change will require not only a 
different parametrix around this band endpoint but also the use of a 
modified $g$-function with generically different band endpoints we label 
$\mathfrak{a}$ and $\mathfrak{b}$.  Therefore we deform the Riemann-Hilbert 
problem if necessary so the points of self-intersection $a$ and $b$ are 
replaced by $\mathfrak{a}$ and $\mathfrak{b}$, respectively (we also redefine 
${\bf N}$ as the solution to this deformed problem).  Then 
${\bf N}(z;x,\epsilon)$ satisfies the normalization condition 
\eq
\lim_{z\to\infty}{\bf N}(z;x,\epsilon)(-z)^{-\sigma_3/\epsilon} = \mathbb{I}
\endeq
and the jump condition $\mathbf{N}_+=\mathbf{N}_-\mathbf{V}^{(\mathbf{N})}$, where 
$\mathbf{V}^{(\mathbf{N})}$ and the jump contour $\Sigma^{(\mathbf{N})}$ are shown in 
Figure~\ref{fig:N-jumps-pi}.  
\begin{figure}[h]
\setlength{\unitlength}{2pt}
\begin{center}
\begin{picture}(100,100)(-50,-50)
\put(-74,42){\framebox{$z$}}
\put(-35,0){\circle*{2}}
\put(-38,-4){$\mathfrak{a}$}
\put(35,0){\circle*{2}}
\put(36,-5){$\mathfrak{b}$}
\thicklines
\put(25,0){\line(1,0){30}}
\put(48,0){\vector(1,0){1}}
\put(57,-1){$\bbm -1 & -ie^{-\theta/\epsilon} \\ 0 & -1 \ebm$}
\put(45,3){$L$}
\qbezier(35,0)(20,20)(35,40)
\put(27.5,22){\vector(0,1){1}}
\put(30,20){$\bbm 1 & 0 \\ ie^{\theta/\epsilon} & 1 \ebm$}
\put(-25,0){\line(1,0){50}}
\put(-25,0){\vector(1,0){28}}
\put(0,-6){$\Sigma$}
\put(-20,8){$\bbm 0 & -ie^{-\theta/\epsilon} \\ -ie^{\theta/\epsilon} & 0 \ebm$}
\qbezier(-35,0)(-20,20)(-35,40)
\put(-27.5,22){\vector(0,1){1}}
\put(-56,20){$\bbm 1 & ie^{-\theta/\epsilon} \\ 0 & 1 \ebm$}
\put(-25,0){\line(-1,0){30}}
\put(-30,0){\vector(-1,0){18}}
\put(-80,-1){$\bbm 1 & 0 \\ ie^{\theta/\epsilon} & 1 \ebm$}
\qbezier(-35,0)(-20,-20)(-35,-40)
\put(-27.5,-22){\vector(0,-1){1}}
\put(-56,-22){$\bbm 1 & ie^{-\theta/\epsilon} \\ 0 & 1 \ebm$}
\qbezier(35,0)(20,-20)(35,-40)
\put(27.5,-22){\vector(0,-1){1}}
\put(30,-22){$\bbm 1 & 0 \\ ie^{\theta/\epsilon} & 1 \ebm$}
\end{picture}
\end{center}
\caption{\emph{The jump matrices $\mathbf{V^{(N)}}(z;x,\epsilon)$ for 
the function ${\bf N}(z;x,\epsilon)$ with $x$ near $x_c$.}}
\label{fig:N-jumps-pi}
\end{figure}
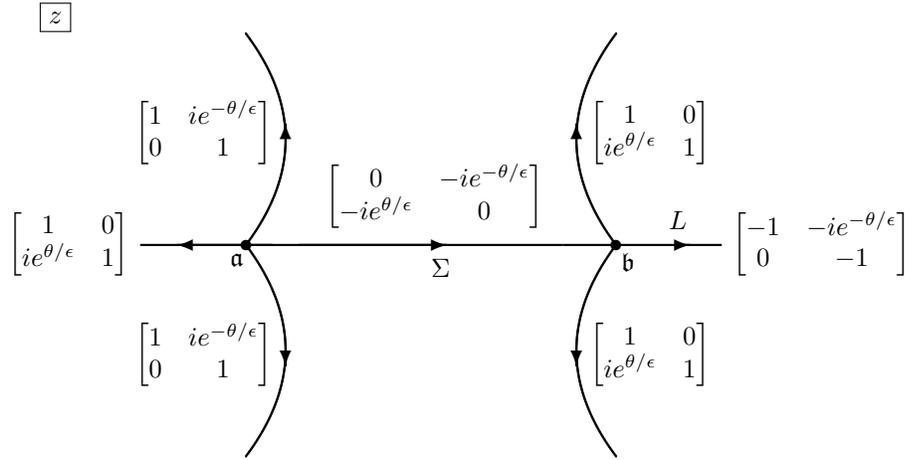

\subsection{The modified $g$-function for $x$ near $x_c$}
Recall the values $a_c$ and $b_c$ given by \eqref{eq:ac-bc-define}.  
Suppose $\mathfrak{a}$ and $\mathfrak{b}$ are close to $a_c$ and $b_c$, respectively,
and recall  the unique function $r(z;\mathfrak{a},\mathfrak{b})$ whose square is the quadratic
$(z-\mathfrak{a})(z-\mathfrak{b})$, whose domain of analyticity in $z$ is $\mathbb{C}\setminus\Sigma$, where $\Sigma$ is the oriented line segment from $z=\mathfrak{a}$ to $z=\mathfrak{b}$, and that satisfies $r(z;\mathfrak{a},\mathfrak{b})=z+\mathcal{O}(1)$ as $z\to\infty$.  Given complex constants $\mathfrak{m}$ and  $\nu$,
and a contour $L$ connecting $\mathfrak{b}$ with $\infty$ without intersecting $\Sigma$ and agreeing with the positive real axis for large $|z|$, let an analytic function $\mathfrak{g}:\mathbb{C}\setminus (\Sigma\cup L)\to\mathbb{C}$ be defined by
\begin{equation}
\mathfrak{g}(z):=\frac{1}{2}\theta(\mathfrak{a};x)+\frac{1}{2}\nu +\int_\mathfrak{a}^z\left[\frac{1}{2}\theta'(\zeta;x)-\frac{3}{2}\left(\zeta+\frac{1}{2}(\mathfrak{a}+\mathfrak{b})+\frac{\mathfrak{m}}{\zeta-\mathfrak{a}}\right)r(\zeta;\mathfrak{a},\mathfrak{b})\right]\,d\zeta,
\label{eq:ghat-define}
\end{equation}
where the path of integration is arbitrary in the simply-connected domain $\mathbb{C}\setminus(\Sigma\cup L)$, and where $\theta$ is defined by \eqref{eq:theta-define}.

This function has the following elementary properties, as are easily verified.  The boundary values
taken by $\mathfrak{g}$ on the segment $\Sigma$ satisfy
\begin{equation}
\mathfrak{g}_+(z)+\mathfrak{g}_-(z)=\theta(z;x)+\nu,\quad z\in\Sigma.
\label{eq:ghat-on-Sigh}
\end{equation}
Similarly, taking the contour $L$ to be oriented in the direction away from $\mathfrak{b}$, 
\begin{equation}
\begin{split}
\mathfrak{g}_+(z)-\mathfrak{g}_-(z)&=\frac{3}{2}\oint\left(\zeta+\frac{1}{2}(\mathfrak{a}+\mathfrak{b})+\frac{\mathfrak{m}}{\zeta-\mathfrak{a}}\right)r(\zeta;\mathfrak{a},\mathfrak{b})\,d\zeta\\
&=-\frac{3\pi i}{8}(\mathfrak{b}-\mathfrak{a})(\mathfrak{b}^2-\mathfrak{a}^2+4\mathfrak{m}),\quad z\in L,
\end{split}
\label{eq:ghat-on-Lh}
\end{equation}
where the contour of integration is a positively-oriented loop that encloses $\Sigma$, and the second line follows by evaluation of the integral by residues at $\zeta=\infty$.  Furthermore, considering the asymptotic behavior of $\mathfrak{g}(z)$ for large $z$ one obtains
\begin{equation}
\mathfrak{g}(z)=\mathfrak{g}_1z +\mathfrak{g}_\ell\log(-z)+\mathfrak{g}_0 + \mathcal{O}(z^{-1}),\quad z\to\infty,
\end{equation}
where
\begin{equation}
\mathfrak{g}_1:=\frac{1}{16}(9\mathfrak{a}^2+6\mathfrak{a}\mathfrak{b}+9\mathfrak{b}^2+8x-24\mathfrak{m}),
\end{equation}
\begin{equation}
\mathfrak{g}_\ell:=\frac{3}{16}(\mathfrak{b}-\mathfrak{a})(\mathfrak{b}^2-\mathfrak{a}^2+4\mathfrak{m}),
\end{equation}
and $\mathfrak{g}_0$ is defined in terms of a convergent integral as
\begin{multline}
\mathfrak{g}_0:=\frac{1}{2}\theta(\mathfrak{a};x)+\frac{1}{2}\nu -\mathfrak{g}_1\mathfrak{a}-\mathfrak{g}_\ell\log(-\mathfrak{a})\\
{}+ \int_\mathfrak{a}^\infty
\left[\frac{1}{2}\theta'(\zeta;x)-\frac{3}{2}\left(\zeta+\frac{1}{2}(\mathfrak{a}+\mathfrak{b})+\frac{\mathfrak{m}}{\zeta-\mathfrak{a}}\right)r(\zeta;\mathfrak{a},\mathfrak{b})-\mathfrak{g}_1-\frac{\mathfrak{g}_\ell}{\zeta}\right]\,d\zeta.
\end{multline}

\begin{lemma}
There exist unique functions $\mathfrak{b}=\mathfrak{b}(x,\mathfrak{a})$ and $\mathfrak{m}=\mathfrak{m}(x,\mathfrak{a})$, analytic in $(x,\mathfrak{a})$ near $(x_c,a_c)$, for which $\mathfrak{b}(x_c,a_c)=b_c$ and $\mathfrak{m}(x_c,a_c)=0$ hold and such that 
\begin{equation}
\mathfrak{g}_1=0\quad\text{and}\quad\mathfrak{g}_\ell=1
\end{equation}
both hold as identities in $(x,\mathfrak{a})$ near $(x_c,a_c)$.  Also, $\mathfrak{b}_\mathfrak{a}(x_c,a_c)=\mathfrak{m}_\mathfrak{a}(x_c,a_c)=0$ and $\mathfrak{b}_x(x_c,a_c)=-6^{-2/3}/2$, while $\mathfrak{m}_x(x_c,a_c)=1/6$.
\label{lemma:g1gell}
\end{lemma}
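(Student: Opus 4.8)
The plan is to observe that $\mathfrak{g}_1$ and $\mathfrak{g}_\ell$, as displayed in the excerpt, are explicit \emph{polynomials} in the four variables $\mathfrak{a}$, $\mathfrak{b}$, $\mathfrak{m}$, and $x$, so that the lemma reduces to a routine application of the holomorphic implicit function theorem in the two unknowns $(\mathfrak{b},\mathfrak{m})$ with parameters $(x,\mathfrak{a})$, once the base point and the relevant Jacobian are checked. Throughout it is convenient to abbreviate $u:=6^{-1/3}$, so that the base-point data become $a_c=-u$, $b_c=3u$, and $x_c=-9u^2$, subject to the single relation $6u^3=1$.

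First I would verify that $(\mathfrak{a},\mathfrak{b},\mathfrak{m},x)=(-u,3u,0,-9u^2)$ actually solves the two equations: substituting into the formula for $\mathfrak{g}_1$ gives $\tfrac{1}{16}(9u^2-18u^2+81u^2-72u^2)=0$, and into the formula for $\mathfrak{g}_\ell$ gives $\tfrac{3}{16}(4u)(8u^2)=6u^3=1$. Next I would record the partial derivatives of $(\mathfrak{g}_1,\mathfrak{g}_\ell)$ with respect to $(\mathfrak{b},\mathfrak{m})$: directly from the formulae, $\partial\mathfrak{g}_1/\partial\mathfrak{b}=\tfrac{1}{16}(6\mathfrak{a}+18\mathfrak{b})$, $\partial\mathfrak{g}_1/\partial\mathfrak{m}=-\tfrac32$, $\partial\mathfrak{g}_\ell/\partial\mathfrak{b}=\tfrac{3}{16}[(\mathfrak{b}^2-\mathfrak{a}^2+4\mathfrak{m})+2\mathfrak{b}(\mathfrak{b}-\mathfrak{a})]$, and $\partial\mathfrak{g}_\ell/\partial\mathfrak{m}=\tfrac34(\mathfrak{b}-\mathfrak{a})$. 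Evaluating at the base point yields the Jacobian $\bpm 3u & -3/2 \\ 6u^2 & 3u \epm$, whose determinant is $18u^2\neq 0$. The holomorphic implicit function theorem then produces unique functions $\mathfrak{b}=\mathfrak{b}(x,\mathfrak{a})$ and $\mathfrak{m}=\mathfrak{m}(x,\mathfrak{a})$, analytic near $(x_c,a_c)$, with $\mathfrak{b}(x_c,a_c)=b_c$ and $\mathfrak{m}(x_c,a_c)=0$, such that $\mathfrak{g}_1\equiv 0$ and $\mathfrak{g}_\ell\equiv 1$ hold identically in $(x,\mathfrak{a})$.

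To obtain the first derivatives I would differentiate the two identities $\mathfrak{g}_1\equiv 0$ and $\mathfrak{g}_\ell\equiv 1$ with respect to $\mathfrak{a}$ and with respect to $x$; by the chain rule this gives, in each case, a linear $2\times 2$ system for the unknown pair of partials whose coefficient matrix is exactly the (invertible) Jacobian above. Differentiating in $\mathfrak{a}$ one finds $\partial\mathfrak{g}_1/\partial\mathfrak{a}=\tfrac{1}{16}(18\mathfrak{a}+6\mathfrak{b})$ and $\partial\mathfrak{g}_\ell/\partial\mathfrak{a}=\tfrac{3}{16}[-(\mathfrak{b}^2-\mathfrak{a}^2+4\mathfrak{m})-2\mathfrak{a}(\mathfrak{b}-\mathfrak{a})]$, both of which vanish at the base point (using $a_c=-u$, $b_c=3u$); since the coefficient matrix is nonsingular, this forces $\mathfrak{b}_\mathfrak{a}(x_c,a_c)=\mathfrak{m}_\mathfrak{a}(x_c,a_c)=0$. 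Differentiating in $x$, only $\partial\mathfrak{g}_1/\partial x=\tfrac12$ is nonzero, and the resulting system $3u\,\mathfrak{b}_x-\tfrac32\mathfrak{m}_x+\tfrac12=0$, $6u^2\,\mathfrak{b}_x+3u\,\mathfrak{m}_x=0$ gives $\mathfrak{b}_x(x_c,a_c)=-\tfrac{1}{12u}=-\tfrac12\,6^{-2/3}$ and then $\mathfrak{m}_x(x_c,a_c)=-2u\,\mathfrak{b}_x=6^{-1/3}\cdot 6^{-2/3}=\tfrac16$.

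There is no genuine analytic difficulty here — every quantity in sight is a polynomial in the four variables, so analyticity and the hypotheses of the implicit function theorem are automatic. The only step requiring care is the algebraic bookkeeping: keeping track of the various powers of $6^{1/3}$ and verifying the numerical cancellations (the vanishing of $\mathfrak{g}_1$ and of both $\mathfrak{a}$-derivatives at the base point, and the conversions of $1/u$ to $6^{1/3}$), which is precisely what the abbreviation $u=6^{-1/3}$ is designed to streamline.
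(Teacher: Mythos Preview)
Your proof is correct and follows essentially the same approach as the paper: verify the base point, check a nonvanishing Jacobian, and apply the holomorphic implicit function theorem, then implicitly differentiate for the derivative values. The only organizational difference is that the paper first solves the linear equation $\mathfrak{g}_1=0$ for $\mathfrak{m}$ explicitly and substitutes into $\mathfrak{g}_\ell=1$, reducing to a one-variable implicit function theorem for a cubic $P(\mathfrak{b};x,\mathfrak{a})=0$, whereas you apply the two-variable implicit function theorem directly to the pair $(\mathfrak{g}_1,\mathfrak{g}_\ell)$; both routes are equivalent and your bookkeeping checks out.
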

\begin{proof}
From the equation $\mathfrak{g}_1=0$, $\mathfrak{m}$ may be eliminated in favor of $x$, $\mathfrak{a}$, and $\mathfrak{b}$:
\begin{equation}
\mathfrak{m}=\frac{3}{8}\mathfrak{a}^2+\frac{1}{4}\mathfrak{a}\mathfrak{b}+\frac{3}{8}\mathfrak{b}^2+\frac{1}{3}x.
\label{eq:m-1}
\end{equation}
Thus, the equation $\mathfrak{g}_\ell=1$ becomes a cubic equation in $\mathfrak{b}$ with coefficients depending analytically on $(x,\mathfrak{a})$:
\begin{equation}
P(\mathfrak{b};x,\mathfrak{a}):=\frac{5}{2}\mathfrak{b}^3-\frac{3}{2}\mathfrak{a}\mathfrak{b}^2
+\left(\frac{4}{3}x-\frac{1}{2}\mathfrak{a}^2\right)\mathfrak{b} -\left(\frac{16}{3}+\frac{1}{2}\mathfrak{a}^3 +\frac{4}{3}x\mathfrak{a}\right)=0.
\end{equation}
It is a direct calculation to confirm that $P(b_c;x_c,a_c)=0$.  Moreover,
\begin{equation}
P_\mathfrak{b}(b_c;x_c,a_c)=\frac{15}{2}b_c^2-3a_cb_c+\frac{4}{3}x_c-\frac{1}{2}a_c^2=32\left(\frac{2}{9}\right)^{1/3}\neq 0,
\end{equation}
from which the Implicit Function Theorem yields the desired function $\mathfrak{b}=\mathfrak{b}(x,\mathfrak{a})$.  Then, from \eqref{eq:m-1} one obtains
\begin{equation}
\mathfrak{m}=\mathfrak{m}(x,\mathfrak{a})=\frac{3}{8}\mathfrak{a}^2+\frac{1}{4}\mathfrak{a}\mathfrak{b}(x,\mathfrak{a})+\frac{3}{8}\mathfrak{b}(x,\mathfrak{a})^2+\frac{1}{3}x,
\label{eq:m-in-terms-of-a-b-x}
\end{equation}
and by direct calculation using $\mathfrak{b}(x_c,a_c)=b_c$ it then follows that $\mathfrak{m}(x_c,a_c)=0$.  Finally, the claimed values of the derivatives follow from the formulae $\mathfrak{b}_\mathfrak{a}(x_c,a_c)=-P_\mathfrak{a}(b_c;x_c,a_c)/P_\mathfrak{b}(b_c;x_c,a_c)$ and $\mathfrak{b}_x(x_c,a_c)=-P_x(b_c;x_c,a_c)/P_\mathfrak{b}(b_c;x_c,a_c)$ and from the chain rule applied 
to \eqref{eq:m-in-terms-of-a-b-x}.
\end{proof}

Taking $\mathfrak{b}=\mathfrak{b}(x,\mathfrak{a})$ and $\mathfrak{m}=\mathfrak{m}(x,\mathfrak{a})$
as in Lemma~\ref{lemma:g1gell}, and further uniquely choosing $\nu$ as a function of $(x,\mathfrak{a})$ so that $\mathfrak{g}_0=0$, we obtain a function $\mathfrak{g}(z)=\mathfrak{g}(z;x,\mathfrak{a})$ satisfying \eqref{eq:ghat-on-Sigh}, the jump condition
\begin{equation}
\mathfrak{g}_+(z;x,\mathfrak{a})-\mathfrak{g}_-(z;x,\mathfrak{a})=-2\pi i,\quad z\in L
\end{equation}
(which follows from \eqref{eq:ghat-on-Lh} upon using $\mathfrak{g}_\ell=1$), and the asymptotic condition
\begin{equation}
\mathfrak{g}(z;x,\mathfrak{a})=\log(-z)+\mathcal{O}(z^{-1}),\quad z\to\infty.
\end{equation}
Associated with $\mathfrak{g}$ is the function $\mathfrak{h}$ defined as 
\begin{equation}
\mathfrak{h}(z)=\mathfrak{h}(z;x,\mathfrak{a}):=\frac{1}{2}\theta(z;x)-\mathfrak{g}(z;x,\mathfrak{a}).
\label{eq:h-corner-define}
\end{equation}
It follows from \eqref{eq:h-corner-define} and \eqref{eq:ghat-define} that
\begin{equation}
\mathfrak{h}(z;x,\mathfrak{a})+\frac{1}{2}\nu(x,\mathfrak{a})=\frac{3}{2}\int_{\mathfrak{a}}^z
\left(\zeta+\frac{1}{2}(\mathfrak{a}+\mathfrak{b}(x,\mathfrak{a}))+\frac{\mathfrak{m}(x,\mathfrak{a})}{\zeta-\mathfrak{a}}\right)r(\zeta;\mathfrak{a},\mathfrak{b}(x,\mathfrak{a}))\,d\zeta.
\label{eq:hath-sum}
\end{equation}

It is also easy to check that $\mathfrak{g}$ and $\mathfrak{h}$ are continuous with respect to $(x,\mathfrak{a})$ in the topology of uniform convergence on compact sets in the $z$-plane that are disjoint from $\Sigma\cup L$, and that
\begin{equation}
\mathfrak{g}(z;x_c,a_c)=g(z;x_c),\quad\nu(x_c,a_c)=\lambda(x_c),\quad\text{and}\quad
\mathfrak{h}(z;x_c,a_c)=h(z;x_c),
\label{eq:g-h-nu-critical}
\end{equation}
that is, the new $g$-function $\mathfrak{g}$ coincides with the old $g$-function $g$ (the one described in \S\ref{section:g-define} used to study the rational Painlev\'e-II functions for $x$ outside of $T$ as well as for $x$ near a smooth point of $\partial T$) in the limit $(x,\mathfrak{a})\to (x_c,a_c)$.

At this point $\mathfrak{g}(z;x,\mathfrak{a})$ and $\nu(x,\mathfrak{a})$ are 
completely specified assuming $\mathfrak{a}$ is given.  In Lemma 
\ref{lemma:conformal} below we will determine $\mathfrak{a}$ as a function of 
$x$ (therefore giving $\mathfrak{g}$ as a function of $z$ and $x$ and 
$\nu$ as a function of $x$).  For now, we make the change of variables 
\begin{equation}
\mathbf{O}(z):=e^{-\nu(x,\mathfrak{a})\sigma_3/(2\epsilon)}{\mathbf N}(z;x,\epsilon)e^{-\mathfrak{g}(z;x,\mathfrak{a})\sigma_3/\epsilon}e^{\nu(x,\mathfrak{a})\sigma_3/(2\epsilon)}.
\end{equation}
Now
\eq
\lim_{z\to\infty}{\bf O}(z;x,\epsilon)=\mathbb{I}, 
\endeq
and the domain of analyticity is the same for both $\mathbf{O}(z)$ and 
${\mathbf N}(z)$.  The jump conditions satisfied by $\mathbf{O}(z)$ 
are illustrated in Figure~\ref{fig:hatO-jumps-corner}.
Note that we take the angles between contours meeting at $z=\mathfrak{a}$ and $z=\mathfrak{b}$ to be locally as indicated in the figure (we do not directly specify the angles involving the segment $\Sigma$).  
\begin{figure}[h]
\setlength{\unitlength}{2pt}
\begin{center}
\begin{picture}(100,100)(-50,-50)
\put(-74,42){\framebox{$z$}}
\put(-35,0){\circle*{2}}
\put(-44,4){$\tfrac{2}{5}\pi$}
\put(-44,-6){$\tfrac{2}{5}\pi$}
\put(-34,1){$\mathfrak{a}$}
\put(35,0){\circle*{2}}
\put(35,4){$\tfrac{2}{3}\pi$}
\put(35,-6){$\tfrac{2}{3}\pi$}
\put(29,1){$\mathfrak{b}$}
\thicklines
\put(25,0){\line(1,0){30}}
\put(48,0){\vector(1,0){1}}
\put(57,-1){$\bbm 1 & ie^{-(2\mathfrak{h}_++\nu)/\epsilon} \\ 0 & 1 \ebm$}
\put(45,3){$L$}
\qbezier(35,0)(20,20)(35,40)
\put(27.5,22){\vector(0,1){1}}
\put(30,20){$\bbm 1 & 0 \\ ie^{(2\mathfrak{h}+\nu)/\epsilon} & 1 \ebm$}
\put(-25,0){\line(1,0){50}}
\put(-25,0){\vector(1,0){28}}
\put(0,-7){$\Sigma$}
\put(-10,8){$\bbm 0 & -i \\ -i & 0 \ebm$}
%
\put(-35,0){\line(-1,3){14}}
\put(-35,0){\vector(-1,3){8}}
\put(-40,25){$\bbm 1 & ie^{-(2\mathfrak{h}+\nu)/\epsilon} \\ 0 & 1 \ebm$}
\put(-25,0){\line(-1,0){30}}
\put(-30,0){\vector(-1,0){18}}
\put(-91,-1){$\bbm 1 & 0 \\ ie^{(2\mathfrak{h}+\nu)/\epsilon} & 1 \ebm$}
%
\put(-35,0){\line(-1,-3){14}}
\put(-35,0){\vector(-1,-3){8}}
\put(-40,-27){$\bbm 1 & ie^{-(2\mathfrak{h}+\nu)/\epsilon} \\ 0 & 1 \ebm$}
\qbezier(35,0)(20,-20)(35,-40)
\put(27.5,-22){\vector(0,-1){1}}
\put(30,-22){$\bbm 1 & 0 \\ ie^{(2\mathfrak{h}+\nu)/\epsilon} & 1 \ebm$}
\end{picture}
\end{center}
\caption{The jump matrices for $\mathbf{O}(z)$ for 
$x$ near $x_c$.}
\label{fig:hatO-jumps-corner}
\end{figure}
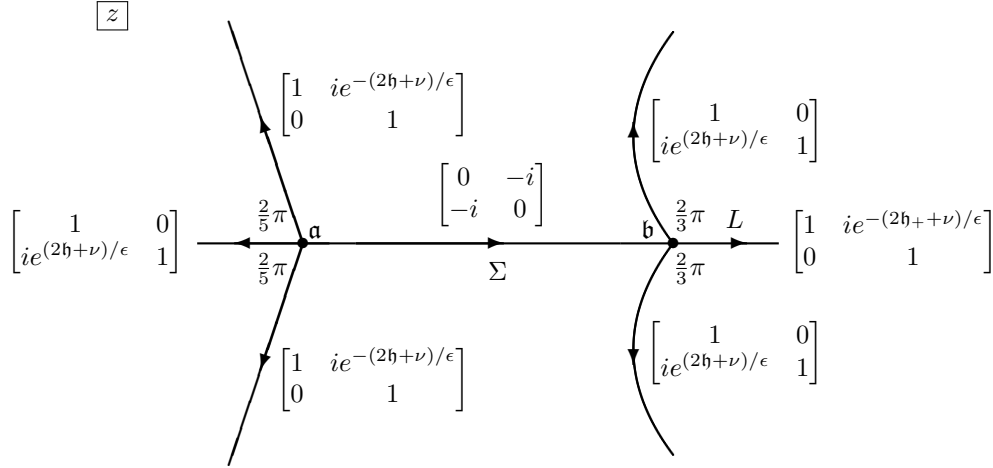
It follows from \eqref{eq:g-h-nu-critical}, continuity 
of $2\mathfrak{h}(z)+\nu$, and the results of \cite[Section 3]{Buckingham-Miller-rational-noncrit} that for $x$ near $x_c$ and $\mathfrak{a}$ near $a_c$, all triangular jump matrices for $\mathbf{O}(z)$ converge to the identity matrix as $\epsilon\to 0$ as long as fixed neighborhoods of the limiting endpoints $(a_c,b_c)$ of $\Sigma$ are excluded.  Moreover, the convergence is exponentially fast and uniform outside of the aforementioned neighborhoods.  This suggests that to approximate $\mathbf{O}(z)$ accurately, it will be necessary only to deal with the constant jump across $\Sigma$ as well as fixed-size neighborhoods of $a_c$ and $b_c$.

\subsection{The outer parametrix}
To deal with the jump of $\mathbf{O}$ across the segment $\Sigma$ we simply adapt the solution of Riemann-Hilbert Problem~\ref{rhp:outer-model-edge} with $K=0$ to the present situation.  Replacing $a(x)$ with $\mathfrak{a}$ and $b(x)$ with $\mathfrak{b}(x,\mathfrak{a})$, we use 
\eqref{eq:edge-beta-define} to define a function $\beta(z;x,\mathfrak{a})$ analytic for $z\in\mathbb{C}\setminus\Sigma$, and then by the formula \eqref{Odot-gen0} we obtain the matrix function $\dot{\mathbf{O}}^{(\mathrm{out})}(z)$.  

This outer parametrix has the following properties.  Firstly, $\dot{\mathbf{O}}^{(\mathrm{out})}(z)$ is analytic for $z\in\mathbb{C}\setminus\Sigma$.  Secondly, $\dot{\mathbf{O}}^{(\mathrm{out})}(z)-\mathbb{I}$ has a convergent Laurent expansion for sufficiently large $|z|$.
Thirdly, for $z\in\Sigma$, the boundary values are related by $\dot{\mathbf{O}}^{(\mathrm{out})}_+(z) =
\dot{\mathbf{O}}^{(\mathrm{out})}_-(z)(-i\sigma_1)$.  Fourthly, $\dot{\mathbf{O}}^{(\mathrm{out})}(z)$ is independent of $\epsilon$ and is uniformly bounded for $z$ bounded away from $a_c$ and $b_c$ (assuming that $x-x_c$ and $\mathfrak{a}-a_c$ are sufficiently small).  Further properties of $\dot{\mathbf{O}}^{(\mathrm{out})}(z)$ concerning its behavior near the points $\mathfrak{a}$ and $\mathfrak{b}(x,\mathfrak{a})$ will be developed below in the discussion of inner parametrices. 

\subsection{The inner (Airy) parametrix near $z=b_c$}
We closely follow \cite[Section 3.6.2]{Buckingham-Miller-rational-noncrit} using the ``Negative-$x$ Configuration''.  Let $\mathbb{D}_\mathfrak{b}$ denote a disk of sufficiently small radius independent of $\epsilon$ that contains the point $z=\mathfrak{b}(x,\mathfrak{a})$ as long as 
$x-x_c$ and $\mathfrak{a}-a_c$ are sufficiently small.  Because $2\mathfrak{h}(z)+\nu$ vanishes at $z=\mathfrak{b}(x,\mathfrak{a})$ like $(z-\mathfrak{b})^{3/2}$, making the substitutions of $\mathfrak{a}$ for $a(x)$, $\mathfrak{b}(x,\mathfrak{a})$ for $b(x)$, $\mathfrak{h}$ for $h$, and $\nu$ for $\lambda$, the construction of \cite[Section 3.6.2]{Buckingham-Miller-rational-noncrit} yields a matrix denoted $\dot{\mathbf{O}}^{(\mathfrak{b})}(z)$ that satisfies exactly the same jump conditions as does $\mathbf{O}(z)$ within the disk $\mathbb{D}_\mathfrak{b}$, and that satisfies the uniform estimate
\begin{equation}
\dot{\mathbf{O}}^{(\mathfrak{b})}(z)\dot{\mathbf{O}}^{(\mathrm{out})}(z)^{-1}=\mathbb{I}+\mathcal{O}(\epsilon),\quad z\in\partial\mathbb{D}_\mathfrak{b}.
\label{eq:corner-airy-match}
\end{equation}
No further details will be required for our analysis, as the dominant source of error terms will come from a neighborhood of the other endpoint $z=\mathfrak{a}$.  Approximation of $\mathbf{O}(z)$ near this point is the topic we take up next.
\subsection{The inner (Painlev\'e-I tritronqu\'ee) parametrix near $z=a_c$}

\subsubsection{Conformal coordinate near $z=a_c$}
Let $q$ denote the principal branch of the square root $(\mathfrak{a}-z)^{1/2}$.  The function $\mathfrak{h}(z;x,\mathfrak{a})+\tfrac{1}{2}\nu(x,\mathfrak{a})$ can be written as an odd analytic function of $q$ using \eqref{eq:hath-sum}:
\begin{equation}
\mathfrak{h}(z;x,\mathfrak{a})+\frac{1}{2}\nu(x,\mathfrak{a})=f(q;x,\mathfrak{a}):=-3\int_0^q
\left(\mathfrak{m}(x,\mathfrak{a})-\frac{1}{2}(3\mathfrak{a}+\mathfrak{b}(x,\mathfrak{a}))w^2+w^4\right)(\mathfrak{b}(x,\mathfrak{a})-\mathfrak{a}+w^2)^{1/2}\,dw.
\end{equation}
The integrand is analytic and even near $w=0$ and this implies the claimed behavior of $f$.  The first several generally nonzero Taylor coefficients of $f$ are:
\begin{equation}
\begin{split}
f'(0;x,\mathfrak{a})&=  -3\mathfrak{m}(x,\mathfrak{a})(\mathfrak{b}(x,\mathfrak{a})-\mathfrak{a})^{1/2},\\
\frac{1}{3!}f'''(0;x,\mathfrak{a})&=\frac{1}{2}(\mathfrak{b}(x,\mathfrak{a})-\mathfrak{a})^{-1/2}
\left[(\mathfrak{b}(x,\mathfrak{a})-\mathfrak{a})(3\mathfrak{a}+\mathfrak{b}(x,\mathfrak{a}))-\mathfrak{m}(x,\mathfrak{a})\right],\\
\frac{1}{5!}f^{(5)}(0;x,\mathfrak{a})&=\frac{3}{40}(\mathfrak{b}(x,\mathfrak{a})-\mathfrak{a})^{-3/2}
\left[\mathfrak{m}(x,\mathfrak{a})+2(\mathfrak{b}(x,\mathfrak{a})-\mathfrak{a})(3\mathfrak{a}+
\mathfrak{b}(x,\mathfrak{a}))-8(\mathfrak{b}(x,\mathfrak{a})-\mathfrak{a})^2\right],\\
\frac{1}{7!}f^{(7)}(0;x,\mathfrak{a})&=-\frac{3}{112}(\mathfrak{b}(x,\mathfrak{a})-\mathfrak{a})^{-5/2}
\left[8(\mathfrak{b}(x,\mathfrak{a})-\mathfrak{a})^2+(\mathfrak{b}(x,\mathfrak{a})-\mathfrak{a})(3\mathfrak{a}+\mathfrak{b}(x,\mathfrak{a}))+\mathfrak{m}(x,\mathfrak{a})\right].
\end{split}
\label{eq:fderivs}
\end{equation}
Since $\mathfrak{m}(x_c,a_c)=0$ and $\mathfrak{b}(x_c,a_c)=b_c=-3a_c$, we see that
\begin{equation}
f'(0;x_c,a_c)=0,\quad \frac{1}{3!}f'''(0;x_c,a_c)=0,\quad\text{but}\quad
\frac{1}{5!}f^{(5)}(0;x_c,a_c)=-\frac{6^{5/6}}{5}<0.
\label{eq:fderivs-criticality}
\end{equation}
Therefore, while $f$ generally vanishes linearly at $q=0$, when $x=x_c$ and $\mathfrak{a}=a_c$ it vanishes there to higher (quintic) order.  
For future reference we also record here the value
\begin{equation}
\frac{1}{7!}f^{(7)}(0;x_c,a_c)=-\frac{3^{7/6}}{2^{11/6}7}.
\label{eq:f7-criticality}
\end{equation}
From \eqref{eq:fderivs} and recalling Lemma~\ref{lemma:g1gell}, we also see that
\begin{equation}
f_\mathfrak{a}'(0;x_c,a_c)=0\quad\text{but}\quad
\frac{1}{3!}f_\mathfrak{a}'''(0;x_c,a_c)=\left(\frac{243}{2}\right)^{1/6}\neq 0
\label{eq:fa-derivs-criticality}
\end{equation}
and
\begin{equation}
f_x'(0;x_c,a_c)=-6^{-1/6}.
\label{eq:fx-deriv-criticality}
\end{equation}

We wish to introduce a conformal map $z\mapsto W$ defined in a disk in the $z$-plane containing the point $z=a_c$ such that $\mathfrak{h}+\tfrac{1}{2}\nu$ takes a simple form in terms of $W$.  Precisely, we will show that $W$ can be found so that
\begin{equation}
\mathfrak{h}(z;x,\mathfrak{a})+\frac{1}{2}\nu(x,\mathfrak{a})=-\frac{4}{5}(-W)^{5/2}-s(-W)^{1/2},
\label{eq:basic-h-equation}
\end{equation}
where on the right-hand side the power functions denote principal branches defined for $|\arg(-W)|<\pi$.  We wish for this equation to hold as an identity along each of the jump contours for $\mathbf{O}$ near $z=\mathfrak{a}$ for which the corresponding jump matrix depends on $z$.

The complex parameter $s$ is intended to allow the degeneration of the model function on the right-hand side of \eqref{eq:basic-h-equation} from a generic square-root vanishing to a $\tfrac{5}{2}$-power vanishing as occurs on the left-hand side as $(x,\mathfrak{a})\to (x_c,a_c)$.  Therefore, we should expect that $s$ will need to depend on $(x,\mathfrak{a})$ in order to guarantee the existence of the conformal map $z\mapsto W$.  In fact, $\mathfrak{a}$ will also need to depend on $x$, as the following result indicates.
\begin{lemma}
There exist analytic functions $\mathfrak{a}=\mathfrak{a}(x)$ and $s=s(x)$, well-defined in a neighborhood of $x=x_c$ and satisfying $\mathfrak{a}(x_c)=a_c$ and $s(x_c)=0$, such that a conformal map $z\mapsto W$ exists in a neighborhood of $z=a_c$ for $x-x_c$ sufficiently small that guarantees that \eqref{eq:basic-h-equation} holds on the three jump contours for $\mathbf{O}$ for which the jump matrix depends on $z$.  The conformal map takes $z=\mathfrak{a}$ to $W=0$.  
Denoting by $W_c$ the mapping $W$ in the special case that $x=x_c$, 
we have $W_c'(a_c)=3^{1/3}/2^{7/15}>0$.
 Also, $s'(x_c)=2^{1/5}/3^{1/3}>0$, so that locally the map $x\mapsto s$ is a dilation of a neighborhood of $x_c$.
\label{lemma:conformal}
\end{lemma}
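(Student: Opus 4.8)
The plan is to recast the identity~\eqref{eq:basic-h-equation} as a half-power-free analytic equation, solve it explicitly at the critical point $(x,\mathfrak{a})=(x_c,a_c)$, and then continue the solution to nearby $x$ by means of a holomorphic ordinary differential equation in a Banach space of analytic functions, following the approach suggested to us by M.\ Bertola.

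\emph{Reduction and base case.}  By~\eqref{eq:hath-sum} the left side of~\eqref{eq:basic-h-equation} equals $f(q;x,\mathfrak{a})$ with $q=(\mathfrak{a}-z)^{1/2}$ the principal branch, and $f$ is an \emph{odd} analytic function of $q$.  Put $\zeta:=q^2=\mathfrak{a}-z$; then $\tilde f(\zeta;x,\mathfrak{a}):=q^{-1}f(q;x,\mathfrak{a})$ is analytic in $\zeta$ near $0$ and jointly analytic in $(x,\mathfrak{a})$ near $(x_c,a_c)$.  Writing $-W=\zeta\,U(\zeta)^2$ with $U$ analytic and $U(0)\neq 0$ makes $W=(z-\mathfrak{a})U(\mathfrak{a}-z)^2$ analytic in $z$ and conformal near $z=\mathfrak{a}$, with $W(\mathfrak{a})=0$ and $W'(\mathfrak{a})=U(0)^2$, and dividing~\eqref{eq:basic-h-equation} through by $q$ turns it into
\begin{equation}
-\tfrac{4}{5}\zeta^2U(\zeta)^5-s\,U(\zeta)=\tilde f(\zeta;x,\mathfrak{a}).
\label{eq:plan-reduced}
\end{equation}
So the lemma amounts to solving~\eqref{eq:plan-reduced} for an analytic germ $U$ with $U(0)\neq0$ and scalars $s,\mathfrak{a}$, all depending analytically on $x$ near $x_c$, with $\mathfrak{a}(x_c)=a_c$ and $s(x_c)=0$.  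At $(x_c,a_c)$ the data~\eqref{eq:fderivs-criticality} give $\tilde f(\zeta;x_c,a_c)=-\tfrac{1}{5}6^{5/6}\zeta^2(1+O(\zeta))$, so $U_c(\zeta):=\bigl(-\tfrac54\zeta^{-2}\tilde f(\zeta;x_c,a_c)\bigr)^{1/5}$ (positive real fifth root at $\zeta=0$) is analytic and non-vanishing near $\zeta=0$ and solves~\eqref{eq:plan-reduced} with $s=0$; this is $W_c$, with $W_c'(a_c)=U_c(0)^2=(6^{5/6}/4)^{2/5}=3^{1/3}/2^{7/15}$.

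\emph{Continuation by a Banach-space ODE.}  Differentiating~\eqref{eq:plan-reduced} in $x$, with $U$ regarded as a point of the Banach space $X$ of functions holomorphic on a fixed small disk $|\zeta|<r_0$ (sup norm) and $s,\mathfrak{a}$ as scalars, all unknown functions of $x$, gives
\[
U_x=-\frac{s'\,U+\tilde f_x+\tilde f_\mathfrak{a}\,\mathfrak{a}'}{4\zeta^2U^4+s}.
\]
For the right side to again lie in $X$ the numerator must be divisible in $X$ by the denominator.  For $(x,\mathfrak{a},s)$ near $(x_c,a_c,0)$ and $U$ near $U_c$, Weierstrass preparation factors $4\zeta^2U^4+s=(\text{unit})\cdot(\zeta^2+p_1\zeta+p_0)$ with $(p_1,p_0)\to(0,0)$ at the critical point, so divisibility is equivalent to the vanishing of the degree-$\le1$ remainder of the numerator modulo $\zeta^2+p_1\zeta+p_0$, i.e.\ to a $2\times2$ $\mathbb{C}$-linear system for $(s',\mathfrak{a}')$ whose coefficients depend analytically on $(U,s,\mathfrak{a},x)$.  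The crucial point is that this system is invertible at the critical point: there it is triangular, the $\zeta^0$ equation reading $s'(x_c)U_c(0)+\tilde f_x(0;x_c,a_c)=0$ (nonzero because of~\eqref{eq:fx-deriv-criticality} and $U_c(0)\neq0$), and the $\zeta^1$ equation then determining $\mathfrak{a}'(x_c)$ because $\tilde f_\mathfrak{a}$ has a \emph{simple} zero at $\zeta=0$ by~\eqref{eq:fa-derivs-criticality}.  Hence near $x_c$ the pair $(s',\mathfrak{a}')$ is a uniquely determined holomorphic function of the current state, so $\tfrac{d}{dx}(U,s,\mathfrak{a})$ is a holomorphic vector field near $(U_c,0,a_c)\in X\times\mathbb{C}^2$, and the existence theorem for holomorphic ordinary differential equations in a Banach space furnishes a unique holomorphic solution taking the value $(U_c,0,a_c)$ at $x=x_c$.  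Then $U(0;x)$ stays nonzero for $x$ near $x_c$, so $W$ remains conformal, and the asserted value and positivity of $s'(x_c)$ (so that $x\mapsto s$ is locally a dilation) are read off from the $\zeta^0$ equation above.

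\emph{Branch and contour bookkeeping; main obstacle.}  The branch cut of $q$ is $\{\mathfrak{a}-z\le0\}$, which near $z=\mathfrak{a}$ coincides with the initial portion of $\Sigma$; there $\zeta<0$ and $W=-\zeta\,U(\zeta)^2>0$, so $W$ maps this cut onto the cut $\{W\ge0\}$ of $(-W)^{1/2}$ and $(-W)^{5/2}$ and the two sides of~\eqref{eq:basic-h-equation} match with consistent branches.  The three jump contours of $\mathbf{O}$ issuing from $\mathfrak{a}$ on which the jump matrix depends on $z$ are, by definition, the $W$-preimages of the standard contour rays of the Painlev\'e-I tritronqu\'ee parametrix in the $W$-plane (this is what fixes the local configuration drawn in Figure~\ref{fig:hatO-jumps-corner}, the angles $\tfrac25\pi$ reflecting the $\tfrac52$-power vanishing), so~\eqref{eq:basic-h-equation} holds identically on them once $W$ has been constructed, the configuration being a small perturbation of the one at $x=x_c$.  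I expect the main obstacle to be precisely the degeneracy of the vector field at $x=x_c$: there $s=0$ and the denominator $4\zeta^2U^4+s$ has a double zero at $\zeta=0$ which splits into two simple zeros as $x$ leaves $x_c$, and it is the Weierstrass-preparation reformulation together with the explicit critical Taylor data~\eqref{eq:fderivs-criticality},~\eqref{eq:fa-derivs-criticality},~\eqref{eq:fx-deriv-criticality} that tames this turning point and makes the flow well-posed through it.
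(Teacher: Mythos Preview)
Your proof is correct and follows essentially the same strategy as the paper's: solve~\eqref{eq:basic-h-equation} explicitly at the critical point, then continue the solution via a holomorphic ODE in a Banach space of analytic functions (the approach the paper attributes to Bertola). The differences are packaging. The paper works with the odd function $Q(q)=(-W)^{1/2}$ of $q=(\mathfrak{a}-z)^{1/2}$, so the denominator $4Q^4+s$ has four simple roots; the divisibility constraint is then expressed via Lagrange--B\"urmann contour integrals $X_p,A_p$ around $|q|=\rho$, and the resulting $2\times2$ system for $(\dot{\mathfrak{a}},\dot{s})$ is shown to be nonsingular at criticality by a residue calculation. You instead pass to $\zeta=q^2$ and $U(\zeta)=Q(q)/q$, which builds in oddness and halves the number of roots, and you replace the contour integrals by Weierstrass preparation and polynomial division modulo $\zeta^2+p_1\zeta+p_0$. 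Your triangularity check at the critical point is precisely the residue computation in the paper's variables, and the value $s'(x_c)=-\tilde f_x(0;x_c,a_c)/U_c(0)=6^{-1/6}/Q_c'(0)$ agrees with the paper's $\tfrac{2i}{\pi}X_2$ calculation (incidentally yielding $2^{1/15}/3^{1/3}$, consistent with the coefficient that actually appears in Theorem~\ref{theorem:corner}). Both arguments then invoke local existence for holomorphic ODEs in Banach space; the paper carries this out in slightly more detail by verifying a Lipschitz estimate for the vector field, which you could match by citing the analytic dependence of Weierstrass preparation on Banach-space parameters.
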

The proof of Lemma~\ref{lemma:conformal} is given in \S\ref{sec:lemma:conformal}.  From now on, we consider $\mathfrak{a}$ not as a fixed parameter, but rather as depending on $x$ near $x_c$ according to Lemma~\ref{lemma:conformal}.  Similarly, $\mathfrak{b}=\mathfrak{b}(x):=\mathfrak{b}(x,\mathfrak{a}(x))$, $\mathfrak{m}=\mathfrak{m}(x):=\mathfrak{m}(x,\mathfrak{a}(x))$, and  $s=s(x)$ with $s(x_c)=0$ and $s'(x_c)>0$.  In this situation, $\mathfrak{h}$ becomes a function of $z$ parametrized by $x$ near $x_c$, and the constant $\nu$ becomes a function of $x$ alone satisfying
\begin{equation}
\nu(x_c)=\frac{1}{3}+2\log(-a_c)=\frac{1}{3}-\log(6^{2/3}).
\label{eq:corner-nu}
\end{equation}
We may therefore consider the partial derivative of $\mathfrak{h}$ with respect to $x$, denoted $\mathfrak{h}_x(z)=\mathfrak{h}_x(z;x)$.  This function is analytic in $z$ for $z\in\mathbb{C}\setminus\Sigma$ and satisfies the asymptotic condition $\mathfrak{h}_x(z)=\tfrac{1}{2}\theta_x(z)-\mathfrak{g}_x(z)=\tfrac{1}{2}z+\mathcal{O}(z^{-1})$ as $z\to\infty$.  On the cut $\Sigma$, the relation $\mathfrak{h}_{x+}(z;x)+\mathfrak{h}_{x-}(z;x)=-\nu'(x)$ holds.  Finally, $\mathfrak{h}_x$ is bounded at the endpoints of $\Sigma$.  It is not difficult to see that these conditions imply that $\mathfrak{h}_x(z;x)$ necessarily has the form
\begin{equation}
\mathfrak{h}_x(z;x)=\frac{1}{2}r(z;\mathfrak{a}(x),\mathfrak{b}(x))-\frac{1}{2}\nu'(x),
\end{equation}
and then by imposing the normalization condition for large $z$ one finds that
\begin{equation}
\nu'(x)=-\frac{1}{2}(\mathfrak{a}(x)+\mathfrak{b}(x)),\quad\text{and in particular}\quad
\nu'(x_c)=-6^{-1/3}.
\label{eq:corner-nu-prime}
\end{equation}

\subsubsection{Tritronqu\'ee parametrix}
\label{section:tritronquee-parametrix}
Consider the rescalings
\begin{equation}
\xi:=\epsilon^{-2/5}W\quad\text{and}\quad t:=\epsilon^{-4/5}s.
\end{equation}
Let $\mathbb{D}_\mathfrak{a}$ be a disk of small fixed radius containing the point $z=\mathfrak{a}$ (which is close to $z=a_c$ for $x$ near $x_c$).
Under the conformal mapping $z\mapsto W$ and the above rescaling of $W$ to obtain $\xi$, 
the disk $\mathbb{D}_\mathfrak{a}$ is mapped to a disk of large radius proportional to $\epsilon^{-2/5}$ centered at the origin in the $\xi$-plane.  Choosing the contours of the original problem near $z=\mathfrak{a}$ so that their images in the $\xi$-plane are
straight rays from the origin, the exact jump conditions for $\mathbf{O}(z)$ may be represented in the $\xi$-plane in terms of the function $\phi(\xi)$ alone, where
\begin{equation}
\phi(\xi)=\phi(\xi;t):=\frac{4}{5}(-\xi)^{5/2}+t(-\xi)^{1/2}.
\label{eq:phidef}
\end{equation}
The exact jump conditions for $\mathbf{O}(z)$ near $z=\mathfrak{a}$ are illustrated in the $\xi$-plane in Figure~\ref{fig:O-jumps-xi}.
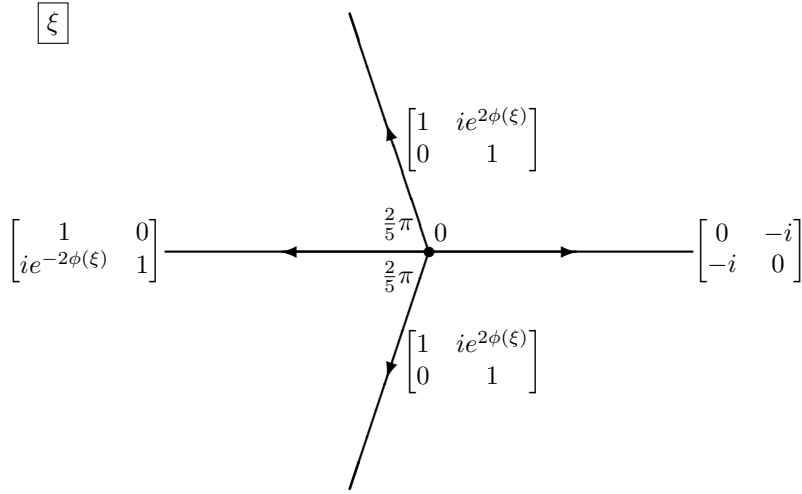
\begin{figure}[h]
\setlength{\unitlength}{2pt}
\begin{center}
\begin{picture}(100,100)(-50,-50)
\put(-74,42){\framebox{$\xi$}}
\put(0,0){\circle*{2}}
\put(-9,4){$\tfrac{2}{5}\pi$}
\put(-9,-6){$\tfrac{2}{5}\pi$}
\put(1,2){$0$}
\thicklines
\put(0,0){\line(1,0){50}}
\put(0,0){\vector(1,0){28}}
\put(50,-1){$\bbm 0 & -i \\ -i & 0 \ebm$}
%
\put(0,0){\line(-1,3){15}}
\put(-7,21){\vector(-1,3){1}}
\put(-5,20){$\bbm 1 & ie^{2\phi(\xi)} \\ 0 & 1 \ebm$}
\put(0,0){\line(-1,0){50}}
\put(0,0){\vector(-1,0){28}}
\put(-80,-1){$\bbm 1 & 0 \\ ie^{-2\phi(\xi)} & 1 \ebm$}
%
\put(0,0){\line(-1,-3){15}}
\put(-7,-21){\vector(-1,-3){1}}
\put(-5,-22){$\bbm 1 & ie^{2\phi(\xi)} \\ 0 & 1 \ebm$}
\end{picture}
\end{center}
\caption{The jump matrix for $\mathbf{O}(z)$ rendered in the $\xi$-plane.}
\label{fig:O-jumps-xi}
\end{figure}

We wish to find a certain exact solution of these jump conditions designed to match well onto the outer parametrix $\dot{\mathbf{O}}^{(\mathrm{out})}(z)$ at the boundary of the disk.  To do this, we first observe that for $z\in\mathbb{D}_\mathfrak{a}$ the outer parametrix can be expressed in terms of 
$\xi$ as
\begin{equation}
\dot{\mathbf{O}}^{(\mathrm{out})}(z)=\mathbf{F}^{(\mathfrak{a})}(z)\epsilon^{\sigma_3/10}(-\xi)^{\sigma_3/4}\mathbf{M}^{-1}.
\label{eq:corner-o-dot-out-local}
\end{equation}
Here $\mathbf{M}=\mathbf{M}^{-1}$ is defined in \eqref{Odot-gen0}, and 
$\mathbf{F}^{(\mathfrak{a})}(z)$ is a holomorphic matrix-valued function for 
$z\in\mathbb{D}_\mathfrak{a}$ that is independent of $\epsilon$ and satisfies 
$\det(\mathbf{F}^{(\mathfrak{a})}(z))=1$.  (An explicit formula for $\mathbf{F}^{\mathfrak{a}}(z)$ in terms of the conformal mapping $W$ of Lemma~\ref{lemma:conformal} can be obtained directly from \eqref{eq:corner-o-dot-out-local} with $\xi=\epsilon^{-2/5}W$.)  To match onto the outer parametrix therefore means that we will seek a solution of the following parametrix Riemann-Hilbert problem.
\begin{rhp}[Tritronqu\'ee parametrix]
Seek a $2\times 2$ matrix-valued function $\mathbf{T}(\xi)=\mathbf{T}(\xi;t)$ with the following properties:
\begin{itemize}
\item[]\textbf{\textit{Analyticity:}}  $\mathbf{T}(\xi;t)$ is analytic in the four sectors $0<\arg(-\xi)<2\pi/5$,
$-2\pi/5<\arg(-\xi)<0$, $2\pi/5<\arg(-\xi)<\pi$, and $-\pi<\arg(-\xi)<-2\pi/5$, and is H\"older continuous in each sector up to the boundary.
\item[]\textbf{\textit{Jump condition:}}  The boundary values taken along the rays common to the boundary of adjacent sectors (taken with outward orientation) are related as follows:
\begin{equation}
\mathbf{T}_+(\xi;t)=\mathbf{T}_-(\xi;t)\begin{bmatrix}0 & -i\\-i & 0\end{bmatrix},\quad \arg(\xi)=0,
\end{equation}
\begin{equation}
\mathbf{T}_+(\xi;t)=\mathbf{T}_-(\xi;t)\begin{bmatrix}1 & 0\\ie^{-2\phi(\xi;t)} & 1\end{bmatrix},
\quad\arg(-\xi)=0,
\label{eq:negative-exponent}
\end{equation}
\begin{equation}
\mathbf{T}_+(\xi;t)=\mathbf{T}_-(\xi;t)\begin{bmatrix}1 & ie^{2\phi(\xi;t)}\\0 & 1\end{bmatrix},
\quad \arg(-\xi)=\pm\frac{2\pi}{5},
\label{eq:positive-exponent}
\end{equation}
where $\phi$ is defined by \eqref{eq:phidef}.
\item[]\textbf{\textit{Normalization:}}  The matrix $\mathbf{T}(\xi;t)$ satisfies the condition
\begin{equation}
\lim_{\xi\to\infty}\mathbf{T}(\xi;t)\mathbf{M}(-\xi)^{-\sigma_3/4}=\mathbb{I},
\end{equation}
with the limit being uniform with respect to direction.
\end{itemize}
\label{rhp:tritronquee}
\end{rhp}
From the solution of this Riemann-Hilbert problem (if it exists, given $t\in\mathbb{C}$), we obtain
a local parametrix for $\mathbf{O}(z)$ expected to be valid for $z\in\mathbb{D}_\mathfrak{a}$ as follows:
\begin{equation}
\mathbf{O}^{(\mathfrak{a})}(z):=\mathbf{F}^{(\mathfrak{a})}(z)\epsilon^{\sigma_3/10}\mathbf{T}(\epsilon^{-2/5}W(z;x);\epsilon^{-4/5}s(x)),\quad z\in\mathbb{D}_\mathfrak{a}.
\label{eq:O-parametrix-fraka}
\end{equation}

The matrix $\mathbf{T}(\xi;t)$ can be further characterized with the help of Fredholm theory applied to Riemann-Hilbert Problem~\ref{rhp:tritronquee}.  To apply Fredholm theory, we first introduce a related matrix $\widetilde{\mathbf{T}}(\xi;t)$ defined in terms of $\mathbf{T}(\xi;t)$ as follows:
\begin{equation}
\widetilde{\mathbf{T}}(\xi;t):=\begin{cases}
\mathbf{T}(\xi;t),&\quad |\xi|<1,\\
\mathbf{T}(\xi;t)\mathbf{M}(-\xi)^{-\sigma_3/4},&\quad |\xi|>1.
\end{cases}
\label{eq:T-tilde}
\end{equation}
The matrix $\widetilde{\mathbf{T}}(\xi;t)$ satisfies the conditions of a similar Riemann-Hilbert problem 
with the differences being (i) there is an additional jump discontinuity of $\widetilde{\mathbf{T}}(\xi;t)$ across the unit circle (oriented clockwise) with jump matrix $\mathbf{M}(-\xi)^{-\sigma_3/4}$, (ii) $\widetilde{\mathbf{T}}(\xi;t)$ has no jump across the positive real axis for $\xi>1$, (iii) on the remaining three rays of the jump contour for $\mathbf{T}$ the jump matrix for $\widetilde{\mathbf{T}}$ for $|\xi|>1$ is the jump matrix for $\mathbf{T}$ conjugated by $\mathbf{M}(-\xi)^{-\sigma_3/4}$, and (iv) the normalization condition becomes $\widetilde{\mathbf{T}}(\xi;t)\to\mathbb{I}$ as $\xi\to\infty$.  It is by now a standard construction (see Muskehelishvili \cite{Muskhelishvili} and Deift \cite{Deift} for general theory, and see \cite[Appendix A]{KMM} for specific information about the H\"older spaces most useful in the present application) to associate an identity-normalized Riemann-Hilbert problem such as that satisfied by $\widetilde{\mathbf{T}}$ with an inhomogeneous linear system of singular integral equations (formulated on a suitable function space of H\"older-continuous matrix-valued functions on the jump contour) for which the linear operator acting on the unknown is Fredholm with zero index.  The fact that the parameter $t$ appears analytically in this operator immediately implies that the solution may fail to exist only at isolated points in the $t$-plane, and these singularities are poles of finite order.  The representation of $\widetilde{\mathbf{T}}(\xi;t)$ made available via this approach combined with the fact that the jump matrices for $\widetilde{\mathbf{T}}$ decay to $\mathbb{I}$ as $\xi\to\infty$ along each ray faster than any negative power of $|\xi|$ shows that in fact $\mathbf{T}(\xi;t)\mathbf{M}(-\xi)^{-\sigma_3/4}=\widetilde{\mathbf{T}}(\xi;t)$ has an asymptotic expansion as $\xi\to\infty$ in descending integer powers of $\xi$; there exists a sequence of matrix functions $\{\mathbf{T}_p(t)\}_{p=1}^\infty$ such that, for any integer $P\ge 0$,
\begin{equation}
\mathbf{T}(\xi;t)\mathbf{M}(-\xi)^{-\sigma_3/4}=\mathbb{I}+\sum_{p=1}^P\mathbf{T}_p(t)\xi^{-p} + \mathcal{O}(\xi^{-(P+1)}),\quad \xi\to\infty.
\label{eq:T-asymptotic-series}
\end{equation}
The coefficient matrices $\mathbf{T}_p(t)$ are analytic in $t$ except at the singularities of the solution where they have at worst poles of finite order.  The error term is uniform for $t$ in any compact set that does not contain any of these singularities.  It also follows from the representation of the solution that the asymptotic series \eqref{eq:T-asymptotic-series} is differentiable term-by-term with respect to both $\xi$ and (away from singularities) $t$. 

Two additional observations regarding Riemann-Hilbert Problem~\ref{rhp:tritronquee} are the following.  Firstly, since the jump matrices all have unit determinant, then so does $\mathbf{T}(\xi;t)$ when it exists, and this further implies that $\mathbf{T}(\xi;t)^{-1}$ has the same singularities as does $\mathbf{T}(\xi;t)$ itself.  The condition $\det(\mathbf{T}(\xi;t))=1$ applied to the series \eqref{eq:T-asymptotic-series} also implies that
\begin{equation}
\mathrm{tr}(\mathbf{T}_1(t))=0.
\label{eq:trace-T1-zero}
\end{equation}
Secondly, if $\mathbf{T}(\xi;t)$ is a solution of Riemann-Hilbert Problem~\ref{rhp:tritronquee}, then so is $\mathbf{T}(\xi^*;t^*)^*$, so the index-zero condition implies that whenever $t$ is such that $\mathbf{T}$ exists, then so is $t^*$,
and $\mathbf{T}(\xi;t)=\mathbf{T}(\xi^*;t^*)^*$.  This identity further implies that all of the coefficients in the expansion \eqref{eq:T-asymptotic-series} satisfy $\mathbf{T}_p(t)=\mathbf{T}_p(t^*)^*$, i.e., the matrix entries are all Schwarz-symmetric meromorphic functions of $t$.

The elements of the coefficient matrices $\mathbf{T}_p(t)$ satisfy a number of ordinary differential equations.  Indeed, observe that the matrix $\mathbf{L}(\xi;t):=\mathbf{T}(\xi;t)e^{\phi(\xi;t)\sigma_3}$ satisfies jump conditions that are independent of both $\xi$ and $t$ (the transformation has the effect of replacing the exponential factors $e^{\pm 2\phi(\xi;t)}$ in \eqref{eq:negative-exponent}--\eqref{eq:positive-exponent} with $1$).  This implies that the matrices
\begin{equation}
\mathbf{A}(\xi;t):=\frac{\partial\mathbf{L}}{\partial\xi}(\xi;t)\mathbf{L}(\xi;t)^{-1}
\quad\text{and}\quad
\mathbf{U}(\xi;t):=\frac{\partial\mathbf{L}}{\partial t}(\xi;t)\mathbf{L}(\xi;t)^{-1}
\label{eq:A-U-def}
\end{equation}
are both entire functions of $\xi$.  We may compute their asymptotic expansions as $\xi\to\infty$ with the help of \eqref{eq:T-asymptotic-series}.  These are as follows:
\begin{equation}
\begin{split}
\mathbf{U}(\xi;t)&=-\xi\sigma_++\sigma_--[\mathbf{T}_1(t),\sigma_+] + \mathcal{O}(\xi^{-1}),\\
\mathbf{A}(\xi;t)&=-2\xi^2\sigma_+ + 2\xi\left(\sigma_--[\mathbf{T}_1(t),\sigma_+]\right)+
2[\mathbf{T}_1(t),\sigma_+]\mathbf{T}_1(t)+2[\mathbf{T}_1(t),\sigma_-]\\
&\quad\quad\quad{}-2[\mathbf{T}_2(t),\sigma_+]-\frac{1}{2}t\sigma_+ + \xi^{-1}\mathbf{A}_1(t) + \xi^{-2}\mathbf{A}_2(t)+\mathcal{O}(\xi^{-3}),
\end{split}
\end{equation}
where $[\mathbf{A},\mathbf{B}]:=\mathbf{A}\mathbf{B}-\mathbf{B}\mathbf{A}$,  
and with the help of \eqref{eq:trace-T1-zero} we have
\begin{equation}
A_{1,21}(t)=\frac{1}{2}t + 2T_{1,11}(t)^2 - 2\det(\mathbf{T}_1(t)) + 2(T_{2,22}(t)-T_{2,11}(t))+4T_{1,21}(t)T_{2,21}(t).
\label{eq:A1-21}
\end{equation}
But since $\mathbf{U}(\xi;t)$ and $\mathbf{A}(\xi;t)$ are entire in $\xi$, they are equal to the polynomial terms in the expansions
\begin{equation}
\begin{split}
\mathbf{U}(\xi;t)&=-\xi\sigma_++\sigma_--[\mathbf{T}_1(t),\sigma_+],\\
\mathbf{A}(\xi;t)&=-2\xi^2\sigma_+ + 2\xi\left(\sigma_--[\mathbf{T}_1(t),\sigma_+]\right)+
2[\mathbf{T}_1(t),\sigma_+]\mathbf{T}_1(t)+2[\mathbf{T}_1(t),\sigma_-]
-2[\mathbf{T}_2(t),\sigma_+]-\frac{1}{2}t\sigma_+,
\end{split}
\label{eq:U-and-A}
\end{equation}
and we further establish the identity $A_{1,21}(t)=0$.   According to the definitions \eqref{eq:A-U-def}, 
$\mathbf{L}(\xi;t)$ is a simultaneous fundamental solution matrix of the Lax pair of differential equations $\mathbf{L}_\xi=\mathbf{A}\mathbf{L}$ and $\mathbf{L}_t=\mathbf{U}\mathbf{L}$.  This overdetermined system is therefore compatible, meaning that the coefficient matrices $\mathbf{A}$ and $\mathbf{U}$ satisfy the zero-curvature condition $\mathbf{U}_\xi-\mathbf{A}_t +[\mathbf{U},\mathbf{A}]=\mathbf{0}$.  With the help of \eqref{eq:trace-T1-zero} and $A_{1,21}(t)=0$ with $A_{1,21}(t)$ given by \eqref{eq:A1-21}, the matrix elements of $\mathbf{U}(\xi;t)$ and $\mathbf{A}(\xi;t)$ can be written in terms of just three unknown functions of $t$:
\begin{equation}
\begin{split}
H=H(t)&:=T_{1,21}(t),\\
Y=Y(t)&:=T_{1,21}(t)^2 + T_{1,22}(t)-T_{1,11}(t),\\
Z=Z(t)&:=2T_{2,21}(t)+2T_{1,12}(t)-2T_{1,21}(t)^3-4T_{1,21}(t)T_{1,22}(t)+2T_{1,21}(t)T_{1,11}(t),
\end{split}
\label{eq:HYZdef}
\end{equation}
namely
\begin{equation}
\mathbf{U}(\xi;t)=\begin{bmatrix}H& Y-H^2-\xi\\1 & -H\end{bmatrix}
\end{equation}
and
\begin{equation}
\mathbf{A}(\xi;t)=\begin{bmatrix}
2\xi H+Z+2HY & -2\xi^2+2\xi(Y-H^2)-t-2Y^2-2H^2Y-2HZ\\
2\xi+2Y & -2\xi H-Z-2HY
\end{bmatrix}.
\label{eq:corner-A-rewrite}
\end{equation}
Upon separating the coefficients of different powers of $\xi$, the zero-curvature condition then yields the following three differential equations (and no further relations):
\begin{equation}
\begin{split}
H'(t)&=-Y(t),\\
Y'(t)&=Z(t),\\
Z'(t)&=6Y(t)^2+t.
\end{split}
\label{eq:PI-system}
\end{equation}
Elimination of $Z(t)$ shows that $Y(t)$ solves the Painlev\'e-I equation
\eq
Y''(t)=6Y(t)^2+t.
\label{eq:Painleve-I}
\endeq
Now, setting to zero the other elements of the matrix coefficient $\mathbf{A}_1(t)$ allows $T_{3,21}(t)$ and the difference $T_{3,22}(t)-T_{3,11}(t)$ to be explicitly expressed in terms of elements of the matrices $\mathbf{T}_1(t)$ and $\mathbf{T}_2(t)$ (only two further relations appear from the equation $\mathbf{A}_1(t)=\mathbf{0}$ because $\mathrm{tr}(\mathbf{A}_1(t))=0$).  Using these identities along with
$A_{1,21}(t)=0$, \eqref{eq:trace-T1-zero}, and the definitions \eqref{eq:HYZdef}, the equation $A_{2,21}(t)=0$ yields the additional identity
\begin{equation}
H(t)=\frac{1}{2}Z(t)^2-2Y(t)^3-tY(t),
\label{eq:Hamiltonian}
\end{equation}
which is easily checked to be consistent with \eqref{eq:PI-system}.
Therefore, $H$ is the Hamiltonian function associated with the solution $Y$ of the Painlev\'e-I equation.  

The large-$t$ asymptotic behavior of the general solution of the Painlev\'e-I equation was studied by Kapaev \cite{Kapaev:2004} by means of the Deift-Zhou steepest descent method applied to a Riemann-Hilbert problem that is equivalent to a generalization of Riemann-Hilbert Problem~\ref{rhp:tritronquee} to allow two independent Stokes constants (this also requires including two additional jump rays with angles $\arg(-\xi)=\pm 4\pi/5$).  Kapaev proves that in the special case of the Stokes constants in which his problem corresponds with Riemann-Hilbert Problem~\ref{rhp:tritronquee}, the function $Y$ ($Y=y_3=y_{-2}$ in Kapaev's notation) has the asymptotic behavior
\begin{equation}
Y(t)=-\left(-\frac{t}{6}\right)^{1/2}+\mathcal{O}(t^{-2}),\quad t\to\infty,\quad |\arg(-t)|\le \frac{4}{5}\pi-\delta
\label{eq:tritronquee-asymptotics}
\end{equation}
for any $\delta>0$ however small.  The validity of this asymptotic formula in such a large sector of the $t$-plane is sufficient to uniquely identify the solution $Y(t)$ of the Painlev\'e-I equation \cite[Remark 2.3]{Kapaev:2004}.  It is called the (real) tritronqu\'ee solution.  More generally, there is a one-parameter family of Schwarz-symmetric ``tronqu\'ee'' solutions of $Y''(t)=2=6Y(t)^2+t$ having the same asymptotic description \eqref{eq:tritronquee-asymptotics} but with $t$ restricted to the smaller sector $|\arg(-t)|\le \pi/5-\delta$.  In the remaining sectors of the complex $t$-plane there are (double) poles accumulating at $t=\infty$; the general solution of the Painlev\'e-I equation has poles near $t=\infty$ in all directions.  In the limit of large $|t|$ in any sector in which the solution is not asymptotically pole-free, the solution is asymptotically described by a Weierstra\ss\ elliptic function with modulus depending on $\arg(t)$.  

Dubrovin, Grava, and Klein \cite{Dubrovin:2009} conjectured that $Y(t)$ is analytic for $|\arg(-t)|<4\pi/5$, i.e., that the pole-free nature of $Y(t)$ that holds for large $|t|$ due to \eqref{eq:tritronquee-asymptotics} actually extends to all $|t|$ in the indicated sector.  This conjecture has recently been proven by Costin, Huang, and Tanveer \cite{CostinHT}.  The poles of $Y$ are obviously values of $t$ for which the solution of Riemann-Hilbert Problem~\ref{rhp:tritronquee} is itself singular (fails to exist).  Conversely, if $t\in\mathbb{C}$ is a value at which $Y(\cdot)$ is analytic, then it follows from the differential equations \eqref{eq:PI-system} that the same is true for $H(\cdot)$ and $Z(\cdot)$, and hence the matrix $\mathbf{A}(\xi;t)$ given by \eqref{eq:corner-A-rewrite} also exists as a quadratic polynomial in $\xi$.  Therefore, at such a value of $t$ there exist canonical solutions $\mathbf{L}(\xi;t)$ of the linear system $\mathbf{L}_\xi=\mathbf{A}\mathbf{L}$, and it follows that the matrix $\mathbf{T}(\xi;t)$ satisfying the conditions of Riemann-Hilbert Problem~\ref{rhp:tritronquee} also exists.  In other words, the singularities of $\mathbf{T}(\xi;t)$ are precisely the poles of the tritronqu\'ee solution $Y(t)$.

It is well-known that every pole of every solution $Y(t)$ of the Painlev\'e-I equation $Y''(t)=6Y(t)^2+t$
is a double pole.  Indeed, if $t_0$ is a pole of $Y(t)$, then by matching the (necessarily) dominant
terms $Y''(t)$ and $-6Y(t)^2$ one easily obtains this result.  By continuing the argument to higher order one sees that, at any pole $t_0$, $Y$ has a Laurent expansion of the form
\begin{equation}
Y(t)=\frac{1}{(t-t_0)^2} -\frac{t_0}{10}(t-t_0)^2 -\frac{1}{6}(t-t_0)^3 + \mathcal{O}((t-t_0)^4),\quad t\to t_0.
\end{equation}
Substituting this expansion into \eqref{eq:Hamiltonian} shows that the associated Hamiltonian has the expansion
\begin{equation}
H(t)=\frac{1}{t-t_0}+\mathcal{O}(1),
\end{equation}
i.e., the Hamiltonian necessarily has simple poles, all of residue $1$.

Fix a compact set $\mathcal{K}$ in the $t$-plane that contains no poles of the tritronqu\'ee solution $Y(t)$.  Uniformly for $t\in \mathcal{K}$ (i.e., for $s(x)\in\epsilon^{4/5}\mathcal{K}$), we then have 
\begin{equation}
\mathbf{T}(\xi;t)\mathbf{M}(-\xi)^{-\sigma_3/4}=\mathbb{I}+\mathcal{O}(\xi^{-1}),\quad \xi\to\infty.
\end{equation}
Then, since $z\in\partial\mathbb{D}_\mathfrak{a}$ implies that $\xi$ is proportional to $\epsilon^{-2/5}$, the definition \eqref{eq:O-parametrix-fraka} implies that (since $\mathbf{F}^{(\mathfrak{a})}(z)$ is independent of $\epsilon$ and has unit determinant)
\begin{equation}
\dot{\mathbf{O}}^{(\mathfrak{a})}(z)\dot{\mathbf{O}}^{(\mathrm{out})}(z)^{-1}=
\mathbf{F}^{(\mathfrak{a})}(z)\epsilon^{\sigma_3/10}\left(\mathbb{I}+\mathcal{O}(\epsilon^{2/5})\right)
\epsilon^{-\sigma_3/10}\mathbf{F}^{(\mathfrak{a})}(z)^{-1}=
\mathbb{I}+\mathcal{O}(\epsilon^{1/5}),\quad \epsilon\to 0, \quad z\in\partial\mathbb{D}_\mathfrak{a}.
\label{eq:match-on-partial-Da}
\end{equation}
This estimate also holds uniformly for $t\in \mathcal{K}$.

%

\subsection{The global parametrix and computation of error terms}
The global parametrix for $\mathbf{O}(z)$ is defined as follows:
\begin{equation}
\dot{\mathbf{O}}(z):=\begin{cases}
\dot{\mathbf{O}}^{(\mathfrak{b})}(z),&\quad z\in \mathbb{D}_\mathfrak{b},\\
\dot{\mathbf{O}}^{(\mathfrak{a})}(z),&\quad z\in\mathbb{D}_\mathfrak{a},\\
\dot{\mathbf{O}}^{(\mathrm{out})}(z),&\quad z\in\mathbb{C}\setminus\overline{\mathbb{D}_\mathfrak{a}\cup\mathbb{D}_\mathfrak{b}}.
\end{cases}
\label{eq:global-parametrix}
\end{equation}
The error in approximating $\mathbf{O}(z)$ by its global parametrix $\dot{\mathbf{O}}(z)$ is quantified by introducing the error $\mathbf{E}(z):=\mathbf{O}(z)\dot{\mathbf{O}}(z)^{-1}$.  This matrix
is analytic for $z\in\mathbb{C}\setminus\Sigma^{(\mathbf{E})}$, where the jump contour $\Sigma^{(\mathbf{E})}$ consists of (i) all arcs of the jump contour for $\mathbf{O}(z)$ outside of the disks $\mathbb{D}_\mathfrak{a}$ and $\mathbb{D}_\mathfrak{b}$ with the exception of the segment $\Sigma$ (because the outer parametrix $\dot{\mathbf{O}}^{(\mathrm{out})}(z)$ and $\mathbf{O}(z)$ satisfy the same jump condition on $\Sigma$) and (ii) the two circles $\partial\mathbb{D}_\mathfrak{a}$ and $\partial\mathbb{D}_\mathfrak{b}$, both of which we take to be oriented in the clockwise (negative) direction.  Note that $\Sigma^{(\mathbf{E})}\cap(\mathbb{D}_\mathfrak{a}\cup\mathbb{D}_\mathfrak{b})=\emptyset$ because the inner parametrices exactly satisfy the jump conditions of
$\mathbf{O}(z)$ within these disks.  We assume that the arcs of $\Sigma^{(\mathbf{E})}$ that coincide with arcs of the jump contour for $\mathbf{O}(z)$ also inherit their orientation, and note further that the contour $\Sigma^{(\mathbf{E})}$ will be taken to be independent of $x$ near $x_c$.  Note also that
$\mathbf{E}(z)\to\mathbb{I}$ as $z\to\infty$ as this holds for both $\mathbf{O}(z)$ and $\dot{\mathbf{O}}^{(\mathrm{out})}(z)$.

\begin{lemma}
Let $\mathcal{K}$ be a compact subset of the complex $t$-plane that does not contain any poles of the tritronqu\'ee solution $Y(t)$ and 
suppose that $t=\epsilon^{4/5}s(x)\in \mathcal{K}$. 
The error $\mathbf{E}(z)$ satisfies the conditions of a Riemann-Hilbert problem of small-norm type in the limit $\epsilon\to 0$
with jump matrix $\mathbf{V}^{(\mathbf{E})}(z)$ satisfying the conditions $\mathbf{V}^{(\mathbf{E})}-\mathbb{I}\in L^2(\Sigma^{(\mathbf{E})})\cap L^\infty(\Sigma^{(\mathbf{E})})$ and the estimates $\|\mathbf{V}^{(\mathbf{E})}-\mathbb{I}\|_{L^\infty(\Sigma^{(\mathbf{E})})}=\mathcal{O}(\epsilon^{1/5})$ and
$\|z^2(\mathbf{V}^{(\mathbf{E})}-\mathbb{I})\|_{L^1(\Sigma^{(\mathbf{E})})}=\mathcal{O}(\epsilon^{1/5})$, both holding uniformly for $t\in \mathcal{K}$.
\label{lemma-corner-small-norm}
\end{lemma}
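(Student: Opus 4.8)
The plan is to check the three standard hypotheses of small-norm Riemann-Hilbert theory—that $\mathbf{V}^{(\mathbf{E})}-\mathbb{I}$ lies in the relevant function spaces and is uniformly small—by decomposing the jump contour $\Sigma^{(\mathbf{E})}$ into its constituent pieces and estimating $\mathbf{V}^{(\mathbf{E})}-\mathbb{I}$ on each. First I would recall that $\mathbf{V}^{(\mathbf{E})}(z)=\dot{\mathbf{O}}_-(z)\mathbf{V}^{(\mathbf{O})}(z)\dot{\mathbf{O}}_+(z)^{-1}$ on arcs of $\Sigma^{(\mathbf{E})}$ coming from the jump contour of $\mathbf{O}(z)$, while on the two circles $\partial\mathbb{D}_{\mathfrak{a}}$ and $\partial\mathbb{D}_{\mathfrak{b}}$ (where $\mathbf{O}$ has no jump) $\mathbf{V}^{(\mathbf{E})}(z)=\dot{\mathbf{O}}^{(\mathfrak{a})}(z)\dot{\mathbf{O}}^{(\mathrm{out})}(z)^{-1}$ and $\dot{\mathbf{O}}^{(\mathfrak{b})}(z)\dot{\mathbf{O}}^{(\mathrm{out})}(z)^{-1}$ respectively.

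The argument then splits into three cases. On the arcs of $\Sigma^{(\mathbf{E})}$ lying outside both disks (the lens boundaries and the portions of $L$), the jump matrix $\mathbf{V}^{(\mathbf{O})}(z)$ is a triangular perturbation of $\mathbb{I}$ whose off-diagonal entry is $ie^{\pm(2\mathfrak{h}(z)+\nu)/\epsilon}$; by the continuity of $2\mathfrak{h}+\nu$ in $(x,\mathfrak{a})$ near $(x_c,a_c)$, the convergence \eqref{eq:g-h-nu-critical} to the genus-zero quantities, and the sign-chart analysis of \cite{Buckingham-Miller-rational-noncrit} summarized just after Figure~\ref{fig:hatO-jumps-corner}, $\Re(2\mathfrak{h}(z)+\nu)$ is bounded away from zero with a definite sign on these arcs (uniformly in $x$ near $x_c$, hence uniformly for $t\in\mathcal{K}$). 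Thus $\mathbf{V}^{(\mathbf{O})}-\mathbb{I}$ is exponentially small in $\epsilon$ in both $L^\infty$ and the weighted $L^1(\Sigma^{(\mathbf{E})},z^2)$ sense; since $\dot{\mathbf{O}}^{(\mathrm{out})}(z)$ and its inverse are uniformly bounded away from $a_c,b_c$ (the fourth property recorded for the outer parametrix), the conjugated quantity $\mathbf{V}^{(\mathbf{E})}-\mathbb{I}$ inherits the same exponential smallness—certainly $\mathcal{O}(\epsilon^{1/5})$. On $\partial\mathbb{D}_{\mathfrak{b}}$ the estimate \eqref{eq:corner-airy-match} gives $\mathbf{V}^{(\mathbf{E})}-\mathbb{I}=\mathcal{O}(\epsilon)$ directly. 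On $\partial\mathbb{D}_{\mathfrak{a}}$ the estimate \eqref{eq:match-on-partial-Da}, valid uniformly for $t\in\mathcal{K}$, gives $\mathbf{V}^{(\mathbf{E})}-\mathbb{I}=\mathcal{O}(\epsilon^{1/5})$. Combining the three cases, and noting that each circle has fixed finite length and is contained in a fixed compact region of the $z$-plane (so that $L^\infty$ bounds there translate to $L^1$ and weighted-$L^1$ bounds with no loss), yields $\|\mathbf{V}^{(\mathbf{E})}-\mathbb{I}\|_{L^\infty(\Sigma^{(\mathbf{E})})}=\mathcal{O}(\epsilon^{1/5})$ and $\|z^2(\mathbf{V}^{(\mathbf{E})}-\mathbb{I})\|_{L^1(\Sigma^{(\mathbf{E})})}=\mathcal{O}(\epsilon^{1/5})$; membership in $L^2\cap L^\infty$ follows since $\Sigma^{(\mathbf{E})}$ has finite total length and the bounded pieces contribute finitely. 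The $L^\infty$ and weighted-$L^1$ control, together with the fact that $\Sigma^{(\mathbf{E})}$ can be chosen independent of $x$ near $x_c$, are exactly the input required for the small-norm theory of \cite[Appendix B]{Buckingham-Miller-rational-noncrit}.

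The main obstacle is the uniformity in $x$ (equivalently in $t\in\mathcal{K}$) of every estimate, and in particular making sure that none of the constants degenerate as $x\to x_c$: the disk $\mathbb{D}_{\mathfrak{a}}$ must be positioned so that $z=\mathfrak{a}(x)$ stays inside it for all $x$ under consideration (guaranteed by Lemma~\ref{lemma:conformal}, since $\mathfrak{a}(x)\to a_c$), the conformal map $W(z;x)$ of Lemma~\ref{lemma:conformal} must remain a uniformly nondegenerate biholomorphism on $\mathbb{D}_{\mathfrak{a}}$ so that $\xi=\epsilon^{-2/5}W$ is genuinely of size $\epsilon^{-2/5}$ on $\partial\mathbb{D}_{\mathfrak{a}}$, and the key point that $\mathbf{T}(\xi;t)$ exists with the asymptotic expansion \eqref{eq:T-asymptotic-series} holding uniformly for $t$ in the pole-free compact set $\mathcal{K}$—this is where the hypothesis that $\mathcal{K}$ avoids poles of $Y(t)$, via the Fredholm/analyticity discussion preceding \eqref{eq:T-asymptotic-series}, enters essentially. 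Once these uniformity points are in place the estimates are routine, so I would organize the proof to dispatch them first.
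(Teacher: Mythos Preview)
Your proposal is correct and follows essentially the same decomposition and estimates as the paper's own proof: split $\Sigma^{(\mathbf{E})}$ into the arcs outside the disks (exponential smallness via the sign of $\Re(2\mathfrak{h}+\nu)$ and boundedness of the outer parametrix), $\partial\mathbb{D}_{\mathfrak{b}}$ (Airy match \eqref{eq:corner-airy-match}, $\mathcal{O}(\epsilon)$), and $\partial\mathbb{D}_{\mathfrak{a}}$ (tritronqu\'ee match \eqref{eq:match-on-partial-Da}, $\mathcal{O}(\epsilon^{1/5})$, the dominant contribution). One small slip: $\Sigma^{(\mathbf{E})}$ does \emph{not} have finite total length, since it contains the unbounded rays of $\Sigma^{(\mathbf{O})}$ outside the disks; $L^2$ membership on those arcs follows instead from the exponential decay in $z$ that you already established, not from finite length.
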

\begin{proof}  It suffices to analyze the jump matrix $\mathbf{V}^{(\mathbf{E})}(z)$ defined on
$\Sigma^{(\mathbf{E})}$.
On the arcs of $\Sigma^{(\mathbf{E})}$ outside of the two disks $\mathbb{D}_\mathfrak{a}$ and $\mathbb{D}_\mathfrak{b}$, we have
\begin{equation}
\mathbf{V}^{(\mathbf{E})}(z)=\mathbf{E}_-(z)^{-1}\mathbf{E}_+(z)=\dot{\mathbf{O}}^{(\mathrm{out})}(z)
\mathbf{V}^{(\mathbf{O})}(z)\dot{\mathbf{O}}^{(\mathrm{out})}(z)^{-1},
\end{equation}
where $\mathbf{V}^{(\mathbf{O})}(z)$ denotes the jump matrix for $\mathbf{O}(z)$.  But since $s$ is small, and hence $2\mathfrak{h}+\nu$ is close to $2h+\lambda$ outside of the two disks, $\mathbf{V}^{(\mathbf{O})}(z)-\mathbb{I}$ is exponentially small in the limit $\epsilon\to 0$ and rapidly decaying as $z\to\infty$.  Since $\dot{\mathbf{O}}^{(\mathrm{out})}(z)$ and its inverse are both independent of $\epsilon$ and bounded outside of the disks, it follows that $\mathbf{V}^{(\mathbf{E})}-\mathbb{I}$ is exponentially small in $\epsilon$ and rapidly decaying as $z\to\infty$ on all arcs of $\Sigma^{(\mathbf{E})}$ outside of the two disks.  

Next consider $z\in\partial\mathbb{D}_\mathfrak{b}$.  Given the clockwise orientation, the jump across this circle is $\mathbf{V}^{(\mathbf{E})}(z)=\dot{\mathbf{O}}^{(\mathfrak{b})}(z)\dot{\mathbf{O}}^{(\mathrm{out})}(z)^{-1}$ which is uniformly $\mathbb{I}+\mathcal{O}(\epsilon)$ according to \eqref{eq:corner-airy-match}.  
Finally, consider $z\in\partial\mathbb{D}_\mathfrak{a}$.  Here a similar argument using instead 
\eqref{eq:match-on-partial-Da} shows that $\mathbf{V}^{(\mathbf{E})}(z)-\mathbb{I}=\mathcal{O}(\epsilon^{1/5})$, this being the dominant contribution to $\|\mathbf{V}^{(\mathbf{E})}-\mathbb{I}\|_{L^\infty(\Sigma^{(\mathbf{E})})}$.
\end{proof}

This result is related to a rather well-developed theory of small-norm Riemann-Hilbert problems, some useful elements of which are described in \cite[Appendix B]{Buckingham-Miller-rational-noncrit}.  
Lemma~\ref{lemma-corner-small-norm} has several consequences.  One of them is the existence of an asymptotic (for large $z$) representation of the form
\begin{equation}
\mathbf{E}(z)=\mathbb{I}+z^{-1}\mathbf{E}_1 +z^{-2}\mathbf{E}_2 + o(z^{-1}),\quad z\to\infty.
\end{equation}
Here the coefficients $\mathbf{E}_1$ and $\mathbf{E}_2$ will depend on both $x$ and $\epsilon$.  Further consequences include asymptotic (for small $\epsilon$) formulae for the moments $\mathbf{E}_1$ and $\mathbf{E}_2$:  for $p=1,2$,
\begin{equation}
\mathbf{E}_p=-\frac{1}{2\pi i}\int_{\Sigma^{(\mathbf{E})}}(\mathbf{V}^{(\mathbf{E})}(z)-\mathbb{I})z^{p-1}\,dz -\frac{1}{2\pi i}\int_{\Sigma^{(\mathbf{E})}}\mathcal{C}_-^{\Sigma^{(\mathbf{E})}}[\mathbf{V}^{(\mathbf{E})}(\cdot)-\mathbb{I}](z)(\mathbf{V}^{(\mathbf{E})}(z)-\mathbb{I})z^{p-1}\,dz + \mathcal{O}(\epsilon^{3/5}).
\label{eq:corner-moment-formulas}
\end{equation}
Here we use the Cauchy projection operator that, given an admissible (as 
defined in \cite[Appendix B]{Buckingham-Miller-rational-noncrit}) oriented 
contour $\gamma$, is defined as 
\eq
\mathcal{C}_-^{\gamma}[f](\zeta):=\mathop{\lim_{z\to\zeta}}_{z\text{ on the right of }\zeta}\frac{1}{2\pi i}\int_\gamma\frac{f(w)dw}{w-z}, \quad z\in\mathbb{C}\backslash \gamma.
\endeq

Now, as part of the proof of Lemma~\ref{lemma-corner-small-norm} it was shown that $\mathbf{V}^{(\mathbf{E})}(z)-\mathbb{I}$ is dominated by its behavior for $z\in\partial\mathbb{D}_\mathfrak{a}$, with all other contributions being $\mathcal{O}(\epsilon)$.  Therefore, without any loss of the order of accuracy, the integration contour $\Sigma^{(\mathbf{E})}$ in \eqref{eq:corner-moment-formulas} 
could be replaced with $\partial\mathbb{D}_\mathfrak{a}$ (oriented negatively), and similarly $\mathcal{C}_-^{\Sigma^{(\mathbf{E})}}$ can be replaced by 
$\mathcal{C}_-^{\partial\mathbb{D}_\mathfrak{a}}$.  
Furthermore, for $z\in\partial\mathbb{D}_\mathfrak{a}$, we have the formula $\mathbf{V}^{(\mathbf{E})}(z)=
\dot{\mathbf{O}}^{(\mathfrak{a})}(z)\dot{\mathbf{O}}^{(\mathrm{out})}(z)^{-1}$.  In \eqref{eq:match-on-partial-Da} the term $\mathbb{I}+\mathcal{O}(\epsilon^{2/5})$ is shorthand for the
series \eqref{eq:T-asymptotic-series} evaluated for $\xi=\epsilon^{-2/5}W(z)$, where $W$ is the conformal map described by Lemma~\ref{lemma:conformal}.  Therefore, a more precise version of \eqref{eq:match-on-partial-Da} is
\begin{multline}
\mathbf{V}^{(\mathbf{E})}(z)-\mathbb{I}=\frac{\epsilon^{1/5}H(t)}{W(z)}\mathbf{F}^{(\mathfrak{a})}(z)\sigma_-\mathbf{F}^{(\mathfrak{a})}(z)^{-1} \\{}+ 
\frac{\epsilon^{2/5}(H(t)^2-Y(t))}{2W(z)}\mathbf{F}^{(\mathfrak{a})}(z)\sigma_3
\mathbf{F}^{(\mathfrak{a})}(z)^{-1}+\mathcal{O}(\epsilon^{3/5}),\quad z\in\partial\mathbb{D}_\mathfrak{a}.
\label{eq:match-on-partial-Da-better}
\end{multline}
Without any change of the order of the error term above, we may consider $\mathbf{F}^{(\mathfrak{a})}(z)$ and $W(z)$ to be evaluated for $x=x_c$ (and hence also $\mathfrak{a}=a_c$ and $\mathfrak{b}=b_c$) as the difference amounts to a contribution of order $x-x_c=\mathcal{O}(\epsilon^{4/5})$.  
In this degenerate situation, we have 
\begin{equation}
\mathbf{F}^{(\mathfrak{a})}(z)=\mathbf{M}\cdot (b_c-z)^{-\sigma_3/4}\left(\frac{z-a_c}{W_c(z)}\right)^{\sigma_3/4},
\end{equation}
which implies that
\begin{equation}
\mathbf{F}^{(\mathfrak{a})}(z)\sigma_-\mathbf{F}^{(\mathfrak{a})}(z)^{-1}=k(z)
\mathbf{M}\sigma_-\mathbf{M}^{-1}\quad\text{and}\quad
\mathbf{F}^{(\mathfrak{a})}(z)\sigma_3\mathbf{F}^{(\mathfrak{a})}(z)^{-1}=\sigma_1,
\end{equation}
where $k(z)$ is the scalar function analytic at $z=a_c$ given by
\begin{equation}
k(z):=\frac{1}{2}(b_c-z)^{1/2}
\left(\frac{W_c(z)}{z-a_c}\right)^{1/2},
\end{equation}
and where $\mathbf{M}\sigma_-\mathbf{M}^{-1}=i\sigma_2+\sigma_3$.
Clearly, computing the second integral term in \eqref{eq:corner-moment-formulas} up to terms of order $\epsilon^{3/5}$ amounts to substituting for $\mathbf{V}^{(\mathbf{E})}(\cdot)-\mathbb{I}$ from the
leading term of \eqref{eq:match-on-partial-Da-better}; since then the constant matrix $\mathbf{M}\sigma_-\mathbf{M}^{-1}$ will appear squared, the fact that $\sigma_-^2=\mathbf{0}$ means that only the first integral term in \eqref{eq:corner-moment-formulas} is actually needed.  Therefore, inserting \eqref{eq:match-on-partial-Da-better} into \eqref{eq:corner-moment-formulas} under these considerations leads to the formula
\begin{equation}
\mathbf{E}_p=-\frac{\epsilon^{1/5}H(t)}{2\pi i}\bbm 1 & 1 \\ -1 & -1\ebm
\oint_{\partial\mathbb{D}_\mathfrak{a}}\frac{k(z)z^{p-1}\,dz}{W_c(z)} -
\frac{\epsilon^{2/5}(H(t)^2-Y(t))}{4\pi i}\sigma_1\oint_{\partial\mathbb{D}_\mathfrak{a}}\frac{z^{p-1}\,dz}{W_c(z)} + \mathcal{O}(\epsilon^{3/5}).
\end{equation}
Taking into account the negative orientation of the circle $\partial\mathbb{D}_\mathfrak{a}$ and the fact that $W_c(z)$ has a unique simple zero at $z=a_c\in\mathbb{D}_\mathfrak{a}$, we then obtain
\begin{equation}
\mathbf{E}_p=\epsilon^{1/5}\frac{k(a_c)a_c^{p-1}}{W_c'(a_c)}H(t)\bbm 1 & 1 \\ -1 & -1\ebm
+\epsilon^{2/5}\frac{a_c^{p-1}}{2W_c'(a_c)}(H(t)^2-Y(t))\sigma_1 + \mathcal{O}(\epsilon^{3/5}).
\end{equation}
Finally, using $k(a_c)=\tfrac{1}{2}(b_c-a_c)^{1/2}W_c'(a_c)^{1/2}$ and recalling the value of $W_c'(a_c)$ from Lemma~\ref{lemma:conformal} and the values of $a_c$ and $b_c$ from
\eqref{eq:ac-bc-define}, we obtain
\begin{equation}
\begin{split}
\mathbf{E}_1&=\epsilon^{1/5}\frac{2^{1/15}}{3^{1/3}}H(t)\bbm 1 & 1\\-1 & -1\ebm +\epsilon^{2/5}\frac{1}{2^{8/15}3^{1/3}}(H(t)^2-Y(t))\sigma_1+\mathcal{O}(\epsilon^{3/5}),\\
\mathbf{E}_2&=-\epsilon^{1/5}\frac{1}{2^{4/15}3^{2/3}}H(t)\bbm 1 & 1\\-1 & -1\ebm -\epsilon^{2/5}\frac{1}{2^{13/15}3^{2/3}}(H(t)^2-Y(t))\sigma_1+\mathcal{O}(\epsilon^{3/5}).
\end{split}
\end{equation}

\subsection{Asymptotic formulae for the rational Painlev\'e-II functions}
We begin with the exact formula $\mathbf{O}(z)=\mathbf{E}(z)\dot{\mathbf{O}}(z)$ that expresses the matrix $\mathbf{O}(z)$, whose moments at $z=\infty$ encode the rational Painlev\'e-II functions, in terms of the explicit global parametrix $\dot{\mathbf{O}}(z)$ and the error matrix $\mathbf{E}(z)$.  This implies the following exact formulae for the rational Painlev\'e-II functions 
\begin{equation}
\epsilon^{(2+\epsilon)/(3\epsilon)}e^{-\nu/\epsilon}\pu_m=\dot{O}_{1,12} + E_{1,12},
\label{eq:corner-pu-exact}
\end{equation}
\begin{equation}
\epsilon^{-(2-\epsilon)/(3\epsilon)}e^{\nu/\epsilon}\pv_m=\dot{O}_{1,21}+E_{1,21},
\label{eq:corner-pv-exact}
\end{equation}
\begin{equation}
\epsilon^{1/3}\pp_m=\dot{O}_{1,22}+E_{1,22} -\frac{\dot{O}_{2,12}+E_{2,12}+E_{1,11}\dot{O}_{1,12}+
E_{1,12}\dot{O}_{1,22}}{\dot{O}_{1,12}+E_{1,12}},
\label{eq:corner-pp-exact}
\end{equation}
and
\begin{equation}
\epsilon^{1/3}\pq_m=-\dot{O}_{1,11}-E_{1,11}+\frac{\dot{O}_{2,21}+E_{2,21}+E_{2,12}\dot{O}_{1,11}+E_{1,22}\dot{O}_{1,21}}{\dot{O}_{1,21}+E_{1,21}}.
\label{eq:corner-pa-exact}
\end{equation}
The derivation of \eqref{eq:corner-pu-exact}--\eqref{eq:corner-pa-exact} is 
given in \cite[Equations (3-49)--(3-52)]{Buckingham-Miller-rational-noncrit} 
with the substitutions $\lambda\to\nu$, 
$\dot{\bf O}^\text{(out)}\to\dot{\bf O}$, and $g\to\mathfrak{g}$.  
Here, the matrices $\dot{\mathbf{O}}_1$ and $\dot{\mathbf{O}}_2$ are the coefficients in the large-$z$ expansion
\begin{equation}
\dot{\mathbf{O}}(z)=\mathbb{I}+z^{-1}\dot{\mathbf{O}}_1 + z^{-2}\dot{\mathbf{O}}_2 + \mathcal{O}(z^{-3}),\quad z\to\infty.
\end{equation}
Since $\dot{\mathbf{O}}(z)=\dot{\mathbf{O}}^{(\mathrm{out})}(z)$ for $|z|$ 
sufficiently large in magnitude, the coefficients $\dot{\bf O}_1$ and 
$\dot{\bf O}_2$ have essentially already been computed in \eqref{eq:edge-dot-O1}--\eqref{eq:edge-dot-O2}; the only task remaining is to set $K=0$ in those formulae to obtain
\begin{equation}
\dot{\mathbf{O}}_1=\frac{1}{4}(\mathfrak{b}-\mathfrak{a})\sigma_1=6^{-1/3}\sigma_1+\mathcal{O}(\epsilon^{4/5}) \quad \text{and} \quad
\dot{O}_{2,12}=\dot{O}_{2,21}=\frac{1}{8}(\mathfrak{b}^2-\mathfrak{a}^2) = 6^{-2/3} +
\mathcal{O}(\epsilon^{4/5}),
\end{equation}
where the error terms are uniform for $x-x_c=\mathcal{O}(\epsilon^{4/5})$.  Therefore,
\begin{equation}
\epsilon^{(2+\epsilon)/(3\epsilon)}e^{-\nu/\epsilon}\pu_m=\frac{1}{6^{1/3}}+\epsilon^{1/5}\frac{2^{1/15}}{3^{1/3}}H
\left(\frac{2^{1/15}}{3^{1/3}}\frac{x-x_c}{\epsilon^{4/5}}\right) +\mathcal{O}(\epsilon^{2/5}),
\end{equation}
\begin{equation}
\epsilon^{-(2-\epsilon)/(3\epsilon)}e^{\nu/\epsilon}\pv_m=\frac{1}{6^{1/3}}-\epsilon^{1/5}\frac{2^{1/15}}{3^{1/3}}H\left(\frac{2^{1/15}}{3^{1/3}}\frac{x-x_c}{\epsilon^{4/5}}\right) +\mathcal{O}(\epsilon^{2/5}),
\end{equation}
\begin{equation}
\epsilon^{1/3}\pp_m=-\frac{1}{6^{1/3}}-\epsilon^{2/5}\frac{2^{7/15}}{3^{1/3}}Y\left(\frac{2^{1/15}}{3^{1/3}}\frac{x-x_c}{\epsilon^{4/5}}\right)+\mathcal{O}(\epsilon^{3/5}),
\end{equation}
and
\begin{equation}
\epsilon^{1/3}\pq_m=\frac{1}{6^{1/3}}+\epsilon^{2/5}\frac{2^{7/15}}{3^{1/3}}Y\left(\frac{2^{1/15}}{3^{1/3}}\frac{x-x_c}{\epsilon^{4/5}}\right)+\mathcal{O}(\epsilon^{3/5}).
\end{equation}
Recalling \eqref{epsilon} and \eqref{eq:corner-nu}--\eqref{eq:corner-nu-prime}, we have the following theorem.
\begin{theorem}[Asymptotics of $\pu_m$, $\pv_m$, $\pp_m$, and $\pq_m$ near a corner 
of $T$]
Let $Y(t)$ be the real tritronqu\'ee solution of the Painlev\'e-I equation 
\eqref{eq:Painleve-I} specified by its asymptotic expansion 
\eqref{eq:tritronquee-asymptotics}, and let $H(t)$ be the associated Hamiltonian 
(see \eqref{eq:Hamiltonian}).  Also let $\mathcal{K}$ be a compact set in the $t$-plane that does not contain any poles of $Y(t)$, and recall $\pu_m=\pu_m(y)$, 
$\pv_m=\pv_m(y)$, $\pp_m=\pp_m(y)$, and $\pq_m=\pq_m(y)$ are defined for 
positive integer $m$ in 
\eqref{backlund-positive}
and \eqref{log-derivative}, 
while $x_c:=-(9/2)^{2/3}$.  Then the formulae
\begin{equation}
\begin{split}
\left(\frac{m}{6}\right)^{-2m/3}e^{1/2-m/3}e^{m(x-x_c)/6^{1/3}}&\pu_m\left(\left(m-\tfrac{1}{2}\right)^{2/3}x\right) \\
& =1+\frac{2^{6/15}}{m^{1/5}}H\left(\frac{2^{1/15}}{3^{1/3}}m^{4/5}(x-x_c)\right)+\mathcal{O}\left(\frac{1}{m^{2/5}}\right),
\end{split}
\label{eq:corner-pu-asymp-final}
\end{equation}
\begin{equation}
\begin{split}
\left(\frac{m}{6}\right)^{2(m-1)/3}e^{m/3-1/2}e^{-m(x-x_c)/6^{1/3}}&\pv_m\left(\left(m-\tfrac{1}{2}\right)^{2/3}x\right) \\
 & =1-\frac{2^{6/15}}{m^{1/5}}H\left(\frac{2^{1/15}}{3^{1/3}}m^{4/5}(x-x_c)\right) +\mathcal{O}\left(\frac{1}{m^{2/5}}\right),
\label{eq:corner-pv-asymp-final}
\end{split}
\end{equation}
\begin{equation}
m^{-1/3}\pp_m\left(\left(m-\tfrac{1}{2}\right)^{2/3}x\right)=-\frac{1}{6^{1/3}}-\frac{1}{m^{2/5}}\frac{2^{7/15}}{3^{1/3}}Y\left(\frac{2^{1/15}}{3^{1/3}}m^{4/5}(x-x_c)\right)+\mathcal{O}\left(\frac{1}{m^{3/5}}\right),
\label{eq:corner-pp-asymp-final}
\end{equation}
and
\begin{equation}
m^{-1/3}\pq_m\left(\left(m-\tfrac{1}{2}\right)^{2/3}x\right)=\frac{1}{6^{1/3}}+\frac{1}{m^{2/5}}\frac{2^{7/15}}{3^{1/3}}Y\left(\frac{2^{1/15}}{3^{1/3}}m^{4/5}(x-x_c)\right)+\mathcal{O}\left(\frac{1}{m^{3/5}}\right)
\label{eq:corner-pq-asymp-final}
\end{equation}
all hold in the limit $m\to+\infty$ uniformly for 
\begin{equation}
\frac{2^{1/15}}{3^{1/3}}m^{4/5}(x-x_c)\in \mathcal{K}.
\end{equation}
\label{theorem:corner}
\end{theorem}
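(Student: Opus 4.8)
The plan is to recognize that by the point the statement appears, all of the substantive analytic work has already been carried out: Lemma~\ref{lemma-corner-small-norm} places the error $\mathbf{E}(z)$ in the framework of small-norm Riemann--Hilbert theory, the moments $\mathbf{E}_1$ and $\mathbf{E}_2$ have been computed to two orders in $\epsilon^{1/5}$ in terms of the tritronqu\'ee Hamiltonian $H(t)$ and solution $Y(t)$ evaluated at $t=\epsilon^{-4/5}s(x)$, the outer-parametrix coefficients have been read off from \eqref{eq:edge-dot-O1}--\eqref{eq:edge-dot-O2} with $K=0$, and the exact formulae \eqref{eq:corner-pu-exact}--\eqref{eq:corner-pa-exact} have been assembled into the four displayed asymptotic expressions immediately preceding the theorem. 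What remains is a change of normalization from the parameter $\epsilon=(m-\tfrac12)^{-1}$ to $m$, together with verification that the errors introduced by this change stay within the claimed orders, uniformly for the scaled variable in $\mathcal{K}$.

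First I would dispose of the exponential prefactors. Using $(2+\epsilon)/(3\epsilon)=\tfrac{2m}{3}$ one has $\epsilon^{(2+\epsilon)/(3\epsilon)}=(m-\tfrac12)^{-2m/3}$ exactly, while Taylor expansion of $\nu$ about $x=x_c$, together with $\nu(x_c)=\tfrac13-\log(6^{2/3})$ and $\nu'(x_c)=-6^{-1/3}$ from \eqref{eq:corner-nu}--\eqref{eq:corner-nu-prime} and $\nu''=\mathcal{O}(1)$, gives $e^{-\nu(x)/\epsilon}=e^{-1/(3\epsilon)}6^{2/(3\epsilon)}e^{6^{-1/3}(x-x_c)/\epsilon}(1+\mathcal{O}(\epsilon^{3/5}))$, since $(x-x_c)^2/\epsilon=\mathcal{O}(\epsilon^{3/5})$ on the range $x-x_c=\mathcal{O}(\epsilon^{4/5})$ forced by $t\in\mathcal{K}$. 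Collecting these factors and using $(1-\tfrac{1}{2m})^{-2m/3}=e^{1/3}(1+\mathcal{O}(m^{-1}))$, the product $\epsilon^{(2+\epsilon)/(3\epsilon)}e^{-\nu/\epsilon}$ equals $6^{-1/3}(1+\mathcal{O}(m^{-3/5}))$ times the prefactor $(\tfrac{m}{6})^{-2m/3}e^{1/2-m/3}e^{m(x-x_c)/6^{1/3}}$ in \eqref{eq:corner-pu-asymp-final}; dividing the intermediate formula for $\pu_m$ by this and simplifying $6^{1/3}\cdot 2^{1/15}/3^{1/3}=2^{6/15}$ yields \eqref{eq:corner-pu-asymp-final}. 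The same computation with the reciprocal powers of $6$ handles $\pv_m$, while for $\pp_m$ and $\pq_m$ only the benign factor $\epsilon^{1/3}=m^{-1/3}(1+\mathcal{O}(m^{-1}))$ intervenes, the quotients in \eqref{eq:corner-pp-exact}--\eqref{eq:corner-pa-exact} being harmless because $\dot{O}_{1,12}+E_{1,12}=6^{-1/3}+\mathcal{O}(\epsilon^{1/5})$ is bounded away from zero. For the argument of $H$ and $Y$, since $s(x_c)=0$ the quantity $t=\epsilon^{-4/5}s(x)$ appearing in those formulae equals $\tfrac{2^{1/15}}{3^{1/3}}\tfrac{x-x_c}{\epsilon^{4/5}}(1+\mathcal{O}(x-x_c))$ by Lemma~\ref{lemma:conformal}, hence equals $\tfrac{2^{1/15}}{3^{1/3}}m^{4/5}(x-x_c)+\mathcal{O}(\epsilon^{4/5})$ after also replacing $(m-\tfrac12)^{4/5}$ by $m^{4/5}$; because $\mathcal{K}$ is compact and disjoint from the closed, discrete pole set of $Y$ it admits a compact neighborhood $\mathcal{K}'$ with the same property, so for $m$ large the true $t$ lies in $\mathcal{K}'$, the moment estimates apply there, and smoothness of $H,Y$ (and their derivatives) on $\mathcal{K}'$ lets us replace $t$ by $\tfrac{2^{1/15}}{3^{1/3}}m^{4/5}(x-x_c)$ at the cost of an $\mathcal{O}(\epsilon^{4/5})$ change of argument, absorbed after multiplication by $\epsilon^{1/5}$ or $\epsilon^{2/5}$ into $\mathcal{O}(\epsilon)$. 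Tallying the error contributions — the $\mathcal{O}(\epsilon^{3/5})$ from the prefactor, the $\mathcal{O}(m^{-1})$ relative errors from $(m-\tfrac12)$ versus $m$, and the $\mathcal{O}(\epsilon^{2/5})$ (for $\pu_m,\pv_m$) or $\mathcal{O}(\epsilon^{3/5})$ (for $\pp_m,\pq_m$) terms already present — gives the stated totals $\mathcal{O}(m^{-2/5})$ and $\mathcal{O}(m^{-3/5})$, uniformly for $\tfrac{2^{1/15}}{3^{1/3}}m^{4/5}(x-x_c)\in\mathcal{K}$.

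I do not expect a genuine obstacle; the one point requiring care is arithmetic rather than conceptual. For $\pu_m$ and $\pv_m$ the $\mathcal{O}(\epsilon^{1/5})$ part of $\mathbf{E}_1$ survives and produces the $H$-term directly, but in the combinations defining $\pp_m$ and $\pq_m$ in \eqref{eq:corner-pp-exact}--\eqref{eq:corner-pa-exact} the $\mathcal{O}(\epsilon^{1/5})$ parts of $\mathbf{E}_1$ and $\mathbf{E}_2$ cancel exactly (this is precisely why $\mathbf{E}_1,\mathbf{E}_2$ were expanded to order $\epsilon^{2/5}$ and why $H^2-Y$, rather than $H$, enters). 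Hence one must track the $\epsilon^{2/5}$ terms attentively, using $6^{1/3}/3^{1/3}=2^{1/3}$ and the like to verify the cancellation of the $\epsilon^{1/5}$ terms and the collapse of the surviving $\epsilon^{2/5}$ contribution to $-\tfrac{2^{7/15}}{3^{1/3}}\epsilon^{2/5}Y$ in $\pp_m$ (and its sign reverse in $\pq_m$). The parallel subtlety in the prefactor is that $(1-\tfrac{1}{2m})^{-2m/3}\to e^{1/3}$ must combine with $\nu(x_c)=\tfrac13-\log(6^{2/3})$ to leave exactly $(\tfrac{m}{6})^{-2m/3}e^{1/2-m/3}$ with no leftover constant, and similarly for $\pv_m$.
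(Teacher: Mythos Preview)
Your proposal is correct and follows exactly the paper's approach: the paper's entire proof is the sentence ``Recalling \eqref{epsilon} and \eqref{eq:corner-nu}--\eqref{eq:corner-nu-prime}, we have the following theorem,'' and you have simply made explicit the bookkeeping that sentence summarizes (conversion of the $\epsilon$-prefactors to $m$-prefactors via $\nu(x_c)$, $\nu'(x_c)$, the replacement $\epsilon\leftrightarrow m$, and the passage from $\epsilon^{-4/5}s(x)$ to $\tfrac{2^{1/15}}{3^{1/3}}m^{4/5}(x-x_c)$). Your observation about enlarging $\mathcal{K}$ to a pole-free $\mathcal{K}'$ to accommodate the $\mathcal{O}(\epsilon^{4/5})$ shift in the argument of $H,Y$, and your flagging of the $\epsilon^{1/5}$-cancellation in \eqref{eq:corner-pp-exact}--\eqref{eq:corner-pa-exact}, are details the paper leaves implicit.
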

Note that, since $H'(t)=-Y(t)$, these formulae are formally consistent with the exact identities
\eqref{log-derivative} under the rescaling \eqref{x}.  The accuracy of these approximations is illustrated 
in Figures~\ref{fig:Um-corner} and \ref{fig:Pm-corner}. 
\begin{figure}[h]
\includegraphics{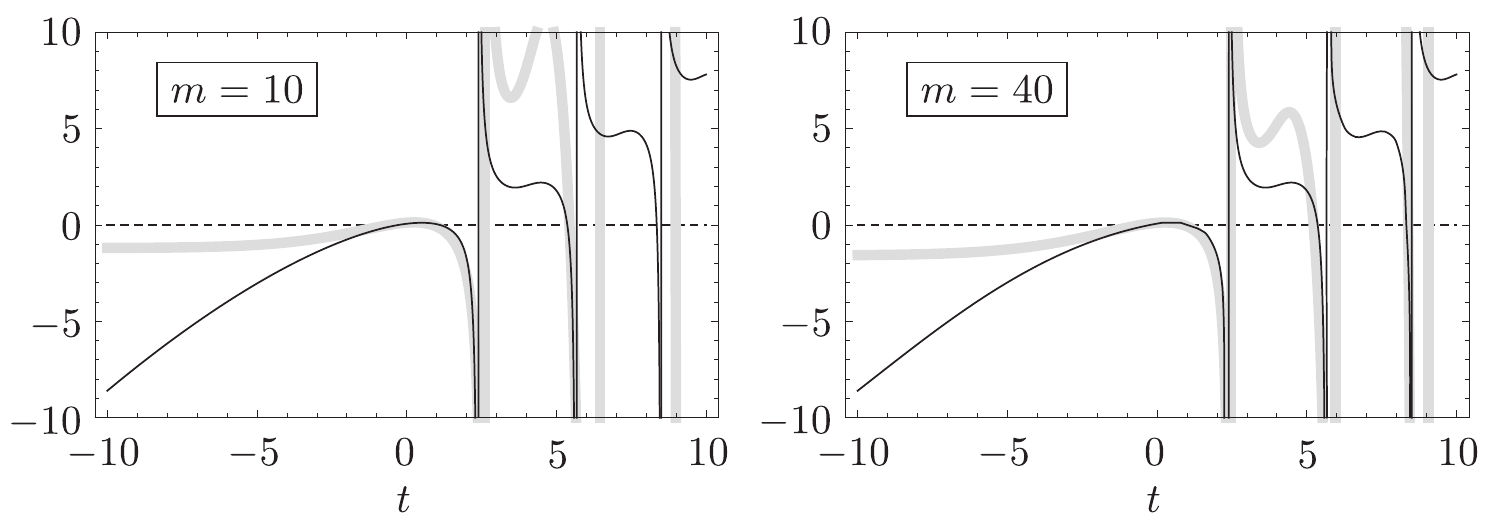}
\caption{The function $2^{-6/15}m^{1/5}((m/6)^{-2m/3}e^{1/2-m/3}e^{m(x-x_c)/6^{1/3}}\pu_m-1)$ (thick gray curves) and the tritronqu\'ee Hamiltonian $H$ (thin black curves) both plotted as functions of $t=2^{1/15}3^{-1/3}m^{4/5}(x-x_c)$ for $m=10$ (left) and $m=40$ (right).  The function $H$ was numerically computed using the ``pole field solver'' code of Fornberg and Weideman \cite{FornbergW11}.}
\label{fig:Um-corner}
\end{figure}
\begin{figure}[h]
\includegraphics{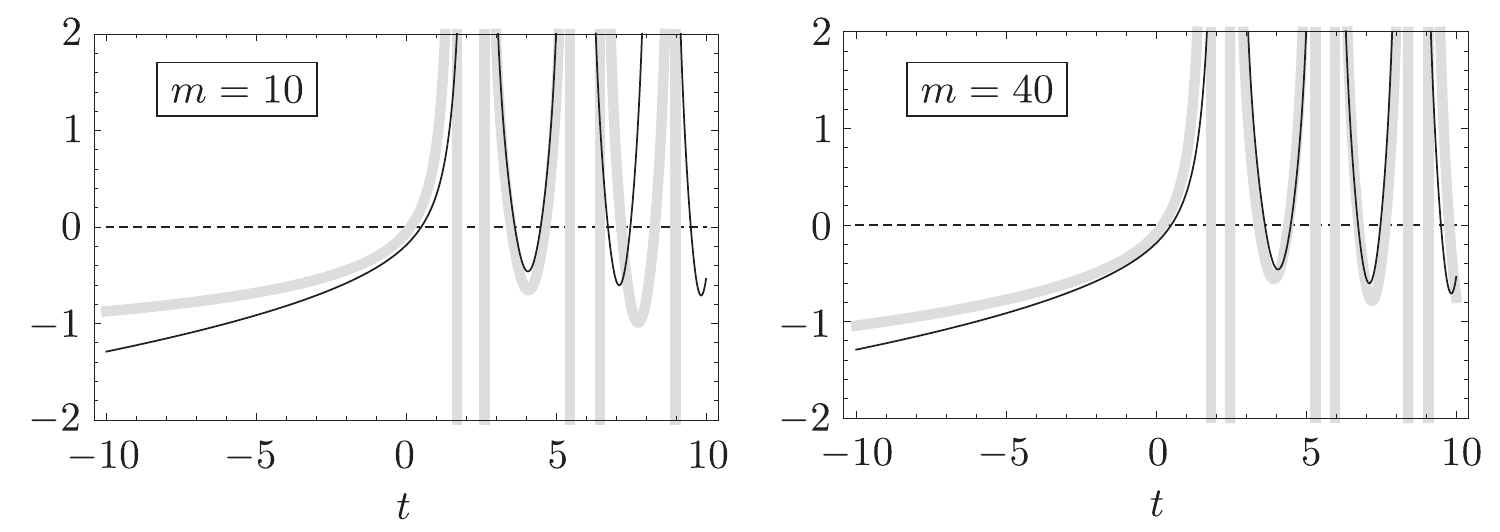}
\caption{The function $-2^{-7/15}3^{1/3}m^{2/5}(m^{-1/3}\pp_m+6^{-1/3})$ (thick gray curves) and the tritronqu\'ee solution $Y$ (thin black curves) both plotted as functions of $t=2^{1/15}3^{-1/3}m^{4/5}(x-x_c)$ for $m=10$ (left) and $m=40$ (right).  The function $Y$ was numerically computed using the ``pole field solver'' code of Fornberg and Weideman \cite{FornbergW11}.}
\label{fig:Pm-corner}
\end{figure}
It is interesting that, according to \eqref{eq:corner-pp-asymp-final}--\eqref{eq:corner-pq-asymp-final}, a function with only simple poles like $\pp_m$ or $\pq_m$ can be accurately approximated by a function like $Y$ having only double poles.  
This is possible because the approximation is only valid on sets that avoid the poles, and from Figure~\ref{fig:Pm-corner} one can see that on the scale where $t$ is fixed (i.e., $x-x_0=\mathcal{O}(m^{-4/5})$) there are pairs of simple poles of $\pp_m$ with opposite residues of $\pm 1$ (visible in Figure~\ref{fig:Pm-corner} as nearby pairs of vertical gray lines) that converge in the limit $m\to\infty$ to a common limit, namely a (double) pole of $Y$ (see also 
Figure \ref{tritronquee-vs-p10-p40}).  The 
approximations  guaranteed by Theorem~\ref{theorem:corner} are simply not valid where these coalescence events are occurring.  

The same phenomenon occurs at the level of the functions $\pu_m$ and $\pv_m$, where simple zeros and simple poles converge in pairs to merge and form the simple poles of $H$ (these do not cancel since a zero of $\pu_m$ or $\pv_m$ corresponds to a non-zero 
value of $H$ for sufficiently large $m$, as can be seen from 
\eqref{eq:corner-pu-asymp-final} and \eqref{eq:corner-pv-asymp-final}).  The fact that when $x$ is close to $x_c$ every neighborhood of a pole of $\pu_m$ ultimately (as $m\to\infty$) contains a simple zero as well means that the technique described in Remark~\ref{remark:cheese} of exploiting the B\"acklund transformations \eqref{backlund-positive}--\eqref{backlund-negative} to analyze the reciprocal $1/\pu_m$ and hence obtain asymptotics without the need of excluding ``holes'' near the poles of $Y$ does not apply in the case of the analysis near the corner points of $\partial T$.

\appendix
\section{Proofs of Key Lemmas}
\label{sec:Appendix}
\renewcommand{\theequation}{A-\arabic{equation}}
\subsection{Proof of Lemma~\ref{lemma-S-univalent}}
\label{sec:lemma-S-univalent}
\begin{proof}
By the Argument Principle, it suffices to show that $S$ maps the boundary of its domain of analyticity onto a simple closed curve on the Riemann sphere in a $1$-to-$1$ fashion.  Consider $x$ traversing the (open) contour labeled $\circled{1}$ in Figure~\ref{fig:Sigma-S}.  As $x$ is real, writing $S(x)=u+iv$ with $u,v\in\mathbb{R}$, \eqref{cubic-equation} splits into two real equations:
\begin{equation}
3u^3-9uv^2+4xu+8=0\quad\text{and}\quad 9u^2v-3v^3+4xv=0.
\label{eq:uv-eqns}
\end{equation}
Because $S$ has a jump across $\Sigma_S$, by Schwarz symmetry $v\neq 0$.  Dividing the second equation by $v$ and substituting into the first gives the relation
\begin{equation}
6u^3+6uv^2=8.
\end{equation}
Introducing polar coordinates by $u=\rho\cos(\vartheta)$ and $v=\rho\sin(\vartheta)$, $\rho\ge 0$, this relation reads
\begin{equation}
6\rho^3\cos(\vartheta)=8.
\end{equation}
We therefore have $\cos(\vartheta)>0$, so the image of the arc $\circled{1}$ in polar form reads
\begin{equation}
\rho=\left(\frac{4}{3}\right)^{1/3}(\sec(\vartheta))^{1/3}.
\label{eq:S-1-polar-form}
\end{equation}
Since from the second equation in \eqref{eq:uv-eqns} we have $4x=3v^2-9u^2$, with the use of \eqref{eq:S-1-polar-form} we get a formula for $x$ as a function of $\vartheta$:
\begin{equation}
x=3^{1/3}4^{2/3}(\sec(\vartheta))^{2/3}(1-4(\cos(\vartheta))^2).
\end{equation}
Note that $\vartheta=0$ gives $x=x_c$, while $\vartheta=\pi/3$ gives $x=0$.  Moreover, by a direct calculation one sees that 
\begin{equation}
\frac{dx}{d\vartheta}=3^{1/3}4^{2/3}\frac{2}{3}\sin(\vartheta)(\cos(\vartheta))^{1/3}(8+(\sec(\vartheta))^2)>0,\quad 0<\vartheta<\frac{\pi}{3}.
\end{equation}
Hence the arc $\circled{1}$ is mapped onto its image, given by the polar equation \eqref{eq:S-1-polar-form} with $0<\vartheta<\pi/3$ (obviously a simple arc), in a $1$-to-$1$ fashion.  By Schwarz symmetry of $S$ and the rotational symmetry \eqref{eq:S-rotational-symmetry}, the remaining five arcs illustrated in Figure~\ref{fig:Sigma-S} are also mapped to their images in a $1$-to-$1$ fashion, and their images are all simple arcs subtending the remaining five sectors of the complex $S$-plane each with opening angle $\pi/3$.  Since $S$ is continuous along the curve $\circled{1}\to\circled{2}\to\circled{3}\to\circled{4}\to\circled{5}\to\circled{6}$, its image is therefore a simple closed curve.  $S$ maps the domain $\mathbb{C}\setminus\Sigma_S$ to the interior of this latter curve.  See Figure~\ref{fig:S-plane}.
\end{proof}

\subsection{Proof of Lemma~\ref{lemma:d-univalent}}
\label{sec:lemma:d-univalent}
\begin{proof}
The fact that $\mathfrak{d}(x^*)=\mathfrak{d}(x)^*$ holds if $x\in\mathcal{S}_0$ follows directly from 
\eqref{eq:c-reflection} using the definition \eqref{eq:d-define}.  To prove the univalence, we begin with the integral formula \eqref{eq:c-function-define} and we introduce the change of integration variable given by $2\zeta=S(x)+\Delta(x)w$ (recall that $\Delta(x):=-4i/(-3S(x))^{1/2}$ holds as an identity for $x\in\mathcal{S}_0$ with the principal branch implied) to yield
\begin{equation}
\mathfrak{d}(x)=\frac{3\Delta(x)^3}{16}\int_{-1}^{-2S(x)/\Delta(x)}\left(w+\frac{2S(x)}{\Delta(x)}\right)
r(w;-1,1)\,dw-\frac{i\pi}{2}.
\end{equation}
Eliminating $\Delta$ in favor of $S$, we obtain 
$\mathfrak{d}(x)=I(t)$ where $t:=i(\tfrac{3}{4})^{1/2}(-S(x))^{3/2}$ (principal branch) and where
\begin{equation}
\label{proofs-I-def}
I(t):=-\frac{2}{t}\int_{-1}^{t}(w-t)r(w;-1,1)\,dw-\frac{i\pi}{2}.  
\end{equation}
This function has the reflection symmetry $I(-t^*)=I(t)^*$ (Schwarz symmetry through the imaginary axis).  
Now, $t$ is obviously analytic and univalent on $\mathcal{S}_0$, as can be easily seen with the help of Lemma~\ref{lemma-S-univalent}.  The corresponding conformal image $\tau$ is illustrated in Figure~\ref{fig:t-plane}.  
\begin{figure}[h]
\includegraphics{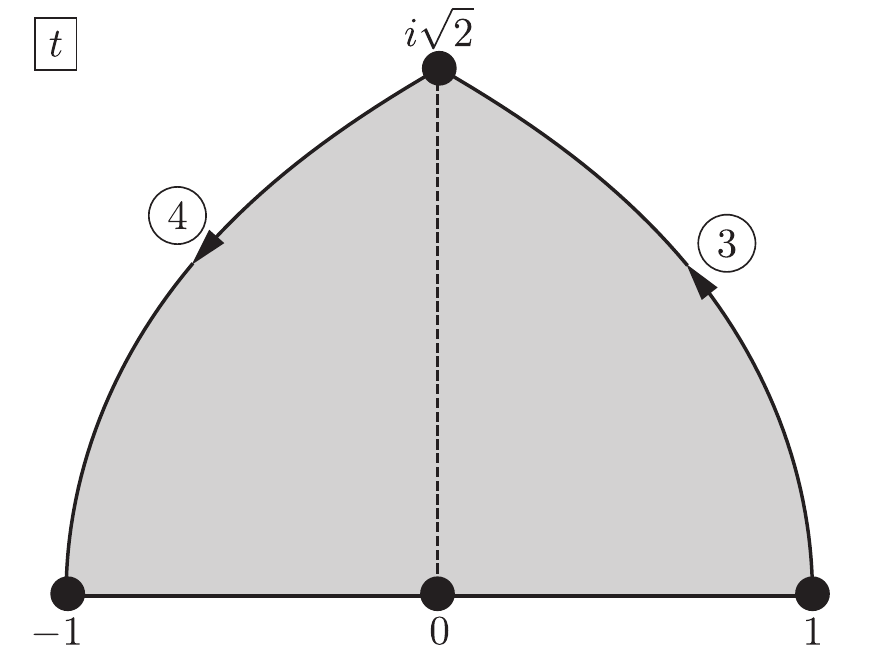}
\caption{The conformal image $\tau$ of the sector $\mathcal{S}_0$ (parametrized by $x$) in the $t$-plane, where $t:=i(\tfrac{3}{4})^{1/2}(-S(x))^{3/2}$.  The boundary curve labeled $\circled{3}$ (corresponding to Figures~\ref{fig:Sigma-S} and \ref{fig:S-plane}) has the parametrization $t=(\sec(\tfrac{2}{3}\vartheta))^{1/2}e^{i\vartheta}$, $0<\vartheta<\pi/2$, and the domain $\tau$ is symmetric through the imaginary axis.}
\label{fig:t-plane}
\end{figure}
It is now obvious that $\mathfrak{d}$ is analytic on $\mathcal{S}_0$, and by composition it remains to prove that $I:\tau\to\mathbb{C}$ is univalent.  

By the Argument Principle, it is sufficient to show that $I$ maps $\partial\tau$ to a simple closed curve on the Riemann sphere in a $1$-to-$1$ fashion.  Note that by Leibniz' rule,
\begin{equation}
I'(t)=\frac{2}{t^2}\int_{-1}^t(w-t)r(w;-1,1)\,dw +\frac{2}{t}\int_{-1}^tr(w;-1,1)\,dw =\frac{2}{t^2}\int_{-1}^t
wr(w;-1,1)\,dw.
\end{equation}
Moreover, the resulting integrand $wr(w;-1,1)$ has an obvious antiderivative, and therefore
\begin{equation}
I'(t)=\frac{2}{t^2}\int_{-1}^twr(w;-1,1)\,dw=\frac{2}{t^2}\int_{-1}^t\frac{d}{dw}\left(\frac{1}{3}r(w;-1,1)^3\right)\,dw=\frac{2r(t;-1,1)^3}{3t^2}.
\label{eq:I-prime}
\end{equation}
By using the fact that $r(\cdot;-1,1)$ is positive imaginary on the top edge of its branch cut, it is easy to see that the interval $0<t<1$ on the boundary of $\tau$, oriented left-to-right, is mapped by $I$ onto
the imaginary half-line from $i\pi/2$ to $i\infty$, oriented downward, and since $I'(t)\neq 0$ for $0<t<1$ (again evaluated along the top edge of the branch cut for $r$) this mapping is $1$-to-$1$.  

We now claim that if the arc of $\partial\tau$ labeled $\circled{3}$ in Figure~\ref{fig:t-plane} is parametrized by 
$t=t(\vartheta):=(\sec(\tfrac{2}{3}\vartheta))^{1/2}e^{i\vartheta}$, $0<\vartheta<\pi/2$, then 
\begin{equation}
\Re\left(\frac{d}{d\vartheta}I(t(\vartheta))\right)<0,\quad 0<\vartheta<\frac{\pi}{2},
\label{eq:inequality-real-part}
\end{equation}
and
\begin{equation}
\Im\left(\frac{d}{d\vartheta}I(t(\vartheta))\right)<0,\quad 0<\vartheta<\frac{\pi}{2}.
\label{eq:inequality-imaginary-part}
\end{equation}
To prove \eqref{eq:inequality-real-part}, we use $r(w;-1,1)^2=w^2-1$ and \eqref{eq:I-prime} with the chain rule to obtain
\begin{equation}
\frac{d}{d\vartheta}I(t(\vartheta))=\frac{2}{3}r(t(\vartheta);-1,1)\frac{t(\vartheta)^2-1}{t(\vartheta)^2}t'(\vartheta).
\end{equation}
But $t(\vartheta)$ lies in the first quadrant, and it is easy to check that $r(\cdot;-1,1)$ maps the open first quadrant to itself.  Also, 
\begin{multline}
\frac{t(\vartheta)^2-1}{t(\vartheta)^2}t'(\vartheta)=-\frac{2(\cos(\tfrac{1}{3}\vartheta))^2\sin(\tfrac{1}{3}\vartheta)}{3(\cos(\tfrac{2}{3}\vartheta))^{3/2}}\left(9-32(\sin(\tfrac{1}{3}\vartheta))^2+32(\sin(\tfrac{1}{3}\vartheta))^4\right) \\{}+ i\frac{2(\sin(\tfrac{1}{3}\vartheta))^2\cos(\tfrac{1}{3}\vartheta)}{3(\cos(\tfrac{2}{3}\vartheta))^{3/2}}\left(9-32(\cos(\tfrac{1}{3}\vartheta))^2+32(\cos(\tfrac{1}{3}\vartheta))^4\right),
\end{multline}
and since the quadratic $9-32u+32u^2$ has complex roots, it follows that the above expression lies in the second quadrant of the complex plane.  This proves \eqref{eq:inequality-real-part}, since the product of quantities in the first and second quadrants lies in the left half-plane.
From \eqref{eq:inequality-real-part} it follows that
\begin{equation}
\frac{d}{d\vartheta}I(t(\vartheta))=-\left(\left[\frac{d}{d\vartheta}I(t(\vartheta))\right]^2\right)^{1/2},
\label{eq:root-squared}
\end{equation}
where the principal branch is meant.  But again using $r(w;-1,1)^2=w^2-1$,
\begin{equation}
\begin{split}
\Im\left(\left[\frac{d}{d\vartheta}I(t(\vartheta))\right]^2\right)&=\Im\left(\frac{4}{9}\frac{(t(\vartheta)^2-1)^3}{t(\vartheta)^4}t'(\vartheta)^2\right)\\ & = \frac{4}{27}\sin(\tfrac{2}{3}\vartheta)(\sec(\tfrac{2}{3}\vartheta))^3
\left(3\sin(\tfrac{2}{3}\vartheta)+2\sin(2\vartheta)\right)^2>0,\quad 0<\vartheta<\frac{\pi}{2}.
\end{split}
\end{equation}
Since the principal-branch square root of a quantity in the upper half-plane lies in the first quadrant, it follows from \eqref{eq:root-squared} that $d I(t(\vartheta))/d\vartheta$ lies in the third quadrant, and therefore both \eqref{eq:inequality-real-part} \emph{and} \eqref{eq:inequality-imaginary-part} hold true.
Now, by the reflection symmetry property $I(-t^*)=I(t)^*$ the image of the terminal point of the arc labeled $\circled{3}$ in Figure~\ref{fig:t-plane} (corresponding to $\vartheta=\pi/2$) lies on the real axis in the $I$-plane.  Using this information and the 
inequalities \eqref{eq:inequality-real-part}--\eqref{eq:inequality-imaginary-part} it follows that the arc labeled $\circled{3}$ in Figure~\ref{fig:t-plane} is mapped in a $1$-to-$1$ fashion onto a simple arc that begins at $I=i\pi/2$, terminates at a strictly negative real value of $I$, and otherwise satisfies $\Re(I)<0$ and $\Im(I)>0$.  Applying again the reflection symmetry of $I$ shows that the whole boundary of $\tau$ is mapped onto a simple closed curve on the Riemann sphere in a $1$-to-$1$ fashion.  The image of $\tau$ is illustrated in Figure~\ref{fig:I-plane}.  
This completes the proof of univalence of $I:\tau\to\mathbb{C}$ and hence of $\mathfrak{d}:\mathcal{S}_0\to\mathbb{C}$.
\end{proof}

\subsection{Proof of Lemma~\ref{lemma:ell}}
\label{sec:lemma:ell}
\begin{proof}
By Schwarz symmetry of $\ell$, it suffices to show that $\Im(\ell(x))$ varies from $0$ to $\pi/4$ as $x$ varies along $\partial T$ from $x_e$ (the real midpoint of $\partial T\cap\mathcal{S}_0$) to $x_ce^{-2\pi i/3}$ (the upper corner).  Since $e^{\ell(x)}$ is analytic and nonvanishing for $x\in\mathcal{S}_0$, the increment of the argument of $e^{\ell(x)}$ along any closed curve in $\mathcal{S}_0$ must vanish.  We use the closed curve $C$ consisting of the following oriented arcs:
\begin{itemize}
\item $C_1$ being the real interval $[x_e,M]$ for $M\gg 1$, oriented left-to-right,
\item $C_2$ being parametrized by $Me^{i\phi}$ with $\phi$ increasing from $\phi=0$ to $\phi=\pi/3$,
\item $C_3$ being parametrized by $te^{i\pi/3}$ with $t$ decreasing from $t=M$ to 
$t=|x_c|+\delta$ for $\delta\ll 1$,
\item $C_4$ being parametrized by $x_ce^{-2\pi i/3}+\delta e^{i\phi}$ with $\phi$ decreasing from $\pi/3$ to an angle $\phi_c(\delta)$ so that $C_4$ terminates on
$\partial T$, and,
\item $C_5$ being the arc of $\partial T$ that closes $C$, oriented toward $x_e$.
\end{itemize}
Note that, according to Proposition~\ref{prop:corner} and the basic reflection symmetry of $T$ through the lines with slopes $0$ and $\pm\sqrt{3}$, the angle $\phi_c(\delta)$ tends to $\pi/3-4\pi/5=-7\pi/15$ as $\delta\downarrow 0$.  
Since
the net increment of $\Im(\ell(x))$ as $x$ traverses $C$ is zero, 
the increment as $x$ varies backwards along the edge $C_5$ is given by the sum of the increments as $x$ varies along $C_1,\dots,C_4$ in the direction of their given orientation.  We calculate the latter in the limit $\delta\downarrow 0$ and $M\uparrow\infty$.

It is easy to see that $\Im(\ell(x))=0$ holds identically for all $x\in C_1$ as this contour lies on the positive real axis.  Along $C_2$ we calculate the increment of $\Im(\ell(x))$ by considering the limit $M\to\infty$.  In this situation, the fact that $S(x)\sim -2/x$ for large $|x|$
shows that $\arg(i\Delta)$ increases by $\pi/6$ along $C_2$, while (since $r(\zc(x);x)^2=S(x)^2-4/(3S(x))$ holds) $\arg(r(\zc(x);x)^{-5/2})$ increases by $-5\pi/12$ along $C_2$.  Therefore the increment of $\Im(\ell(x))$ as $x$ varies along $C_2$ in the limit $M\to\infty$ is $-\pi/4$.  It can be checked that $\Im(\ell(x))$ is again constant as $x$ varies along $C_3$.  To calculate the increment of $\Im(\ell(x))$ along $C_4$, we consider the limit $\delta\downarrow 0$ in which $i\Delta(x)$ tends to a fixed nonzero limiting value uniformly for $x\in C_4$; therefore it remains to consider the variation of $r(\zc(x);x)^{-5/2}$ along this arc.  It can be shown by analyzing $S(x)$ for $x$ near the corner $x_ce^{-2\pi i/3}$ that $r(\zc(x);x)^2$ is locally proportional to $(x-x_ce^{-2\pi i/3})^{1/2}$, and hence $\arg(r(\zc(x);x)^{-5/2})$ increases by $\pi/2$ along $C_4$ in the limit $\delta\downarrow 0$.  Therefore the increment of $\Im(\ell(x))$ as $x$ varies along $C_4$ is also $\pi/2$ in this limit.  Combining these results shows that the increment of $\Im(\ell(x))$ as $x$ varies along $\partial T$ from $x=x_e>0$ to $x=x_ce^{-2\pi i/3}$ (backwards along $C_5$ in the limit $\delta\downarrow 0$) is exactly $\pi/4$.
\end{proof}

\subsection{Proof of Lemma~\ref{lemma:conformal}}
\label{sec:lemma:conformal}
\begin{proof}
By analogy with the introduction of $q$ as the principal branch square root of $\mathfrak{a}-z$, we denote by $Q$ the principal branch of $(-W)^{1/2}$.  Thus $|\arg(-W)|<\pi$ and $|\arg(Q)|<\pi/2$.  The relation
\eqref{eq:basic-h-equation} obviously implies the equation
\begin{equation}
f(q;x,\mathfrak{a})=-\frac{4}{5}Q^5-sQ.
\label{eq:rewritten-h-equation}
\end{equation}
We want to determine $\mathfrak{a}$ and $s$ as functions of $x$ near $x_c$ so that this equation has a solution $Q=Q(q;x)$ that is a univalent analytic function of $q$ near $q=0$.  Since both $f$ and the quintic polynomial on the right-hand side of \eqref{eq:rewritten-h-equation} are odd, we will impose the condition that $Q$ be an odd function of $q$.  Also, since both $q$ and $Q$ denote principal branches of square roots, it will be necessary that $Q$ map the right half-plane to itself.  Given such a $Q$, we obtain $W=-Q((\mathfrak{a}(x)-z)^{1/2};x)^2$, which by oddness and univalence of $Q$ will clearly be a univalent function of $z$ taking $z=\mathfrak{a}(x)$ to $W=0$.

We first note that if $x=x_c$, $\mathfrak{a}=a_c$, and $s=0$, then it is easy to solve for the function $Q$, which we denote by $Q_c$ in this degenerate situation.  Indeed, according to \eqref{eq:fderivs-criticality}, we can write 
\begin{equation}
f(q;x_c,a_c)=
-\frac{6^{5/6}}{5}q^5F(q^2),
\end{equation}
where $F(\cdot)$ is an analytic function of its argument satisfying 
\begin{equation}
F(q^2)=1+\frac{f^{(7)}(0;x_c,a_c)}{42f^{(5)}(0;x_c,a_c)}q^2+\mathcal{O}(q^4)=1+\frac{3^{1/3}5}{2^{8/3}7}q^2+\mathcal{O}(q^4),\quad q\to 0.
\end{equation}
We may therefore solve for $Q_c(q)$ by taking an analytic fifth root:
\begin{equation}
Q_c(q)=\frac{3^{1/6}}{2^{7/30}}
qF(q^2)^{1/5},
\end{equation}
where the last factor is an even analytic function that equals $1$ for $q=0$.  Clearly, $Q_c$ is an odd analytic function of $q$ that is univalent for sufficiently small $q$ because 
\begin{equation}
Q_c'(0)=
\frac{3^{1/6}}{2^{7/30}}>0.  
\label{eq:Yc-prime-zero}
\end{equation}
It is also a Schwarz-symmetric function, so since $Q_c'(0)>0$, $Q_c$ locally maps the right half-plane to itself as desired.  
Since $W_c(z):=-Q_c((a_c-z)^{1/2})^2$, the conformal map $W_c$ satisfies
\begin{equation}
W_c(a_c)=0\quad\text{and}\quad W_c'(a_c)=Q_c'(0)^2=\frac{3^{1/3}}{2^{7/15}}>0.
\end{equation}

Next, we consider how $(\mathfrak{a},s,Q(\cdot))$ should evolve as $x$ moves away from $x=x_c$.  Holding $q$ fixed at some small value and assuming that $(\mathfrak{a},s,Q(q))$ are differentiable with respect to $x$ near $x_c$, we obtain from \eqref{eq:rewritten-h-equation} that
\begin{equation}
\dot{Q}(q)=-\frac{f_x(q)+f_\mathfrak{a}(q)\dot{\mathfrak{a}} + Q(q)\dot{s}}{4Q(q)^4+s},
\label{eq:Ydot-a-s}
\end{equation}
where
\begin{equation}
\dot{Q}(q):=\frac{\partial Q}{\partial x}(q;x),\quad\dot{\mathfrak{a}}:=\frac{d\mathfrak{a}}{dx}(x),\quad
\dot{s}:=\frac{ds}{dx}(x).
\end{equation}
The roots of the denominator of \eqref{eq:Ydot-a-s} are $Q(q)=\Upsilon, i\Upsilon,-\Upsilon,-i\Upsilon$, where $\Upsilon:=(-\tfrac{1}{4}s)^{1/4}$.  If the right-hand side of \eqref{eq:Ydot-a-s} is to be an analytic function of small $q$, we will need to require that the numerator also vanishes at the four corresponding (under the inverse $Q^{-1}(\cdot)$ to the univalent function $Q(\cdot)$) values of $q$.
We therefore insist that
\begin{equation}
\begin{split}
f_x(Q^{-1}(\Upsilon))+f_\mathfrak{a}(Q^{-1}(\Upsilon))\dot{\mathfrak{a}}+\Upsilon \dot{s}&=0,\\
f_x(Q^{-1}(i\Upsilon))+f_\mathfrak{a}(Q^{-1}(i\Upsilon))\dot{\mathfrak{a}}+i\Upsilon\dot{s}&=0
\end{split}
\label{eq:adot-sdot-system}
\end{equation}
(that the numerator of \eqref{eq:Ydot-a-s} also vanishes at the points $q$ corresponding to $-\Upsilon$ and $-i\Upsilon$ follows automatically by the oddness of both $f_x(\cdot)$ and $f_\mathfrak{a}(\cdot)$ and the presumed oddness of $Q(\cdot)$ and hence of its inverse).  Supposing for the moment that $s\neq 0$ (and hence $\Upsilon\neq 0$), solving \eqref{eq:adot-sdot-system} for $\dot{\mathfrak{a}}$ and $\dot{s}$ gives
\begin{equation}
\begin{split}
\dot{\mathfrak{a}}&=-\frac{f_x(Q^{-1}(i\Upsilon))-if_x(Q^{-1}(\Upsilon))}{f_\mathfrak{a}(Q^{-1}(i\Upsilon))-if_\mathfrak{a}(Q^{-1}(\Upsilon))},\\
\dot{s}&=\frac{1}{\Upsilon}\cdot\frac{f_\mathfrak{a}(Q^{-1}(\Upsilon))f_x(Q^{-1}(i\Upsilon))-f_\mathfrak{a}(Q^{-1}(i\Upsilon))f_x(Q^{-1}(\Upsilon))}{f_\mathfrak{a}(Q^{-1}(i\Upsilon))-if_\mathfrak{a}(Q^{-1}(\Upsilon))}.
\end{split}
\label{eq:adot-sdot-formulas1}
\end{equation}
After using \eqref{eq:adot-sdot-formulas1} to eliminate $\dot{\mathfrak{a}}$ and $\dot{s}$ from the right-hand side of \eqref{eq:Ydot-a-s}, we intend to view \eqref{eq:Ydot-a-s} and \eqref{eq:adot-sdot-formulas1} as a first-order differential equation
on an appropriate space of triples $(\mathfrak{a},s,Q(\cdot))$ for which we have the initial condition $(a_c,0,Q_c(\cdot))$ when $x=x_c$.

To properly formulate this problem as a dynamical system, 
pick
a radius $\rho>0$ sufficiently small 
and consider
the Banach space $\mathcal{B}_0$ consisting of functions $Q:\mathbb{C}\to\mathbb{C}$ analytic for $|q|<2\rho$ and continuous for $|q|\le 2\rho$ for which 
\eq
Q(-q)=-Q(q)\quad\text{and}\quad\|Q\|_0:=\sup_{|q|=2\rho}|Q(q)|<\infty.
\endeq
(That $\|\cdot\|_0$ defines a norm on $\mathcal{B}_0$ follows from the Maximum Modulus Principle.)
Clearly $Q_c(\cdot)\in\mathcal{B}_0$ if $\rho$ is sufficiently small.  The space we will work in is $(\mathfrak{a},s,Q(\cdot))\in\mathcal{B}:=\mathbb{C}^2\times\mathcal{B}_0$, which is itself a Banach space with norm $\|(\mathfrak{a},s,Q(\cdot)\|:=|\mathfrak{a}|+|s|+\|Q\|_0$.  Let $D\subset\mathcal{B}$ be a ball of sufficiently small radius $\delta>0$ in $\mathcal{B}$ centered at $(a_c,0,Q_c(\cdot))$.
We claim that equations \eqref{eq:Ydot-a-s} (with $\dot{\mathfrak{a}}$ and $\dot{s}$ eliminated) and \eqref{eq:adot-sdot-formulas1} define a vector field on $D$, that is, $(\dot{\mathfrak{a}},\dot{s},\dot{Q}(\cdot))$ is continuous in $x$ near $x_c$ and Lipschitz as a function of $(\mathfrak{a},s,Q(\cdot))\in D$.

The key to proving the claim is to rewrite $\dot{\mathfrak{a}}$ and $\dot{s}$ given by \eqref{eq:adot-sdot-formulas1} in a form that makes the rest of the proof straightforward.  
With the help of the Lagrange-B\"urmann inversion formula, we have
\begin{equation}
\begin{split}
f_x(Q^{-1}(i\Upsilon))-if_x(Q^{-1}(\Upsilon))&=\frac{1}{2\pi i}\oint_{|q|=\rho}
\frac{f_x(q)Q'(q)\,dq}{Q(q)-i\Upsilon} -\frac{1}{2\pi i}\oint_{|q|=\rho}\frac{if_x(q)Q'(q)\,dq}{Q(q)-\Upsilon}
\\ &=-\frac{4\Upsilon^3}{\pi}X_0(\mathfrak{a},s,Q(\cdot);x),
\end{split}
\label{eq:fx-diff}
\end{equation}
where for any integer $p$ we define
\begin{equation}
X_p(\mathfrak{a},s,Q(\cdot);x):=\oint_{|q|=\rho}\frac{f_x(q;x,\mathfrak{a})Q(q)^pQ'(q)\,dq}{4Q(q)^4+s},
\end{equation}
and where the contour of integration is a positively oriented circle of radius $\rho$ that encloses all four roots of $4Q(q)^4+s$.  To obtain the second representation from the first, one makes the substitution $q\mapsto -q$, exploits oddness of $Q$ and $f_x$, and averages the resulting equivalent formula with the original one, using the identity $4\Upsilon^4+s=0$ to simplify the denominator.  In exactly the same way,
\begin{equation}
f_\mathfrak{a}(Q^{-1}(i\Upsilon))-if_\mathfrak{a}(Q^{-1}(\Upsilon))=-\frac{4\Upsilon^3}{\pi}A_0(\mathfrak{a},s,Q(\cdot);x),
\label{eq:fa-diff}
\end{equation}
where for any integer $p$ we define
\begin{equation}
A_p(\mathfrak{a},s,Q(\cdot);x):=
\oint_{|q|=\rho}\frac{f_\mathfrak{a}(q;x,\mathfrak{a})Q(q)^pQ'(q)\,dq}{4Q(q)^4+s}.
\end{equation}
Similarly,
\begin{equation}
\begin{split}
f_x(Q^{-1}(i\Upsilon))+if_x(Q^{-1}(\Upsilon))&=\frac{4\Upsilon}{\pi}X_2(\mathfrak{a},s,Q(\cdot);x),\\
f_\mathfrak{a}(Q^{-1}(i\Upsilon))+if_\mathfrak{a}(Q^{-1}(\Upsilon))&=\frac{4\Upsilon}{\pi}A_2(\mathfrak{a},s,Q(\cdot);x).
\end{split}
\label{eq:fx-fa-sum}
\end{equation}
Using \eqref{eq:fx-diff}--\eqref{eq:fx-fa-sum},
\eq
\begin{split}
f_\mathfrak{a}(Q^{-1}(\Upsilon))f_x(&Q^{-1}(i\Upsilon))-f_\mathfrak{a}(Q^{-1}(i\Upsilon))f_x(Q^{-1}(\Upsilon))\\
&=\frac{i}{2}\left[f_x(Q^{-1}(i\Upsilon))+if_x(Q^{-1}(\Upsilon))\right]
\left[f_\mathfrak{a}(Q^{-1}(i\Upsilon))-if_\mathfrak{a}(Q^{-1}(\Upsilon))\right]\\
&\quad\quad{}-\frac{i}{2}\left[f_\mathfrak{a}(Q^{-1}(i\Upsilon))+if_\mathfrak{a}(Q^{-1}(\Upsilon))\right]
\left[f_x(Q^{-1}(i\Upsilon))-if_x(Q^{-1}(\Upsilon))\right]\\
&=-\frac{8i\Upsilon^4}{\pi^2}X_2(\mathfrak{a},s,Q(\cdot);x)A_0(\mathfrak{a},s,Q(\cdot);x) + \frac{8i\Upsilon^4}{\pi^2}A_2(\mathfrak{a},s,Q(\cdot);x)X_0(\mathfrak{a},s,Q(\cdot);x).
\end{split}
\endeq
Finally, applying these results to the representations of $\dot{\mathfrak{a}}$ and $\dot{s}$ given in  \eqref{eq:adot-sdot-formulas1} yields
\begin{equation}
\begin{split}
\dot{\mathfrak{a}}&=-\frac{X_0(\mathfrak{a},s,Q(\cdot);x)}
{A_0(\mathfrak{a},s,Q(\cdot);x)},\\
\dot{s}&=\frac{2i}{\pi}X_2(\mathfrak{a},s,Q(\cdot);x)-
\frac{2i}{\pi}\cdot\frac{A_2(\mathfrak{a},s,Q(\cdot);x)X_0(\mathfrak{a},s,Q(\cdot);x)}
{A_0(\mathfrak{a},s,Q(\cdot);x)}.
\end{split}
\label{eq:adot-sdot-formulas2}
\end{equation}

Recall that $f_x(q;x,\mathfrak{a})$ and $f_\mathfrak{a}(q;x,\mathfrak{a})$ are analytic in all three arguments near $q=0$, $x=x_c$, and $\mathfrak{a}=a_c$.  Note also that taking $\delta$ sufficiently small bounds $s\in\mathbb{C}$ and $Q-Q_c\in\mathcal{B}_0$ and hence bounds $4Q(q)^4+s$ away from zero for $|q|=\rho$.  It therefore follows easily that if $\mathcal{I}=\mathcal{I}(\mathfrak{a},s,Q(\cdot);x)$ represents any of the four integrals $X_0$, $X_2$, $A_0$, or $A_2$, then $\mathcal{I}$ is continuous in $x$ near $x_c$ and for each such $x$ satisfies an estimate of the form
\begin{multline}
|\mathcal{I}(\mathfrak{a}_2,s_2,Q_2(\cdot);x)-\mathcal{I}(\mathfrak{a}_1,s_1,Q_1(\cdot);x)|\le c\left(|\mathfrak{a}_2-\mathfrak{a}_1| + |s_2-s_1| 
\vphantom{+ \sup_{|q|=\rho}|Q_2(q)-Q_1(q)| + \sup_{|q|=\rho}|Q'_2(q)-Q'_1(q)|}
\right.\\
\left.{}+ \sup_{|q|=\rho}|Q_2(q)-Q_1(q)| + \sup_{|q|=\rho}|Q'_2(q)-Q'_1(q)|\right)
\end{multline}
for some $c>0$ depending only on $\delta$.  Using the Cauchy Integral Formula to express $Q_j'(q)$ with $|q|=\rho$ in terms of values of $Q_j(q)$ with $|q|=2\rho$ and applying the Maximum Modulus Principle shows that, for some different constant $c$,
\begin{equation}
|\mathcal{I}(\mathfrak{a}_2,s_2,Q_2(\cdot);x)-\mathcal{I}(\mathfrak{a}_1,s_1,Q_1(\cdot);x)|\le c\|(\mathfrak{a}_2,s_2,Q_2(\cdot))-(\mathfrak{a}_1,s_1,Q_2(\cdot))\|.
\end{equation}
It follows from \eqref{eq:adot-sdot-formulas2} that if the denominator $A_0(\mathfrak{a},s,Q(\cdot);x)$ is bounded away from zero on $D$ for all $x$ near $x_c$, then $\dot{\mathfrak{a}}$ and $\dot{s}$ will be continuous in $x$ and will satisfy estimates of the form
\begin{equation}
\begin{split}
|\dot{\mathfrak{a}}(\mathfrak{a}_2,s_2,Q_2(\cdot);x)-\dot{\mathfrak{a}}(\mathfrak{a}_1,s_1,Q_1(\cdot);x)|&\le c\|(\mathfrak{a}_2,s_2,Q_2(\cdot))-(\mathfrak{a}_1,s_1,Q_1(\cdot))\|,\\
|\dot{s}(\mathfrak{a}_2,s_2,Q_2(\cdot);x)-\dot{s}(\mathfrak{a}_1,s_1,Q_1(\cdot);x)|&\le c\|(\mathfrak{a}_2,s_2,Q_2(\cdot))-(\mathfrak{a}_1,s_1,Q_1(\cdot))\|
\end{split}
\label{eq:adot-sdot-Lipschitz}
\end{equation}
for all $(\mathfrak{a}_j,s_j,Q_j(\cdot))\in D$ for $j=1,2$.  But by a residue calculation at $q=0$ using \eqref{eq:fa-derivs-criticality},
\begin{equation}
A_0(a_c,0,Q_c(\cdot);x_c)=
\frac{i\pi}{2Q_c'(0)^3}\left(\frac{243}{2}\right)^{1/6}=i\pi 3^{1/3}2^{-7/15}\neq 0,
\end{equation}
which therefore implies \eqref{eq:adot-sdot-Lipschitz} as long as $\delta$ is sufficiently small.

Now consider \eqref{eq:Ydot-a-s} with $\dot{\mathfrak{a}}$ and $s$ eliminated in terms of the integrals $X_0$, $X_2$, $A_0$, and $A_2$ by means of \eqref{eq:adot-sdot-formulas2}.  The denominator is an analytic function of $q$ having either four simple roots or one four-fold root at $q=0$ (for $s=0$ only).  The numerator is an analytic function of $q$ that by construction vanishes at the four simple roots of the denominator when $s\neq 0$, and therefore the fraction $\dot{Q}$ is analytic in an $s$-independent neighborhood of the origin $q=0$ as long as $s\neq 0$.  But by the Argument Principle and continuity of $\dot{\mathfrak{a}}$ and $\dot{s}$ with respect to $s$, the numerator of $\dot{Q}$ vanishes at the origin to fourth order when $s=0$, and therefore $\dot{Q}$ is analytic in $q$ even if $s=0$.  Moreover, $\dot{Q}$ is an odd function of $q$ because $Q\in\mathcal{B}_0$ is odd.  Therefore, $(\mathfrak{a},s,Q(\cdot))\in D$ implies that $\dot{Q}\in\mathcal{B}_0$ as a function of $q$.  Using the continuity of $\dot{\mathfrak{a}}$ and $s$ in $x$ near $x_c$ and the Lipschitz estimates \eqref{eq:adot-sdot-Lipschitz} then shows that $\dot{Q}$ is continuous from $x$ near $x_c$ in $\mathbb{C}$ to $\mathcal{B}_0$, and that, for some $c$,
\begin{equation}
\|\dot{Q}(\mathfrak{a}_2,s_2,Q_2(\cdot);x)-\dot{Q}(\mathfrak{a}_1,s_1,Q_1(\cdot);x)\|_0\le
c\|(\mathfrak{a}_2,s_2,Q_2(\cdot))-(\mathfrak{a}_1,s_1,Q_1(\cdot))\|
\label{eq:Ydot-Lipschitz}
\end{equation}
holds for all $(\mathfrak{a}_j,s_j,Q_j(\cdot))\in D$ for $j=1,2$. 

Therefore, the triple $(\dot{\mathfrak{a}},\dot{s},\dot{Q}(\cdot))$ constitutes a map from $((\mathfrak{a},s,Q(\cdot)),x)\in D\times (x_c-\delta,x_c+\delta)$ to $\mathcal{B}$ that is continuous in the second argument and Lipschitz in the first.  It follows from local existence and uniqueness theory for differential equations in Banach spaces that for $x-x_c$ sufficiently small there exists a unique solution $(\mathfrak{a}(x),s(x),Q(\cdot;x))\in D$ of the first-order differential equations \eqref{eq:Ydot-a-s} and \eqref{eq:adot-sdot-formulas1} with initial condition $(\mathfrak{a}(x_c),s(x_c),Q(\cdot;x_c))=(a_c,0,Q_c(\cdot))$.  It follows further that, if $\mathfrak{a}=\mathfrak{a}(x)$ and $s=s(x)$, then, for all $x$ near $x_c$, $W(z;x):=-Q((\mathfrak{a}(x)-z)^{1/2};x)^2$ is a univalent map satisfying \eqref{eq:basic-h-equation} on the three contours near $z=\mathfrak{a}$ for which the jump matrix of $\mathbf{O}$ depends on $z$.
Indeed, oddness of $Q$ means that the right-half $q$-plane will be mapped under $Q$ to a curve through the origin symmetric with respect to reflection through the origin.  Since $Q'(q)$ is controlled by the Cauchy Integral Formula, the slope of this curve will be nearly vertical for $x$ close enough to $x_c$.  Upon going back to $W$ by $W=-Q^2$, the region to the right of the boundary curve will be mapped to a single branch cut (both halves agree because the curve is symmetric) emanating from $W=0$ to the right, and its slope will be small near the origin if $x$ is close to $x_c$.  Therefore the mapping $z\mapsto W$ will be well-defined and invertible on the three rays  of the jump contour for which the jump matrix depends on $z$ (we only have to exclude the contour along which $\mathbf{O}_+(z)=\mathbf{O}_-(z)(-i\sigma_1)$ which may indeed interfere with the branch cut, but which does not matter because the jump is independent of $z$).

By residue calculations at $q=0$, we have $A_2(a_c,0,Q_c(\cdot);x_c)=0$ and
(using \eqref{eq:fx-deriv-criticality} and \eqref{eq:Yc-prime-zero}) we calculate $X_0(a_c,0,Q_c(\cdot);x_c)=-i\pi 2^{-14/15}3^{-1/3}$.  This gives the claimed value of $s'(x_c)$ and hence completes the proof of the lemma.
\end{proof}

\end{document}